\title[Algebraic $K$-theory of elliptic cohomology]
	{Algebraic $K$-theory \\ of elliptic cohomology}
\author[G. Angelini-Knoll, Ch. Ausoni, D.L. Culver,
	E. H{\"o}ning and J. Rognes]
	{Gabriel Angelini-Knoll, Christian Ausoni, Dominic Leon Culver, \\
	Eva H{\"o}ning and John Rognes}
\address{Universit{\'e} Sorbonne Paris Nord, LAGA, CNRS, UMR 7539, F-93430, Villetaneuse, France}
\email{angelini-knoll@math.univ-paris13.fr}
\address{Universit{\'e} Sorbonne Paris Nord, LAGA, CNRS, UMR 7539, F-93430, Villetaneuse, France}
\email{ausoni@math.univ-paris13.fr}
\address{Max Planck Institute for Mathematics, Bonn, Germany}
\email{dominic.culver@gmail.com}
\address{Department of Mathematics, Radboud University, Nijmegen, The Netherlands}
\email{hoening@science.ru.nl}
\address{Department of Mathematics, University of Oslo, P.O.~Box 1053, Blindern, NO-0316 Oslo, Norway}
\email{rognes@math.uio.no}
\date{October 24th 2023}
\newtheorem{theorem}{Theorem}[section]
\newtheorem{proposition}[theorem]{Proposition}
\newtheorem{lemma}[theorem]{Lemma}
\newtheorem{corollary}[theorem]{Corollary}
\theoremstyle{definition}
\newtheorem{definition}[theorem]{Definition}
\newtheorem{notation}[theorem]{Notation}
\theoremstyle{remark}
\newtheorem{remark}[theorem]{Remark}
\numberwithin{equation}{section}
\numberwithin{figure}{section}
\DeclareMathOperator{\can}{can}
\DeclareMathOperator{\cok}{cok}
\DeclareMathOperator{\Ext}{Ext}
\DeclareMathOperator{\Gal}{Gal}
\DeclareMathOperator{\im}{im}
\newcommand{\bF}{\mathbb{F}}
\newcommand{\bQ}{\mathbb{Q}}
\newcommand{\bT}{\mathbb{T}}
\newcommand{\bZ}{\mathbb{Z}}
\newcommand{\cA}{\mathcal{A}}
\newcommand{\cC}{\mathcal{C}}
\newcommand{\cP}{\mathcal{P}}
\newcommand{\cQ}{\mathcal{Q}}
\newcommand{\ds}{\displaystyle}
\newcommand{\<}{\langle}
\newcommand{\longfrom}{\longleftarrow}
\newcommand{\longto}{\longrightarrow}
\newcommand{\wA}{\widetilde{A}}
\newcommand{\wB}{\widetilde{B}}
\newcommand{\wC}{\widetilde{C}}
\newcommand{\wD}{\widetilde{D}}
\newcommand{\wET}{\widetilde{E\bT}}
\renewcommand{\:}{\colon}
\renewcommand{\>}{\rangle}
\begin{document}

\begin{abstract}
We calculate the mod~$(p, v_1, v_2)$ homotopy $V(2)_* TC(BP\<2\>)$
of the topological cyclic homology of the truncated Brown--Peterson
spectrum $BP\<2\>$, at all primes $p\ge7$, and show that it is a finitely
generated and free $\bF_p[v_3]$-module on $12p+4$ generators in explicit
degrees within the range $-1 \le * \le 2p^3+2p^2+2p-3$.  At these
primes $BP\<2\>$ is a form of elliptic cohomology, and our result also
determines the mod~$(p, v_1, v_2)$ homotopy of its algebraic $K$-theory.
Our computation is the first that exhibits chromatic redshift from pure
$v_2$-periodicity to pure $v_3$-periodicity in a precise quantitative
manner.
\end{abstract}

\subjclass{
Primary
19D50, 
19D55, 
55P43, 
55Q51; 
Secondary
55N20, 
55N34, 
55N91, 
55Q10, 
55T25. 
}

\maketitle

\tableofcontents

\section{Introduction}

Let $p$ be a prime, let $V(n)$ denote a Smith--Toda complex with $BP_*
V(n) = BP_*/(p, \dots, v_n)$, and let $BP\<n\>$ with $\pi_* BP\<n\>
= \bZ_{(p)}[v_1, \dots, v_n]$ denote a truncated Brown--Peterson
spectrum equipped with the $E_3$ $BP$-algebra structure of
Hahn--Wilson~\cite{HW22}*{Thm.~A}.  Let $P(x) = \bF_p[x]$ and $E(x)$
denote the polynomial and exterior $\bF_p$-algebras on a generator~$x$,
and let $\bF_p\{x\}$ denote the $\bF_p$-module generated by~$x$.

In this paper we confirm the quantitative form of the chromatic
redshift conjecture of~\cite{Rog00}*{p.~8} in the case of $BP\<2\>$
at~$p\ge7$, showing that $V(2)_* TC(BP\<2\>)$ is finitely generated
and free as a $P(v_3)$-module.  Hence the topological cyclic homology
functor takes the ``pure fp-type~$2$'' ring spectrum $BP\<2\>$ with
$V(1)_* BP\<2\>$ finitely generated and free as a $P(v_2)$-module,
to a ``pure fp-type~$3$'' ring spectrum $TC(BP\<2\>)$ with $V(2)_*
TC(BP\<2\>)$ finitely generated and free as a $P(v_3)$-module, dilating
the wavelength of periodicity from $|v_2| = 2p^2-2$ to $|v_3| = 2p^3-2$.
\footnote{See also Remark~\ref{rem:recent} regarding the recent resolution
by Burklund, Schlank and~Yuan~\cite{BSY} of the (weaker) qualitative
form of the redshift conjecture, in the case of $E_\infty$ ring spectra.}

The precise statement follows.

\begin{theorem} \label{thm:mainTCBP2}
Let $p\ge7$.  There is a preferred isomorphism
\begin{align*}
V(2)_* TC(BP\<2\>) &\cong P(v_3)
        \otimes E(\partial, \lambda_1, \lambda_2, \lambda_3) \\
        &\qquad\oplus P(v_3) \otimes E(\lambda_2, \lambda_3)
        \otimes \bF_p\{ \Xi_{1,d} \mid 0 < d < p \} \\
        &\qquad\oplus P(v_3) \otimes E(\lambda_1, \lambda_3)
        \otimes \bF_p\{ \Xi_{2,d} \mid 0 < d < p \} \\
        &\qquad\oplus P(v_3) \otimes E(\lambda_1, \lambda_2)
        \otimes \bF_p\{ \Xi_{3,d} \mid 0 < d < p \}
\end{align*}
of $P(v_3) \otimes E(\lambda_1, \lambda_2, \lambda_3)$-modules.  This is a
finitely generated and free $P(v_3)$-module on $12p+4$ explicit generators
in degrees $-1 \le * \le 2p^3+2p^2+2p-3$.
\end{theorem}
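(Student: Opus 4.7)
The plan is to apply the trace method of B\"okstedt--Hsiang--Madsen in the formulation of Nikolaus--Scholze, reducing the computation of $V(2)_* TC(BP\<2\>)$ to three ingredients: the $V(2)$-homotopy of $THH(BP\<2\>)$ with its circle action, the $S^1$-homotopy fixed point and $S^1$-Tate spectral sequences converging to $V(2)_* TC^-(BP\<2\>)$ and $V(2)_* TP(BP\<2\>)$, and finally the fiber sequence $TC(BP\<2\>) \to TC^-(BP\<2\>) \xrightarrow{\varphi - \can} TP(BP\<2\>)$. The $E_3$ $BP$-algebra structure of Hahn--Wilson is needed to get the requisite multiplicative structure on these spectral sequences.

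First I would compute $V(2)_* THH(BP\<2\>)$, most likely via a B\"okstedt-type spectral sequence whose input involves $HH_*$ of $V(2)_* BP\<2\> = P(v_3)$ together with the contribution of the mod~$p$ dual Steenrod generators $\bar\xi_i, \bar\tau_i$. The expected answer is a free $P(v_3)$-module containing exterior classes $\lambda_1, \lambda_2, \lambda_3$ in degrees $2p - 1$, $2p^2 - 1$, $2p^3 - 1$, together with a polynomial generator in degree $2p^3$ (analogous to the class $\mu_3$ playing the role of $\sigma v_3$), and the Connes operator class giving rise to~$\partial$.

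Next I would run the $S^1$-Tate and $S^1$-homotopy fixed point spectral sequences with $E^2 = P(t, t^{-1}) \otimes V(2)_* THH(BP\<2\>)$, where $|t| = -2$. Following the Ausoni--Rognes template for $V(1)_* TC(\ell)$, the key differentials should be of the form $d^{2p^i + 1}$ hitting $t^{p^i} \lambda_{i+1}$ (and analogous higher differentials involving $v_3$), which truncate the Tate tower. The redshift phenomenon in the theorem -- wavelength passing from $|v_2|$ to $|v_3|$ -- arises precisely at this stage, because the $v_2$-periodic families present in the input are annihilated by these differentials and replaced by $v_3$-periodic survivors. The explicit $\Xi_{i,d}$ classes should then be read off as representatives of the three truncated $t^{-d}$-towers surviving from the homotopy fixed point spectral sequence, one tower for each of the three ``levels'' $i = 1, 2, 3$ that correspond to the three Bott-type elements $\lambda_i$.

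Finally I would assemble $V(2)_* TC(BP\<2\>)$ as the equalizer of $\varphi$ and $\can$. The summand $P(v_3) \otimes E(\partial, \lambda_1, \lambda_2, \lambda_3)$ should come from the classes in the ``linear'' part of $TC$ that lift along the canonical map from $TC^-$, and the three remaining summands should come from the three $\Xi_{i,d}$-families; a dimension count confirms $16 + 3(p-1) \cdot 4 = 12p + 4$ generators as a $P(v_3)$-module. The hypothesis $p \ge 7$ enters both to guarantee the existence of $V(2)$ as a suitable Smith--Toda complex and to give the spectral sequences enough sparsity for differentials and extensions to be determined.

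The main obstacle will be pinning down the complete differential pattern in the $S^1$-Tate spectral sequence, in particular the higher differentials involving $v_3$ and the multiplicative extensions in the abutment; this is notoriously where analogous computations for $\ell$ and $ku_p$ became delicate. A secondary but substantial difficulty is the Frobenius computation: identifying $\varphi$ on the surviving classes in $V(2)_* TC^-(BP\<2\>)$ precisely enough to take the equalizer and recover the exact list of $12p + 4$ generators in the claimed degree range.
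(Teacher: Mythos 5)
Your proposal reproduces the paper's overall architecture (Nikolaus--Scholze equalizer, $\bT$-homotopy fixed point and Tate spectral sequences, reading off the $\Xi_{i,d}$ from truncated towers), but it has a genuine gap at exactly the point you flag as ``the main obstacle'': you give no mechanism for establishing the Tate differentials. The paper's central technical input is a theory of homotopy power operations $P^k$ for $E_2$ ring spectra, together with a homotopy Cartan formula (Propositions~\ref{prop:CartanSV0} and~\ref{prop:CartanV0V1}), used to construct classes $\beta^\circ_1 = P^{p-1}(\alpha_1)$ and $\gamma^\circ_1 = P^{p^2-p}(\beta^\circ_1)$ in $\pi_* V(0)$ and $\pi_* V(1)$ and to prove that the images of $\alpha_1$, $\beta^\circ_1$, $\gamma^\circ_1$ and $v_3$ under $S \to THH(BP\<2\>)^{h\bT}$ are detected by $t\lambda_1$, $t^p\lambda_2$, $t^{p^2}\lambda_3$ and $t\mu$ (Proposition~\ref{prop:unit}). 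Since these unit-image classes die in $V(2)_* THH(BP\<2\>)^{tC_p}$ (as $V(2)_* BP\<2\> = \bF_p$), their detecting classes must be boundaries, and this is what forces the entire differential pattern, first in the $C_p$-Tate spectral sequence and then, by Tsalidis/Frobenius--Verschiebung bootstrapping, in the $C_{p^n}$- and $\bT$-Tate spectral sequences. This step is nontrivial precisely because $BP\<2\>$ is only $E_3$, so the $E_\infty$ Cartan formula of the $\ell$ computation is unavailable; Section~\ref{sec:power} of the paper exists to fill that hole. Without some substitute for this input, your spectral sequences do not close.

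Two smaller corrections. First, your guessed differentials $d^{2p^i+1}$ hitting $t^{p^i}\lambda_{i+1}$ are not quite right: the differentials are even-length, $d^{2r(k)}(t^{p^{k-1}-p^k}) \doteq t^{p^{k-1}}(t\mu)^{r(k-3)}\lambda_{[k]}$ with $r(k) = p^k + p^{k-3} + \cdots$, and there are infinitely many families ($k \ge 1$), with the higher ones ($k \ge 4$) carrying the $(t\mu)$-powers that produce the truncated $P(v_3)$-towers whose inverse limits are the $\Xi_{i,d}$; the odd-length differential appears only at the finite levels $C_{p^n}$. Second, the comparison map $\hat\Gamma_1$ is only $(2p^2+2p-3)$-coconnected while the comparison with $BP\<1\>$ only controls degrees $* < 2p^2-1$, so there is a gap of degrees that the traditional arguments cannot reach; your (correct) use of the Nikolaus--Scholze equalizer in all degrees is what closes it, but the equalizer and coequalizer then have to be computed honestly in low degrees as well, including the $\bF_p$-codimension-two discrepancies between $\<x_{k,d}\>$ and $B'(k,d)$ that the paper tracks in Lemmas~\ref{lem:Bprimekd-gens} and~\ref{lem:Bhatkd-gens}.
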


The close relation between algebraic $K$-theory and
topological cyclic homology for $p$-complete ring spectra leads to the
following application, cf.~Theorem~\ref{thm:KBP2p}.

\begin{theorem} \label{thm:mainKBP2p}
Let $p\ge7$.  There is an exact sequence of $P(v_3) \otimes E(\lambda_1,
\lambda_2, \lambda_3)$-modules
\begin{multline*}
0 \to \Sigma^{-2} \bF_p\{\bar\tau_1, \bar\tau_2, \bar\tau_1\bar\tau_2\}
        \longto V(2)_* K(BP\<2\>_p) \\
        \overset{trc_*}\longto V(2)_* TC(BP\<2\>)
        \longto \Sigma^{-1} \bF_p\{1\} \to 0
\end{multline*}
with $|\bar\tau_i| = 2p^i-1$.  The localization homomorphism
$$
V(2)_* K(BP\<2\>_p) \longto v_3^{-1} V(2)_* K(BP\<2\>_p)
$$
is an isomorphism in degrees $* \ge 2p^2+2p$, and the target is a
finitely generated and free $P(v_3^{\pm1})$-module on $12p+4$ generators.
\end{theorem}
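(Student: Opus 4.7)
The plan is to deduce Theorem~\ref{thm:mainKBP2p} from Theorem~\ref{thm:mainTCBP2} by combining the Dundas--Goodwillie--McCarthy theorem with the classical B\"okstedt--Hsiang--Madsen and Hesselholt--Madsen computation of the cyclotomic trace at $\bZ_p$. The Postnikov truncation $BP\<2\>_p \to H\bZ_p$ is a $\pi_0$-isomorphism of connective $p$-complete ring spectra, so DGM produces a homotopy cartesian square of cyclotomic traces at $BP\<2\>_p$ and $\bZ_p$. For $p$ odd one has the $p$-complete cofiber sequence $K(\bZ_p) \to TC(\bZ_p) \to \Sigma^{-1} H\bZ_p$, and transporting through the pullback yields
$$K(BP\<2\>_p) \longto TC(BP\<2\>) \longto \Sigma^{-1} H\bZ_p.$$

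Smashing with $V(2)$ and using that $v_1, v_2$ act trivially on $H\bZ_p$, the iterated defining cofiber sequences $\Sigma^{|v_i|} V(i-1) \xrightarrow{v_i} V(i-1) \to V(i)$ give $V(2)_* H\bZ_p \cong E(\bar\tau_1, \bar\tau_2)$ with $|\bar\tau_i| = 2p^i - 1$. Hence $V(2)_*(\Sigma^{-1} H\bZ_p)$ has one-dimensional summands in degrees $-1, 2p-2, 2p^2-2$, and $2p^2+2p-3$. The associated long exact sequence, combined with the vanishing of $V(2)_* K(BP\<2\>_p)$ in negative degrees, forces the cokernel of $trc_*$ to equal $\Sigma^{-1}\bF_p\{1\}$, with $\partial \in V(2)_{-1} TC$ from Theorem~\ref{thm:mainTCBP2} providing the surjection. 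The kernel is the image of the connecting map applied to $\bar\tau_1, \bar\tau_2, \bar\tau_1\bar\tau_2$, which matches $\Sigma^{-2}\bF_p\{\bar\tau_1, \bar\tau_2, \bar\tau_1\bar\tau_2\}$ provided the boundary is injective in the three relevant degrees. This injectivity is the main technical obstacle: it amounts to checking that the projection $V(2)_n TC(BP\<2\>) \to V(2)_n(\Sigma^{-1} H\bZ_p)$ vanishes for $n \in \{2p-2, 2p^2-2, 2p^2+2p-3\}$, which reduces, via the DGM pullback and the $p$-adic splitting of $TC(\bZ_p)$, to tracking the explicit classes (such as $\partial\lambda_1, \partial\lambda_2, \partial\lambda_1\lambda_2$) of Theorem~\ref{thm:mainTCBP2} and verifying that they do not detect the corresponding $\bar\tau$-classes after restriction to $\bZ_p$.

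Finally, the kernel $A$ (supported in degrees $\le 2p^2+2p-4$) and cokernel $B$ (in degree $-1$) of $trc_*$ are bounded, hence $v_3$-torsion. Applying $v_3^{-1}$ to the four-term exact sequence yields
$$v_3^{-1} V(2)_* K(BP\<2\>_p) \cong v_3^{-1} V(2)_* TC(BP\<2\>),$$
which by Theorem~\ref{thm:mainTCBP2} is free over $P(v_3^{\pm 1})$ on $12p+4$ generators. Since $A$ and $B$ both vanish in degrees $* \ge 2p^2+2p$, we have $V(2)_* K \cong V(2)_* TC$ in that range, and all generators of $V(2)_* TC$ have degree at most $2p^3+2p^2+2p-3 = |v_3| + (2p^2+2p-1)$. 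Hence no generator lies in degree $n + k|v_3|$ for any $n \ge 2p^2+2p$ and $k \ge 1$, so multiplication by $v_3$ is surjective (and injective, by freeness) on $V(2)_n K$ for $n \ge 2p^2+2p$, giving the desired isomorphism with the localization.
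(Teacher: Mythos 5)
Your overall strategy coincides with the paper's: the Dundas--Goodwillie--McCarthy square for $BP\<2\>_p \to H\bZ_p$ together with the Hesselholt--Madsen cofiber sequence for $\bZ_p$ gives $K(BP\<2\>_p)_p \to TC(BP\<2\>)_p \overset{\varpi}\to \Sigma^{-1}H\bZ_p$, one computes $V(2)_*(H\bZ_p) = E(\bar\tau_1,\bar\tau_2)$, and the degree~$-1$ part of the long exact sequence forces $\varpi_*(\partial) \doteq \Sigma^{-1}1$. Your localization argument at the end is also correct and matches the paper's.

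However, there is a genuine gap at exactly the point you flag as ``the main technical obstacle'': you state that one must verify $\varpi_* = 0$ in degrees $2p-2$, $2p^2-2$ and $2p^2+2p-3$, but you do not actually do it, and the route you gesture at (restricting to $\bZ_p$ and re-identifying the $\bar\tau$-classes there) is not carried out and would require knowing the map $V(2)_* TC(BP\<2\>) \to V(2)_* TC(\bZ_p)$ in these degrees. The paper closes this gap with two short observations. First, the classes $\partial\lambda_1$, $\partial\lambda_2$ and $\partial\lambda_1\lambda_2$ are images under $i_2 i_1 \: V(0) \to V(2)$ of classes in $V(0)_* TC(BP\<2\>)$ (since $\partial$, $\lambda_1$ and $\lambda_2^K$ are all defined at the level of $V(0)$- or integral homotopy); hence their images under $\varpi_*$ factor through $V(0)_*(\Sigma^{-1}H\bZ_p) = \Sigma^{-1}\bF_p$, which vanishes in positive degrees. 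Second — and this is a generator your enumeration misses — the group $V(2)_{2p^2-2} TC(BP\<2\>)$ is two-dimensional, spanned by $\partial\lambda_2$ \emph{and} $\lambda_1 \Xi_{2,1}$; the latter is handled by noting that both $\lambda_1$ and $\Xi_{2,1}$ (the image of $\beta_1'$ under the unit map) lie in the image of $trc_*$, so their product does too, and $\varpi_* \circ trc_* = 0$. Without disposing of $\lambda_1\Xi_{2,1}$ your argument would not rule out the boundary map failing to be injective on $\bar\tau_2$. With these two points supplied, the rest of your proof goes through as written.
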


The proven Lichtenbaum--Quillen conjecture for $K(\bZ_{(p)})$ and
$K(\bZ_p)$ also lets us pass from the $p$-complete version to the
$p$-local version of~$BP\<2\>$, cf.~Theorem~\ref{thm:KBP2}.

\begin{theorem} \label{thm:mainKBP2}
Let $p\ge7$.  The $p$-completion map induces a $(2p^2+2p-2)$-coconnected
homomorphism
$$
V(2)_* K(BP\<2\>) \overset{\kappa_*}\longto V(2)_* K(BP\<2\>_p) \,.
$$
The localization homomorphism
$$
V(2)_* K(BP\<2\>) \longto v_3^{-1} V(2)_* K(BP\<2\>)
$$
is an isomorphism in degrees $* \ge 2p^2+2p$, and the target is a
finitely generated and free $P(v_3^{\pm1})$-module on $12p+4$ generators.
\end{theorem}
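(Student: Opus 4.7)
The plan is to deduce this from Theorem~\ref{thm:mainKBP2p} by controlling the fiber of the $p$-completion map $\kappa$. First, I would invoke a Land--Mathew--Meier--Tamme style cartesian square: for the connective $E_1$-ring spectrum $BP\<2\>$ with $\pi_0 BP\<2\> = \bZ_{(p)}$, one has a (suitably completed) cartesian square
$$
\begin{array}{ccc}
K(BP\<2\>) & \longto & K(BP\<2\>_p) \\
\downarrow & & \downarrow \\
K(\bZ_{(p)}) & \longto & K(\bZ_p) \,.
\end{array}
$$
Smashing with $V(2)$ identifies the homotopy fiber of $\kappa$ with the $V(2)$-homotopy fiber of the bottom horizontal map, reducing the coconnectivity question for $\kappa_*$ to one about classical number rings.

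Next, I would invoke the proven Quillen--Lichtenbaum property for $K(\bZ_{(p)})$ and $K(\bZ_p)$. Both $p$-completed $K$-theories are described explicitly via \'etale cohomology, and the comparison $K(\bZ_{(p)})_p \to K(\bZ_p)_p$ is highly coconnected. A direct analysis, using the B\"okstedt--Madsen understanding of $TC(\bZ_p)$ together with the localization sequence relating $K(\bZ_{(p)})$ and $K(\bQ)$, should show that the $V(2)$-homotopy of the homotopy fiber of $K(\bZ_{(p)}) \to K(\bZ_p)$ is concentrated in degrees $\le 2p^2+2p-2$, yielding the claimed coconnectivity bound for $\kappa_*$.

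With these two steps, the remaining assertions follow from Theorem~\ref{thm:mainKBP2p}. In degrees $\ge 2p^2+2p-1$ the map $\kappa_*$ is an isomorphism, so in the range $* \ge 2p^2+2p$ the localization $V(2)_* K(BP\<2\>) \to v_3^{-1} V(2)_* K(BP\<2\>)$ agrees with its $p$-complete counterpart, which Theorem~\ref{thm:mainKBP2p} identifies with a finitely generated free $P(v_3^{\pm1})$-module on $12p+4$ generators.

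The main obstacle, in my view, is the second step: pinpointing the precise bound $2p^2+2p-2$. This demands careful bookkeeping of the low-degree $v_1$- and $v_2$-periodic families in $V(2)_*$ of the fiber of $K(\bZ_{(p)}) \to K(\bZ_p)$, together with verification that the Land--Mathew--Meier--Tamme square applies in the $E_3$-algebra setting supplied by Hahn--Wilson. Steps one and three, by contrast, are essentially formal once the relevant machinery is in place.
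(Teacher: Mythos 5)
Your proposal follows essentially the same route as the paper's proof: the paper obtains your cartesian square from Dundas's theorem applied to the faces $K \to TC$ of a cube over $BP\<2\> \to \bZ_{(p)}$ together with the $p$-adic invariance of $TC$ (rather than citing Land--Mathew--Meier--Tamme, and with no extra difficulty from the $E_3$-structure, since Dundas applies to any $1$-connected map of connective $S$-algebras), and it pins down the bound $2p^2+2p-2$ exactly as you outline, by noting that $V(1)_* K(\bQ)$ and $V(1)_* K(\bQ_p)$ are concentrated in degrees $0 \le * \le 2p-2$ (Lichtenbaum--Quillen/Bloch--Kato plus B{\"o}kstedt--Madsen), transferring this to a $(2p-1)$-coconnectivity statement for $V(1) \wedge K(\bZ_{(p)}) \to V(1) \wedge K(\bZ_p)$ via the localization sequences, and then smashing with the cofiber sequence $\Sigma^{2p^2-2} V(1) \to V(1) \to V(2)$. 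The remaining deductions from Theorem~\ref{thm:mainKBP2p} are as you describe.
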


\begin{remark}
An alternative title for this paper could be ``Topological cyclic homology
modulo~$p$, $v_1$ and~$v_2$ of the second truncated Brown--Peterson
spectrum''.  In earlier work~\cite{AR02} we referred to the calculation
of $V(1)_* TC(BP\<1\>)$ as (an essential step toward) a calculation of
the ``algebraic $K$-theory of topological $K$-theory''.  The relation
between $BP\<1\>$ and topological $K$-theory is analogous to that between
$BP\<2\>$ and elliptic cohomology, so we hope the reader grants us the
poetic license presumed by our choice of title.

The $v_1$- and $v_2$-periodic families in $\pi_* V(0)$ and $\pi_* V(1)$,
respectively, are related to the well-known $\alpha$-family visible to
topological $K$-theory and the fairly well understood $\beta$-family
visible to elliptic cohomology.  The $v_3$-periodic families emerging
from our calculation are related to the third family of Greek letter
elements, the $\gamma$-family, which is less well understood, and for
which there is currently no known detecting cohomology theory with a
geometric interpretation of the cohomology classes.  Our result suggests
that algebraic $K$-theory of elliptic cohomology may be such a detecting
cohomology theory.
\end{remark}

We now explain Theorem~\ref{thm:mainTCBP2} in more detail.  For each $E_3$
ring spectrum~$B$ we have maps of $E_2$ ring spectra
$$
S \longto K(B) \overset{trc}\longto TC(B)
	\overset{\pi}\longto THH(B)^{h\bT}
	\longto THH(B)
$$
from the sphere spectrum to the topological Hochschild homology~$THH(B)$
of~$B$, via its algebraic $K$-theory~$K(B)$, topological cyclic
homology~$TC(B)$ and the~$\bT$-homotopy fixed points of~$THH(B)$.
For $p\ge7$ the Smith--Toda spectrum~$V(2)$ exists as a homotopy
commutative and associative ring spectrum, with a periodic class
$v_3 \in \pi_{2p^3-2} V(2)$.
In Section~\ref{sec:THH} we recall that
$$
V(2)_* THH(BP\<2\>) = E(\lambda_1, \lambda_2, \lambda_3) \otimes P(\mu) \,,
$$
with $|\lambda_i| = 2p^i-1$ for $i \in \{1,2,3\}$ and $|\mu| = 2p^3$.
In Sections~\ref{sec:power} and~\ref{sec:V0V1classes} we use $E_2$ ring
spectrum power operations to show that the $THH$-classes~$\lambda_i$
lift to $K$-theory classes $\lambda_i^K \in V(2)_* K(BP\<2\>)$, with
$tr(\lambda_i^K) = \lambda_i$.  We also write $\lambda_i$ for their
images in $V(2)_* TC(BP\<2\>)$ and $V(2)_* THH(BP\<2\>)^{h\bT}$.  In
Sections~\ref{sec:CpTatespseq} through~\ref{sec:TTatespseq} we determine
the structure of the $\bT$-homotopy fixed point spectral sequence
\begin{align*}
E^2(\bT) &= H^{-*}(\bT, V(2)_* THH(BP\<2\>)) \\
	&= P(t) \otimes E(\lambda_1, \lambda_2, \lambda_3) \otimes P(\mu) \\
	&\Longrightarrow V(2)_* THH(BP\<2\>)^{h\bT} \,.
\end{align*}
The image of $v_3$ in $V(2)_* THH(BP\<2\>)^{h\bT}$ is detected by~$t\mu$.
The homotopy classes $\Xi_{i,d} \in V(2)_* TC(BP\<2\>)$ for $i \in
\{1,2,3\}$ and $0<d<p$ are constructed in Section~\ref{sec:TC} so that
$$
\pi(\Xi_{i,d}) = \sum_{n=0}^\infty \xi_{i+3n,d}
$$
in $V(2)_* THH(BP\<2\>)^{h\bT}$.  In this convergent series, each
$\xi_{k,d}$ is a specific $V(2)$-homotopy element detected by a class
$$
x_{k,d} = t^{\frac{d}{p} r(k)} \lambda_{[k]} \mu^{\frac{d}{p} r(k-3)}
	\in E^\infty(\bT) \,.
$$
Here $[k] \in \{1,2,3\}$ satisfies $k \equiv [k] \mod 3$, and $r(k) =
p^k + p^{k-3} + \dots + p^{[k]}$ for $k\ge1$.  In particular
$$
\pi(\Xi_{i,d}) \ ,\ \xi_{i,d} \in \{t^{dp^{i-1}} \lambda_i\}
$$
for $i \in \{1,2,3\}$ are both detected by $t^{dp^{i-1}} \lambda_i$
in $E^\infty(\bT)$.
Letting $\partial$ denote the generator of
$V(2)_{-1} TC(BP\<2\>)$, and noting that $\lambda_i \cdot \Xi_{i,d} = 0$
for each $i$ and~$d$, this concludes our specification of the notation
in Theorem~\ref{thm:mainTCBP2}, which appears as
Theorem~\ref{thm:TCBP2} in the body of the text.
One way to summarize the grading of the module generators is
to say that the Poincar{\'e} series of $V(3)_* TC(BP\<2\>)$ is
\begin{align*}
(1+x^{-1}) &(1+x^{2p-1}) (1+x^{2p^2-1}) (1+x^{2p^3-1}) \\
&+ (1+x^{2p^2-1}) (1+x^{2p^3-1}) (x+x^3+\dots+x^{2p-3}) \\
&+ (1+x^{2p-1}) (1+x^{2p^3-1}) (x^{2p-1}+x^{4p-1}+\dots+x^{2p^2-2p-1}) \\
&+ (1+x^{2p-1}) (1+x^{2p^2-1}) (x^{2p^2-1}+x^{4p^2-1}+\dots+x^{2p^3-2p^2-1})
	\,.
\end{align*}

\begin{remark} \label{rem:descent}
The seminal calculation in this field was made by B{\"o}kstedt and
Madsen~\cite{BM94}, \cite{BM95}.  For the Eilenberg--MacLane spectrum
$BP\<0\> = H\bZ_{(p)}$ at $p\ge3$ they established an isomorphism
\begin{align*}
V(0)_* TC(\bZ_{(p)}) &\cong P(v_1) \otimes E(\partial, \lambda_1) \\
	&\qquad \oplus P(v_1) \otimes \bF_p\{ \Xi_{1,d} \mid 0<d<p \}
\end{align*}
of free $P(v_1)$-modules of rank~$p+3$, where $\Xi_{1,d}$ is detected
by $t^d \lambda_1$.  The (then unproven) Lichtenbaum--Quillen conjecture
for $K(\bQ_p)$ could be deduced from this, showing that the natural
homomorphism
$$
V(0)_* K(\bQ_p) \longto V(0)_* K(\bar\bQ_p)^{hG_{\bQ_p}}
$$
is $0$-coconnected, where $G_{\bQ_p} = \Gal(\bar\bQ_p/\bQ_p)$ is the
absolute Galois group.  In particular, the $P(v_1)$-module generators
of $V(0)_* TC(\bZ_{(p)})$ correspond in a precise manner to a basis for
the Galois cohomology groups in the descent spectral sequence
$$
E^2_{-s,t} = H_{\Gal}^s(\bQ_p; \bF_p(t/2))
	\Longrightarrow V(0)_{-s+t} K(\bar\bQ_p)^{hG_{\bQ_p}} \,.
$$
The fact that $V(0)_* TC(\bZ_{(p)})$ is $P(v_1)$-torsion free is thus a
reflection of Suslin's theorem~\cite{Sus84} that $V(0)_* K(\bar\bQ_p)
\cong V(0)_* ku = \bF_p[u]$ is $P(v_1)$-torsion free, and the finite
generation and grading of $V(0)_* TC(\bZ_{(p)})$ corresponds to precise
information about the Galois (or motivic) cohomology of~$\bQ_p$.

For the Adams summand $BP\<1\> = \ell$ of $ku_{(p)}$ at $p\ge5$,
two of the present authors~\cite{AR02} thereafter obtained an
isomorphism
\begin{align*}
V(1)_* TC(\ell) &\cong P(v_2) \otimes E(\partial, \lambda_1, \lambda_2) \\
	&\qquad \oplus P(v_2) \otimes E(\lambda_2) \otimes \bF_p\{ \Xi_{1,d} \mid 0<d<p \} \\
	&\qquad \oplus P(v_2) \otimes E(\lambda_1) \otimes \bF_p\{ \Xi_{2,d} \mid 0<d<p \}
\end{align*}
of free $P(v_2)$-modules of rank~$4p+4$, where $\Xi_{1,d}$ is detected
by~$t^d \lambda_1$ and $\Xi_{2,d}$ is detected by~$t^{dp} \lambda_2$.
Moreover, one of us~\cite{Aus10} proceeded to calculate $V(1)_* TC(ku)$,
and showed~\cite{Aus05} that
$$
V(1)_* K(\ell_p) \longto V(1)_* K(ku_p)^{h\Delta}
$$
is an isomorphism.  Another one of us~\cite{Rog14}*{\S5} viewed this as
computational evidence for the existence of a descent spectral sequence,
converging to $V(1)_* K(\ell_p)$, from a form of motivic cohomology
defined for $E_\infty$ ring spectra such as~$\ell_p$.  The fact that
$V(1)_* TC(\ell)$ is $P(v_2)$-torsion free would then reflect an analogue
of Suslin's theorem, and the finite generation and grading of $V(1)_*
TC(\ell)$ would correspond to specific information about this spectrally
defined motivic cohomology.
\footnote{See also Remark~\ref{rem:recent} regarding the recent discovery
by Hahn, Raksit and Wilson~\cite{HRW} of such a cohomology theory,
in the case of $E_\infty$ ring spectra.}

Our present conclusions about $V(2)_* TC(BP\<2\>)$ and $V(2)_*
K(BP\<2\>_p)$ as $P(v_3)$-modules continue this pattern, and further
suggest the existence of a descent spectral sequence from a motivic
cohomology defined for less commutative ring spectra, such as the $E_3$
ring spectrum $BP\<2\>_p$.  If so, Theorem~\ref{thm:mainTCBP2} provides
information about these (at the time of writing, hypothetical) motivic
cohomology groups.
\end{remark}

\begin{remark}
Our calculations in $V(2)$-homotopy involve the homotopy element $v_3 \in
\pi_{2p^3-2} V(2)$ and its $v_2$-Bockstein image $i_2 j_2(v_3) \in
\pi_{2p^3-2p^2-1} V(2)$, closely related to the first element~$\gamma_1
\in \pi_{2p^3-2p^2-2p-1} S$ in the third Greek letter family.  To make
a similar computation of $V(3)_* TC(BP\<3\>)$ as a $P(v_4)$-module would
require knowing the existence of a homotopy element $v_4 \in \pi_{2p^4-2}
V(3)$, mapping to the class with the same name in $BP_* V(3) = BP_*/(p,
\dots, v_3)$.  The existence of~$v_4$ is presently not known for any
prime~$p$, cf.~\cite{Rav04}*{\S5.6, (5.6.13)}.  Conceivably, a calculation could
be made of $V_* TC(BP\<3\>)$ as a $P(w)$-module for another type~$4$
finite ring spectrum~$V$, with $v_4$ self map $w \: \Sigma^d V \to V$.
Something similar was carried out for the Eilenberg--MacLane spectrum
$BP\<0\> = H\bZ_{(2)}$ at $p=2$ in~\cite{Rog99}, calculating $(S/2)_*
TC(\bZ_{(2)})$ and~$(S/4)_* TC(\bZ_{(2)})$ in tandem.
\end{remark}

\begin{remark} \label{rem:telescopic}
Let $T(3) = v_3^{-1} V(2)$ be the telescopic localization of the
type~$3$ complex~$V(2)$, and let $V(3)$ be the mapping cone of $v_3 \:
\Sigma^{2p^2-2} V(2) \to V(2)$.  The three theorems above imply that
$$
T(3)_* TC(BP\<2\>) \cong T(3)_* K(BP\<2\>_p) \cong T(3)_* K(BP\<2\>)
$$
are all nontrivial $P(v_3^{\pm1})$-modules, so that the Bousfield
$T(3)$-localizations
$$
L_{T(3)} TC(BP\<2\>) \simeq L_{T(3)} K(BP\<2\>_p) \simeq L_{T(3)} K(BP\<2\>)
$$
are all nontrivial spectra.  Moreover, the graded abelian groups
$$
V(3)_* TC(BP\<2\>) \longleftarrow V(3)_* K(BP\<2\>_p)
	\longleftarrow V(3)_* K(BP\<2\>)
$$
are all finite, so
$$
TC(BP\<2\>)_p \longleftarrow K(BP\<2\>_p)_p
	\longleftarrow K(BP\<2\>)_p
$$
are all of fp-type~$3$ in the sense of~\cite{MR99}.  These qualitative
statements confirm a weaker form of the chromatic redshift conjecture
for $BP\<2\>$, roughly as formulated in~\cite{AR08}*{Conj.~1.3},
but do not contain the information that $V(2)_* TC(BP\<2\>)$ is free
over~$P(v_3)$, i.e., that $TC(BP\<2\>)$ is of ``pure fp-type~$3$'' in
the sense of~\cite{Rog00}, nor the quantitative information about its
precise rank and generating basis.

In groundbreaking work, Hahn and Wilson~\cite{HW22}*{Thm.~B} confirmed the
qualitative form of the chromatic redshift conjecture for all~$BP\<n\>$,
at all primes~$p$.
However, as outlined in Remark~\ref{rem:descent}, we take the view that
the precise $P(w)$-module structure of $V_* TC(BP\<n\>)$, where $V$ is
some type~$n+1$ finite complex with $v_{n+1}$ self map~$w \: \Sigma^d
V \to V$, will be an essential ingredient of an understanding of it and
$V_* K(BP\<n\>_p)$ as being obtained by descent from a form of motivic
cohomology for ring spectra.
\end{remark}

\begin{remark}
The authors of~\cite{AR02} had outlined a calculation of $V(n)_*
TC(BP\<n\>)$ as a $P(v_{n+1})$-module, under the strong hypotheses that
$V(n)$ exists as a ring spectrum (with a homotopy element~$v_{n+1}$)
and that $BP\<n\>$ admits an $E_\infty$ ring spectrum structure.
As in the case $n=1$, the sketched argument used a homotopy Cartan
formula for $E_\infty$ power operations, and was carried out in the
range of degrees where the comparison homomorphism $\hat\Gamma_{1*} \:
V(n)_* THH(BP\<n\>) \to V(n)_* THH(BP\<n\>)^{tC_p}$ is an isomorphism.
When $n=2$ and $p\ge7$, this homomorphism is $(2p^2+2p-3)$-coconnected,
as we show in Theorem~\ref{thm:CpTate}, so that the calculation would
determine $V(2)_* TC(BP\<2\>)$ for $* > 2p^2+2p-3$.

There is a $(2p^2-2)$-connected map $BP\<2\> \to BP\<1\>$
inducing a $(2p^2-1)$-connected map $V(2)_* TC(BP\<2\>) \to
V(2)_* TC(BP\<1\>)$, cf.~\cite{BM94}*{Prop.~10.9}, \cite{Dun97}
and Proposition~\ref{prop:TCBP2-to-TCBP1}.  Hence the known calculation of
$V(1)_* TC(BP\<1\>)$ does account for $V(2)_* TC(BP\<2\>)$ in degrees $*
< 2p^2-1$.  This leaves a gap in degrees $2p^2-1 \le * \le 2p^2+2p-3$,
where the traditional arguments do not determine $V(2)_* TC(BP\<2\>)$.
(This is a new phenomenon for $n\ge2$; there is no such gap for $n
\in \{0,1\}$.)

Around the year 2000 it was only known that $BP\<n\>$ could be realized
as an $E_1$ ring spectrum~\cite{BJ02}*{Cor.~3.5}, so the calculations
were hypothetical, even for $n=2$ and $p\ge7$.  With the much more
recent Hahn--Wilson construction of an $E_3$ ring structure on~$BP\<n\>$
it has finally become possible to carry out most of the original program,
as we show in this paper.  The lower order of commutativity has, however,
required us to also develop a homotopy Cartan formula for certain $E_2$
power operations, which we do in Section~\ref{sec:power}.

The original B{\"o}kstedt--Hsiang--Madsen presentation~\cite{BHM93}
of $TC(B)$ was given in terms of fixed point spectra $THH(B)^C$ for finite
subgroups $C \subset \bT$, using the language of genuinely equivariant
stable homotopy theory.  However, almost all calculations were made
using the naively equivariant homotopy fixed points $THH(B)^{hC}$ and
Tate constructions $THH(B)^{tC}$, and were therefore only known to be
valid in the range of degrees where the comparison
map $\hat\Gamma_1$ induces an isomorphism.

The new Nikolaus--Scholze presentation~\cite{NS18} of topological
cyclic homology promoted the ingredients that were previously used for
calculations into definitions.  Hence $TC(B)$ was redefined in terms
of the homotopy fixed points $THH(B)^{h\bT}$ and Tate construction
$THH(B)^{t\bT}$, and the key role of the (naively $\bT$-equivariant)
map~$\hat\Gamma_1$, now called the $p$-cyclotomic structure
map~$\varphi_p$, was greatly clarified.  Moreover, Nikolaus--Scholze
proved that the old and new definitions agree when $THH(B)$ is bounded
below, e.g.~for connective~$B$.  This means that by carrying out the
homotopy fixed point and Tate construction calculations in all degrees,
we can now fully calculate $V(2)_* TC(BP\<2\>)$, eliminating the gap
of degrees discussed above.  We compare the old and new terminologies
in Section~\ref{sec:nomenclature}.
\end{remark}

\begin{remark} \label{rem:recent}
After the present paper was first posted in preprint form,
Hahn, Raksit and Wilson~\cite{HRW} introduced a motivic filtration
on~$TC(R)$, for so-called chromatically quasi-syntomic $E_\infty$
ring spectra~$R$, whose associated graded realizes the form of motivic
cohomology that was predicted to exist in Remark~\ref{rem:descent}.
This new cohomology theory for $E_\infty$ ring spectra generalizes the
syntomic cohomology for quasi-syntomic commutative rings introduced by
Bhatt, Morrow and Scholze~\cite{BMS19}*{\S7.4}.

In the same year,
Burklund, Schlank and~Yuan~\cite{BSY}*{Thm.~E}, building
on~\cite{Yua21}*{Thm.~A}, proved that if $R$ is an $E_\infty$
ring spectrum such that $K(n)_* R \ne 0$ and $K(n+1)_* R =
0$, then $K(n+1)_* K(R) \ne 0$.  Combined with previous
work of Land--Meier--Mathew--Tamme~\cite{LMMT}*{Cor.~B} and
Clausen--Mathew--Naumann--Noel~\cite{CMNN} on the vanishing of $K(m)_*
K(R)$ for $m \ge n+2$, this proves that algebraic $K$-theory of an
$E_\infty$ ring spectrum increments chromatic complexity by precisely
one, thus establishing a very general form of qualitative redshift.
\end{remark}

{\bf Acknowledgments.}
We all thank the referee for good advice.
The second author acknowledges support from the project ANR-16-CE40-0003
ChroK.  The third author was supported by the Max Planck Institute for
Mathematics while this work was being carried out.  He would like to thank
the Institute for their hospitality.  The fourth author thanks the Radboud
Excellence Initiative for funding her postdoc position.  This project
received funding from the European Union's Horizon 2020 research and
innovation programme under the Marie Sk{\l}odowska-Curie grant agreement
No 101034255. \thinspace \includegraphics[scale=0.12]{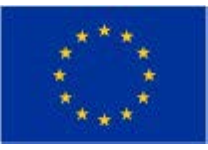}

\section{Smith--Toda and truncated Brown--Peterson spectra}

Let $\cA_*$ be the mod~$p$ dual Steenrod algebra, and write $H_* X =
H_*(X; \bF_p)$ for the mod~$p$ homology of a spectrum~$X$, viewed as
an $\cA_*$-comodule.  Likewise, let $H = H\bF_p$ denote the mod~$p$
Eilenberg--MacLane ($E_\infty$ ring) spectrum.

By a Smith--Toda complex $V(n)$ we mean a finite and $p$-local
spectrum with $H_* V(n) = E(\tau_0, \dots, \tau_n) \subset
\cA_*$.  The spectra $V(0) = S \cup_p e^1$, $V(1) = S \cup_p e^1
\cup_{\alpha_1} e^{2p-1} \cup_p e^{2p}$ and~$V(2)$ exist for $p\ge2$,
$p\ge3$ and~$p\ge5$, respectively, see Smith~\cite{Smi70}*{\S4} and
Toda~\cite{Tod71}*{Thm.~1.1}.  In the stable homotopy category there
are unital multiplications $\mu_0 \: V(0) \wedge V(0) \to V(0)$,
$\mu_1 \: V(1) \wedge V(1) \to V(1)$ and~$\mu_2 \: V(2) \wedge
V(2) \to V(2)$ for $p\ge3$, $p\ge5$ and~$p\ge7$, respectively,
cf.~\cite{YY77}*{\S1.4, \S2.4, \S3.3}.  These are unique, and
therefore commutative.  They are also associative, with the exception
of $\mu_0$ at $p=3$.  Toda~\cite{Tod71}*{Thm.~4.4} showed that $V(3)$
exists for $p\ge7$ and admits a unital multiplication for $p\ge11$.
The spectra $V(n)$ for $n\ge4$ are not known to exist at any prime~$p$,
cf.~\cite{Rav04}*{(5.6.13)}.
We use the following notation for some of the resulting homotopy cofiber
sequences.
\begin{align}
S \overset{p}\longto S
	&\overset{i_0}\longto V(0)
	\overset{j_0}\longto \Sigma S \label{eq:V0} \\
\Sigma^{2p-2} V(0) \overset{v_1}\longto V(0)
	&\overset{i_1}\longto V(1)
	\overset{j_1}\longto \Sigma^{2p-1} V(0) \label{eq:V1} \\
\Sigma^{2p^2-2} V(1) \overset{v_2}\longto V(1)
	&\overset{i_2}\longto V(2)
	\overset{j_2}\longto \Sigma^{2p^2-1} V(1) \label{eq:V2} \\
\Sigma^{2p^3-2} V(2) \overset{v_3}\longto V(2)
	&\overset{i_3}\longto V(3)
	\overset{j_3}\longto \Sigma^{2p^3-1} V(2) \label{eq:V3} \,.
\end{align}
The unital multiplications on $V(0)$, $V(1)$ and $V(2)$ are also regular, in
the sense that the respective Bockstein operators $i_0 j_0 \: V(0) \to
\Sigma V(0)$, $i_1 j_1 \: V(1) \to \Sigma^{2p-1} V(1)$ and $i_2 j_2 \:
V(2) \to \Sigma^{2p^2-1} V(2)$ act as derivations.
See \cite{AT65}*{Thm.~5.9} and~\cite{Yos77}*{Prop.~1.1, Prop.~1.2}.

The complex cobordism spectrum~$MU$ is a prototypical $E_\infty$ ring
spectrum.  Basterra--Mandell~\cite{BM13}*{Thm.~1.1} proved that the
$p$-local Brown--Peterson spectrum $BP$ is a retract up to homotopy of
$MU_{(p)}$ in the category of $E_4$ ring spectra, and that the $E_4$ ring
structure on~$BP$ is unique up to equivalence.  By an $n$-th truncated
Brown--Peterson spectrum~$BP\<n\>$ we mean a complex orientable $p$-local
ring spectrum such that the composite
$$
\bZ_{(p)}[v_1, \dots, v_n] \subset \pi_* BP \longto \pi_* MU_{(p)}
	\longto \pi_* BP\<n\>
$$
is an isomorphism, following~\cite{LN14}*{Def.~4.1}.  It follows, as
in~\cite{LN14}*{Thm.~4.4}, that $H_* BP\<n\> = P(\bar\xi_k \mid k\ge1)
\otimes E(\bar\tau_k \mid k > n)$ as a subalgebra of the dual Steenrod
algebra.  According to recent work by Hahn and Wilson~\cite{HW22}, there
exist towers
$$
\dots \longto BP\<n+1\> \longto BP\<n\> \longto \dots
	\longto BP\<0\> = H\bZ_{(p)}
$$
of $E_3$ $BP$-algebra spectra, for all~$p$, where each $BP\<n\>$ is an
$n$-th truncated Brown--Peterson spectrum.  Hence $THH(BP)$ is an $E_3$
ring spectrum with cyclotomic structure, in the sense to be recalled in
Section~\ref{sec:nomenclature}, and there are towers
$$
\dots \longto THH(BP\<n+1\>) \longto THH(BP\<n\>) \longto \dots
        \longto THH(\bZ_{(p)})
$$
of $E_2$ $THH(BP)$-algebra spectra with cyclotomic structure.  The
availability of these $\bT$-equivariant ring spectrum structures is an
essential prerequisite for the calculations given in the present paper.

Chadwick--Mandell~\cite{CM15}*{Cor.~1.3} showed that the Quillen map
$MU_{(p)} \to BP$ is an $E_2$ ring map, and it follows from~\cite{BM13}
that this map exhibits $BP$ as a retract up to homotopy of $MU_{(p)}$
in the category of $E_2$ ring spectra.  It is not known whether the
Basterra--Mandell and Quillen/Chadwick--Mandell $E_2$ ring spectrum
splittings can be chosen to agree, but the induced splittings of $\pi_*
THH(BP)$ off from $\pi_* THH(MU_{(p)})$, in the category of differential
graded algebras, must agree modulo addition of decomposables and
multiplication by $p$-local units.
Hence the calculations in~\cite{Rog20}*{Thm.~5.6} of the $\sigma$-operator
on $\pi_* THH(BP)$, induced by the $\bT$-action on $THH(BP)$, is valid
also for the Basterra--Mandell splitting, up to decomposables and
$p$-local units.

\section{Topological Hochschild homology}
\label{sec:THH}

Let $p$ be an odd prime.  We use the conjugate pair of presentations
\begin{align*}
\cA_* &= P(\xi_k \mid k \ge 1) \otimes E(\tau_k \mid k \ge 0) \\
  &= P(\bar\xi_k \mid k \ge 1) \otimes E(\bar\tau_k \mid k \ge 0) = H_* H
\end{align*}
of the dual Steenrod algebra~\cite{Mil58}, with $\bar\xi_k = \chi(\xi_k)$
in degree $2(p^k-1)$ and $\bar\tau_k = \chi(\tau_k)$ in degree $2p^k-1$.
The Hopf algebra coproduct is given by
$$
\psi(\bar\xi_k) = \sum_{i+j=k} \bar\xi_i \otimes \bar\xi_j^{p^i}
\qquad\text{and}\qquad
\psi(\bar\tau_k) = 1\otimes\bar\tau_k + \sum_{i+j=k} \bar\tau_i \otimes
\bar\xi_j^{p^i} \,.
$$
The mod~$p$ homology Bockstein satisfies $\beta(\bar\tau_k) = \bar\xi_k$.
The same formulas give the $\cA_*$-coaction~$\nu$ and Bockstein operation
on the subalgebras
\begin{align*}
H_* BP &= P(\bar\xi_k \mid k\ge1) \\
H_* BP\<n\> &= P(\bar\xi_k \mid k\ge1) \otimes E(\bar\tau_k \mid k>n)
\end{align*}
of $\cA_*$.  For each $E_1$ ring spectrum (or $S$-algebra) $B$, the
topological Hochschild homology $THH(B)$ has a natural $\bT$-action,
which induces $\sigma$-operators
\begin{align*}
\sigma \: H_* THH(B) &\longto H_{*+1} THH(B) \\
\sigma \: \pi_* THH(B) &\longto \pi_{*+1} THH(B)
\end{align*}
in homology and homotopy.  Since $BP$ and the $BP\<n\>$ are (at least)
$E_3$ ring spectra, we can make the following homology computations.

\begin{proposition}[\cite{MS93}*{Rem.~4.3}, \cite{AR05}*{Thm.~5.12}]
\label{prop:HTHHBPBPn} There are $\cA_*$-comodule algebra isomorphisms
$$
H_* THH(BP) \cong H_* BP \otimes E(\sigma\bar\xi_k \mid k\ge1)
$$
and
$$
H_* THH(BP\<n\>) \cong H_* BP\<n\>
	\otimes E(\sigma\bar\xi_1, \dots, \sigma\bar\xi_{n+1})
	\otimes P(\sigma\bar\tau_{n+1}) \,.
$$
Each class $\sigma\bar\xi_k$ is $\cA_*$-comodule primitive, while
$\nu(\sigma\bar\tau_{n+1}) = 1 \otimes \sigma\bar\tau_{n+1} + \bar\tau_0
\otimes \sigma\bar\xi_{n+1}$.
\end{proposition}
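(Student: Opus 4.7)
The natural vehicle for both computations is the B{\"o}kstedt spectral sequence
$$E^2_{*,*} = HH_*^{\bF_p}(H_*B) \Longrightarrow H_*THH(B),$$
which is a spectral sequence of $\cA_*$-comodule algebras (with the $\cA_*$-coaction induced from that on $H_*B$). I would first compute the $E^2$-page using the Hochschild homology of polynomial and exterior tensor factors: $HH_*(P(x)) = P(x) \otimes E(\sigma x)$ for $|x|$ even, and $HH_*(E(y)) = E(y) \otimes \Gamma(\sigma y)$ for $|y|$ odd, where I write $\sigma x$ for the Connes-operator image and $\Gamma$ for a divided power algebra. For $B = BP$ this gives $E^2 = H_*BP \otimes E(\sigma\bar\xi_k \mid k\ge1)$, which already has the claimed shape.

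For $B = BP$ the spectral sequence collapses at $E^2$. To see this I would exhibit each $\sigma\bar\xi_k$ as a permanent cycle represented by an explicit $1$-cycle in the cyclic bar construction, and then observe that for bidegree parity reasons, no differentials are possible between classes in $E^2 = H_*BP \otimes E(\sigma\bar\xi_k \mid k\ge1)$. For $B = BP\<n\>$ the $E^2$-page is
$$E^2 = H_*BP\<n\> \otimes E(\sigma\bar\xi_k \mid k\ge 1) \otimes \Gamma(\sigma\bar\tau_k \mid k > n),$$
and now the spectral sequence does not collapse: I need differentials that kill $\sigma\bar\xi_k$ and $\sigma\bar\tau_k$ for $k \ge n+2$ as well as the divided-power generators $\gamma_{p^i}(\sigma\bar\tau_{n+1})$ for $i\ge 1$, while leaving $P(\sigma\bar\tau_{n+1})$ as the surviving polynomial subalgebra. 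The standard mechanism is to use that $v_k$ vanishes in $\pi_*BP\<n\>$ for $k>n$ together with Dyer--Lashof/$\sigma$-compatibility to produce relations of the form $d(\gamma_p(\sigma\bar\tau_k)) \doteq \sigma\bar\xi_{k+1}$ (up to multiplicative extensions), iterated to remove all the unwanted generators. This differential analysis, controlled by the $E_3$ $BP$-algebra structure of Hahn--Wilson, is the main obstacle; it is what is carried out in \cite{MS93} and \cite{AR05}, and the cleanest way to reproduce it is via the base-change spectral sequence coming from the $BP$-algebra structure of $BP\<n\>$.

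Finally I would read off the $\cA_*$-comodule structure by applying the derivation $\sigma$ to the coproduct formulas $\psi(\bar\xi_k) = \sum_{i+j=k} \bar\xi_i \otimes \bar\xi_j^{p^i}$ and $\psi(\bar\tau_k) = 1 \otimes \bar\tau_k + \sum_{i+j=k} \bar\tau_i \otimes \bar\xi_j^{p^i}$. Since $\sigma(\bar\xi_j^{p^i}) = p^i \bar\xi_j^{p^i-1}\sigma\bar\xi_j$ vanishes mod $p$ for $i\ge 1$, only the $i=0$ summand survives, giving $\nu(\sigma\bar\xi_k) = 1 \otimes \sigma\bar\xi_k$ (primitivity) and $\nu(\sigma\bar\tau_{n+1}) = 1 \otimes \sigma\bar\tau_{n+1} + \bar\tau_0 \otimes \sigma\bar\xi_{n+1}$, as asserted. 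The hardest step, as indicated, is the differential analysis for $BP\<n\>$; once that is settled, multiplicative extensions are controlled by degree considerations and the resulting algebra and comodule structures assemble into the stated isomorphism.
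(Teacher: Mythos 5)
Your proposal follows essentially the same route as the sources the paper cites for this proposition (\cite{MS93} for $BP$ and \cite{AR05} for $BP\<n\>$, the paper itself giving no proof beyond these citations): the B{\"o}kstedt spectral sequence with the differentials $d^{p-1}(\gamma_j(\sigma\bar\tau_k)) \doteq \sigma\bar\xi_{k+1}\,\gamma_{j-p}(\sigma\bar\tau_k)$ for $k>n$, followed by the multiplicative extensions $(\sigma\bar\tau_k)^p \doteq \sigma\bar\tau_{k+1}$, with the comodule structure read off from $\nu\circ\sigma = (1\otimes\sigma)\circ\nu$ and the derivation property of $\sigma$ exactly as you describe. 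Two small corrections to your write-up: the collapse for $BP$ is not a parity argument but follows because $E^2$ is generated as an algebra by classes in filtrations $0$ and $1$, which are automatically permanent cycles; and the classes $\sigma\bar\tau_k$ for $k\ge n+2$ are not killed by differentials --- they survive to $E^\infty$ inside the truncated polynomial algebras $P_p(\sigma\bar\tau_k)$ and are only absorbed by the multiplicative extensions, which is precisely what assembles the full polynomial algebra $P(\sigma\bar\tau_{n+1})$ in the abutment.
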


Passing to homotopy, recall that $\pi_* BP = \bZ_{(p)}[v_n \mid n\ge1]$
with $|v_n| = 2p^n-2$.  To be definite, we take the $v_n$ to be the
Hazewinkel generators.

\begin{proposition}
[\cite{MS93}*{Rem.~4.3}, \cite{Rog20}*{Prop.~4.6, Thm.~5.6}]
\label{prop:piTHHBP}
There is an algebra isomorphism
$$
\pi_* THH(BP) \cong \pi_* BP
	\otimes E(\lambda_n \mid n\ge1) \,,
$$
where $\lambda_n$ has degree~$|\lambda_n| = 2p^n-1$ and (mod~$p$) Hurewicz
image $h(\lambda_n) = \sigma\bar\xi_n$.  Here $\sigma(\lambda_n) = 0$
for each~$n$.  The first few $\sigma(v_n)$ satisfy
\begin{align*}
\sigma(v_1) &= p \lambda_1 \\
\sigma(v_2) &= p \lambda_2 - (p+1) v_1^p \lambda_1 \\
\sigma(v_3) &= p \lambda_3 - (p v_1 v_2^{p-1} + v_1^{p^2}) \lambda_2 \\
        &\qquad - (v_2^p - (p+1) v_1^{p+1} v_2^{p-1}
                + p^2 v_1^{p^2-1} v_2 + p v_1^{p^2+p}) \lambda_1 \,.
\end{align*}
\end{proposition}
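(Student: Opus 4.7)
The plan is to prove the three assertions in order: the algebra isomorphism, the vanishing $\sigma(\lambda_n) = 0$, and the explicit formulas for $\sigma(v_n)$.

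For the algebra isomorphism, I would lift the mod~$p$ homology calculation of Proposition~\ref{prop:HTHHBPBPn} to $p$-local homotopy. Since $\pi_* BP = \bZ_{(p)}[v_n \mid n\ge1]$ is torsion-free and smooth over~$\bZ_{(p)}$, Hochschild--Kostant--Rosenberg gives $HH_*(\pi_* BP) = \pi_* BP \otimes E(dv_n \mid n\ge1)$ with $|dv_n| = 2p^n - 1$, which appears as the $E^2$-page of a B\"okstedt-style spectral sequence converging to $\pi_* THH(BP)$. Collapse at~$E^2$ and the absence of multiplicative extensions follow by comparison with the mod~$p$ B\"okstedt spectral sequence, which collapses by Proposition~\ref{prop:HTHHBPBPn}, and because the new generators sit in odd total degree. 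I would define $\lambda_n$ as the resulting exterior algebra generator in degree $2p^n - 1$, and identify its Hurewicz image with $\sigma\bar\xi_n$ using the classical fact that $v_n \equiv \pm\bar\xi_n$ modulo $p$ and decomposables under the Hurewicz map $\pi_* BP \to H_* BP$.

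For $\sigma(\lambda_n) = 0$, I would invoke the derivation property of the Connes operator $\sigma$ on $\pi_* THH(BP)$ together with $\sigma^2 = 0$; since $\lambda_n$ lies in the image of~$\sigma$ modulo decomposables and $p$-torsion (both annihilated by~$\sigma$ via the Leibniz rule and the relation $\sigma(p) = 0$), the vanishing is immediate.

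For the $\sigma(v_n)$ formulas, the idea is to combine $\sigma$'s derivation property with the Hazewinkel recursion
\begin{equation*}
p\, m_n = \sum_{i=0}^{n-1} m_i\, v_{n-i}^{p^i}
\end{equation*}
in $\pi_* BP \otimes \bQ$, where $m_0 = 1$, $v_0 = p$, together with the normalization $\sigma(m_n) = \lambda_n$ arising from the logarithm of the $p$-typical formal group law. Applying $\sigma$ to this recursion, inducting on~$n$, and clearing denominators yields the displayed expressions for $n \in \{1, 2, 3\}$. The main obstacle is this last step for~$n = 3$: the translation between logarithm and Hazewinkel generators through the $p$-series introduces intricate torsion corrections, such as the $p\, v_1 v_2^{p-1} \lambda_2$ and $v_1^{p^2-1} v_2\, \lambda_1$ terms, whose precise coefficients must be pinned down by the detailed analysis carried out in~\cite{Rog20}*{Thm.~5.6}.
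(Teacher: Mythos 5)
The paper does not actually prove this proposition: it is imported from \cite{MS93}*{Rem.~4.3} and \cite{Rog20}*{Prop.~4.6, Thm.~5.6}, with only the remark that $\lambda_n$ is the filtration-one class detected by $t_n \in \pi_{2p^n-2}(BP \wedge BP)$ in the skeleton-filtration spectral sequence and that its Hurewicz image is $\sigma\bar\xi_n$ by \cite{Zah72}*{Lem.~3.7}. Your sketch is in the spirit of those references but has a conceptual error at its core. The spectral sequence computing $\pi_* THH(BP)$ from the skeleton filtration does not have $E^2 = HH_*^{\bZ_{(p)}}(\pi_* BP)$; that identification would require $\pi_*(BP \wedge BP) \cong BP_* \otimes_{\bZ_{(p)}} BP_*$, which is false. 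Its $E^1$-term is the cyclic bar complex of the Hopf algebroid $(BP_*, BP_*BP)$, and the exterior generators of $E^2$ are the suspensions $\sigma t_n$ of the generators of $BP_*BP = BP_*[t_1,t_2,\dots]$, not de Rham classes $dv_n$. The distinction is not cosmetic: if $\lambda_n$ were $dv_n = \sigma(v_n)$, as your HKR identification forces, one would get $\sigma(v_1) = \lambda_1$, contradicting the asserted $\sigma(v_1) = p\lambda_1$. The whole content of the $\sigma(v_n)$ formulas is that $v_n$ and $t_n$ differ; $\sigma(v_n)$ is the linearization of $\eta_R - \eta_L$ under $t_i \mapsto \lambda_i$ (e.g.\ $\eta_R(v_1) = v_1 + pt_1$ gives $\sigma(v_1) = p\lambda_1$). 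Relatedly, your justification of $h(\lambda_n) = \sigma\bar\xi_n$ via ``$v_n \equiv \pm\bar\xi_n$ mod $p$ and decomposables'' is wrong: the mod~$p$ Hurewicz image of the Hazewinkel generator $v_n$ is decomposable (integrally $v_n = pm_n$ plus decomposables). The correct input is that $t_n$ has mod~$p$ Hurewicz image $\bar\xi_n$ modulo decomposables, which is precisely the Zahler reference.

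Your remaining steps are closer to the mark but still loose. For $\sigma(\lambda_n) = 0$, the claim that decomposables are annihilated by $\sigma$ is false ($\sigma$ is a derivation, not zero on products), and $\pi_* THH(BP)$ has no $p$-torsion to quotient by; the workable version is $0 = \sigma^2(v_n) = p\,\sigma(\lambda_n) + \sigma(\text{decomposable corrections})$, followed by induction on $n$ and torsion-freeness of $BP_* \otimes E(\lambda_i)$. For the explicit formulas, applying $\sigma$ to the Hazewinkel recursion with $\sigma(m_n) = \lambda_n$ (which does hold, since $\eta_R(m_n) - m_n \equiv t_n$ modulo decomposables) is a viable route and essentially reproduces the computation of \cite{Rog20}*{Thm.~5.6}; but, as you concede, you are ultimately deferring the coefficient bookkeeping to that reference, which is also all the paper itself does.
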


The specific choice of $\lambda_n \in \pi_{2p^n-1} THH(BP)$ made
in~\cite{Rog20} is the unique class detected by $t_n \in \pi_{2p^n-2}(BP
\wedge BP)$ in filtration degree~$1$ of the spectral sequence associated
to the skeleton filtration of $THH(BP)$.  The claim that its Hurewicz
image equals $\sigma\bar\xi_n \in H_{2p^n-1} THH(BP)$ follows from the
proof of~\cite{Zah72}*{Lem.~3.7}.

If $V(n)$ exists as a finite spectrum with
$$
H_* V(n) = E(\tau_0, \dots, \tau_n) \,,
$$
then $H_*(V(n) \wedge BP\<n\>) \cong \cA_*$, so that $V(n) \wedge BP\<n\>
\simeq H$.  We write $h_n \: V(n)_* X \to H_* X$ for the (generalized)
Hurewicz homomorphism induced by the map $V(n) \to H$ extending the
unit $S \to H$.

\begin{proposition}[\cite{AR12}*{Lem.~4.1}, \cite{AKCH}*{Prop.~2.9}]
	\label{prop:VnTHHBPn}
Suppose that $V(n)$ exists as a ring spectrum.  Then
\begin{align*}
V(n)_* THH(BP\<n\>) &= \pi_*(V(n) \wedge THH(BP\<n\>)) \\
	&= E(\lambda_1, \dots, \lambda_{n+1}) \otimes P(\mu_{n+1})
\end{align*}
maps isomorphically to the subalgebra of $\cA_*$-comodule primitives
in
\begin{align*}
H_* (V(n) \wedge THH(BP\<n\>)) &\cong H_* V(n) \otimes H_* THH(BP\<n\>) \\
&\cong \cA_* \otimes E(\sigma\bar\xi_1, \dots, \sigma\bar\xi_{n+1})
	\otimes P(\sigma\bar\tau_{n+1}) \,.
\end{align*}
Here each $\lambda_k$ is the image of $\lambda_k \in \pi_{2p^k-1} THH(BP)$
under the natural map induced by $S \to V(n)$ and
$BP \to BP\<n\>$, with Hurewicz images
$$
h(\lambda_k) = 1 \wedge \sigma\bar\xi_k
\quad\text{and}\quad
h_n(\lambda_k) = \sigma\bar\xi_k \,.
$$
Moreover, $\mu_{n+1}$ in degree~$|\mu_{n+1}| = 2p^{n+1}$ is the
class with Hurewicz images
$$
h(\mu_{n+1}) = 1 \wedge \sigma\bar\tau_{n+1}
	+ \tau_0 \wedge \sigma\bar\xi_{n+1}
\quad\text{and}\quad
h_n(\mu_{n+1}) = \sigma\bar\tau_{n+1} \,.
$$
\end{proposition}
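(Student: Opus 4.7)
The plan is to identify $V(n) \wedge THH(BP\<n\>)$ as an $H$-module spectrum, so that its homotopy coincides with the subalgebra of $\cA_*$-comodule primitives in its mod-$p$ homology, and then to compute these primitives explicitly.

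First, since $H_* V(n) \otimes H_* BP\<n\> = \cA_*$ as an $\cA_*$-comodule algebra, we have $V(n) \wedge BP\<n\> \simeq H$ as $BP\<n\>$-algebras. Because $THH(BP\<n\>)$ is a $BP\<n\>$-module via its unit, base change gives
$$
V(n) \wedge THH(BP\<n\>) \simeq (V(n) \wedge BP\<n\>) \wedge_{BP\<n\>} THH(BP\<n\>) \simeq H \wedge_{BP\<n\>} THH(BP\<n\>),
$$
an $H$-module spectrum. Over the field $\bF_p$ every $H$-module splits as a wedge of suspensions of $H$, so the mod-$p$ homology is an extended $\cA_*$-comodule of the form $\cA_* \otimes \pi_*(V(n) \wedge THH(BP\<n\>))$, and the Hurewicz map identifies $V(n)_* THH(BP\<n\>)$ with the $\cA_*$-comodule primitives of $H_*(V(n) \wedge THH(BP\<n\>))$.

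Next, I would compute these primitives using Proposition~\ref{prop:HTHHBPBPn}. The classes $\sigma\bar\xi_1, \dots, \sigma\bar\xi_{n+1}$ are already $\cA_*$-comodule primitive, while $\sigma\bar\tau_{n+1}$ carries the error term $\bar\tau_0 \otimes \sigma\bar\xi_{n+1}$ in its coaction. Setting $\mu_{n+1} := \sigma\bar\tau_{n+1} + \tau_0 \cdot \sigma\bar\xi_{n+1}$, the coaction of $\tau_0 \in H_* V(n) \subset \cA_*$, inherited from the coproduct $\psi(\tau_0) = \tau_0 \otimes 1 + 1 \otimes \tau_0$ on $\cA_*$, supplies a cancelling contribution (up to the sign relating Milnor $\tau_0$ to conjugate $\bar\tau_0$), so $\mu_{n+1}$ is primitive. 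After this change of variables, $H_*(V(n) \wedge THH(BP\<n\>))$ factors as an extended comodule $\cA_* \otimes E(\sigma\bar\xi_1, \dots, \sigma\bar\xi_{n+1}) \otimes P(\mu_{n+1})$, whose primitives are precisely the second tensor factor, matching the claimed algebra.

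Finally, the homotopy classes $\lambda_k$ for $1 \le k \le n+1$ arise as the images under $S \to V(n)$ and $BP \to BP\<n\>$ of the corresponding elements of $\pi_* THH(BP)$ from Proposition~\ref{prop:piTHHBP}, already carrying the required Hurewicz images $\sigma\bar\xi_k$; the class $\mu_{n+1}$ is the unique homotopy lift of the primitive constructed above. The main obstacle is verifying primitivity of $\mu_{n+1}$ with the correct sign: this demands reconciling the conjugate-Milnor convention used for the coaction of $\sigma\bar\tau_{n+1}$ with the Milnor convention for $\tau_0 \in H_* V(n)$, via $\bar\tau_0 \equiv -\tau_0$ in degree one. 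Beyond this bookkeeping, the argument is formal.
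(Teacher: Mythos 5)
Your argument is correct and is essentially the proof the paper relies on: it delegates the details to \cite{AR12}*{Lem.~4.1} and \cite{AKCH}*{Prop.~2.9}, but the key ingredients you use --- the equivalence $V(n) \wedge BP\<n\> \simeq H$ obtained from $H_*(V(n) \wedge BP\<n\>) \cong \cA_*$, the resulting $H$-module splitting identifying homotopy with the $\cA_*$-comodule primitives, and the coaction computation showing that $1 \wedge \sigma\bar\tau_{n+1} + \tau_0 \wedge \sigma\bar\xi_{n+1}$ is primitive because $\bar\tau_0 = -\tau_0$ --- are exactly those stated immediately before and after the proposition. No changes are needed.
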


Note that the $\cA_*$-coaction sends $h(\mu_{n+1})$ to
$$
1 \otimes (1 \wedge \sigma\bar\tau_{n+1}) + \bar\tau_0 \otimes (1 \wedge
\sigma\bar\xi_{n+1}) + 1 \otimes (\tau_0 \wedge \sigma\bar\xi_{n+1})
+ \tau_0 \otimes (1 \wedge \sigma\bar\xi_{n+1})
	= 1 \otimes h(\mu_{n+1}) \,,
$$
so that this class is $\cA_*$-comodule primitive.  We spell out these
definitions a little more explicitly in the case of main interest in
this paper.

\begin{definition} \label{def:lam1lam2lam3mu}
For $p\ge7$ let
$$
\lambda_1, \lambda_2, \lambda_3, \mu_3 \in V(2)_* THH(BP\<2\>)
$$
denote the classes in degrees $|\lambda_1| = 2p-1$, $|\lambda_2| =
2p^2-1$, $|\lambda_3| = 2p^3-1$ and~$|\mu_3| = 2p^3$ with Hurewicz
images $h(\lambda_1) = 1 \wedge \sigma\bar\xi_1$, $h(\lambda_2) = 1 \wedge
\sigma\bar\xi_2$, $h(\lambda_3) = 1 \wedge \sigma\bar\xi_3$ and~$h(\mu_3)
= 1 \wedge \sigma\bar\tau_3 + \tau_0 \wedge \sigma\bar\xi_3$.  Then
$$
V(2)_* THH(BP\<2\>) = E(\lambda_1, \lambda_2, \lambda_2)
	\otimes P(\mu_3) \,,
$$
which has at most one monomial generator in each degree.
We generally abbreviate $\mu_3$ to $\mu$ when only discussing $BP\<2\>$.
\end{definition}

The $V(n)$-homotopy classes $\mu_{n+1}$ should not be confused
with the ring spectrum multiplications~$\mu_n \: V(n) \wedge V(n)
\to V(n)$, which hereafter appear explicitly only in the proof of
Proposition~\ref{prop:CartanV0V1}.

\section{Cyclotomic nomenclature}
\label{sec:nomenclature}

We review some notations in common use from 1994 to 2017, including
the articles~\cite{HM97}, \cite{Rog99}, \cite{AR02}, \cite{HM03}
and~\cite{AR12}.  For each $\bT$-spectrum $X$ there is a natural map
$$
r \: X^{C_p} \longto \Phi^{C_p} X
$$
of $\bT/C_p$-spectra from the categorical $C_p$-fixed points to the
geometric $C_p$-fixed points.  The latter were introduced, as ``spacewise
$C_p$-fixed points'', in~\cite{LMS86}*{Def.~II.9.7}, essentially as a
left Kan extension.  This definition agrees with what has later been
called the monoidal geometric fixed points~\cite{MM02}.  Recall the
$\bT$-equivariant homotopy cofiber sequence
$$
E\bT_+ \overset{c}\longto S^0 \overset{e}\longto \wET \,.
$$
In the commutative square
$$
\xymatrix{
X^{C_p} \ar[r]^-{r} \ar[d]_-{e} & \Phi^{C_p}(X) \ar[d]^{\simeq} \\
(\wET \wedge X)^{C_p} \ar[r]^-{\simeq} & \Phi^{C_p}(\wET \wedge X)
}
$$
the right hand and lower maps are $\bT/C_p$-equivariant equivalences.
The expression $(\wET \wedge X)^{C_p}$ is therefore sometimes~\cite{HM97}
taken as a definition of the geometric fixed points, but this
construction is not strictly monoidal.  The commutative square
$$
\xymatrix{
X^{C_p} \ar[r]^-{r} \ar[d]_-{c^*} & \Phi^{C_p}(X) \ar[d]^-{c^*} \\
F(E\bT_+, X)^{C_p} \ar[r]^-{r} & \Phi^{C_p}(F(E\bT_+, X))
}
$$
is $\bT/C_p$-equivariantly homotopy Cartesian.  Note that
$\Phi^{C_p}(F(E\bT_+, X)) \simeq [\wET \wedge F(E\bT_+, X)]^{C_p}
= X^{tC_p}$ defines the $C_p$-Tate $\bT/C_p$-spectrum.  These
$\bT/C_p$-spectra are hereafter viewed as $\bT$-spectra via the $p$-th
root isomorphism $\rho \: \bT \cong \bT/C_p$, which we omit to exhibit
in the notation.

The $\bT$-spectra $X = THH(B)$ are cyclotomic, in the sense that there
are $\bT$-equivalences $\Phi^{C_p}(THH(B)) \simeq THH(B)$.
Hence~\cite{BM94}*{(6.1)} there are vertical maps of horizontal homotopy
cofiber sequences
$$
\xymatrix{
THH(B)_{hC_{p^n}} \ar[r]^-{N} \ar@{=}[d]
	& THH(B)^{C_{p^n}} \ar[r]^-{R} \ar[d]_-{\Gamma_n}
	& THH(B)^{C_{p^{n-1}}} \ar[d]^-{\hat\Gamma_n} \\
THH(B)_{hC_{p^n}} \ar[r]^-{N^h}
	& THH(B)^{hC_{p^n}} \ar[r]^-{R^h}
	& THH(B)^{tC_{p^n}}
}
$$
known as the norm--restriction sequences, for all~$n$.
Here the (Witt vector restriction) maps $R$ are given by
$$
r^{C_{p^{n-1}}} \: THH(B)^{C_{p^n}} \longto
	\Phi^{C_p}(THH(B))^{C_{p^{n-1}}} \simeq THH(B)^{C_{p^{n-1}}} \,.
$$
The norm maps~$N$ are given by the Adams transfer equivalence
$THH(B)_{hC_{p^n}} \simeq [E\bT_+ \wedge THH(B)]^{C_{p^n}}$, followed
by the map induced by~$c \: E\bT_+ \to S^0$.  The right hand homotopy
Cartesian squares are compatible with the (Witt vector Frobenius) maps
$F \: X^{C_{p^n}} \to X^{C_{p^{n-1}}}$ that forget some invariance.
The Witt vector terminology is motivated by the effects of these maps
on $\pi_0$ for connective $B$, in view of the isomorphisms $\pi_0
THH(B)^{C_{p^n}} \cong W_{n+1}(\pi_0(B))$ of~\cite{HM97}*{Thm.~3.3}.

The homotopy restriction map $R^h$ is induced by $e \: S \to \wET$,
and induces a map of spectral sequences from the $C_{p^n}$-homotopy
fixed point spectral sequence to the $C_{p^n}$-Tate spectral sequence.
The map $\Gamma_n$ is the comparison map from fixed points to homotopy
fixed points, and $\hat\Gamma_n$ denotes its Tate analogue.  Passing to
homotopy limits over the maps~$F$, and implicitly $p$-completing, one
obtains a map of homotopy cofiber sequences
$$
\xymatrix{
\Sigma THH(B)_{h\bT} \ar[r]^-{N} \ar@{=}[d]
	& TF(B) \ar[r]^-{R} \ar[d]_-{\Gamma}
        & TF(B) \ar[d]^-{\hat\Gamma} \\
\Sigma THH(B)_{h\bT} \ar[r]^-{N^h}
	& THH(B)^{h\bT} \ar[r]^-{R^h}
        & THH(B)^{t\bT} \,.
}
$$
Again, $R^h$ is induced by $e \: S \to \wET$ and induces a map of spectral
sequences from the $\bT$-homotopy fixed point spectral sequence to the
$\bT$-Tate spectral sequence.  The topological cyclic homology
$$
\xymatrix{
TC(B) \ar[r]^-{\pi} & TF(B) \ar@<1ex>[r]^-{1} \ar@<-1ex>[r]_-{R} & TF(B)
}
$$
was originally defined by B{\"o}kstedt, Hsiang and Madsen~\cite{BHM93}
as the homotopy equalizer of the identity $1 \: TF(B) \to TF(B)$ and the
restriction map~$R \: TF(B) \to TF(B)$.  We refer to the preferred lifts
$trc \: K(B) \to TC(B)$ and $tr_{\bT} = \Gamma \circ \pi \circ trc \: K(B)
\to THH(B)^{h\bT}$ of the B{\"o}kstedt trace map $tr \: K(B) \to THH(B)$
as the cyclotomic trace map and the circle trace map, respectively.

Some important recent papers give new emphasis to many of these objects.
Hesselholt~\cite{Hes18} writes
$$
TP(B) = THH(B)^{t\bT}
$$
for the circle Tate construction on $THH(B)$ and calls it the periodic
topological cyclic homology of~$B$.  (One might also say topological
periodic homology.) Nikolaus--Scholze~\cite{NS18} write
$$
TC^-(B) = THH(B)^{h\bT}
$$
for the circle homotopy fixed points of $THH(B)$ and call it the
negative cyclic homology, write
$$
\varphi_p = \hat\Gamma_1 \: THH(B) \longto THH(B)^{tC_p}
$$
for the comparison map and call it the $p$-cyclotomic
structure map, and write
$$
\can \: TC^-(B) \longto TP(B)
$$
for the homotopy restriction map
$$
R^h \: THH(B)^{h\bT} \longto THH(B)^{t\bT}
$$
and refer to it as the canonical map.  The structure map
$$
\epsilon \: X \longto (X^{\wedge p})^{tC_p} = R_+(X)
$$
to the topological Singer construction, from~\cite{BMMS86}*{\S
II.5} and~\cite{LNR12}, is now called the Tate diagonal.

In the definition of $TC(B)$ as a homotopy equalizer, Nikolaus--Scholze
replace $TF(B)$ in the source by $THH(B)^{h\bT}$ via~$\Gamma$, and
replace $TF(B)$ in the target by $THH(B)^{t\bT}$ via $\hat\Gamma$.
In view of the commutative square
$$
\xymatrix{
TF(B) \ar[r]^-{\hat\Gamma} \ar[d]_-{\Gamma}
	& THH(B)^{t\bT} \ar[d]_-{G}^-{\simeq} \\
THH(B)^{h\bT} \ar[r]^-{(\hat\Gamma_1)^{h\bT}}
	& (THH(B)^{tC_p})^{h\bT}
}
$$
from~\cite{HM97}*{p.~68}, \cite{AR02}*{p.~27}, the identity map~$1 \:
TF(B) \to TF(B)$ is then replaced with the circle homotopy fixed points
$(\hat\Gamma_1)^{h\bT} = \varphi_p^{h\bT}$ of the $p$-cyclotomic structure
map, suppressing the (still implicitly $p$-complete) equivalence
$$
G \: THH(B)^{t\bT} = (THH(B)^{tC_p})^{\bT}
	\longto (THH(B)^{tC_p})^{h\bT}
$$
from the notation.  The fact that~$G$ is an equivalence for connective~$B$
was shown by computation in the first instances considered, and then
proved in~\cite{BBLNR14}*{Prop.~3.8} under the assumption that $H_* B$
is of finite type.  It reappears in the new terminology as the Tate orbit
lemma~\cite{NS18}*{Lem.~I.2.1}, since $(THH(B)_{hC_p})^{t\bT} \simeq *$
is equivalent to $\Sigma THH(B)_{h\bT} \to (THH(B)_{hC_p})^{h\bT}$ being
an equivalence, which in turn is equivalent to $G$ being an equivalence.

Likewise, the restriction map $R \: TF(B) \to TF(B)$ is replaced with
the homotopy restriction map $R^h = \can$.  Combining these replacements,
$$
\xymatrix@C+1pc{
TC(B) \ar[r]^-{\pi} & THH(B)^{h\bT}
	\ar@<1ex>[r]^-{G^{-1} (\hat\Gamma_1)^{h\bT}}
	\ar@<-1ex>[r]_-{R^h} & THH(B)^{t\bT}
}
$$
is redefined as the homotopy equalizer of $G^{-1} \circ
(\hat\Gamma_1)^{h\bT}$ and $R^h =\can$, much as in~\cite{AR12}*{p.~1072},
or (in order not to need to invert~$G$) as the homotopy equalizer
$$
\xymatrix{
TC(B) \ar[r]^-{\pi} & THH(B)^{h\bT}
	\ar@<1ex>[r]^-{(\hat\Gamma_1)^{h\bT}}
	\ar@<-1ex>[r]_-{GR^h} & (THH(B)^{tC_p})^{h\bT}
}
$$
of $(\hat\Gamma_1)^{h\bT} = \varphi_p^{h\bT}$ and $G \circ R^h$.
The old and new definitions of~$TC(B)$ agree for connective~$B$,
by~\cite{NS18}*{Thm.~II.3.8}.

\section{Homotopy power operations}
\label{sec:power}

Let $B$ be an $E_{n+1}$ ring spectrum.  Using the Boardman--Vogt tensor
product of operads~\cite{Dun88}, we may view~$B$ as an $E_n$ algebra in
the category of $E_1$ ring spectra (or $S$-algebras).  There are then
natural $E_n$ algebra structures on the algebraic $K$-theory spectrum
$K(B)$ and on the cyclotomic spectrum $THH(B)$, and these are respected
by the trace map $K(B) \to THH(B)$, as well as its cyclotomic refinements.

For each $E_2$ ring spectrum $R$, there is a natural ``top'' homology
operation
$$
\xi_1 \: H_{2k-1} R \longto H_{2pk-1} R
$$
introduced in~\cite{CLM76}*{Thm.~III.1.3}.  If $R$ is an $E_3$ ring
spectrum, then $\xi_1 = Q^k$ is the Araki--Kudo/Dyer--Lashof/Cohen
operator, as defined in~\cite{CLM76}*{Thm.~III.1.1}, and we will also
use this notation in the $E_2$ ring spectrum case, to emphasize the
dependence on~$k$ (and to avoid confusion with the element~$\xi_1$
in the dual Steenrod algebra).
Let $\beta$ denote the mod~$p$ homology Bockstein operator.
In~\cite{AR02}*{\S1.5} two of us discussed a homotopy operation
$$
P^k \: \pi_{2k-1} R \longto V(0)_{2pk-1} R
$$
lifting~$Q^k$ (see Lemma~\ref{lem:h0PkeqQkh}),
in the context of $E_\infty$ ring spectra.
In this paper we will extend its definition to $E_2$ ring spectra,
and construct a homotopy operation
$$
P^k \: V(0)_{2k-1} R \longto V(1)_{2pk-1} R
$$
also lifting~$Q^k$ (see Lemma~\ref{lem:h1PkeqQkh}).

To define these operations for $E_2$ ring spectra~$R$, we make use of
the little $2$-cubes operad~$\cC_2$ encoding $E_2$ algebra structures.
For a spectrum~$X$ let
$$
Br_p X = D_{2,p} X = \cC_2(p) \ltimes_{\Sigma_p} X^{\wedge p}
$$
denote the $p$-th braided-extended power of~$X$.  Note that $Br_p \Sigma^2
X \cong \Sigma^{2p} Br_p X$ by \cite{CMM78}*{Thm.~1}.  In the case $X =
S^{2k-1}$, with $H_* X = \bF_p\{x_{2k-1}\}$,
\begin{equation} \label{eq:HBrpSodd}
H_* Br_p S^{2k-1} = \bF_p \{ \beta Q^k(x_{2k-1}), Q^k(x_{2k-1}) \}
\end{equation}
follows from~\cite{CLM76}*{Thm.~III.5.3}, cf.~\cite{Coh81}*{Prop.~II.1.2}.
Hence there is an (implicitly $p$-complete) equivalence $\bar\eta_0 \:
\Sigma^{2pk-1} DV(0) \simeq Br_p S^{2k-1}$, with right adjoint
$$
\eta_0 \: S^{2pk-1} \longto V(0) \wedge Br_p S^{2k-1} \,.
$$
Here $DV(0) \simeq \Sigma^{-1} V(0)$ denotes the Spanier--Whitehead
dual of $V(0)$, and $h_0(\eta_0) = Q^k(x_{2k-1})$.

For typographical reasons we will often simply write $g$ for the
maps $1 \wedge g \: A \wedge B \to A \wedge C$ and $g \wedge 1 \:
B \wedge D \to C \wedge D$, for suitable $A$, $g \: B \to C$ and~$D$.

\begin{definition} \label{def:PkSV0}
Let $R$ be an $E_2$ ring spectrum.  The homotopy power operation
$$
P^k \: \pi_{2k-1} R \longto V(0)_{2pk-1} R
$$
sends each map $f \: S^{2k-1} \to R$ to the composite
$$
P^k(f) \: S^{2pk-1} \overset{\eta_0}\longto V(0) \wedge Br_p S^{2k-1}
\overset{Br_p f}\longto V(0) \wedge Br_p R
\overset{\theta}\longto V(0) \wedge R \,,
$$
where $\theta \: Br_p R \to R$ is part of the $E_2$ ring structure.
\end{definition}

In the case $X = \Sigma^{2k-1} DV(0)$, where $H_* X = \bF_p\{x_{2k-2},
x_{2k-1}\}$ with $\beta(x_{2k-1}) = x_{2k-2}$, there is an inclusion
$$
\bF_p\{x_{2k-2}^p, \beta Q^k(x_{2k-1}), Q^k(x_{2k-1})\}
	\subset H_* Br_p \Sigma^{2k-1} DV(0)
$$
of left $\cA_*$-comodules, or of right $\cA$-modules.  Of the dual
Steenrod operations, only $\beta$ and $\cP^1_*$ act nontrivially on the
left hand side, with
$$
\cP^1_* Q^k(x_{2k-1}) = 0
\qquad\text{and}\qquad
\cP^1_* \beta Q^k(x_{2k-1}) = - x_{2k-2}^p \,,
$$
according to the spectrum-level Nishida relations,
see~\cite{CLM76}*{Thm.~III.1.1(6), Thm.~III.1.3(3)}
and~\cite{BMMS86}*{Thm.~III.1.1(8)}.
As in~\cite{Tod71}, let
$$
V(1/2) = S \cup_p e^1 \cup_{\alpha_1} e^{2p-1} \,,
$$
so that $V(0) \subset V(1/2) \subset V(1)$ and $DV(1/2) \simeq
\Sigma^{1-2p}(S \cup_{\alpha_1} e^{2p-2} \cup_p e^{2p-1})$.  The following
construction refines a map discussed by Toda in~\cite{Tod68}*{Lem.~3}.

\begin{lemma} \label{lem:3celltoBrpDV0}
There exists a ($p$-complete) map
$$
\bar\eta_{1/2} \:
\Sigma^{2pk-1} DV(1/2) \longto Br_p \Sigma^{2k-1} DV(0)
$$
realizing the inclusion of $\bF_p\{x_{2k-2}^p, \beta Q^k(x_{2k-1}),
Q^k(x_{2k-1})\}$ in homology.
\end{lemma}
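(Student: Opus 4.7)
My plan is to construct $\bar\eta_{1/2}$ by cellular obstruction theory based on the cofiber sequence
$$
S^{2pk-2p} \overset{\iota}\longto \Sigma^{2pk-1} DV(1/2)
	\overset{q}\longto \Sigma^{2pk-1} DV(0)
	\overset{\delta}\longto S^{2pk-2p+1}
$$
obtained by $\Sigma^{2pk-1}$-suspending the Spanier--Whitehead dual of $V(0) \hookrightarrow V(1/2) \to \Sigma^{2p-1} S$, whose connecting map~$\delta$ is essentially dual to~$\alpha_1$.  The three cells of $\Sigma^{2pk-1} DV(1/2)$ sit in precisely the right degrees to target the classes $x_{2k-2}^p$, $\beta Q^k(x_{2k-1})$, and $Q^k(x_{2k-1})$, respectively.

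First I would define candidate maps on each cellular piece separately.  On the quotient, take
$$
\bar\eta := \Sigma^{2pk-1} DV(0) \overset{\bar\eta_0}\simeq Br_p S^{2k-1}
	\overset{Br_p(i)}\longto Br_p \Sigma^{2k-1} DV(0) \,,
$$
where $i \: S^{2k-1} \to \Sigma^{2k-1} DV(0)$ is the top-cell inclusion; by naturality of the Araki--Kudo operation $Q^k$ and of the Bockstein, $\bar\eta$ realizes $\bF_p\{\beta Q^k(x_{2k-1}), Q^k(x_{2k-1})\}$ in homology.  On the bottom cell, take
$$
f \: S^{2pk-2p} \cong (S^{2k-2})^{\wedge p}
	\overset{j^{\wedge p}}\longto (\Sigma^{2k-1} DV(0))^{\wedge p}
	\longto Br_p \Sigma^{2k-1} DV(0) \,,
$$
where $j \: S^{2k-2} \to \Sigma^{2k-1} DV(0)$ is the bottom-cell inclusion and the final map is the natural one through any chosen basepoint of $\cC_2(p)$; by construction $f$ realizes $x_{2k-2}^p$.

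To fuse $\bar\eta$ and $f$ into a single map $\bar\eta_{1/2}$, I apply $[-, Y]$ with $Y = Br_p \Sigma^{2k-1} DV(0)$ to the Puppe sequence above, obtaining
$$
[\Sigma^{2pk-1} DV(0), Y] \overset{q^*}\longto
	[\Sigma^{2pk-1} DV(1/2), Y] \overset{\iota^*}\longto
	[S^{2pk-2p}, Y] \overset{\delta^*}\longto
	[\Sigma^{2pk-2} DV(0), Y] \,,
$$
with $\delta^*$ given by precomposition with $\Sigma^{-1} \delta \: \Sigma^{2pk-2} DV(0) \to S^{2pk-2p}$.  Exactness shows that $f$ lifts to some $\tilde f \in \iota^{*-1}(f)$ precisely when $\delta^*(f) = 0$.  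Once such a lift exists, the ambiguity $q^* [\Sigma^{2pk-1} DV(0), Y]$ is precisely enough to adjust the homology image of~$\tilde f$ on the top two cells to match that of~$\bar\eta$.

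The main obstacle is verifying $\delta^*(f) = 0$ in $[\Sigma^{2pk-2} DV(0), Y]$.  My strategy is to reduce to Toda's original three-cell construction in~\cite{Tod68}*{Lem.~3}: by functoriality of $Br_p$ along $j$, the composite $f \circ \Sigma^{-1}\delta$ factors through $Br_p(j) \: Br_p S^{2k-2} \to Br_p \Sigma^{2k-1} DV(0)$, and the target subcomplex $Br_p S^{2k-2}$ has relevant homotopy groups computable from~\cite{CLM76}*{Thm.~III.5.3}.  Toda's vanishing of the bracket $\langle -, p, \alpha_1 \rangle$ in this context supplies the required null-homotopy after $p$-completion.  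Once $\delta^*(f) = 0$ is established, any lift gives a valid $\bar\eta_{1/2}$, and adjusting by $q^*(\alpha)$ as above realizes $\bF_p\{x_{2k-2}^p, \beta Q^k(x_{2k-1}), Q^k(x_{2k-1})\}$ in homology as claimed.
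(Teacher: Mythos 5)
Your construction breaks down at its first building block: the ``top-cell inclusion'' $i \: S^{2k-1} \to \Sigma^{2k-1} DV(0)$ does not exist. Since $\Sigma^{2k-1} DV(0) \simeq S^{2k-2} \cup_p e^{2k-1}$ is a shifted mod~$p$ Moore spectrum, the long exact sequence of the cofibration shows that every element of $\pi_{2k-1}(\Sigma^{2k-1}DV(0))_{(p)}$ projects trivially to the top cell, so no map from $S^{2k-1}$ can induce an isomorphism on $H_{2k-1}$. More fundamentally, no map $\bar\eta \: \Sigma^{2pk-1} DV(0) \to Br_p \Sigma^{2k-1} DV(0)$, however constructed, can realize $\bF_p\{\beta Q^k(x_{2k-1}), Q^k(x_{2k-1})\}$: the induced map on homology commutes with the dual Steenrod operations, and $\cP^1_* \beta Q^k(x_{2k-1}) = - x_{2k-2}^p \neq 0$ lives in degree $2pk-2p$, where $H_*\Sigma^{2pk-1}DV(0)$ vanishes, so the image of the degree $2pk-2$ class must be annihilated by $\cP^1_*$ and can never equal $\beta Q^k(x_{2k-1})$. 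This $\cP^1_*$-linkage is the entire reason the three-cell complex $\Sigma^{2pk-1}DV(1/2)$, with its $\alpha_1$-attachment of the middle cell to the bottom cell, is required. The same observation defeats your final step: classes in the image of $q^*$ carry trivial $\cP^1_*$ out of degree $2pk-2$, so no correction by $q^*(\alpha)$ can adjust a lift $\tilde f$ to hit $\beta Q^k(x_{2k-1})$.

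The bottom-cell map $f$ realizing $x_{2k-2}^p$ is fine, and extending it cell by cell is the right idea, but the substance of the lemma then lies in identifying the attaching maps of a minimal cell structure on the target rather than in gluing a sub-map to a quotient-map. That is what the paper does: the Nishida relation $\cP^1_*\beta Q^k(x_{2k-1}) = -x_{2k-2}^p$ shows that the $(2pk-2)$-cell of $Br_p\Sigma^{2k-1}DV(0)$ is attached to the (suitably oriented) $(2pk-2p)$-cell by $\alpha_1$, so $f$ extends over the middle cell of $\Sigma^{2pk-1}DV(1/2)$ by a map hitting $\beta Q^k(x_{2k-1})$; then the Bockstein relation and $\cP^1_* Q^k(x_{2k-1}) = 0$ show that the $(2pk-1)$-cell is attached by degree~$p$ to the $(2pk-2)$-cell and not to the $(2pk-2p+1)$-skeleton, so the map extends over the top cell hitting $Q^k(x_{2k-1})$. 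Your appeal to Toda's null-homotopy addresses at most the first of these two extension problems, and only after the target of the extension has been analyzed as above.
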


\begin{proof}
We can choose a minimal cell structure on $Br_p \Sigma^{2k-1} DV(0)$
with a $(2pk-1)$-cell representing $Q^k(x_{2k-1})$ that is attached by a
degree~$p$ map to a $(2pk-2)$-cell representing $\beta Q^k(x_{2k-1})$.
The $(2pk-1)$-cell is not attached to the $(2pk-2p+1)$-skeleton, since
$\cP^1_* Q^k(x_{2k-1}) = 0$.  We can orient the $(2pk-2p)$-cell so
that the $(2pk-2)$-cell is attached to it by $\alpha_1$, since $\cP^1_*
\beta Q^k(x_{2k-1}) = - x_{2k-2}^p$.
\end{proof}

We fix a choice of $\bar\eta_{1/2}$ for each integer~$k$, but see
Remark~\ref{rem:eta1indet} below.  This specifies a composite map
\begin{equation} \label{eq:bareta1}
\bar\eta_1 \: \Sigma^{2pk-1} DV(1)
	\longto \Sigma^{2pk-1} DV(1/2)
	\overset{\bar\eta_{1/2}}\longto Br_p \Sigma^{2k-1} DV(0) \,,
\end{equation}
with homology image $\bF_p\{x_{2k-2}^p, \beta Q^k(x_{2k-1}),
Q^k(x_{2k-1})\}$, and we write
$$
\eta_1 \: S^{2pk-1} \overset{\eta_{1/2}}\longto
	V(1/2) \wedge Br_p \Sigma^{2k-1} DV(0)
	\longto V(1) \wedge Br_p \Sigma^{2k-1} DV(0)
$$
for its right adjoint.

\begin{definition} \label{def:PkV0V1}
Let $R$ be an $E_2$ ring spectrum.  The homotopy power operation
$$
P^k \: V(0)_{2k-1} R \longto V(1/2)_{2pk-1} R \longto V(1)_{2pk-1} R
$$
sends each map $f \: S^{2k-1} \to V(0) \wedge R$, with left adjoint
$\bar f \: \Sigma^{2k-1} DV(0) \to R$, to the composite
$$
P^k(f) \: S^{2pk-1}
	\overset{\eta_1}\longto V(1) \wedge Br_p \Sigma^{2k-1} DV(0)
	\overset{Br_p \bar f}\longto V(1) \wedge Br_p R
	\overset{\theta}\longto V(1) \wedge R \,.
$$
\end{definition}

\begin{remark} \label{rem:eta1indet}
We discuss the non-uniqueness of $\bar\eta_{1/2}$ and the resulting
ambiguity in the operation~$P^k$ just defined.  For brevity, let $U =
Br_p \Sigma^{2k-1} DV(0)$.  By~\cite{CLM76}*{Thm.~III.3.1} we have
$$
H_* U \cong \bF_p\{x_{2k-2}^p,
x_{2k-2}^{p-1} x_{2k-1}, x_{2k-2}^{p-2} y_{4k-3},
x_{2k-2}^{p-2} y_{4k-2}, x_{2k-2}^{p-3} x_{2k-1} y_{4k-3}\}
$$
in degrees $2pk-2p \le * \le 2pk-2p+2$, plus classes in higher degrees,
where
\begin{align*}
y_{4k-3} &= [x_{2k-2}, x_{2k-2}]_1 \\
y_{4k-2} &= [x_{2k-2}, x_{2k-1}]_1
\end{align*}
are $E_2$ ring spectrum Browder brackets.  (We write $[x,y]_1$ in place
of the traditional $\lambda_1(x,y)$ in order to avoid confusion with
the homotopy class~$\lambda_1$.)  The (additive) indeterminacies
in~$\bar\eta_{1/2}$ and~$\eta_1$ are maps $\Sigma^{2pk-1} DV(1/2) \to U$
and $m \: S^{2pk-1} \to V(1) \wedge U$, respectively, that induce zero
in homology.  The Atiyah--Hirzebruch spectral sequence for $V(1)_* U$
shows that $m = \alpha_1 \cdot n$ for a class $n \in V(1)_{2pk-2p+2}
U \cong H_{2pk-2p+2} U$, generated by $x_{2k-2}^{p-2} y_{4k-2}$
and~$x_{2k-2}^{p-3} x_{2k-1} y_{4k-3}$.  These generators map to zero in
$V(1)_* R$ if the $E_2$ ring structure on~$R$ extends to an~$E_3$ ring
structure.  Hence any two different choices of maps $\bar\eta_{1/2}$ will
give operations $P^k$ that differ at most by a multiple of~$\alpha_1$,
and which strictly agree if $R$ is an $E_3$ ring spectrum.
This means that for all of the assertions we will make about these
homotopy power operations the choice of $\bar\eta_{1/2}$ makes
no difference: In Lemma~\ref{lem:h1PkeqQkh} the Hurewicz homomorphism $h_1$
annihilates $\alpha_1$-multiples, and in Proposition~\ref{prop:CartanV0V1}
we assume that $R$ is an $E_\infty$ ring spectrum.
\end{remark}

\begin{lemma} \label{lem:h0PkeqQkh}
Let $R$ be an $E_2$ ring spectrum.  The square
$$
\xymatrix{
\pi_{2k-1} R \ar[r]^-{P^k} \ar[d]_-{h}
	& V(0)_{2pk-1} R \ar[d]^-{h_0} \\
H_{2k-1} R \ar[r]^-{Q^k}
	& H_{2pk-1} R
}
$$
commutes.
\end{lemma}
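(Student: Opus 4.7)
The plan is to compare both sides by tracing the definition of $P^k$ through the Hurewicz homomorphism and then invoking naturality of the $E_2$ top operation $Q^k = \xi_1$. Given $f \: S^{2k-1} \to R$, the class $h_0(P^k(f)) \in H_{2pk-1} R$ is represented by the composite
$$
S^{2pk-1} \overset{\eta_0}\longto V(0) \wedge Br_p S^{2k-1}
\overset{Br_p f}\longto V(0) \wedge Br_p R
\overset{\theta}\longto V(0) \wedge R \longto H \wedge R \,,
$$
where the last map is induced by $V(0) \to H$. Using that $V(0) \to H$ is a map of $E_2$ ring spectra, we can factor this as first applying $V(0) \wedge Br_p S^{2k-1} \to H \wedge Br_p S^{2k-1}$ (which sends $\eta_0$ to $h_0(\eta_0) = Q^k(x_{2k-1})$ by construction), and then applying $Br_p f$ followed by the $E_2$ structure map $\theta \: Br_p(H \wedge R) \to H \wedge R$ on the $E_2$ ring spectrum $H \wedge R$.

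I would then appeal to the Cohen--Lada--May construction of the operation $Q^k$ on $H_*R$: for $y \in H_{2k-1} R$ represented by $\bar y \: S^{2k-1} \to H \wedge R$, the class $Q^k(y)$ is by definition the composite
$$
S^{2pk-1} \longto H \wedge Br_p S^{2k-1}
\overset{Br_p \bar y}\longto H \wedge Br_p(H \wedge R)
\overset{\theta}\longto H \wedge R \,,
$$
where the first map represents $Q^k(x_{2k-1}) \in H_{2pk-1} Br_p S^{2k-1}$, cf.~\eqref{eq:HBrpSodd} and \cite{CLM76}*{Thm.~III.1.3}. Taking $\bar y$ to be the image of $f$ under the unit $R \to H \wedge R$, i.e., $y = h(f)$, naturality of $Br_p$ identifies $Br_p(h(f))$ with the composite of $Br_p f$ and the map induced by the unit, and the two composites coincide up to reordering of the smash factors. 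Hence $h_0(P^k(f)) = Q^k(h(f))$.

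The main thing to be careful about is the coherence between the $E_2$ structures on $R$, on $V(0) \wedge R$, and on $H \wedge R$, together with the naturality of $Br_p$ with respect to the ring map $V(0) \to H$; these ensure that the single map $\theta \: Br_p R \to R$ used in Definition~\ref{def:PkSV0} really does induce $Q^k$ in mod~$p$ homology. Once these naturality statements are in place, the commutativity of the square is a formal consequence of $h_0(\eta_0) = Q^k(x_{2k-1})$.
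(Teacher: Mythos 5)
Your proposal is correct and follows essentially the same route as the paper: push $P^k(f)$ through the Hurewicz map, use $h_0(\eta_0)=Q^k(x_{2k-1})$, and identify the resulting composite with the Cohen--Lada--May operation via the equivalence $H \wedge Br_p X \simeq Br^H_p(H \wedge X)$ (the paper implements this with $H$-module splittings $b_0 h = h_0$ of $V(0)$ in a single large diagram). One caution: your appeal to ``$V(0)\to H$ is a map of $E_2$ ring spectra'' is both unnecessary and unavailable ($V(0)$ carries no such structure); the factorization you want follows from mere functoriality of the smash product applied to the spectrum map $h_0 \: V(0)\to H$, which is all the paper uses.
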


\begin{proof}
Let $t_0 \: H \to H \wedge DV(0)$ and $b_0 \: H \wedge V(0) \to H$ be
$H$-module maps that split off the top and bottom cells, respectively.
Then $b_0 h = h_0 \: V(0) \to H$ and the following diagram commutes.
$$
\scalebox{0.7}{
\xymatrix@C-0.5pc{
S^{2pk-1} \ar[r] \ar@(u,u)[rr]^-{\eta_0} \ar[d]^-{h}  
	& V(0) \wedge DV(0) \wedge S^{2pk-1} \ar[r]^-{\bar\eta_0} \ar[d]^-{h}
	& V(0) \wedge Br_p S^{2k-1} \ar[r]^-{Br_p f} \ar[d]^-{h}
	& V(0) \wedge Br_p R \ar[r]^-{\theta} \ar[d]^-{h}
	& V(0) \wedge R \ar[d]^-{h} \\
H \wedge S^{2pk-1} \ar[r] \ar[dr]_-{t_0}
	& H \wedge V(0) \wedge DV(0) \wedge S^{2pk-1} \ar[r]^-{\bar\eta_0} \ar[d]^-{b_0}
	& H \wedge V(0) \wedge Br_p S^{2k-1} \ar[r]^-{Br_p f} \ar[d]^-{b_0}
	& H \wedge V(0) \wedge Br_p R \ar[r]^-{\theta} \ar[d]^-{b_0}
	& H \wedge V(0) \wedge R \ar[d]^-{b_0} \\
& H \wedge DV(0) \wedge S^{2pk-1} \ar[r]^-{\bar\eta_0}
	& H \wedge Br_p S^{2k-1} \ar[r]^-{Br_p f} \ar@{<->}[d]^-{\cong}
	& H \wedge Br_p R \ar[r]^-{\theta} \ar@{<->}[d]^-{\cong}
	& H \wedge R \\
& & Br^H_p(H \wedge S^{2k-1}) \ar[r]^-{Br^H_p \bar g}
	& Br^H_p(H \wedge R)
}
}
$$
Here $\bar g = 1 \wedge f \: H \wedge S^{2k-1} \to H \wedge R$ denotes
the $H$-module map that is left adjoint to the Hurewicz image $g = hf \:
S^{2k-1} \to H \wedge R$, and $Br^H_p$ denotes the $p$-th braided-extended
power construction in the category of $H$-modules.  The upper composite
$S^{2pk-1} \to H \wedge R$ then represents $h_0 P^k(f)$, while the lower
composite represents
$$
Q_{p-1}(g) = \theta_*(e_{p-1} \otimes g^{\otimes p})
$$
up to a known unit in~$\bF_p$, with notation as
in~\cite{May70}*{Def.~2.2}, \cite{CLM76}*{\S I.1}.  This equals $Q^k(hf)$.
\end{proof}

\begin{lemma} \label{lem:h1PkeqQkh}
Let $R$ be an $E_2$ ring spectrum.  The square
$$
\xymatrix{
V(0)_{2k-1} R \ar[r]^-{P^k} \ar[d]_-{h_0}
	& V(1)_{2pk-1} R \ar[d]^-{h_1} \\
H_{2k-1} R \ar[r]^-{Q^k}
	& H_{2pk-1} R
}
$$
commutes.
\end{lemma}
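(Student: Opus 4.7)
The plan is to imitate the proof of Lemma~\ref{lem:h0PkeqQkh}, replacing the outer coefficient spectrum $V(0)$ by $V(1)$ throughout, $\bar\eta_0$ by $\bar\eta_1$, and taking as input not a sphere map but the left adjoint $\bar f \: \Sigma^{2k-1} DV(0) \to R$ of the given class $f \in V(0)_{2k-1} R$. First I would fix an $H$-module splitting $b_1 \: H \wedge V(1) \to H$ of the unit such that the composite $V(1) \xrightarrow{\eta \wedge 1} H \wedge V(1) \xrightarrow{b_1} H$ is the canonical map realizing $h_1$ on $\pi_*(- \wedge R)$, together with a compatible dual splitting $t_1 \: H \to H \wedge DV(1)$ picking out the top cell of $DV(1)$.

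Next I would write down the three-row commutative diagram whose top row is the defining composite of $P^k(f)$,
$$S^{2pk-1} \xrightarrow{\eta_1} V(1) \wedge Br_p \Sigma^{2k-1} DV(0) \xrightarrow{Br_p \bar f} V(1) \wedge Br_p R \xrightarrow{\theta} V(1) \wedge R,$$
whose middle row is obtained by smashing everywhere with $H$ via the unit, and whose bottom row is obtained by applying $b_1$ on the right to collapse the $V(1)$-factor. By the choice of $b_1$, the composite going across the top and then down the rightmost column represents $h_1 P^k(f)$, while the diagonal arrow $t_1 \wedge 1 \: H \wedge S^{2pk-1} \to H \wedge DV(1) \wedge S^{2pk-1}$ on the far left guarantees the leftmost triangle commutes.

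The final step is to identify the bottom composite with $Q^k(h_0 f)$. Using the canonical equivalence $H \wedge Br_p X \simeq Br^H_p(H \wedge X)$, the bottom row becomes $\theta^H \circ Br^H_p(1 \wedge \bar f)$ precomposed with the evident map into $Br^H_p(H \wedge \Sigma^{2k-1} DV(0))$ arising from $\bar\eta_1$. The composite $H \wedge S^{2pk-1} \xrightarrow{t_1} H \wedge DV(1) \wedge S^{2pk-1} \xrightarrow{\bar\eta_1} H \wedge Br_p \Sigma^{2k-1} DV(0)$ represents the top-cell image $Q^k(x_{2k-1})$ by the cell-level construction of $\bar\eta_1$ in Lemma~\ref{lem:3celltoBrpDV0} and~\eqref{eq:bareta1}. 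Since the Dyer--Lashof operation $Q^k$ applied to an $H$-module map out of $H \wedge \Sigma^{2k-1} DV(0)$ depends only on its top-cell component under the $H$-module splitting $H \wedge \Sigma^{2k-1} DV(0) \simeq \Sigma^{2k-2} H \vee \Sigma^{2k-1} H$, and since that top-cell component of $1 \wedge \bar f$ is precisely the $H$-module adjoint of $h_0 f$, the bottom composite recovers $Q^k(h_0 f)$.

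The main point to check carefully is that the top-cell component of $1 \wedge \bar f$ really is the $H$-module adjoint of $h_0 f$. This uses that the top-cell splitting of $H \wedge DV(0)$ is dual, via the self-duality $V(0) \simeq \Sigma^{-1} DV(0)$, to the bottom-cell splitting $b_0$ of $H \wedge V(0)$, so that the top-cell piece of $1 \wedge \bar f$ recovers the composite $S^{2k-1} \xrightarrow{f} V(0) \wedge R \to H \wedge R$ defining $h_0 f$. The analogous bookkeeping with $V(1)$ on the outside is more delicate because $V(1)$ has four cells rather than two, but once the compatibility of $b_1$ and $t_1$ is fixed, commutativity of the diagram follows from naturality of the unit and of the splitting.
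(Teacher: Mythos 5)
Your proposal is correct and follows essentially the same route as the paper's proof: the same three-row diagram built from the top/bottom cell splittings $t_1$, $b_1$, the identification $H \wedge Br_p X \simeq Br^H_p(H \wedge X)$, and the key observations that $t_1$ followed by $\bar\eta_1$ hits $Q^k(x_{2k-1})$ while the top-cell component of $1 \wedge \bar f$ is the $H$-module extension of $h_0 f$ (the paper phrases this last point as the homotopy $\bar g \simeq \bar f \circ t_0$ through $H$-module maps, and encodes your "depends only on the top cell" remark as a factorization through $Br^H_p(H \wedge S^{2k-1})$ via $Br^H_p t_0$).
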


\begin{proof}
Let $t_1 \: H \to H \wedge DV(1)$ and $b_1 \: H \wedge V(1) \to H$ be
$H$-module maps that split off the top and bottom cells, respectively.
Then $b_1 h = h_1 \: V(1) \to H$ and the following diagram commutes,
up to units in $\bF_p$.
$$
\scalebox{0.68}{
\xymatrix@C-1pc{
S^{2pk-1} \ar[r] \ar@(u,u)[rr]^-{\eta_1} \ar[d]^-{h}
	& V(1) \wedge DV(1) \wedge S^{2pk-1} \ar[r]^-{\bar\eta_1} \ar[d]^-{h}
	& V(1) \wedge Br_p \Sigma^{2k-1} DV(0) \ar[r]^-{Br_p \bar f} \ar[d]^-{h}
	& V(1) \wedge Br_p R \ar[r]^-{\theta} \ar[d]^-{h}
	& V(1) \wedge R \ar[d]^-{h} \\
H \wedge S^{2pk-1} \ar[r] \ar[dr]_-{t_1} \ar@(d,l)[ddr]_-{e_{p-1}}
	& H \wedge V(1) \wedge DV(1) \wedge S^{2pk-1} \ar[r]^-{\bar\eta_1} \ar[d]^-{b_1}
	& H \wedge V(1) \wedge Br_p \Sigma^{2k-1} DV(0) \ar[r]^-{Br_p \bar f} \ar[d]^-{b_1}
	& H \wedge V(1) \wedge Br_p R \ar[r]^-{\theta} \ar[d]^-{b_1}
	& H \wedge V(1) \wedge R \ar[d]^-{b_1} \\
& H \wedge DV(1) \wedge S^{2pk-1} \ar[r]^-{\bar\eta_1}
& H \wedge Br_p \Sigma^{2k-1} DV(0) \ar[r]^-{Br_p \bar f} \ar@{<->}[d]^-{\cong}
	& H \wedge Br_p R \ar[r]^-{\theta} \ar@{<->}[d]^-{\cong}
	& H \wedge R \\
& H \wedge Br_p S^{2k-1} \ar[ur]^(0.4){Br_p t_0} \ar@{<->}[d]^-{\cong}
	& Br^H_p(H \wedge \Sigma^{2k-1} DV(0)) \ar[r]^-{Br^H_p \bar f}
	& Br^H_p(H \wedge R) \\
& Br^H_p(H \wedge S^{2k-1}) \ar[ur]^-{Br^H_p t_0} \ar@(r,d)[urr]^-{Br^H_p \bar g}
}
}
$$
Here $\bar g \: H \wedge S^{2k-1} \to H \wedge R$ denotes the $H$-module
map extending the $V(0)$-Hurewicz image $g = h_0 f \: S^{2k-1} \to H
\wedge R$.  The two maps from $H \wedge S^{2pk-1}$ to $H \wedge Br_p
\Sigma^{2k-1} DV(0)$ agree, up to a known unit in $\bF_p$, because the
map~$\bar\eta_{1/2}$ in Lemma~\ref{lem:3celltoBrpDV0} sends the top cell
to $Q^k(x_{2k-1})$.  The two maps from $Br^H_p(H \wedge S^{2k-1})$
to $Br^H_p(H \wedge R)$ agree because $\bar g$ is homotopic through
$H$-module maps to $\bar f t_0$.  The upper composite $S^{2pk-1} \to
H \wedge R$ in the diagram represents $h_1 P^k(f)$, while the lower
composite represents a known unit times $Q_{p-1}(g)$, which equals
$Q^k(h_0 f)$.
\end{proof}

The following homotopy Cartan formula generalizes the one proved for
$E_\infty$ ring spectra in~\cite{AR02}*{Lem.~1.6}.

\begin{proposition} \label{prop:CartanSV0}
Let $R$ be an $E_3$ ring spectrum.  For $x \in \pi_{2i} R$ and $y
\in \pi_{2j-1} R$ the relation
$$
P^k(xy) = x^p P^j(y)
$$
holds in $V(0)_{2pk-1} R$, where $k = i+j$.
\end{proposition}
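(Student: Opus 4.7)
My strategy is to mimic the proof of the $E_\infty$ analogue \cite{AR02}*{Lem.~1.6}, the new ingredient being the Dunn--Boardman--Vogt decomposition $E_3 \simeq E_1 \otimes E_2$. This identification exhibits the $E_1$ multiplication $\mu \: R \wedge R \to R$ as a morphism of $E_2$ algebras, i.e., as commuting up to coherent homotopy with the $\cC_2$-algebra structure defining~$\theta$. This compatibility fails for merely $E_2$ ring spectra, and it is precisely what will enable the Cartan decomposition.

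First I would construct the external pairing
$$
\nu \: Br_p X \wedge Br_p Y \longto Br_p(X \wedge Y)
$$
from the diagonal $\cC_2(p) \to \cC_2(p) \times \cC_2(p)$ and the $\Sigma_p$-equivariant shuffle $X^{\wedge p} \wedge Y^{\wedge p} \to (X \wedge Y)^{\wedge p}$. Because $\mu$ is $E_2$-multiplicative, the outer rectangle
$$
\xymatrix@C+0.5pc{
Br_p R \wedge Br_p R \ar[r]^-{\nu} \ar[d]_-{\theta \wedge \theta}
	& Br_p(R \wedge R) \ar[r]^-{Br_p(\mu)}
	& Br_p R \ar[d]^-{\theta} \\
R \wedge R \ar[rr]^-{\mu}
	& & R
}
$$
commutes up to homotopy.

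Second, I would split the generating class. Write $S^{2k-1} \cong S^{2i} \wedge S^{2j-1}$; let $\iota \: S^{2pi} \to Br_p S^{2i}$ be the class realizing the $p$-fold smash power (with mod~$p$ Hurewicz image $Q^i(x_{2i}) = x_{2i}^p$), and let $\eta_0^{(j)} \: S^{2pj-1} \to V(0) \wedge Br_p S^{2j-1}$ be the instance of~$\eta_0$ for~$S^{2j-1}$. I claim that the composite
$$
S^{2pk-1} \cong S^{2pi} \wedge S^{2pj-1}
	\overset{\iota \wedge \eta_0^{(j)}}\longto
	Br_p S^{2i} \wedge V(0) \wedge Br_p S^{2j-1}
	\overset{\nu}\longto V(0) \wedge Br_p S^{2k-1}
$$
(after an implicit shuffle of the $V(0)$-factor) agrees, up to an absorbable unit in~$\bF_p$, with $\eta_0$ for~$S^{2k-1}$. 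Indeed, by~\eqref{eq:HBrpSodd} the group $V(0)_{2pk-1} Br_p S^{2k-1}$ is detected faithfully by~$h_0$ in $H_{2pk-1} Br_p S^{2k-1} = \bF_p\{Q^k(x_{2k-1})\}$, and the external Cartan formula for~$Q^k$ applied to an even class times an odd class gives $Q^k(x_{2i} x_{2j-1}) = x_{2i}^p \cdot Q^j(x_{2j-1})$, matching the Hurewicz image of the composite.

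Combining these ingredients, naturality of $Br_p$ and $\nu$ yields
\begin{align*}
P^k(xy) &= \theta \circ Br_p(\mu \circ (x \wedge y)) \circ \eta_0 \\
	&\simeq \theta \circ Br_p(\mu) \circ \nu \circ (Br_p(x) \wedge Br_p(y))
		\circ (\iota \wedge \eta_0^{(j)}) \\
	&\simeq \mu \circ \big((\theta \circ Br_p(x) \circ \iota) \wedge
		(\theta \circ Br_p(y) \circ \eta_0^{(j)})\big) \\
	&= x^p \cdot P^j(y),
\end{align*}
where $\theta \circ Br_p(x) \circ \iota = x^p$ follows by naturality from the fact that $\iota$ realizes the $p$-th power on even-degree spheres. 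The principal obstacle will be the second step, namely lifting the homology-level Cartan identity to a homotopy-level identification of~$\eta_0$; this is possible because $V(0)_{2pk-1} Br_p S^{2k-1}$ is cyclic and faithfully detected by~$h_0$ via a standard Atiyah--Hirzebruch argument, but it requires some care with the $\bF_p$-unit arising from the diagonal on~$\cC_2(p)$ and with the implicit shuffle in the target of $\iota \wedge \eta_0^{(j)}$.
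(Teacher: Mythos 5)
Your overall strategy is the right one, and you correctly identify the key structural input (the Dunn decomposition $E_3 \simeq E_1 \otimes E_2$, exhibiting the multiplication as an $E_2$-algebra map) as well as the homology Cartan computation $Q^k(x_{2i}\otimes x_{2j-1}) = x_{2i}^p \otimes Q^j(x_{2j-1})$. However, there is a genuine gap at the foundation: the external pairing $\nu \: Br_p X \wedge Br_p Y \to Br_p(X \wedge Y)$ does not exist in the direction you claim. The diagonal $\cC_2(p) \to \cC_2(p) \times \cC_2(p)$ together with the shuffle produces the operadic diagonal
$$
\delta_p \: Br_p(X \wedge Y) \longto Br_p X \wedge Br_p Y \,,
$$
i.e.\ a map in the \emph{opposite} direction (as in \cite{BMMS86}*{\S I.2}); to go your way one would need a $(\Sigma_p\times\Sigma_p)$-equivariant map $\cC_2(p)\times\cC_2(p)\to\cC_2(p)$, which is not available. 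Since both your splitting of $\eta_0$ and your outer commuting rectangle are phrased in terms of $\nu$, the argument has to be restructured around $\delta_p$: one compares $\delta_p \circ Br_p(\bar f \wedge \bar g) \circ \bar\eta_0$ with the external route through $D_{1,p}S^{2i} \wedge Br_p S^{2j-1}$, and the interchange law now reads $\theta \circ Br_p(\phi) \simeq \phi \circ (\theta\wedge\theta)\circ\delta_p$.

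Once the arrows are oriented correctly, a second problem appears that your Atiyah--Hirzebruch argument does not address: the comparison of the two routes is no longer a comparison of maps into $V(0)\wedge Br_p S^{2k-1}$ (where the relevant group is cyclic and detected by $h_0$), but of maps into $Br_p S^{2i} \wedge Br_p S^{2j-1}$. There the $E_2$ Browder bracket class $\alpha_{2pi+1} = -x_{2i}^{p-2}[x_{2i},x_{2i}]_1 \in H_{2pi+1} Br_p S^{2i}$ pairs with $\beta Q^j(x_{2j-1})$ to give a class in total degree $2pk-1$, so the two composites may differ by a multiple of $\alpha_{2pi+1}\otimes\beta Q^j(x_{2j-1})$ and homology does not separate them. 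This is precisely where the $E_3$ hypothesis enters beyond the interchange law: one must stabilize along $\sigma_2 \: Br_p X \to D_{3,p}X$, where $\alpha_{2pi+1}$ maps to zero, and only then does a homology-plus-connectivity argument identify the two composites up to homotopy. Your proof never invokes $D_{3,p}$, and without that stabilization the key identification can fail at the level of $Br_p S^{2i}\wedge Br_p S^{2j-1}$.
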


\begin{proof}
We use the following nearly-commutative diagram, where $\delta_p$ is
the operadic diagonal from~\cite{BMMS86}*{\S I.2},
$$
D_{n,p} X = \cC_n(p) \ltimes_{\Sigma_p} X^{\wedge p}
$$
denotes the $p$-th $E_n$-extended power, and $\sigma_1 \: X^{\wedge p}
\simeq D_{1,p} X \to D_{2,p} X  = Br_p X$ and $\sigma_2 \: Br_p X \to D_{3,p}
X$ are stabilization maps.

$$
\scalebox{0.85}{
\xymatrix@C+1.5pc{
\Sigma^{2pk-1} DV(0) \ar[r]^-{\bar\eta_0}
	\ar[d]^-{\cong}
& Br_p S^{2k-1} \ar[r]^-{Br_p(f \cdot g)} \ar[d]^-{\cong}
	& Br_p R \ar[r]^-{\theta}
	& R \\
S^{2pi} \wedge \Sigma^{2pj-1} DV(0) \ar[d]^-{\simeq}
& Br_p(S^{2i} \wedge S^{2j-1}) \ar[r]^-{Br_p(f \wedge g)}
	\ar[d]^-{\delta_p}
	& Br_p(R \wedge R) \ar[u]_-{Br_p \phi} \ar[d]^-{\delta_p} \\
D_{1,p} S^{2i} \wedge Br_p S^{2j-1} \ar[r]^-{\sigma_1 \wedge 1}
& Br_p S^{2i} \wedge Br_p S^{2j-1} \ar[r]^-{Br_p f \wedge Br_p g}
	\ar[d]^-{\sigma_2 \wedge \sigma_2}
	& Br_p R \wedge Br_p R \ar[r]^-{\theta \wedge \theta}
	\ar[d]^-{\sigma_2 \wedge \sigma_2}
	& R \wedge R \ar[uu]^-{\phi} \\
& D_{3,p} S^{2i} \wedge D_{3,p} S^{2j-1} \ar[r]^-{D_{3,p} f \wedge D_{3,p} g}
	& D_{3,p} R \wedge D_{3,p} R \ar[ur]_-{\theta \wedge \theta}
}
}
$$

We may view the $E_3$ ring spectrum~$R$ as an $E_2$ algebra in the
category of $E_1$ ring spectra.  The ring spectrum pairing $\phi \: R
\wedge R \to R$ is then an $E_2$ ring spectrum map, so that the right
hand rectangle commutes.  Moreover, the right hand triangle commutes,
because the $E_3$ operad action extends the $E_2$ action.

Let $f \: S^{2i} \to R$ and $g \: S^{2j-1} \to R$ be maps representing~$x$
and~$y$.  The composite
$$
f \cdot g \: S^{2k-1} \cong S^{2i} \wedge S^{2j-1}
	\overset{f \wedge g}\longto R \wedge R
	\overset{\phi}\longto R
$$
then represents~$xy$, and the upper square commutes by functoriality of
the braided-extended power.  The central and lower squares commute by
naturality of~$\delta_p$ and~$\sigma_2$.

We do not know whether the left hand rectangle commutes.  However, we
do claim that the two composites $\Sigma^{2pk-1} DV(0) \to Br_p S^{2i}
\wedge Br_p S^{2j-1}$ become homotopic after composition with $\sigma_2
\wedge \sigma_2$.  This implies that the composite along the upper edge,
which is adjoint to the map representing $P^k(xy)$, is homotopic to
the composite along the left hand, lower and right hand edges, which in
turn is homotopic to the central composite via~$Br_p f \wedge Br_p g$,
and the latter is adjoint to the map representing $x^p P^j(y)$.

To justify the claim, we compute in homology.  Recall the
expression~\eqref{eq:HBrpSodd} for $H_* Br_p S^{2k-1}$, which has an
evident analogue for $H_* Br_p S^{2j-1}$.  In the case $X = S^{2i}$,
with $H_* X = \bF_p\{x_{2i}\}$,
\begin{equation} \label{eq:HBrpSeven}
H_* Br_p S^{2i} = \bF_p\{ x_{2i}^p, \alpha_{2pi+1}\}
\end{equation}
follows from~\cite{CLM76}*{Thm.~III.5.2}.  Here $\alpha_{2pi+1} = -
x_{2i}^{p-2} [x_{2i}, x_{2i}]_1$ is a class given in terms of
the $E_2$ Browder bracket, and $\beta \alpha_{2pi+1} = 0$ according
to~\cite{CLM76}*{Thm.~III.1.2(7)}.  Note that $\alpha_{2pi+1}$ maps to
zero under~$\sigma_2$.

Along one route, the right $\cA$-module generator $x_{2pk-1}$ in $H_{2pk-1}
\Sigma^{2pk-1} DV(0)$ maps to $x_{2pi} \otimes x_{2pj-1}$ in the
homology of $S^{2pi} \wedge \Sigma^{2pj-1} DV(0)$, and thereafter to
$x_{2i}^p \otimes Q^j(x_{2j-1})$ in the homologies of $D_{1,p} S^{2i}
\wedge Br_p S^{2j-1}$, $Br_p S^{2i} \wedge Br_p S^{2j-1}$ and $D_{3,p}
S^{2i} \wedge D_{3,p} S^{2j-1}$.

Along the other route, $x_{2pk-1}$ maps to $Q^k(x_{2k-1})$ in the homology of
$Br_p S^{2k-1}$, and to $Q^k(x_{2i} \otimes x_{2j-1})$ in the homologies
of $Br_p(S^{2i} \wedge S^{2j-1})$ and~$D_{3,p}(S^{2i} \wedge S^{2j-1})$.
By the $E_3$ ring spectrum Cartan formula~\cite{CLM76}*{Thm.~III.1.1(4)}
it maps to
$$
Q^i(x_{2i}) \otimes Q^j(x_{2j-1}) = x_{2i}^p \otimes Q^j(x_{2j-1})
$$
in the homology of $D_{3,p} S^{2i} \wedge D_{3,p} S^{2j-1}$.

It follows that the two composites $\bar\ell_1, \bar\ell_2 \:
\Sigma^{2pk-1} DV(0) \to D_{3,p} S^{2i} \wedge D_{3,p} S^{2j-1}$ induce
the same homomorphism in homology.  Hence their adjoints $\ell_1,
\ell_2 \: S^{2pk-1} \to V(0) \wedge D_{3,p} S^{2i} \wedge D_{3,p}
S^{2j-1}$ also agree in homology.  Since $D_{3,p} S^{2i} \wedge
D_{3,p} S^{2j-1}$ is $(2pk-3)$-connected and $h_0 \: V(0) \to H$ is
$(2p-3)$-connected, it follows that $\ell_1$ and $\ell_2$ are homotopic.
Therefore $\bar\ell_1$ and $\bar\ell_2$ are also homotopic.
\end{proof}

\begin{remark}
This proof also shows that
$$
\delta_{p*} Q^k(x_{2i} \otimes x_{2j-1})
	= x_{2i}^p \otimes Q^j(x_{2j-1})
	+ c \cdot \alpha_{2pi+1} \otimes \beta Q^j(x_{2j-1})
$$
in the homology of $Br_p S^{2i} \wedge Br_p S^{2j-1}$, for some unknown
coefficient~$c \in \bF_p$.  If $c \ne 0$ then the two maps $\Sigma^{2pk-1}
DV(0) \to Br_p S^{2i} \wedge Br_p S^{2j-1}$ induce different homomorphisms
in homology, and the left hand rectangle does not commute.
\end{remark}

\begin{corollary}
Let
$$
R \overset{s}\longto T \overset{r}\longto R
$$
be spectrum maps with $rs$ homotopic to the identity.  Assume that
$R$ is an $E_2$ ring spectrum, that $T$ is an $E_3$ ring spectrum,
and that $s$ or $r$ is an $E_2$ ring map.  Then $P^k(xy) = x^p P^j(y)$ in
$V(0)_{2pk-1} R$ for $x \in \pi_{2i} R$, $y \in \pi_{2j-1} R$ and $k=i+j$.
\end{corollary}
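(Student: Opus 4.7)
The plan is to leverage the already-established $E_3$ Cartan formula in $T$ (Proposition \ref{prop:CartanSV0}) and transport it to $R$ via the retract structure $rs \simeq 1$, using naturality of $P^k$ under $E_2$ ring maps. The first step is to record this naturality: for any map $\phi \: A \to B$ of $E_2$ ring spectra and any $f \in \pi_{2k-1} A$,
$$
\phi_\ast P^k(f) = P^k(\phi_\ast f) \quad \text{in } V(0)_{2pk-1} B.
$$
This is immediate from Definition \ref{def:PkSV0}: the only $A$-specific ingredient in the composite $\theta \circ Br_p(f) \circ \eta_0$ is the $E_2$ structure map $\theta$, and $E_2$ ring maps intertwine the two structure maps via $Br_p\phi$; ring maps also respect products. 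Moreover, because $rs \simeq 1$, the induced map $r_\ast s_\ast$ is the identity on $V(0)$-homotopy, so $s_\ast$ is split injective and $r_\ast$ is split surjective.

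In case $s$ is the $E_2$ ring map, I would apply $s_\ast$ to both sides of the desired identity and use naturality to rewrite
$$
s_\ast P^k(xy) = P^k(s_\ast x \cdot s_\ast y), \qquad s_\ast(x^p P^j(y)) = (s_\ast x)^p P^j(s_\ast y),
$$
in $V(0)_{2pk-1} T$. These two elements coincide by Proposition \ref{prop:CartanSV0} applied to the $E_3$ ring spectrum $T$. Split injectivity of $s_\ast$ then yields $P^k(xy) = x^p P^j(y)$ in $V(0)_{2pk-1} R$.

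In case $r$ is the $E_2$ ring map, the argument is dual. Set $\tilde x = s_\ast x$ and $\tilde y = s_\ast y$; here $s$ is only a map of spectra, which is enough to define these homotopy classes in $T$. By Proposition \ref{prop:CartanSV0} in $T$,
$$
P^k(\tilde x \tilde y) = \tilde x^p P^j(\tilde y).
$$
Applying the $E_2$ ring map $r_\ast$, using naturality of both multiplication and $P^k$, together with $r_\ast \tilde x = x$ and $r_\ast \tilde y = y$, the left-hand side becomes $P^k(xy)$ and the right-hand side becomes $x^p P^j(y)$, as desired.

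There is no genuinely hard step; the main thing to check is that $P^k$ is strictly natural for $E_2$ ring maps, which is a direct unwinding of Definition \ref{def:PkSV0}, and that the retract hypothesis makes $s_\ast$ split injective on the relevant $V(0)$-homotopy group. This is the standard mechanism by which Cartan-type formulas valid for $E_3$ ring spectra propagate to $E_2$ retracts.
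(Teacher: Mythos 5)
Your proof is correct and is essentially the paper's argument: the paper likewise applies Proposition~\ref{prop:CartanSV0} to the $E_3$ ring spectrum $T$ for the classes $s_*x$, $s_*y$, and then uses naturality of products and of $P^k$ with respect to whichever of $r$ or $s$ is the $E_2$ ring map, concluding via $r_*s_*=1$. Your "split injectivity of $s_*$" phrasing in the first case is just a repackaging of the same step.
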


\begin{proof}
Replace Proposition~\ref{prop:CartanV0V1} with
Proposition~\ref{prop:CartanSV0} in the proof of
Corollary~\ref{cor:CartanV0V1} below.
\end{proof}

We will also need a homotopy Cartan formula for the power operations
from Definition~\ref{def:PkV0V1}.

\begin{proposition} \label{prop:CartanV0V1}
Let $R$ be an $E_\infty$ ring spectrum.  For $x \in V(0)_{2i} R$ and $y
\in V(0)_{2j-1} R$ the relation
$$
P^k(xy) = x^p P^j(y)
$$
holds in $V(1)_{2pk-1} R$, where $k = i+j$.
\end{proposition}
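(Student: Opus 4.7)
I would mirror the proof of Proposition~\ref{prop:CartanSV0}, now with $V(0)$-coefficients on the inputs and $V(1)$-coefficients on the output. Represent $x$ by $f \: S^{2i} \to V(0) \wedge R$ with left adjoint $\bar f \: \Sigma^{2i} DV(0) \to R$, and $y$ by $g \: S^{2j-1} \to V(0) \wedge R$ with left adjoint $\bar g \: \Sigma^{2j-1} DV(0) \to R$. The product $xy \in V(0)_{2k-1} R$ has an adjoint $\overline{xy} \: \Sigma^{2k-1} DV(0) \to R$ which, after straightening, factors as
$$
\Sigma^{2k-1} DV(0) \overset{\Delta}\longto \Sigma^{2i} DV(0) \wedge \Sigma^{2j-1} DV(0) \overset{\bar f \wedge \bar g}\longto R \wedge R \overset{\phi}\longto R,
$$
where $\Delta$ is dual to the multiplication $\mu_0$ on~$V(0)$ and $\phi$ is the multiplication on~$R$.

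Next, I would build a nearly-commutative diagram analogous to the one in the proof of Proposition~\ref{prop:CartanSV0}, with top-left corner $\Sigma^{2pk-1} DV(1)$. The upper route applies $\bar\eta_1$ from~\eqref{eq:bareta1}, then $Br_p(\overline{xy})$ and $\theta$, realizing $P^k(xy)$. The lower route uses the operadic diagonal
$$
\delta_p \: Br_p(\Sigma^{2i} DV(0) \wedge \Sigma^{2j-1} DV(0)) \longto Br_p \Sigma^{2i} DV(0) \wedge Br_p \Sigma^{2j-1} DV(0),
$$
followed by $Br_p \bar f \wedge Br_p \bar g$, $\theta \wedge \theta$, and finally $\phi$; this realizes $x^p \cdot P^j(y)$. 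Since $R$ is $E_\infty$, the multiplication~$\phi$ is in particular an $E_2$ (indeed operadic at all levels) ring map, so the right-hand rectangle commutes, and the right-hand triangle commutes as before.

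To identify the two routes, I would stabilize via $\sigma_2 \wedge \sigma_2$ to $D_{3,p}(\Sigma^{2i} DV(0)) \wedge D_{3,p}(\Sigma^{2j-1} DV(0))$ and compare in mod-$p$ homology. The image of the top cell of $\Sigma^{2pk-1} DV(1)$ along the upper route is $Q^k(x_{2i} \otimes x_{2j-1}) = x_{2i}^p \otimes Q^j(x_{2j-1})$ by the $E_3$ Cartan formula~\cite{CLM76}*{Thm.~III.1.1(4)}, matching the lower route directly. The two remaining generators $x_{2k-2}^p$ and $\beta Q^k(x_{2k-1})$ in the image of $H_* \bar\eta_1$ (see Lemma~\ref{lem:3celltoBrpDV0}) match by the same Cartan formula applied to $x_{2i} \otimes x_{2j-2}$ and by naturality of the Bockstein, combined with the Nishida relation for $\cP^1_*$. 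Since $D_{3,p}(\cdots) \wedge D_{3,p}(\cdots)$ is highly connected and $h_1 \: V(1) \to H$ is connective through a sufficient range, agreement in homology lifts to a homotopy of the two adjoint maps $S^{2pk-1} \to V(1) \wedge D_{3,p}(\cdots) \wedge D_{3,p}(\cdots)$, hence to the desired identity in $V(1)_* R$.

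\textbf{Main obstacle.} The delicate step is proving that the left-hand rectangle ``commutes after stabilization by~$\sigma_2 \wedge \sigma_2$''. One has to enumerate the many Browder-bracket classes $[-,-]_1$ appearing in $H_* Br_p \Sigma^{2i} DV(0) \wedge Br_p \Sigma^{2j-1} DV(0)$ via~\cite{CLM76}*{Thm.~III.3.1} and verify that only the pure Cartan-type classes survive under $\sigma_2 \wedge \sigma_2$; the $E_\infty$ hypothesis guarantees the vanishing of these bracket corrections in the operadic target. The same $E_\infty$ hypothesis handles the indeterminacy of $\bar\eta_{1/2}$ noted in Remark~\ref{rem:eta1indet}, since the residual $\alpha_1$-multiple vanishes in $V(1)_* R$ when the $E_2$ structure extends to (at least) $E_3$, so the operations and the identity $P^k(xy) = x^p P^j(y)$ are all well-defined.
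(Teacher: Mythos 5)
Your overall strategy is the right one and matches the paper's in outline, but there is a genuine gap at the decisive step: you stabilize only to $D_{3,p}\Sigma^{2i}DV(0) \wedge D_{3,p}\Sigma^{2j-1}DV(0)$, and at that stage the concluding argument fails. Unlike the situation in Proposition~\ref{prop:CartanSV0}, where the smash factors are extended powers of \emph{spheres} and the target is $(2pk-3)$-connected, here the extended powers of $DV(0)$ have many cells far below the top degree, and in particular $H_{2pk-2p+2}$ of the $E_3$ target is \emph{nonzero}: it contains $E_3$ Browder-bracket classes that do not die under $\sigma_2$. Since $\Sigma^{2pk-1}DV(1)$ has a cell in degree $2pk-2p$, the difference of the two composites can carry the top cell by $\alpha_1$ onto a $(2pk-2p+2)$-cell of the target even though it induces zero in homology; so ``agreement in homology lifts to a homotopy'' is exactly what is not available here. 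The remark following Proposition~\ref{prop:CartanV0V1} makes this point explicitly: the argument only goes through for $E_n$ extended powers with $n\ge6$, and fails for $3 \le n \le 5$ precisely because of these bracket classes. Your closing sentence asserts that ``the $E_\infty$ hypothesis guarantees the vanishing of these bracket corrections in the operadic target,'' but your chosen operadic target is $D_{3,p}\wedge D_{3,p}$, where the $E_\infty$ hypothesis on $R$ has not yet been used and the $E_3$ brackets survive.

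The repair is to use the $E_\infty$ structure at the level of the extended powers themselves: replace $\sigma_2$ by the infinite stabilization $\sigma'_2 \: Br_p X \to D_p X$ and compare the two composites in $W = D_p\Sigma^{2i}DV(0) \wedge D_p\Sigma^{2j-1}DV(0)$. There all Browder brackets vanish, and the computation of $H_*D_p\Sigma^{2i}DV(0)$ and $H_*D_p\Sigma^{2j-1}DV(0)$ via \cite{CLM76}*{Thm.~I.4.1} shows that $H_{2pk-2p+2}W = 0$, so the Atiyah--Hirzebruch obstruction $H_{2pk-2p+2}(W;\pi_{2p-3}V(1))$ to null-homotoping the difference vanishes and the two maps out of $\Sigma^{2pk-1}DV(1)$ agree. (A minor further point: it suffices to track the single right $\cA$-module generator $x_{2pk-1}$ of $H_*\Sigma^{2pk-1}DV(1)$, so your separate verification for $x_{2k-2}^p$ and $\beta Q^k(x_{2k-1})$ is not needed.)
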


\begin{proof}
We use the following nearly-commutative diagram, where
$$
D_p X = \cC_\infty(p) \ltimes_{\Sigma_p} X^{\wedge p}
$$
denotes the $p$-th (unqualified) extended power, and $\sigma'_2 \: Br_p
X \to D_p X$ is the infinite stabilization map.  We write $\mu_0^p \:
V(0)^{\wedge p} \to V(0)$ for the $(p-1)$-fold iterate of the ring
spectrum multiplication, and let $m = \mu_1 (i_1 \wedge 1) \: V(0)
\wedge V(1) \to V(1)$ denote the left $V(0)$-module action on~$V(1)$.

$$
\scalebox{0.67}{
\xymatrix@C+1.5pc{
\Sigma^{2pk-1} DV(1) \ar[r]^-{\bar\eta_1}
	\ar[d]^-{Dm}
& Br_p \Sigma^{2k-1} DV(0) \ar[r]^-{Br_p \overline{f \cdot g}} \ar[d]^-{Br_p D\mu_0}
	& Br_p R \ar[r]^-{\theta}
	& R \\
\Sigma^{2pi} DV(0) \wedge \Sigma^{2pj-1} DV(1) \ar[d]^-{D\mu_0^p \wedge \bar\eta_1}
& Br_p(\Sigma^{2i} DV(0) \wedge \Sigma^{2j-1} DV(0)) \ar[r]^-{Br_p(\bar f \wedge \bar g)}
	\ar[d]^-{\delta_p}
	& Br_p(R \wedge R) \ar[u]_-{Br_p \phi} \ar[d]^-{\delta_p} \\
D_{1,p} \Sigma^{2i} DV(0) \wedge Br_p \Sigma^{2j-1} DV(0) \ar[r]^-{\sigma_1 \wedge 1}
& Br_p \Sigma^{2i} DV(0) \wedge Br_p \Sigma^{2j-1} DV(0) \ar[r]^-{Br_p \bar f \wedge Br_p \bar g}
	\ar[d]^-{\sigma'_2 \wedge \sigma'_2}
	& Br_p R \wedge Br_p R \ar[r]^-{\theta \wedge \theta}
	\ar[d]^(0.45){\sigma'_2 \wedge \sigma'_2}
	& R \wedge R \ar[uu]^-{\phi} \\
& D_p \Sigma^{2i} DV(0) \wedge D_p \Sigma^{2j-1} DV(0) \ar[r]^-{D_p \bar f \wedge D_p \bar g}
	& D_p R \wedge D_p R \ar[ur]_-{\theta \wedge \theta}
}
}
$$

The right hand rectangle and triangle commute as before, replacing $E_3$
by~$E_\infty$.

Let $f \: S^{2i} \to V(0) \wedge R$ and $g \: S^{2j-1} \to V(0) \wedge R$ be maps representing~$x$
and~$y$, with adjoints $\bar f \: \Sigma^{2i} DV(0) \to R$
and $\bar g \: \Sigma^{2j-1} DV(0) \to R$.  The composite
$$
\overline{f \cdot g} \: \Sigma^{2k-1} DV(0)
	\overset{D\mu_0}\longto \Sigma^{2i} DV(0) \wedge \Sigma^{2j-1} DV(0)
	\overset{\bar f \wedge \bar g}\longto R \wedge R
	\overset{\phi}\longto R
$$
is then adjoint to the map $f \cdot g \: S^{2k-1} \to V(0) \wedge R$
that represents~$xy$, and the upper square commutes by functoriality of
the braided-extended power.  The central and lower squares commute by
naturality of~$\delta_p$ and~$\sigma'_2$.

As before, we do not know whether the left hand rectangle commutes.
However, we claim that the two composites
$$
\Sigma^{2pk-1} DV(1) \longto Br_p \Sigma^{2i} DV(0)
	\wedge Br_p \Sigma^{2j-1} DV(0)
$$
become homotopic after composition with $\sigma'_2 \wedge \sigma'_2$
to
$$
W = D_p \Sigma^{2i} DV(0) \wedge D_p \Sigma^{2j-1} DV(0) \,.
$$
This implies that the composite along the upper edge, which is adjoint
to the map representing $P^k(xy)$, is homotopic to the composite along
the left hand, lower and right hand edges, which in turn is homotopic
to the central composite via~$Br_p \bar f \wedge Br_p \bar g$, and the
latter is adjoint to the map representing $x^p P^j(y)$.

To justify the claim we first compute in homology,
using~\cite{CLM76}*{Thm.~I.4.1}.  Writing $H_* \Sigma^{2i} DV(0) =
\bF_p\{x_{2i-1}, x_{2i}\}$ and $H_* \Sigma^{2j-1} DV(0) = \bF_p\{x_{2j-2},
x_{2j-1}\}$, with $\beta x_{2i} = x_{2i-1}$ and $\beta x_{2j-1} =
x_{2j-2}$, we have
\begin{multline*}
H_* D_p \Sigma^{2i} DV(0) = \bF_p\{\beta Q^i(x_{2i-1}), Q^i(x_{2i-1}),
	x_{2i-1} x_{2i}^{p-1}, x_{2i}^p, \\
	\beta Q^{i+1}(x_{2i-1}), Q^{i+1}(x_{2i-1}), \dots\}
\end{multline*}
in degrees $* \ge 2pi-2$,
and
\begin{multline*}
H_* D_p \Sigma^{2j-1} DV(0) = \bF_p\{x_{2j-2}^p, x_{2j-2}^{p-1} x_{2j-1}, \\
	\beta Q^j(x_{2j-2}), Q^j(x_{2j-2}),
	\beta Q^j(x_{2j-1}), Q^j(x_{2j-1}), \dots\}
\end{multline*}
in degrees $* \ge 2pj-2p$.  Their tensor product is $H_* W$,
which is concentrated in degrees $2pk-2p-2 \le * \le 2pk-2p+1$
and $* \ge 2pk-4$.

On one hand, the right $\cA$-module generator $x_{2pk-1}$ in $H_{2pk-1}
\Sigma^{2pk-1} DV(1)$ maps to $x_{2pi} \otimes x_{2pj-1}$ in the homology
of $\Sigma^{2pi} DV(0) \wedge \Sigma^{2pj-1} DV(1)$, and thereafter
to $x_{2i}^p \otimes Q^j(x_{2j-1})$ in the homologies of $D_{1,p}
\Sigma^{2i} DV(0) \wedge Br_p \Sigma^{2j-1} DV(0)$, $Br_p \Sigma^{2i}
DV(0) \wedge Br_p \Sigma^{2j-1} DV(0)$ and~$W$.  On the other hand,
$x_{2pk-1}$ maps to $Q^k(x_{2k-1})$ in the homology of $Br_p \Sigma^{2k-1}
DV(0)$, and to $Q^k(x_{2i} \otimes x_{2j-1})$ in the homologies of $Br_p
(\Sigma^{2i} DV(0) \wedge \Sigma^{2j-1} DV(0))$ and $D_p (\Sigma^{2i}
DV(0) \wedge \Sigma^{2j-1} DV(0))$.  By the $E_\infty$ ring spectrum
Cartan formula~\cite{CLM76}*{Thm.~I.1.1(6)} it maps to $Q^i(x_{2i})
\otimes Q^j(x_{2j-1}) = x_{2i}^p \otimes Q^j(x_{2j-1})$ in $H_* W$.

It follows that the two composites $\bar m_1, \bar m_2 \: \Sigma^{2pk-1}
DV(1) \to W$ induce the same homomorphism in homology.  Let $\bar m =
\bar m_2 - \bar m_1$ be their difference, inducing zero in homology.  The
homological Atiyah--Hirzebruch spectral sequence for $V(1)_* W = [DV(1),
W]_*$ shows that $\bar m$ is null-homotopic, since $H_{2pk-2p+2}(W;
\pi_{2p-3} V(1)) = H_{2pk-2p+2} W = 0$.  Hence $\bar m_1$ and $\bar m_2$
are homotopic, as claimed.
\end{proof}

\begin{remark}
A similar proof goes through if $R$ is an $E_n$ ring spectrum
with $n\ge6$, replacing $W$ with $W_n = D_{n,p} \Sigma^{2i} DV(0)
\wedge D_{n,p} \Sigma^{2j-1} DV(0)$.  For $3 \le n \le 5$ the group
$H_{2pk-2p+2} W_n$ will be nonzero, due to the presence of $E_n$ Browder
bracket terms in this degree, so that $\bar m$ might map the top cell of
$\Sigma^{2pk-1} DV(1)$ via $\alpha_1$ to a $(2pk-2p+2)$-cell of~$W_n$,
and hence be essential.  For simplicity we assume $n=\infty$, since this
will suffice for our application.
\end{remark}

\begin{corollary} \label{cor:CartanV0V1}
Let
$$
R \overset{s}\longto T \overset{r}\longto R
$$
be spectrum maps with $rs$ homotopic to the identity.  Assume that $R$
is an $E_2$ ring spectrum, that $T$ is an $E_\infty$ ring spectrum,
and that~$r$ or~$s$ is an $E_2$ ring map.  Then $P^k(xy) = x^p P^j(y)$
in $V(1)_{2pk-1} R$ for $x \in V(0)_{2i} R$, $y \in V(0)_{2j-1} R$
and $k=i+j$.
\end{corollary}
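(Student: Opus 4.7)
The strategy is to transport the Cartan identity of Proposition~\ref{prop:CartanV0V1}, which holds for the $E_\infty$ ring spectrum $T$, back to $R$ along the retraction $rs \simeq \mathrm{id}_R$. The key input is that the power operation $P^k$ of Definition~\ref{def:PkV0V1} is natural with respect to $E_2$ ring maps: inspecting the construction, once a choice of $\bar\eta_{1/2}$ (and hence $\eta_1$) has been fixed once and for all, the only $R$-dependent ingredient is the $E_2$ structure map $\theta \: Br_p R \to R$, and this is intertwined with $\theta_{R'}$ by $Br_p \phi$ whenever $\phi \: R \to R'$ is an $E_2$ ring map. Consequently $\phi_* \circ P^k = P^k \circ \phi_*$ on $V(0)_{2k-1} R$.

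Given $x \in V(0)_{2i} R$ and $y \in V(0)_{2j-1} R$, I would set $\tilde x = s_*(x)$ and $\tilde y = s_*(y)$ in $V(0)_* T$. Applying Proposition~\ref{prop:CartanV0V1} to $T$ yields
\[
P^k(\tilde x \tilde y) \;=\; \tilde x^{\,p}\, P^j(\tilde y) \qquad \text{in } V(1)_{2pk-1} T.
\]
If $s$ is the $E_2$ ring map, then $\tilde x \tilde y = s_*(xy)$ and $\tilde x^{\,p} = s_*(x^p)$, while naturality gives $P^k(\tilde x \tilde y) = s_*(P^k(xy))$ and $P^j(\tilde y) = s_*(P^j(y))$. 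The displayed identity becomes $s_*(P^k(xy)) = s_*(x^p P^j(y))$, and applying $r_*$ while using $r_* s_* = \mathrm{id}_*$ produces $P^k(xy) = x^p P^j(y)$ in $V(1)_{2pk-1} R$. If instead $r$ is the $E_2$ ring map, I would apply $r_*$ directly to the displayed identity: by naturality and the ring-map property of $r_*$, the left side becomes $P^k(r_*(\tilde x)\, r_*(\tilde y)) = P^k(xy)$ and the right side becomes $r_*(\tilde x)^{\,p}\, P^j(r_*(\tilde y)) = x^p P^j(y)$, since $r_* s_* = \mathrm{id}_*$.

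The only real subtlety to pin down is the non-uniqueness of $\bar\eta_{1/2}$ discussed in Remark~\ref{rem:eta1indet}, which a priori makes $P^k$ on $R$ well-defined only up to an $\alpha_1$-multiple. On the $E_\infty$ spectrum $T$ the choice is immaterial, and by fixing a single $\bar\eta_{1/2}$ at the outset and using it to define $P^k$ on both sides throughout the naturality step, the ambiguity disappears from the argument. After this point the proof is formal transport of structure along the retraction, and carries through uniformly in the two cases distinguished above.
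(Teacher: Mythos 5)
Your proposal is correct and follows essentially the same route as the paper's proof: apply Proposition~\ref{prop:CartanV0V1} to $s_*x$ and $s_*y$ in $T$, then use naturality of the products and of the homotopy power operations with respect to whichever of $r$ or $s$ is the $E_2$ ring map, and conclude from $r_*s_*=1$. Your remark about fixing $\bar\eta_{1/2}$ once and for all so that $P^k$ is strictly natural is exactly the point made in Remark~\ref{rem:eta1indet}.
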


\begin{proof}
Apply Proposition~\ref{prop:CartanV0V1} for~$T$ to see that
$$
r_*(P^k(s_* x \cdot s_* y)) = r_*((s_* x)^p \cdot P^j(s_* y))
$$
in $V(1)_{2k-1}(R)$.  If $r$ is an $E_2$ ring map, then naturality of
the products and homotopy power operations with respect to~$r$ implies
$P^k(r_* s_* x \cdot r_* s_* y) = (r_* s_* x)^p \cdot P^j(r_* s_* y)$.
If $s$ is an $E_2$ ring map, then naturality of the products and homotopy
power operations with respect to~$s$ implies $r_* s_*(P^k(x \cdot y)) =
r_* s_*(x^p \cdot P^j(y))$.  In either case the conclusion follows from
$r_* s_* = 1$.
\end{proof}

\section{Some $V(0)$- and $V(1)$-homotopy classes}
\label{sec:V0V1classes}

The homotopy power operations introduced in Definitions~\ref{def:PkSV0}
and~\ref{def:PkV0V1} apply for $R = S$ with its $E_\infty$ ring structure.
The $E_2$-term of its mod~$p$ Adams spectral sequence
$$
E_2^{s,t}(S) = \Ext_{\cA_*}^{s,t}(\bF_p, \bF_p)
	\Longrightarrow \pi_{t-s}(S)^\wedge_p
$$
contains classes traditionally denoted
$$
a_0 = [\tau_0]
\qquad\text{and}\qquad
h_i = [\xi_1^{p^i}]
$$
for $i\ge0$, in bidegrees $(s,t) = (1,1)$ and $(1,2p^i(p-1))$,
respectively.  Here $\tau_0$ is dual to $\beta$ and $a_0$ detects $p \in
\pi_0(S)^\wedge_p \cong \bZ_p$, while $\xi_1^{p^i}$ is dual to $\cP^{p^i}$
and $h_0$ detects the generator $\alpha_1 \in \pi_{2p-3}(S)^\wedge_p
\cong \bZ/p$.  The classes $h_i$ for $i\ge1$ support nonzero
$d_2$-differentials~\cite{Liu62} in the Adams spectral sequence for~$S$,
but some of these map to permanent cycles in the corresponding spectral
sequences for $V(0)$ and~$V(1)$, detecting interesting homotopy classes.

\begin{definition}
Let
$$
\beta^\circ_1 = P^{p-1}(\alpha_1) \in \pi_{2p^2-2p-1} V(0)
$$
and
$$
\gamma^\circ_1 = P^{p^2-p}(\beta^\circ_1) \in \pi_{2p^3-2p^2-1} V(1) \,.
$$
\end{definition}

The ring/circle superscripts indicate that these classes are constructed
using the $E_2$ ring spectrum structure.

\begin{lemma} \label{lem:betagammadetection}
The classes $\beta^\circ_1$ and $\gamma^\circ_1$ are detected by $i_0(h_1) =
[\xi_1^p]$ and $i_1 i_0(h_2) = [\xi_1^{p^2}]$ in the Adams spectral
sequences for $V(0)$ and~$V(1)$, respectively.
\end{lemma}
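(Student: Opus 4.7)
The plan is to verify both detection statements in two stages: first check that each class has positive Adams filtration (so that lifting through the Adams tower is possible), then identify the filtration-$1$ representative by tracking the braided-extended-power construction through an Adams resolution and computing the induced action on the cobar complex.

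For the filtration bound, note that $H_*(S)$ is concentrated in degree $0$, so $h(\alpha_1) = 0$. Lemma~\ref{lem:h0PkeqQkh} then gives $h_0(\beta^\circ_1) = Q^{p-1}(h(\alpha_1)) = 0$, which places $\beta^\circ_1$ in positive Adams filtration in the mod~$p$ Adams spectral sequence for $V(0)$. Iterating this observation with Lemma~\ref{lem:h1PkeqQkh} yields $h_1(\gamma^\circ_1) = Q^{p^2-p}(h_0(\beta^\circ_1)) = Q^{p^2-p}(0) = 0$, so $\gamma^\circ_1$ likewise has positive Adams filtration in the spectral sequence for $V(1)$. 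This shows both classes admit lifts to the first stage of the respective Adams towers.

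To identify the filtration-$1$ representative of $\beta^\circ_1$, I would exploit the naturality of the braided-extended-power construction. Choose a lift $\tilde{\alpha}_1 \: S^{2p-3} \to X_1$ of $\alpha_1$ to the first stage of the Adams tower of the sphere whose image in $H_{2p-3}(H \wedge X_1) = \bar{\cA}_{2p-3}$ represents $[\xi_1]$. Feeding $\tilde{\alpha}_1$ into the construction of Definition~\ref{def:PkSV0} in place of $\alpha_1$ produces a compatible lift of $\beta^\circ_1$ through a stage of the Adams tower of $V(0)$. The homology class of this lift is a Dyer--Lashof-type cobar computation: the image of $[\xi_1]$ under the operation induced by $P^{p-1}$ on $\Ext$ is, by the Nishida relations and the Cartan formula of Proposition~\ref{prop:CartanSV0}, the primitive cocycle $\xi_1^p$, which represents $i_0(h_1) = [\xi_1^p] \in \Ext^{1,2p(p-1)}_{\cA_*}(\bF_p, H_*V(0))$. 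The argument for $\gamma^\circ_1$ is parallel, with $\alpha_1, V(0)$ and Proposition~\ref{prop:CartanSV0}, Lemma~\ref{lem:h0PkeqQkh} replaced by $\beta^\circ_1, V(1)$ and Proposition~\ref{prop:CartanV0V1}, Lemma~\ref{lem:h1PkeqQkh}; starting from the representative $[\xi_1^p]$ of $\beta^\circ_1$, the analogous cobar computation lands on $[\xi_1^{p^2}] = i_1 i_0(h_2)$.

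The main obstacle is the algebraic cobar identification in the second step: relating the geometric operation $P^k$ to an explicit operation on $\Ext_{\cA_*}$. For $E_\infty$ ring spectra the Bruner--May--McClure--Steinberger machinery carries this out in full, but here only $E_2$ structure is used and the target is $V(0)$, respectively $V(1)$, rather than $S$. I would either adapt the BMMS detection argument to the $E_2$ setting (using that the indeterminacy of $\bar\eta_{1/2}$ noted in Remark~\ref{rem:eta1indet} does not affect the Hurewicz or filtration-$1$ image), or carry out a direct cell-by-cell analysis of $Br_p X_1$ identifying the $(2pk-1)$-cell of $Br_p \Sigma^{2k-1} DV(0)$ with the cobar generator $\xi_1^p$ after passage to $H$-homology.
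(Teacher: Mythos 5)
Your first step is fine, but it only establishes that $\beta^\circ_1$ and $\gamma^\circ_1$ have Adams filtration $\ge 1$; the content of the lemma is the identification of the filtration-one detecting class, and that is exactly the step you leave open. You propose to push a lift of $\alpha_1$ through an Adams resolution of the extended power and read off the effect of $P^{p-1}$ on the cobar complex, invoking ``the Nishida relations and the Cartan formula of Proposition~\ref{prop:CartanSV0}'' to conclude that the output is $[\xi_1^p]$. But Proposition~\ref{prop:CartanSV0} is a homotopy Cartan formula for decomposable products $xy$ and says nothing about the $\Ext$-level effect of $P^k$ on an indecomposable class such as $h_0$. The assertion that the geometric operation induces the algebraic Steenrod operation sending $h_0$ to $h_1$ is precisely the content of the machinery of extended powers of Adams resolutions in \cite{BMMS86}, which is set up for $H_\infty$ ring spectra and for operations that do not change coefficients; your operations are only $E_2$ and land in $V(0)_*R$, resp.\ $V(1)_*R$, rather than $\pi_*R$. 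You identify this as ``the main obstacle'' and then offer two possible strategies without executing either, so the central step of the proof is missing.

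The paper avoids all of this by using the classical mapping-cone criterion: a class of filtration $\ge 1$ is detected by $[\xi_1^{p^i}]$, the cobar class dual to $\cP^{p^i}$, precisely when $\cP^{p^i}_*$ acts nontrivially from the top to the bottom homology class of the mapping cone of its left adjoint. One then needs only Toda's computation \cite{Tod68} of the extended power of a mapping cone, $D_{\alpha_1*}\bigl((e_{p-1}\otimes x^{\otimes p})^\wedge\bigr) = e_0\otimes(x^\wedge)^{\otimes p}$ up to a unit, together with the homology Cartan formula for $\cP^p_*$ applied to a $p$-th power, to see that $\cP^p_*$ is nonzero in $H_* C\bar\beta$; the case of $\gamma^\circ_1$ is the same argument one level up, using the map $\bar\eta_1$ of~\eqref{eq:bareta1}. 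This route requires no Adams resolutions of extended powers and no algebraic power operations on $\Ext$, only homology-level statements for which the $E_2$ structure and the indeterminacy discussed in Remark~\ref{rem:eta1indet} cause no trouble. To complete your argument you should either substitute this mapping-cone computation for your second step, or actually carry out the adaptation of the \cite{BMMS86} detection theory to $E_2$ operations with shifting coefficients, which is a substantial undertaking that your sketch does not supply.
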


\begin{proof}
The case of $\beta^\circ_1$ is due to Toda~\cite{Tod68}*{Lem.~4}.
It suffices to prove that the dual Steenrod operation~$\cP^p_*$ acts
nontrivially in the homology of the mapping cone~$C\bar\beta$, where
$$
\bar\beta \: \Sigma^{2p^2-2p-1} DV(0) \simeq Br_p S^{2p-3}
	\overset{Br_p \alpha_1}\longto Br_p S
	\overset{\theta}\longto S
$$
is left adjoint to $\beta^\circ_1$.
There are natural maps
$$
C\bar\beta \overset{\tilde\theta}\longfrom C(Br_p \alpha_1)
	\overset{D_{\alpha_1}}\longto Br_p(C \alpha_1)
$$
that are induced by $\theta$ and the canonical null-homotopy in a cone,
respectively.  By an analog of~\cite{Tod68}*{Thm.~2} for braided-extended
powers we have
$$
D_{\alpha_1*} ((e_{p-1} \otimes x^{\otimes p})^\wedge)
	= e_0 \otimes (x^\wedge)^{\otimes p} \,,
$$
up to a unit in~$\bF_p$, where $x^\wedge \in H_{2p-2} C\alpha_1$
lifts the generator $x \in H_{2p-3} S^{2p-3}$ and $(e_{p-1}
\otimes x^{\otimes p})^\wedge \in H_{2p^2-2p} C(Br_p \alpha_1)$
lifts $e_{p-1} \otimes x^{\otimes p} \in H_{2p^2-2p-1} Br_p S^{2p-3}$.
Since $\cP^1_*(x^\wedge)$ generates $H_0 C\alpha_1$, it follows from the
homology Cartan formula that $\cP^p_*(e_0 \otimes (x^\wedge)^{\otimes p}) = e_0
\otimes \cP^1_*(x^\wedge)^{\otimes p}$ generates $H_0 Br_p(C\alpha_1)$.
By naturality with respect to Toda's map $D_{\alpha_1}$ it follows that
$\cP^p_*((e_{p-1} \otimes x^{\otimes p})^\wedge)$ generates $H_0 C(Br_p
\alpha_1)$, and by naturality with respect to $\tilde\theta$ it follows that
$$
\cP^p_* \: H_{2p^2-2p} C\bar\beta \longto H_0 C\bar\beta
$$
is nonzero.

The proof for $\gamma^\circ_1$ is similar.  It suffices to prove that the
dual Steenrod operation~$\cP^{p^2}_*$ acts nontrivially in the homology
of the mapping cone~$C\bar\gamma$, where
$$
\bar\gamma \: \Sigma^{2p^3-2p^2-1} DV(1)
	\overset{\bar\eta_1}\longto Br_p(\Sigma^{2p^2-2p-1} DV(0))
	\overset{Br_p \bar\beta}\longto Br_p S
	\overset{\theta}\longto S
$$
is left adjoint to $\gamma^\circ_1$.
Here $\bar\eta_1$ was defined in~\eqref{eq:bareta1}.
There are natural maps
$$
C\bar\gamma \overset{\tilde\eta_1}\longto C(\theta \circ Br_p \bar\beta)
	\overset{\tilde\theta}\longfrom C(Br_p \bar\beta)
	\overset{D_{\bar\beta}}\longto Br_p(C\bar\beta)
$$
induced by $\bar\eta_1$, $\theta$ and the canonical null-homotopy,
respectively.  By~\cite{Tod68}*{Thm.~2} again, we have
$$
D_{\bar\beta*}((e_{p-1} \otimes y^{\otimes p})^\wedge)
	 = e_0 \otimes (y^\wedge)^{\otimes p} \,,
$$
up to a unit in $\bF_p$, where $y^\wedge \in H_{2p^2-2p} C\bar\beta$
lifts the generator
$$
y \in H_{2p^2-2p-1}(\Sigma^{2p^2-2p-1} DV(0))
$$
and $(e_{p-1} \otimes y^{\otimes p})^\wedge \in H_{2p^3-2p^2} C(Br_p
\bar\beta)$ lifts
$$
e_{p-1} \otimes y^{\otimes p}
	\in H_{2p^3-2p^2-1} Br_p(\Sigma^{2p^2-2p-1} DV(0)) \,.
$$
Since $\cP^p_*(y^\wedge)$ generates $H_0 C\bar\beta$ it follows
that $\cP^{p^2}_*(e_0 \otimes (y^\wedge)^{\otimes p})
= e_0 \otimes \cP^p_*(y^\wedge)^{\otimes p}$ generates
$H_0 Br_p(C\bar\beta)$.  Naturality with respect to $D_{\bar\beta}$
implies that
$$
\cP^{p^2}_*((e_{p-1} \otimes y^{\otimes p})^\wedge)
$$
generates $H_0 C(Br_p \bar\beta)$, and naturality with respect
to $\tilde\theta$ and~$\tilde\eta_1$ implies that
$$
\cP^{p^2}_* \: H_{2p^3-2p^2} C\bar\gamma \longto H_0 C\bar\gamma
$$
is nonzero.
\end{proof}

The first Greek letter element $\alpha_1 \in \pi_{2p-3} S$ is the
image under $j_0 \: V(0) \to S^1$ of a class $v_1 \in \pi_{2p-2} V(0)$
detected by the class of the cobar cocycle $[\tau_1] 1 + [\xi_1] \tau_0$
in bidegree $(s,t) = (1, 2p-1)$ of the Adams spectral sequence
$$
E_2^{s,t}(Y) = \Ext_{\cA_*}^{s,t}(\bF_p, H_* Y)
	\Longrightarrow \pi_{t-s}(Y^\wedge_p)
$$
for $Y = V(0)$.  Similarly, $\beta_1 \in \pi_{2p^2-2p-2} S$ is the image
under $j_0 j_1 \: V(1) \to S^{2p}$ of a class $v_2 \in \pi_{2p^2-2}
V(1)$, and $\gamma_1 \in \pi_{2p^3-2p^2-2p-1} S$ is the image under $j_0
j_1 j_2 \: V(2) \to S^{2p^2+2p-1}$ of a class $v_3 \in \pi_{2p^3-2} V(2)$.

\begin{lemma}
The groups $\pi_{2p-2} V(0) \cong \bZ/p$ for $p\ge3$,
$\pi_{2p^2-2} V(1) \cong \bZ/p$ for $p\ge3$ and
$\pi_{2p^3-2} V(2) \cong \bZ/p$ for $p\ge5$
are generated by classes $v_1$, $v_2$ and $v_3$, respectively,
each in Adams filtration~$1$.
\end{lemma}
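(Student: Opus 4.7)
The plan is to combine the defining cofibrations~\eqref{eq:V0}, \eqref{eq:V1}, \eqref{eq:V2} with the mod~$p$ Adams spectral sequence
$$ E_2^{s,t}(V(n)) = \Ext_{\cA_*}^{s,t}(\bF_p, H_*V(n)) \Longrightarrow \pi_{t-s} V(n)^\wedge_p \,, $$
where $H_*V(n) = E(\bar\tau_0, \dots, \bar\tau_n)$. The strategy is to first pin down the Adams filtration by a degree count, and then to bound the size of the group using long exact sequences in homotopy.

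For the filtration claim, the comodule $H_*V(n)$ is concentrated in internal degrees $\le \sum_{i=0}^n (2p^i-1) < 2p^{n+1}-1$, so $E_2^{0, 2p^{n+1}-1}(V(n)) = (H_*V(n))^{\cA_*}_{2p^{n+1}-1} = 0$, and any nonzero element of $\pi_{2p^{n+1}-2}V(n)$ is necessarily detected in Adams filtration~$\ge 1$. The explicit filtration-$1$ cocycle $[\bar\tau_1]\otimes 1 + [\bar\xi_1]\otimes\bar\tau_0$ detecting $v_1$ is given in the paragraph preceding the lemma; for $n=1,2$ I would construct analogous cobar $1$-cocycles $c_{n+1}$ from the coproduct formula for $\bar\tau_{n+1}$ in $\cA_*$, with the property that $(j_0 j_1 \cdots j_n)_*(c_{n+1})$ equals, up to a unit, the class $h_{n+1} = [\bar\xi_1^{p^{n+1}}]$ detecting $\alpha_1$, $\beta_1$, $\gamma_1$, respectively, in the Adams spectral sequence for~$S$.

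For the cardinality claim, the long exact sequence of~\eqref{eq:V0} yields $\pi_{2p-2}V(0) \cong \pi_{2p-3}S = \bZ/p\{\alpha_1\}$ immediately, since $\pi_{2p-2}S = 0$ and $p\cdot \alpha_1 = 0$; the unique lift is~$v_1$. For $V(1)$ and $V(2)$, the long exact sequences of~\eqref{eq:V1} and~\eqref{eq:V2} reduce the claim to the identification of $\pi_{2p^{n+1}-2}V(n-1)$ and of $\pi_{2p^{n+1}-2p^n-1}V(n-1)$ modulo the image of multiplication by~$v_n$. These are determined by classical calculations (Toda, Oka, Miller--Ravenel--Wilson) and collapse the sequences to the desired isomorphism $\pi_{2p^{n+1}-2}V(n) \cong \bZ/p$. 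Non-triviality of the resulting generator is ensured by the $v_{n+1}$-self map (Adams at $n=0$, Smith at $n=1$, Toda at $n=2$), which composed with the unit realizes $v_{n+1}$ as a nonzero homotopy class detected in Adams filtration~$1$ by~$c_{n+1}$.

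The principal obstacle is the bookkeeping in the long exact sequences at $n = 1, 2$, which requires precise identification of the neighboring homotopy groups of $V(n-1)$ and of the kernel and image of multiplication by~$v_n$. I would handle this by invoking the classical computations just cited rather than redoing them from scratch.
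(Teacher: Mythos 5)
Your proposal handles $V(0)$ and $V(1)$ essentially as the paper does (the paper simply declares $V(0)$ well known and cites Toda for $V(1)$), but for $V(2)$ it takes a genuinely different route, and that is where the gaps lie. The paper never touches $\pi_*V(1)$ in high degrees: it writes $H^*V(2)=E(\beta,\cQ_1,\cQ_2)$ as the quotient of $\cA\otimes_{\cP}\bF_p$ by $K=\bF_p\{\cQ_3,\beta\cQ_3,\dots\}$, and uses the resulting long exact sequence in $\Ext$ together with Toda's May spectral sequence bound showing $\Ext_{\cP}^{s,t}(\bF_p,\bF_p)=0$ in topological degrees $2p^3-3$ and $2p^3-2$. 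This identifies $\delta(\cQ_3^*)$ in filtration $s=1$ as the \emph{only} class of the entire Adams $E_2$-term of $V(2)$ in topological degree $2p^3-2$, with no possible target for a differential; the group, the generator, and the Adams filtration all drop out at once. Your reduction of the $V(2)$ case via the long exact sequence of~\eqref{eq:V2} instead requires $\pi_{2p^3-2}V(1)$, $\pi_{2p^3-2p^2-1}V(1)$, $\pi_{2p^3-3}V(1)$ and the kernel/cokernel of $v_2$-multiplication between them. This is not off-the-shelf input: Miller--Ravenel--Wilson compute $\Ext^0$ and $\Ext^1$ of the Adams--Novikov $E_2$-term for $BP_*/(p,v_1)$, but the actual homotopy groups of $V(1)$ in degrees near $2p^3$ involve all filtrations (the $\beta$-family range), differentials, and extensions, and a complete determination there is essentially of the same order of difficulty as the statement you are trying to prove. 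As written, this step is a genuine gap, not bookkeeping.

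A second concrete gap is the non-triviality argument. You propose to realize $v_{n+1}$ by composing the $v_{n+1}$ self-map of $V(n)$ with the unit, but Smith's $v_2$ self-map of $V(1)$ exists only for $p\ge5$ and Toda's $v_3$ self-map of $V(2)$ (equivalently, the existence of $V(3)$) only for $p\ge7$, whereas the lemma asserts the $V(1)$ statement for $p\ge3$ and the $V(2)$ statement for $p\ge5$. The homotopy class $v_{n+1}\in\pi_{2p^{n+1}-2}V(n)$ exists at the boundary primes even though the self-map does not, so the self-map cannot be its source there. (Your filtration bound via the vanishing of the $0$-line is correct in spirit; note only that the relevant group is $E_2^{0,2p^{n+1}-2}$, and that after establishing the group is $\bZ/p$ one still needs the filtration-$1$ cocycle to survive to $E_\infty$ and detect the generator --- which, in the paper's argument, is automatic because that cocycle generates the whole $E_2$-term in its topological degree.)
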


\begin{proof}
The claim for $V(0)$ is well known.  The claim for $V(1)$ is contained
in~\cite{Tod71}*{Thm.~5.2, (5.7)}.  The claim for $V(2)$ can be deduced
from~\cite{Tod71}*{\S3}, as follows.  Let $\cP \subset \cA$ be the sub
Hopf algebra of the mod~$p$ Steenrod algebra generated by the Steenrod
operations~$\cP^i$.  Let $K = \bF_p\{\cQ_3, \beta \cQ_3, \dots\}$ be
the kernel of the surjection $\cA \otimes_{\cP} \bF_p \to H^* V(2) =
E(\beta, \cQ_1, \cQ_2)$, where $\cQ_i$ denotes the Milnor primitive,
and consider the long exact sequence
$$
\dots \to \Ext_{\cA}^{s-1,t}(K, \bF_p)
	\overset{\delta}\longto \Ext_{\cA}^{s,t}(H^* V(2), \bF_p)
	\longto \Ext_{\cP}^{s,t}(\bF_p, \bF_p) \to \dots \,.
$$
Using the May spectral sequence, Toda~\cite{Tod71}*{\S3} calculated
an upper bound for $\Ext_{\cP}^{s,t}(\bF_p, \bF_p)$ in the range $t
< 2(p^2+2p+3)(p-1)+4$, which shows that these groups are trivial in
topological degrees $t-s = 2p^3-3$ and~$2p^3-2$.  Hence $\delta(\cQ_3^*)$
in cohomological degree~$s=1$ is the only generator of $E_2(V(2)) =
\Ext_{\cA}(H^* V(2), \bF_p)$ in topological degree~$2p^3-2$.  Moreover,
there is no possible target for an Adams differential on this class,
which must therefore detect~$v_3$.
\end{proof}

\begin{lemma} \label{lem:j1v1beta-j2v2gamma}
For $p\ge3$, the classes $\beta^\circ_1$ and $j_1(v_2) = \beta'_1$ in
$\pi_{2p^2-2p-1} V(0)$ agree modulo (a nonzero multiple of)
$\alpha_1 v_1^{p-1}$.  Hence $i_1(\beta^\circ_1) = i_1(\beta'_1)$ in
$\pi_{2p^2-2p-1} V(1)$, and $j_0(\beta^\circ_1) = \beta_1 = j_0(\beta'_1)$
in $\pi_{2p^2-2p-2} S$ is the first element in the $\beta$-family.

For $p\ge5$, the classes $\gamma^\circ_1$ and $j_2(v_3) = \gamma''_1$ in
$\pi_{2p^3-2p^2-1} V(1)$ agree modulo~$\alpha_1 v_2^{p-1}$.
Hence $i_2(\gamma^\circ_1) = i_2(\gamma''_1)$ in $\pi_{2p^3-2p^2-1}
V(2)$.
\end{lemma}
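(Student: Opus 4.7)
The plan is to compare $\beta^\circ_1$ and $\beta'_1$ via their detecting classes in the mod~$p$ Adams spectral sequence for $V(0)$, and then to analyze the remaining indeterminacy in higher Adams filtration. The parallel argument will handle $\gamma^\circ_1$ and $\gamma''_1$.

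First, I would identify the Adams $E_2$-class detecting $\beta'_1 = j_1(v_2)$. The class $v_2 \in \pi_{2p^2-2}V(1)$ lies in Adams filtration one and is detected in $\Ext^{1, 2p^2-1}_{\cA_*}(\bF_p, H_*V(1))$ by the cobar cocycle $[\tau_2]\cdot 1 + [\xi_2]\tau_0 + [\xi_1^p]\tau_1$, as follows from the coproduct formula $\psi(\bar\tau_2) = 1\otimes \bar\tau_2 + \bar\tau_0\otimes \bar\xi_2 + \bar\tau_1 \otimes \bar\xi_1^p + \bar\tau_2 \otimes 1$ (modulo conjugation conventions). The map $j_{1*}\: H_*V(1) \to H_*\Sigma^{2p-1}V(0)$ sends $\tau_1 \mapsto \Sigma^{2p-1} 1$ and $\tau_0\tau_1 \mapsto \Sigma^{2p-1}\tau_0$ and annihilates the bottom cell. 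Hence the induced map on Adams $E_2$-terms carries the cocycle to $[\xi_1^p]\cdot 1 = i_0(h_1)$ in filtration~$1$. By Lemma~\ref{lem:betagammadetection}, this is the same class that detects $\beta^\circ_1$. Therefore $\beta^\circ_1 - c\,\beta'_1$ lies in Adams filtration $\geq 2$ for some unit $c\in \bF_p$ which we may absorb into the normalization of~$\bar\eta_{1/2}$.

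Next I would identify the higher filtration indeterminacy. The class $\alpha_1 v_1^{p-1}\in \pi_{2p^2-2p-1}V(0)$ is detected in Adams filtration~$p$ by $h_0 \cdot ([\tau_1]+[\xi_1]\tau_0)^{p-1}$. Using the standard computation of $\Ext_{\cA_*}^{s,t}(\bF_p, H_*V(0))$ in this bidegree (via the change-of-rings $\Ext^{s,t}_{\cA_*}(\bF_p, E(\tau_0))\cong \Ext^{s,t}_{P}(\bF_p, \bF_p)$ where $P = \cA_*/(\tau_0)$, together with the $v_1$-Bockstein spectral sequence from $\Ext$ of~$S$), one checks that the only nonzero $\Ext$-classes of topological degree $2p^2-2p-1$ and filtration $\geq 2$ are (up to scalar) the multiple of $\alpha_1 v_1^{p-1}$. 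This bounds the indeterminacy, giving the asserted relation modulo $\alpha_1 v_1^{p-1}$.

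For the consequences: $i_1(\alpha_1 v_1^{p-1}) = 0$ since $i_1 \circ v_1 \simeq *$ in the cofiber sequence \eqref{eq:V1}, so $i_1(\beta^\circ_1) = i_1(\beta'_1)$ in $V(1)$. Also $j_0(\alpha_1 v_1^{p-1}) = \alpha_1 \cdot j_0(v_1^{p-1}) = \alpha_1 \cdot \alpha_{p-1}$ in $\pi_*S$, which vanishes by the classical Toda relation $\alpha_1\alpha_{p-1}=0$ in the $\alpha$-family at odd primes; thus $j_0(\beta^\circ_1) = j_0(\beta'_1) = \beta_1$. The statement for $\gamma^\circ_1$ and $\gamma''_1=j_2(v_3)$ is proved in exactly the same way, using that $v_3 \in \pi_{2p^3-2}V(2)$ is detected by a cocycle containing the term $[\xi_1^{p^2}]\tau_2$, that $j_{2*}$ sends $\tau_2\mapsto 1$, and that the higher filtration indeterminacy in $\pi_{2p^3-2p^2-1}V(1)$ consists of multiples of $\alpha_1 v_2^{p-1}$, which are killed by $i_2$.

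The main obstacle is step three, namely verifying that in the Adams spectral sequence for $V(0)$ (respectively $V(1)$) the only class in the relevant topological degree and Adams filtration $\geq 2$ is $\alpha_1 v_1^{p-1}$ (respectively $\alpha_1 v_2^{p-1}$). For $p$ large (here $p\geq 3$ suffices for the first statement, $p\geq 5$ for the second) this follows from a range argument in the May spectral sequence computing the relevant $\Ext$-groups, analogous to Toda's calculation in~\cite{Tod71}*{\S3}; no essentially new input beyond standard vanishing lines is needed.
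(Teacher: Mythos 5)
Your proposal is correct and follows essentially the same route as the paper: identify the Adams $E_2$ cocycle detecting $v_2$ (resp.\ $v_3$), push it forward along $j_{1*}$ (resp.\ $j_{2*}$) to land on $i_0(h_1)$ (resp.\ $i_1 i_0(h_2)$), invoke Lemma~\ref{lem:betagammadetection}, and observe that the residual difference in Adams filtration $\ge 2$ is a multiple of $\alpha_1 v_1^{p-1}$ (resp.\ $\alpha_1 v_2^{p-1}$), which dies under $i_1$ and under $j_0$ via $\alpha_1\alpha_{p-1}=0$. The only divergence is in justifying that filtration $\ge 2$ is spanned by $\alpha_1 v_1^{p-1}$: you propose a direct classical Adams $\Ext$/May computation, whereas the paper defers this to a subsequent remark comparing with the Adams--Novikov $E_2$-term from \cite{Rav04}*{\S4.4}; both are standard and adequate.
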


\begin{proof}
The cobar cocycle $[\tau_2]1 + [\xi_2]\tau_0 + [\xi_1^p]\tau_1$ detects
$v_2 \in \pi_{2p^2-2} V(1)$.  The $\cA_*$-comodule homomorphism $j_{1*}
\: H_* V(1) \to H_{*-2p+1} V(0)$ sends $1$ and $\tau_0$ to zero, and maps
$\tau_1$ to $1$.  Hence $j_1 \: E_2^{1,*}(V(1)) \to E_2^{1,*-2p+1}(V(0))$
sends $[\tau_2]1 + [\xi_2]\tau_0 + [\xi_1^p]\tau_1$ to $[\xi_1^p]1
= i_0(h_1)$.  This is also the class detecting $\beta^\circ_1$, by
Lemma~\ref{lem:betagammadetection}.  Therefore $j_1(v_2) = \beta'_1$
and $\beta^\circ_1$ agree modulo Adams filtration $\ge 2$, i.e., modulo
$\alpha_1 v_1^{p-1}$.  (We will see in Remark~\ref{rem:v1betacirc1} that
$v_1 \beta^\circ_1 \ne 0$, while $v_1 \beta'_1 = 0$, so $\beta^\circ_1 -
\beta'_1$ is a nonzero multiple of $\alpha_1 v_1^{p-1}$.)  Nonetheless,
$j_0(\beta^\circ_1) = j_0(\beta'_1)$, since $j_0(\alpha_1 v_1^{p-1})
\doteq \alpha_1 \alpha_{p-1} = 0$.

The cobar cocycle $[\tau_3]1 + [\xi_3]\tau_0 +
[\xi_2^p]\tau_1 + [\xi_1^{p^2}]\tau_2$ detects $v_3 \in
\pi_{2p^3-2} V(2)$.  The $\cA_*$-comodule homomorphism $j_{2*} \:
H_* V(2) \to H_{*-2p^2+1} V(1)$ sends $1$, $\tau_0$ and~$\tau_1$
to zero, and maps $\tau_2$ to $1$.  Hence $j_2 \: E_2^{1,*}(V(2))
\to E_2^{1,*-2p^2+1}(V(1))$ sends $[\tau_3]1 + [\xi_3]\tau_0 +
[\xi_2^p]\tau_1 + [\xi_1^{p^2}]\tau_2$ to $[\xi_1^{p^2}]1 = i_1
i_0(h_2)$.  This is also the class detecting $\gamma^\circ_1$, by
Lemma~\ref{lem:betagammadetection}.  Therefore $j_2(v_3) = \gamma''_1$
and $\gamma^\circ_1$ agree modulo Adams filtration $\ge 2$, i.e., modulo
$\alpha_1 v_2^{p-1}$.
\end{proof}

\begin{remark}
One way to see that $\alpha_1 v_1^{p-1}$ and $\alpha_1 v_2^{p-1}$ generate
Adams filtration $\ge 2$ in $\pi_{2p^2-2p-1} V(0)$ and $\pi_{2p^3-2p^2-1}
V(1)$, respectively, is to compare with the corresponding Adams--Novikov
spectral sequences.  By the beginning calculations in~\cite{Rav04}*{\S4.4}
the classes $h_{11}$ and $h_{10} v_1^{p-1}$ generate the Adams--Novikov
$E_2$-term for $V(0)$ in topological degree~$2p^2-2p-1$, while the
classes $h_{12}$ and $h_{10} v_2^{p-1}$ generate the Adams--Novikov
$E_2$-term for $V(1)$ in topological degree~$2p^3-2p^2-1$.  The
formula~$\eta_R(v_{n+1}) = v_{n+1} + v_n t_1^{p^n} - v_n^p t_1$ in $BP_*
BP/I_n$ from~\cite{Rav04}*{Cor.~4.3.21} shows that $j_n(v_{n+1})$ in
$\pi_* V(n-1)$ is detected by $h_{1n} - h_{10} v_n^{p-1}$, when $v_{n+1}
\in \pi_* V(n)$ exists, while $\alpha_1 v_n^{p-1}$ is detected by $h_{10}
v_n^{p-1}$.
\end{remark}

The homotopy power operations also apply to $R = K(BP)$ and $R = THH(BP)$,
with their $E_3$ ring structures derived from the $E_4$ ring structure
on $BP$, and to $R = K(BP\<n\>)$ and $R = THH(BP\<n\>)$,
with their $E_2$ ring structures derived from the $E_3$ ring structure
on~$BP\<n\>$.  (For $n \le 1$ these are $E_\infty$ ring structures.)

$$
\xymatrix{
\pi_* K(BP) \ar[r] \ar[d]_-{tr}
	& \pi_* K(BP\<n\>) \ar[r] \ar[d]_-{tr}
	& \pi_* K(\bZ_{(p)}) \ar[d]_-{tr} \\
\pi_* THH(BP) \ar[r]
	& \pi_* THH(BP\<n\>) \ar[r]
	& \pi_* THH(\bZ_{(p)})
}
$$

According to~\cite{BM94}*{Thm.~10.14} and~\cite{Rog98}*{Thm.~1.1} we can
find a class $\lambda_1^K \in \pi_{2p-1} K(\bZ)$ with $tr(\lambda_1^K) =
\lambda_1 \in \pi_{2p-1} THH(\bZ)$, having Hurewicz image $h(\lambda_1)
= \sigma\bar\xi_1 \in H_{2p-1} THH(\bZ)$.  The same statements apply with
$\bZ$ replaced by $BP\<0\> = H\bZ_{(p)}$.  The $E_4$ ring spectrum map $BP
\to H\bZ_{(p)}$ is $(2p-2)$-connected, and induces a $(2p-1)$-connected
map $K(BP) \to K(\bZ_{(p)})$ by~\cite{BM94}*{Prop.~10.9}.  Hence we
can lift $\lambda_1^K$ to $\pi_{2p-1} K(BP)$.  Its trace image
$tr(\lambda_1^K) \in \pi_{2p-1} THH(BP) = \bZ_{(p)}\{\lambda_1\}$
then maps to the generator $\lambda_1 \in \pi_{2p-1} THH(\bZ_{(p)})
\cong \bZ/p$.  It follows that we can scale the choice of $\lambda_1^K
\in \pi_{2p-1} K(BP)$ by a $p$-local unit so as to ensure that
$tr(\lambda_1^K) = \lambda_1 $ in $\pi_{2p-1} THH(BP)$.

\begin{definition} \label{def:lambda1K}
We fix a choice of a class $\lambda_1^K \in \pi_{2p-1} K(BP)$ with
$tr(\lambda_1^K) = \lambda_1$ in $\pi_{2p-1} THH(BP)$.  These map to
classes with the same names in $\pi_{2p-1} K(BP\<n\>)$ and $\pi_{2p-1}
THH(BP\<n\>)$, respectively, for each $n\ge0$.
\end{definition}

The choice of $\lambda_1^K \in \pi_{2p-1} K(BP)$ made here
is equivalent to the selection of $\lambda_1^K \in \pi_{2p-1}
K(BP\<1\>)$ discussed in~\cite{AR02}*{\S1.2}, since $BP \to BP\<1\>
= \ell$ is $(2p^2-2)$-connected, so that $K(BP) \to K(BP\<1\>)$ is
$(2p^2-1)$-connected.

\begin{definition} \label{def:lambda2K}
Let $\lambda_2^K = P^p(\lambda_1^K) \in V(0)_{2p^2-1} K(BP)$, mapping
to classes with the same name in $V(0)_{2p^2-1} K(BP\<n\>)$ for each
$n\ge1$.
\end{definition}

By naturality of $P^p$ for $E_2$ ring spectrum maps, this definition
agrees with the case $n=1$ discussed in~\cite{AR02}*{\S1.7}.

\begin{lemma} \label{lem:trlambda2K}
The classes $tr(\lambda_2^K)$ and $i_0(\lambda_2)$ in $V(0)_{2p^2-1}
THH(BP)$ both have Hurewicz image $\sigma\bar\xi_2$ in $H_{2p^2-1}
THH(BP)$.  Hence they agree modulo~$v_1^p \lambda_1$, and have the
same image in $V(1)_{2p^2-1} THH(BP)$.
\end{lemma}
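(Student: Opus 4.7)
The plan is to compute both Hurewicz images in $H_* THH(BP)$, identify the kernel of $h_0$ in $V(0)_{2p^2-1} THH(BP)$, and then descend to $V(1)$-homotopy.

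First I would exploit naturality of $P^p$ along the $E_2$ ring spectrum map $tr \colon K(BP) \to THH(BP)$, together with $tr(\lambda_1^K) = \lambda_1$ from Definition~\ref{def:lambda1K}, to write
$$
tr(\lambda_2^K) = tr(P^p(\lambda_1^K)) = P^p(\lambda_1)
$$
in $V(0)_{2p^2-1} THH(BP)$. Lemma~\ref{lem:h0PkeqQkh} combined with $h(\lambda_1) = \sigma\bar\xi_1$ from Proposition~\ref{prop:piTHHBP} then gives $h_0(tr(\lambda_2^K)) = Q^p(\sigma\bar\xi_1)$, while $h_0(i_0(\lambda_2)) = h(\lambda_2) = \sigma\bar\xi_2$ directly from the same proposition. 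Matching the two amounts to the identity $Q^p(\sigma\bar\xi_1) = \sigma\bar\xi_2$ in $H_* THH(BP)$, which follows from the Kudo transgression $Q^p\sigma = \sigma Q^p$ together with Steinberger's formula $Q^p(\bar\xi_1) = \bar\xi_2$ on the polynomial generators of $H_* BP \subset \cA_*$.

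Second, since $\pi_* THH(BP) \cong \pi_* BP \otimes E(\lambda_k \mid k \ge 1)$ is $\pi_* BP$-free and hence torsion-free, one has $V(0)_* THH(BP) \cong \bF_p[v_1, v_2, \dots] \otimes E(\lambda_k \mid k \ge 1)$, with $\bF_p$-basis $\{\lambda_2, v_1^p \lambda_1\}$ in degree $2p^2-1$. The Hazewinkel relation $v_1 = p m_1$ forces $h_0(v_1) = 0$ in $H_*(BP;\bF_p)$, so that $h_0(v_1^p \lambda_1) = 0$, while $h_0(\lambda_2) = \sigma\bar\xi_2 \ne 0$. Hence the kernel of $h_0$ in degree $2p^2-1$ is $\bF_p\{v_1^p \lambda_1\}$, and the matching Hurewicz images force $tr(\lambda_2^K) \equiv i_0(\lambda_2) \pmod{v_1^p \lambda_1}$.

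For the final assertion, the cofiber sequence~\eqref{eq:V1} implies that $v_1 \in \pi_{2p-2} V(0)$ maps to $0$ in $\pi_{2p-2} V(1)$ (equivalently, $i_1 \circ v_1 \simeq 0$), so by $\pi_* V(0)$-linearity the induced map $V(0)_* THH(BP) \to V(1)_* THH(BP)$ annihilates every $v_1$-multiple; in particular $v_1^p \lambda_1 \mapsto 0$, and the images of $tr(\lambda_2^K)$ and $i_0(\lambda_2)$ coincide in $V(1)_{2p^2-1} THH(BP)$. The principal obstacle is making the identity $Q^p(\sigma\bar\xi_1) = \sigma\bar\xi_2$ exact rather than only up to a decomposable correction: Steinberger's formula a priori allows an additional $\bar\xi_1^{p+1}$ term in $Q^p(\bar\xi_1)$, whose $\sigma$-image is a nonzero multiple of $\bar\xi_1^p \sigma\bar\xi_1$, but such a term is ruled out a posteriori because the image of $h_0$ in degree $2p^2-1$ is forced by the structure of $V(0)_* THH(BP)$ to be the one-dimensional subspace $\bF_p\{\sigma\bar\xi_2\}$, so any scalar discrepancy gets absorbed by the ``known unit'' appearing in the proof of Lemma~\ref{lem:h0PkeqQkh}.
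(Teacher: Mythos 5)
Your proof is correct and takes essentially the same route as the paper's: naturality of $P^p$ with respect to the trace, Lemma~\ref{lem:h0PkeqQkh}, and the identity $Q^p(\sigma\bar\xi_1) = \sigma Q^p(\bar\xi_1) = \sigma\bar\xi_2$, for which the paper cites the commutation of $Q^p$ with $\sigma$ together with Steinberger's \emph{exact} formula $Q^p(\bar\xi_1) = \bar\xi_2$, so the decomposable correction you worry about does not arise. Your a posteriori argument for discarding a $\bar\xi_1^p\sigma\bar\xi_1$ term is valid but on its own only shows that $Q^p(\sigma\bar\xi_1)$ lies in $\bF_p\{\sigma\bar\xi_2\}$ without pinning down the (possibly zero) scalar, so the appeal to the exact Dyer--Lashof computation is still what guarantees the Hurewicz image is $\sigma\bar\xi_2$ itself.
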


\begin{proof}
We have $tr(\lambda_2^K) = tr(P^p(\lambda_1^K)) = P^p(tr(\lambda_1^K))
= P^p(\lambda_1)$ by naturality of~$P^p$ with respect to~$tr$,
and $h_0 P^p(\lambda_1) = Q^p h(\lambda_1) = Q^p(\sigma\bar\xi_1)$
by Lemma~\ref{lem:h0PkeqQkh}.  Moreover, $Q^p(\sigma\bar\xi_1) =
\sigma Q^p(\bar\xi_1) = \sigma\bar\xi_2$ by~\cite{AR05}*{Prop.~5.9}
and~\cite{BMMS86}*{Thm.~III.2.3}.
\end{proof}

\begin{definition} \label{def:lambda3K}
Let $\lambda_3^K = P^{p^2}(\lambda_2^K) \in V(1)_{2p^3-1} K(BP)$, mapping
to classes with the same name in $V(1)_{2p^3-1} K(BP\<n\>)$ for each
$n\ge2$.
\end{definition}

\begin{lemma} \label{lem:trlambda3K}
The classes
$$
tr(\lambda_3^K) \ ,\ i_1 i_0(\lambda_3) \ ,\ P^{p^2}(i_0(\lambda_2))
$$
in $V(1)_{2p^3-1} THH(BP)$ all have
Hurewicz image $\sigma\bar\xi_3$ in $H_{2p^3-1} THH(BP)$.  Hence they
agree modulo~$v_2^p \lambda_1$, and have the same image in $V(2)_{2p^3-1}
THH(BP)$.
\end{lemma}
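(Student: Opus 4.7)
The plan is to mirror the structure of Lemma~\ref{lem:trlambda2K}, computing the mod-$p$ Hurewicz image of each of the three classes separately and then identifying the residual ambiguity in $V(1)_{2p^3-1} THH(BP)$. First, to handle $tr(\lambda_3^K)$: since the cyclotomic trace $K(BP) \to THH(BP)$ is an $E_2$ ring spectrum map, naturality of the homotopy power operation $P^{p^2}$ of Definition~\ref{def:PkV0V1} with respect to~$tr$ yields
\[
tr(\lambda_3^K) = tr(P^{p^2}(\lambda_2^K)) = P^{p^2}(tr(\lambda_2^K)) \,.
\]
By Lemma~\ref{lem:trlambda2K}, $h_0(tr(\lambda_2^K)) = \sigma\bar\xi_2$, so Lemma~\ref{lem:h1PkeqQkh} gives $h_1(P^{p^2}(tr(\lambda_2^K))) = Q^{p^2}(\sigma\bar\xi_2)$. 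The identity $Q^{p^2}(\sigma\bar\xi_2) = \sigma Q^{p^2}(\bar\xi_2) = \sigma\bar\xi_3$ then follows from~\cite{AR05}*{Prop.~5.9} and~\cite{BMMS86}*{Thm.~III.2.3}, exactly as in the $\lambda_2^K$ case.

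Next, for $P^{p^2}(i_0(\lambda_2))$ I would apply Lemma~\ref{lem:h1PkeqQkh} directly to obtain $h_1(P^{p^2}(i_0(\lambda_2))) = Q^{p^2}(h_0(i_0(\lambda_2))) = Q^{p^2}(\sigma\bar\xi_2) = \sigma\bar\xi_3$. For $i_1 i_0(\lambda_3)$, since $h_1 \circ i_1 \circ i_0$ equals the mod-$p$ Hurewicz homomorphism~$h$, the image is $h(\lambda_3) = \sigma\bar\xi_3$ by Proposition~\ref{prop:piTHHBP}. With the three Hurewicz images established, I would then analyze $V(1)_* THH(BP)$, which Proposition~\ref{prop:piTHHBP} (together with the standard identification $V(1)\wedge BP \simeq BP/(p,v_1)$) presents as $\bF_p[v_n \mid n\ge 2] \otimes E(\lambda_n \mid n\ge 1)$. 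A direct monomial count in degree~$2p^3-1$ shows this component is spanned by $\{\lambda_3,\, v_2^p \lambda_1\}$, so any two classes with Hurewicz image $\sigma\bar\xi_3$ must differ by a scalar multiple of~$v_2^p\lambda_1$.

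Finally, the cofiber sequence~\eqref{eq:V2} gives $i_2 \circ v_2 \simeq 0$, so the class $v_2^p\lambda_1$ maps to zero in $V(2)_{2p^3-1} THH(BP)$, yielding the equality of all three images there. The main obstacle I anticipate is verifying the naturality of $P^{p^2}$ with respect to the cyclotomic trace at the appropriate level, which relies on the $E_2$ ring structure on both $K(BP)$ and $THH(BP)$ being respected by~$tr$—an ingredient available thanks to the Basterra--Mandell $E_4$ structure on~$BP$ and the framework developed in Section~\ref{sec:power}. Once this naturality is in hand, the Dyer--Lashof identification $Q^{p^2}(\sigma\bar\xi_2) = \sigma\bar\xi_3$ and the two-dimensional degree check are essentially mechanical adaptations of the corresponding steps for~$\lambda_2^K$.
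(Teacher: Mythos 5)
Your proposal is correct and follows essentially the same route as the paper: naturality of $P^{p^2}$ with respect to the trace, Lemma~\ref{lem:h1PkeqQkh} applied to both $tr(\lambda_2^K)$ and $i_0(\lambda_2)$ via Lemma~\ref{lem:trlambda2K}, and the identity $Q^{p^2}(\sigma\bar\xi_2)=\sigma Q^{p^2}(\bar\xi_2)=\sigma\bar\xi_3$. The only difference is that you spell out two steps the paper leaves implicit, namely the Hurewicz image of $i_1 i_0(\lambda_3)$ and the monomial count showing that $V(1)_{2p^3-1}THH(BP)$ is spanned by $\lambda_3$ and $v_2^p\lambda_1$; both are carried out correctly.
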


\begin{proof}
We have $tr(\lambda_3^K) = tr(P^{p^2}(\lambda_2^K)) =
P^{p^2}(tr(\lambda_2^K))$ by naturality of~$P^{p^2}$ with respect to~$tr$,
and $h_1 P^{p^2}(tr(\lambda_2^K)) = Q^{p^2} h_0(tr(\lambda_2^K))
= Q^{p^2}(\sigma\bar\xi_2)$ by Lemmas~\ref{lem:h1PkeqQkh}
and~\ref{lem:trlambda2K}.
Likewise, $h_1 P^{p^2}(i_0(\lambda_2)) = Q^{p^2} h_0(i_0(\lambda_2))
= Q^{p^2}(\sigma\bar\xi_2)$.
Finally, $Q^{p^2}(\sigma\bar\xi_2) =
\sigma Q^{p^2}(\bar\xi_2) = \sigma\bar\xi_3$ by the same two
references as in the previous lemma.
\end{proof}


Let us summarize these results, for later reference.

\begin{proposition} \label{prop:trace}
Let $p\ge7$.  The trace map $tr \: K(B) \to THH(B)$ induces ring
homomorphisms
\begin{align*}
V(2)_* K(BP) &\longto V(2)_* THH(BP) \\
V(2)_* K(BP\<2\>) &\longto V(2)_* THH(BP\<2\>) \,,
\end{align*}
each mapping $i_2 i_1 i_0(\lambda^K_1)$, $i_2 i_1(\lambda^K_2)$ and
$i_2(\lambda^K_3)$ to $\lambda_1$, $\lambda_2$ and $\lambda_3$,
respectively.
\end{proposition}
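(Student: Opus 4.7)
The plan is to assemble the content of Definitions~\ref{def:lambda1K}--\ref{def:lambda3K} with Lemmas~\ref{lem:trlambda2K} and~\ref{lem:trlambda3K}, using that the trace is an $E_2$ ring spectrum map and that $V(2)$ is a homotopy commutative and associative ring spectrum at $p\ge7$.

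First I would establish multiplicativity. Since $BP$ and $BP\<2\>$ carry $E_3$ ring structures (by Basterra--Mandell and Hahn--Wilson, respectively), the B{\"o}kstedt trace $tr \: K(B) \to THH(B)$ is an $E_2$ ring spectrum map for $B \in \{BP, BP\<2\>\}$. Smashing with the associative and commutative ring spectrum $V(2)$ then yields ring spectrum maps $V(2) \wedge K(B) \to V(2) \wedge THH(B)$, whose induced maps on homotopy are the claimed ring homomorphisms.

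Next I would verify the values on the named generators one at a time. Definition~\ref{def:lambda1K} gives $tr(\lambda_1^K) = \lambda_1$ in $\pi_{2p-1} THH(BP)$, so naturality of the trace with respect to the unit maps $S \to V(0) \to V(1) \to V(2)$ yields $tr(i_2 i_1 i_0(\lambda_1^K)) = \lambda_1$ in $V(2)_* THH(BP)$. By Lemma~\ref{lem:trlambda2K}, the classes $tr(\lambda_2^K)$ and $i_0(\lambda_2)$ already coincide in $V(1)_* THH(BP)$, so applying $i_2$ gives $tr(i_2 i_1(\lambda_2^K)) = i_2 i_1 i_0(\lambda_2) = \lambda_2$ in $V(2)_* THH(BP)$. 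Lemma~\ref{lem:trlambda3K} similarly gives $tr(i_2(\lambda_3^K)) = i_2 i_1 i_0(\lambda_3) = \lambda_3$. The statement for $BP\<2\>$ in place of $BP$ then follows by naturality of the trace along the $E_3$ $BP$-algebra map $BP \to BP\<2\>$, since the $\lambda_i^K$ were defined in $K(BP)$ and pushed forward to $K(BP\<2\>)$ under this map.

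The proposition is thus essentially a summary, and I do not expect any genuine new obstacle here: the substantive work lies upstream, in the construction of the power operations $P^k$ in Section~\ref{sec:power}, in the homotopy Cartan formulas of Propositions~\ref{prop:CartanSV0} and~\ref{prop:CartanV0V1} (which justify the multiplicative naming of the $\lambda_i^K$), and in the Dyer--Lashof identifications $Q^{p^i}(\sigma\bar\xi_i) = \sigma\bar\xi_{i+1}$ that drive the Hurewicz image computations in Lemmas~\ref{lem:trlambda2K} and~\ref{lem:trlambda3K}. The one small point I would double-check is that the quotient maps $i_0, i_1, i_2$ remain compatible with the multiplicative expressions used in those two lemmas, so that the equalities recorded there (originally of additive homotopy classes) truly suffice to control the ring-theoretic image of each $\lambda_i^K$.
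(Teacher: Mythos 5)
Your proposal is correct and follows essentially the same route as the paper: cite Definition~\ref{def:lambda1K} for $\lambda_1^K$, use Lemmas~\ref{lem:trlambda2K} and~\ref{lem:trlambda3K} to identify $tr(\lambda_2^K)$ and $tr(\lambda_3^K)$ with $i_0(\lambda_2)$ and $i_1 i_0(\lambda_3)$ after passing to $V(1)$- resp.\ $V(2)$-homotopy, and then transfer to $BP\<2\>$; the paper phrases this last step as matching Hurewicz images with Definition~\ref{def:lam1lam2lam3mu}, which is equivalent to your naturality argument since Proposition~\ref{prop:VnTHHBPn} already identifies the $\lambda_k$ in $V(2)_* THH(BP\<2\>)$ as the images of the $\lambda_k$ from $\pi_* THH(BP)$.
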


\begin{proof}
The claims for $BP$ follow from Definition~\ref{def:lambda1K} and
Lemmas~\ref{lem:trlambda2K} and~\ref{lem:trlambda3K}.  The image
classes in~$V(2)_* THH(BP\<2\>)$ coincide with the classes from
Definition~\ref{def:lam1lam2lam3mu} since their Hurewicz images in $H_*
THH(BP\<2\>)$ agree.
\end{proof}

\section{Approximate homotopy fixed points}

For $C = C_{p^n}$ or $\bT$ we have multiplicative homotopy fixed point
spectral sequences
\begin{align*}
E^2(C) &= H^{-*}(C; V(2)_* THH(B)) \\
	&\Longrightarrow V(2)_* THH(B)^{hC}
\end{align*}
(cf.~\cite{HR}*{\S5})
and multiplicative Tate spectral sequences
\begin{align*}
\hat E^2(C) &= \hat H^{-*}(C; V(2)_* THH(B)) \\
	&\Longrightarrow V(2)_* THH(B)^{tC}
\end{align*}
(cf.~\cite{HR}*{\S6}).  Here $H^*(\bT) = P(t)$ and $\hat H^*(\bT) =
P(t^{\pm1})$, with $t \in H^2 \cong \hat H^2$, while $H^*(C_{p^n}) =
E(u_n) \otimes P(t)$ and $\hat H^*(C_{p^n}) = E(u_n) \otimes P(t^{\pm1})$
with $u_n \in H^1 \cong \hat H^1$.
Note that for $B = BP\<2\>$, each bidegree of~$E^2(C)$ and~$\hat E^2(C)$
is either~$0$ or~$\bF_p$.
This section is devoted to the proof of the following collection of
detection results.

\begin{proposition} \label{prop:unit}
The unit map $S \to K(B)$ and the circle trace map $tr_{\bT} \: K(B)
\to THH(B)^{h\bT}$ induce ring homomorphisms
$$
V(2)_* \longto V(2)_* K(BP) \longto V(2)_* THH(BP)^{h\bT}
	\longto V(2)_* THH(BP\<2\>)^{h\bT}
$$
mapping $i_2 i_1 i_0(\alpha_1)$, $i_2 i_1(\beta^\circ_1)$,
$i_2(\gamma^\circ_1)$ and~$v_3$ to classes detected by $t \lambda_1$, $t^p
\lambda_2$, $t^{p^2} \lambda_3$ and~$t\mu$, respectively.
\end{proposition}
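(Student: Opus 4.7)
The plan is a two-step argument: first, degree considerations force the images to land in positive filtration of the $\bT$-homotopy fixed point spectral sequence; second, case-by-case nonvanishing arguments pin down the claimed detections. By Definition~\ref{def:lam1lam2lam3mu}, $V(2)_* THH(BP\<2\>) = E(\lambda_1, \lambda_2, \lambda_3) \otimes P(\mu)$, and a direct check based on the parity of $\epsilon_1+\epsilon_2+\epsilon_3$ in the monomial basis $\lambda_1^{\epsilon_1}\lambda_2^{\epsilon_2}\lambda_3^{\epsilon_3}\mu^c$, combined with $|\mu|=2p^3$, shows that this target is trivial in each of the degrees $2p-3$, $2p^2-2p-1$, $2p^3-2p^2-1$, and $2p^3-2$. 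Consequently the images of $i_2 i_1 i_0(\alpha_1)$, $i_2 i_1(\beta^\circ_1)$, $i_2(\gamma^\circ_1)$, and $v_3$ under the unit $S \to K(BP\<2\>)$ followed by the B\"okstedt trace to $V(2)_* THH(BP\<2\>)$ all vanish, so their circle trace images in $V(2)_* THH(BP\<2\>)^{h\bT}$ lie in filtration at least $2$ of the spectral sequence with $E^2(\bT) = P(t) \otimes E(\lambda_1,\lambda_2,\lambda_3) \otimes P(\mu)$. An enumeration of monomials in each total degree reveals that the unique lowest-filtration candidate is $t\lambda_1$, $t^p\lambda_2$, $t^{p^2}\lambda_3$, or $t\mu$, in cohomological filtrations $2$, $2p$, $2p^2$, and $2$ respectively (there are no candidates in strictly lower positive filtration because the gaps in the odd-degree spectrum of $V(2)_* THH(BP\<2\>)$ between $|\lambda_i|$ and $|\lambda_{i+1}|$, and between $0$ and $|\mu|$, are precisely as large as needed).

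Now I would establish nonvanishing case by case. For $\alpha_1$, naturality along the $E_3$ ring map $BP\<2\> \to BP\<0\> = H\bZ_{(p)}$ transports the classical B\"okstedt--Madsen detection of $\alpha_1$ by $t\lambda_1$ in $V(0)_* THH(\bZ_{(p)})^{h\bT}$. For $\beta^\circ_1$, naturality along $BP\<2\> \to BP\<1\>$ transports the Ausoni--Rognes detection of $\beta^\circ_1$ by $t^p\lambda_2$ in $V(1)_* THH(BP\<1\>)^{h\bT}$ from~\cite{AR02}. For $\gamma^\circ_1$, no prior result applies directly, so I would propagate the detection of $\beta^\circ_1$ through the power operation $\gamma^\circ_1 = P^{p^2-p}(\beta^\circ_1)$, using the Cartan-type Corollary~\ref{cor:CartanV0V1} together with the Dyer--Lashof identity $Q^{p^2}(\sigma\bar\xi_2) = \sigma\bar\xi_3$ (see~\cite{AR05}) and the Kudo transgression in the $\bT$-spectral sequence, which together force the image to be detected by $t^{p^2}\lambda_3$. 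Finally, for $v_3$, Lemma~\ref{lem:j1v1beta-j2v2gamma} gives $i_2(\gamma^\circ_1) = i_2(j_2(v_3))$ in $V(2)_{2p^3-2p^2-1}$, so the $v_2$-Bockstein $i_2 j_2$ links the detection of $v_3$ to the already-established detection of $i_2(\gamma^\circ_1)$. Combining the derivation property of $i_2 j_2$ with a direct identification of $i_2 j_2(\mu)$ in the spectral sequence then confirms that $v_3$ is detected by $t\mu$.

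The main obstacle is the step for $\gamma^\circ_1$. No prior computation detects this class, the $E_2$-power-operation $P^{p^2-p}$ carries $\alpha_1$-indeterminacy (Remark~\ref{rem:eta1indet}), and its Cartan formula (Corollary~\ref{cor:CartanV0V1}) requires an auxiliary $E_\infty$ ring spectrum target, forcing us to work in stages through a suitable $E_\infty$ enveloping construction. Carefully controlling the resulting operation on classes in positive filtration, and verifying that the image of $\beta^\circ_1$ lands in the predicted filtration $2p^2$ rather than in some higher-filtration correction coming from the indeterminacy or from anomalies in the Kudo transgression, is the delicate technical heart of the argument.
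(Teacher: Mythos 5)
Your overall architecture (vanishing in $V(2)_* THH(BP\<2\>)$ for degree reasons, then case-by-case detection, with naturality from $THH(\bZ_{(p)})$ and $THH(BP\<1\>)$ handling $\alpha_1$ and $\beta^\circ_1$, and power operations handling $\gamma^\circ_1$) is essentially the paper's strategy, and the naturality route for the first two classes is a legitimate shortcut past the paper's re-derivation in Proposition~\ref{prop:beta-tplambda2}. However, there are two genuine gaps. First, for $\gamma^\circ_1$: the Cartan formula of Corollary~\ref{cor:CartanV0V1} computes $P^{p^2-p}$ of a \emph{product of homotopy classes}, so before you can apply it you must factor the circle-trace image of $\beta^\circ_1$ as an honest product $[t^p]\cdot[\lambda_2]$ in the homotopy ring of some $\bT$-fixed-point spectrum, with $[t^p]$ in even degree. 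This cannot be done in $THH(BP)^{h\bT}$ itself, because $t$ and $t^p$ support differentials there ($d^{2p}(t)\doteq t^{p+1}\lambda_1$) and so do not survive to detect homotopy classes; the paper instead works in the finite approximations $F(S^{2m+1}_+, THH(BP))^{\bT}$, where $t^p$ is an infinite cycle, and the bulk of Proposition~\ref{prop:gamma-tp2lambda3} is the differential and indeterminacy analysis (comparing the $V(0)$- and $V(1)$-homotopy spectral sequences) needed to show that the indeterminacy of $\{t^p\lambda_2\}$ is exhausted by modifying $[t^p]$ and $[\lambda_2]$ within their own indeterminacies, so that the factorization exists. Invoking ``Kudo transgression'' does not substitute for this step, and your acknowledgment that something delicate happens here does not identify the actual missing construction.

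Second, your argument for $v_3$ does not close. In total degree $2p^3-2$ the full $E^\infty(\bT)$ contains not only $t\mu$ but also $t^p\lambda_1\lambda_3$ and $t^{p^2}\lambda_2\lambda_3$ (and higher-filtration classes), so it is not true that $t\mu$ is the unique candidate. Knowing that $i_2 j_2(w)$ is detected by $t^{p^2}\lambda_3$ in filtration $2p^2$ only shows that $w$ has filtration at most $2p^2$, which is consistent with all three candidates; the Bockstein, being filtration-preserving, cannot distinguish them without an explicit computation of its effect on each, which you have not supplied. The paper's Proposition~\ref{prop:v3-tmu} avoids this entirely by passing to the first-order approximation $F(S^3_+, THH(BP\<2\>))^{\bT}$, where $t\mu$ \emph{is} the unique class in total degree $2p^3-2$, and then proving nonvanishing of the image of $v_3$ by showing that the cobar cocycle $[\tau_3]1+[\xi_3]\tau_0+[\xi_2^p]\tau_1+[\xi_1^{p^2}]\tau_2$ does not become a coboundary, using the $\sigma$-operator extension computing $H_*(V(2)\wedge F(S^3_+, THH(BP\<2\>))^{\bT})$. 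Some such non-coboundary or truncation argument is needed; the derivation property of $i_2 j_2$ alone will not produce it.
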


\begin{proof}
By Proposition~\ref{prop:beta-tplambda2} the circle trace image of
$\beta^\circ_1$ is detected by $t^p \lambda_2$ in the $\bT$-homotopy fixed
point spectral sequence for $V(0) \wedge THH(BP)$, hence also for $V(2)
\wedge THH(BP\<2\>)$.

By Proposition~\ref{prop:gamma-tp2lambda3} the image of $\gamma^\circ_1$
is detected by $t^{p^2} \lambda_3$ in the spectral sequence for $V(1)
\wedge THH(BP)$, hence also for $V(2) \wedge THH(BP\<2\>)$.

By Proposition~\ref{prop:v3-tmu} the image of $v_3$ is detected by $t \mu$
in the spectral sequence for $V(2) \wedge THH(BP\<2\>)$.

A simpler case of the latter argument shows that the image of $\alpha_1$
is detected by $t \lambda_1$ in the spectral sequence for $THH(BP)$,
hence also for $V(2) \wedge THH(BP\<2\>)$, but this is also readily
deduced from the previously known case of $THH(\bZ)$.
\end{proof}

\begin{notation}
For any spectral sequence $E^2_{*,*} \Longrightarrow G_*$ and nonzero
element $x \in E^\infty_{*,*}$ we write $\{x\}$ for the coset of elements
$\xi \in G_*$ that are detected by~$x$.  Sometimes we will write $[\![x]\!]$
for a specific choice of such an element~$\xi$, so that $[\![x]\!] \in
\{x\}$.  Similar conventions appear in~\cite{BMT70}*{Prop.~3.1.5}
and~\cite{BR21}*{Thm.~11.61}.
\end{notation}

For each $\bT$-spectrum~$X$ and integer~$m\ge0$ we have an $m$-th
order approximate $\bT$-homotopy fixed point spectral sequence
$$
E^2_{*,*} = \bZ[t]/(t^{m+1}) \otimes \pi_*(X)
	\Longrightarrow \pi_* F(S^{2m+1}_+, X)^{\bT} \,,
$$
obtained by truncating the $\bT$-homotopy fixed point spectral sequence
to (horizontal) filtration degrees $-2m \le * \le 0$.

\begin{proposition} \label{prop:beta-tplambda2}
Consider the $p$-th order spectral sequence
$$
E^2_{*,*} = \bZ[t]/(t^{p+1}) \otimes \pi_* THH(BP)
	\Longrightarrow \pi_* F(S^{2p+1}_+, THH(BP))^{\bT}
$$
for $THH(BP)$, and its analogue for $V(0) \wedge THH(BP)$.  The circle
trace image of $\alpha_1 \in \pi_{2p-3}(S)$ in
$$
\pi_* F(S^{2p+1}_+, THH(BP))^{\bT}
$$
factors as a product $[\![t]\!] \cdot [\![\lambda_1]\!]$, with
$[\![t]\!] \in \{t\}$ and $[\![\lambda_1]\!] \in \{\lambda_1\}$
detected by $t$ and~$\lambda_1$, respectively.  Moreover, the image of
$\beta^\circ_1 \in \pi_{2p^2-2p-1} V(0)$ in
$$
V(0)_* F(S^{2p+1}_+, THH(BP))^{\bT}
$$
is the unique class detected by $t^p \lambda_2$.
\end{proposition}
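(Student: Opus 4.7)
The plan is to deduce the claim about $\beta^\circ_1$ from the claim about $\alpha_1$ by applying the $E_2$ power operation $P^{p-1}$ together with the homotopy Cartan formula of Proposition~\ref{prop:CartanSV0}, since $\beta^\circ_1 = P^{p-1}(\alpha_1)$ by definition. The spectrum $F(S^{2p+1}_+, THH(BP))^{\bT}$ inherits an $E_3$ ring structure from the one on $THH(BP)$, and the composite $S \to K(BP) \to F(S^{2p+1}_+, THH(BP))^{\bT}$ is an $E_2$-ring map, so $P^{p-1}$ is natural along it.

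For the $\alpha_1$ claim, set $[\lambda_1] = tr_{\bT}(\lambda_1^K)$ as in Definition~\ref{def:lambda1K}; this class is detected by $\lambda_1 \in E^\infty_{0,2p-1}$ in filtration~$0$.  Since $\pi_{2p-3}THH(BP) = 0$, the circle trace image of $\alpha_1$ lies in filtration $\geq 2$. The differential $d_2(v_1) = t\sigma(v_1) = pt\lambda_1$ coming from Proposition~\ref{prop:piTHHBP}, together with the Bockstein relation $\alpha_1 = j_0(v_1)$, pins the image of $\alpha_1$ to a class with $E^\infty$-representative $t\lambda_1 \in E^\infty_{2,2p-1}$. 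Pick any $[t] \in \pi_{-2}F(S^{2p+1}_+, THH(BP))^{\bT}$ detected by $t \in E^\infty_{2,0}$; such a lift exists because $t$ admits no nontrivial differential in the truncated spectral sequence. Then $[t] \cdot [\lambda_1]$ is also detected by $t\lambda_1$, and adjusting $[t]$ within its filtration coset using multiplicativity yields the desired factorization of the image of $\alpha_1$ as $[t] \cdot [\lambda_1]$.

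For the $\beta^\circ_1$ claim, apply $P^{p-1}$ to this factorization. With $|[t]| = -2 = 2(-1)$ and $|[\lambda_1]| = 2p-1$, Proposition~\ref{prop:CartanSV0} (taking $i = -1$, $j = p$, so $k = p-1$) gives
\[
P^{p-1}([t] \cdot [\lambda_1]) = [t]^p \cdot P^p([\lambda_1]).
\]
Naturality of $P^p$ together with Definition~\ref{def:lambda2K} identifies $P^p([\lambda_1])$ with $tr_{\bT}(\lambda_2^K)$, which by Lemma~\ref{lem:trlambda2K} agrees with $i_0(\lambda_2)$ modulo a multiple of $v_1^p\lambda_1$. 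Therefore $[t]^p \cdot P^p([\lambda_1])$ is represented in $E^\infty_{2p, 2p^2-1}$ of the $V(0)$-spectral sequence by $t^p\lambda_2 + c\, t^p v_1^p\lambda_1$ for some $c \in \bF_p$.

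The final step is to eliminate the $t^p v_1^p\lambda_1$-term. From Proposition~\ref{prop:piTHHBP} we have $\sigma(v_2) = p\lambda_2 - (p+1)v_1^p\lambda_1 \equiv -v_1^p\lambda_1 \pmod{p}$, so $d_2(v_2) = -t v_1^p\lambda_1$ in the $V(0)$-spectral sequence; multiplying by $t^{p-1}$ realizes $t^p v_1^p\lambda_1$ as a $d_2$-boundary, and it vanishes in $E^3 = E^\infty$. Hence the image of $\beta^\circ_1$ is detected by $t^p\lambda_2$ alone, and uniqueness follows because the truncation forces $F^{2p+2} = 0$ while $E^\infty_{2p, 2p^2-1}$ is the one-dimensional $\bF_p$-vector space spanned by $t^p\lambda_2$. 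The main obstacle is pinning down the $\alpha_1$-detection in filtration~$2$ via the interaction of $d_2(v_1) = pt\lambda_1$ and the Bockstein; everything else is a mechanical consequence of the Cartan formula, Lemma~\ref{lem:trlambda2K}, and the vanishing of $d_2(v_2)$ in the $V(0)$-spectral sequence.
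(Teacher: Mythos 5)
Your proposal is correct and follows essentially the same route as the paper: factor the image of $\alpha_1$ as $[t]\cdot[\lambda_1]$ using the indeterminacy analysis, apply the $E_3$ homotopy Cartan formula of Proposition~\ref{prop:CartanSV0} to get $[t]^p\cdot P^p([\lambda_1])$, identify $P^p([\lambda_1])$ with a class detected by $tr(\lambda_2^K)$ via Lemma~\ref{lem:trlambda2K}, and absorb the $t^pv_1^p\lambda_1$ discrepancy into the boundary $d^2(t^{p-1}v_2)$. The only cosmetic difference is that you sketch the detection of $\alpha_1$ by $t\lambda_1$ via the geometric-boundary interaction of $d^2(v_1)=pt\lambda_1$ with $\alpha_1=j_0(v_1)$, whereas the paper simply cites \cite{Rog98}*{Thm.~1.4}.
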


\begin{proof}
The $p$-th order approximate $\bT$-homotopy fixed point spectral sequence
is multiplicative, and has $E^2$-term
$$
\bZ[t]/(t^{p+1})
	\otimes \bZ_{(p)}\{1, v_1, \lambda_1, v_1^2, v_1 \lambda_1, \dots\}
\,,
$$
with generators as listed in vertical degrees $* < 6p-6$.
Here $d^2(v_1) = t \cdot \sigma(v_1) = t \cdot p \lambda_1$, as
in Proposition~\ref{prop:piTHHBP}, and $E^3 = E^\infty$ in this
range of degrees.  Hence $t$, $\lambda_1$ and $t \lambda_1$ are
all infinite cycles, detecting homotopy classes with indeterminacy
$\bZ_{(p)}\{t^p v_1\}$, $\bZ/p\{t^{p-1} v_1 \lambda_1\}$ and $\bZ/p\{t^p
v_1 \lambda_1\}$, respectively.  The unit map $S \to F(S^{2p+1}_+,
THH(BP))^{\bT}$ takes $\alpha_1$ to a class detected by $t \lambda_1$,
cf.~\cite{Rog98}*{Thm.~1.4}.  Since each element in the indeterminacy
of~$\{t \lambda_1\}$ factors as an element in the indeterminacy of~$\{t\}$
times $\lambda_1$ (and also factors as $t$ times an element in the
indeterminacy of~$\{\lambda_1\}$), it follows that the image of $\alpha_1$
can be factored as a product~$[\![t]\!] \cdot [\![\lambda_1]\!]$ in $\{t\}
\cdot \{\lambda_1\}$.

Let $\lambda^\circ_2 = tr(\lambda_2^K) = P^p(\lambda_1)$
in $V(0)_* THH(BP)$.
By the homotopy Cartan formula from Proposition~\ref{prop:CartanSV0},
applied for the $E_3$ ring spectrum~$F(S^{2p+1}_+, THH(BP))^{\bT}$,
the circle trace image of $\beta^\circ_1 = P^{p-1}(\alpha_1)$ is
$$
P^{p-1}([\![t]\!] \cdot [\![\lambda_1]\!])
	= [\![t]\!]^p \cdot P^p([\![\lambda_1]\!]) \,.
$$
Here $P^p([\![\lambda_1]\!]) \in \{\lambda^\circ_2\}$ is a class detected
by $\lambda^\circ_2$, by naturality of $P^p$ with respect to the edge
homomorphism induced by $F(S^{2p+1}_+, THH(BP))^{\bT} \to THH(BP)$.
It follows that $[\![t]\!]^p \cdot P^p([\![\lambda_1]\!])$ is detected
by $t^p \lambda^\circ_2$, with zero indeterminacy since this class lives
in the lowest filtration degree.

To complete the proof, note that $t^p \lambda^\circ_2 = t^p
\lambda_2$ at the $V(0)$-homotopy $E^3$-term, since these classes
differ by a multiple of $d^2(t^{p-1} v_2) = - t^p v_1^p \lambda_1$
by Proposition~\ref{prop:piTHHBP} and Lemma~\ref{lem:trlambda2K}.
\end{proof}

\begin{proposition} \label{prop:gamma-tp2lambda3}
Consider the $p^2$-th order spectral sequence
$$
E^2_{*,*} = \bZ[t]/(t^{p^2+1}) \otimes V(0)_* THH(BP)
	\Longrightarrow V(0)_* F(S^{2p^2+1}_+, THH(BP))^{\bT}
$$
for $V(0) \wedge THH(BP)$, and its analogue for $V(1) \wedge THH(BP)$.
The circle trace image of $\beta^\circ_1 \in \pi_{2p^2-2p-1} V(0)$ in
$$
V(0)_* F(S^{2p^2+1}_+, THH(BP))^{\bT}
$$
factors as a product $[\![t^p]\!] \cdot [\![\lambda_2]\!]$, with
$[\![t^p]\!] \in \{t^p\}$ and $[\![\lambda_2]\!] \in \{\lambda_2\}$
detected by $t^p$ and~$\lambda_2$, respectively.  Moreover, the image
of $\gamma^\circ_1 \in \pi_{2p^3-2p^2-1} V(1)$ in
$$
V(1)_* F(S^{2p^2+1}_+, THH(BP))^{\bT}
$$
is the unique class detected by $t^{p^2} \lambda_3$.
\end{proposition}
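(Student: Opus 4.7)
The plan is to follow the template of Proposition~\ref{prop:beta-tplambda2} one level higher, replacing the pair $(\alpha_1, \beta^\circ_1)$ by $(\beta^\circ_1, \gamma^\circ_1)$ and the $p$-th order spectral sequence by the $p^2$-th order one. The key inputs are the $d^2$ computations from Proposition~\ref{prop:piTHHBP} and the homotopy Cartan formulas of Proposition~\ref{prop:CartanSV0} and Corollary~\ref{cor:CartanV0V1}.

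For the factorization of $\beta^\circ_1$, I would first verify that $t^{p^2}$, $\lambda_2$, $\lambda_3$, and $t^p\lambda_2$ all survive to $E^\infty$ of the $p^2$-th order SS, using that $d^2(v_1) \equiv 0$, $d^2(\lambda_n) = 0$, and $d^2(v_2) = -tv_1^p\lambda_1$ modulo~$p$. Since $\alpha_1$ is detected by $t\lambda_1$ in the $p^2$-th order SS (by naturality of the truncation from $p^2$-th to $p$-th order and Proposition~\ref{prop:beta-tplambda2}), the indeterminacy analysis of that proof carries over to give a factorization $\alpha_1 = [t]\cdot[\lambda_1]$. Applying $P^{p-1}$ together with the $E_3$ Cartan formula of Proposition~\ref{prop:CartanSV0} then yields
\begin{equation*}
\beta^\circ_1 = P^{p-1}([t]\cdot[\lambda_1]) = [t]^p \cdot P^p([\lambda_1]) = [t^p]\cdot[\lambda_2]\,,
\end{equation*}
where $[t^p] := [t]^p \in \{t^p\}$ and $[\lambda_2] := P^p([\lambda_1])$. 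Naturality of $P^p$ with respect to the edge map $F(S^{2p^2+1}_+, THH(BP))^{\bT} \to THH(BP)$, combined with Lemma~\ref{lem:trlambda2K}, puts $[\lambda_2]$ in $\{\lambda_2\}$.

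Applying $P^{p^2-p}$ to this factorization and using the $V(0) \to V(1)$ Cartan formula of Corollary~\ref{cor:CartanV0V1} gives
\begin{equation*}
\gamma^\circ_1 = P^{p^2-p}([t^p]\cdot[\lambda_2]) = [t^p]^p \cdot P^{p^2}([\lambda_2]) = t^{p^2} \cdot P^{p^2}([\lambda_2])\,.
\end{equation*}
To invoke Corollary~\ref{cor:CartanV0V1} I would use that the Basterra--Mandell $E_2$ retraction $BP \to MU_{(p)} \to BP$ induces an $E_2$ retraction of $R = F(S^{2p^2+1}_+, THH(BP))^{\bT}$ through the $E_\infty$ ring spectrum $F(S^{2p^2+1}_+, THH(MU_{(p)}))^{\bT}$. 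By edge naturality and Lemma~\ref{lem:trlambda3K}, $P^{p^2}([\lambda_2])$ is detected by~$\lambda_3$, so $\gamma^\circ_1$ is detected by $t^{p^2}\lambda_3$; uniqueness follows because this class sits at the lowest filtration~$-2p^2$ in the $p^2$-th order SS. The main obstacle I anticipate is verifying that the $E_2$ retraction through $MU_{(p)}$ survives both $THH$ and the approximate $\bT$-homotopy fixed points functor $F(S^{2p^2+1}_+,-)^{\bT}$, since this is what licenses the use of Corollary~\ref{cor:CartanV0V1}; a subsidiary concern is controlling the indeterminacy of $[\lambda_2] = P^p([\lambda_1])$ in the enlarged filtration range of the $p^2$-th order SS, as opposed to the tighter $p$-th order range of Proposition~\ref{prop:beta-tplambda2}.
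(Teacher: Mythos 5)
Your treatment of the second half (the $\gamma^\circ_1$ statement) coincides with the paper's: the same appeal to Corollary~\ref{cor:CartanV0V1} via the $E_3$ retraction of $F(S^{2p^2+1}_+, THH(BP))^{\bT}$ off the $E_\infty$ ring spectrum $F(S^{2p^2+1}_+, THH(MU_{(p)}))^{\bT}$, the same zero-indeterminacy argument in the lowest filtration $-2p^2$, and the same identification $t^{p^2}\lambda^\circ_3 = t^{p^2}\lambda_3$ at the $E^3$-term via $d^2(t^{p^2-1}v_3)$. The divergence, and the gap, is in how you produce the factorization of $\beta^\circ_1$ inside the $p^2$-th order spectral sequence.

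The paper obtains that factorization by choosing arbitrary $x \in \{t^p\}$, $y \in \{\lambda_2\}$ and correcting them, which requires knowing that the indeterminacy of $\{t^p\lambda_2\}$ is spanned by $t^p \cdot \mathrm{indet}\{\lambda_2\}$ together with $\mathrm{indet}\{t^p\} \cdot \lambda_2$; establishing this occupies most of the proof and forces the computation of $d^{2p}(v_2\lambda_1)$ and the $d^{4p-2}$ family by comparison with the $V(1)$-homotopy spectral sequence truncated at order $m = 3p-2$. You propose instead to refactor $\alpha_1 = [t]\cdot[\lambda_1]$ in the $p^2$-th order spectral sequence and apply $P^{p-1}$, asserting that ``the indeterminacy analysis of that proof carries over.'' It does not carry over for free: in the $p$-th order spectral sequence the indeterminacy of $\{t\lambda_1\}$ is the single group $\bZ/p\{t^p v_1\lambda_1\}$, whereas in the $p^2$-th order spectral sequence the coset $\{t\lambda_1\}$ acquires contributions from all surviving classes in total degree $2p-3$ and filtration below $-2$, including (at $E^2$) $t^{1+a(p-1)}v_1^a\lambda_1$ for $1 \le a \le p+1$, $t^{p^2-p+1}\lambda_2$, $t^{p^2}v_1\lambda_2$ and $t^{p^2}v_2\lambda_1$. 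Deciding which of these survive and checking that each surviving one factors through the indeterminacies of $\{t\}$ and $\{\lambda_1\}$ is an analysis of essentially the same order as the one the paper carries out for $\{t^p\lambda_2\}$; it cannot be inherited from the $p$-th order case by naturality of the truncation, since indeterminacies only grow under refinement of the filtration. This unverified step is the substantive gap.

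A secondary point: even granting $\alpha_1 = [t]\cdot[\lambda_1]$, your $[\lambda_2] := P^p([\lambda_1])$ is detected in filtration zero by $P^p(\lambda_1) = tr(\lambda_2^K)$, and Lemma~\ref{lem:trlambda2K} only identifies this with $\lambda_2$ modulo $v_1^p\lambda_1$ --- a class that survives in filtration zero of $E^\infty$, so the two need not agree there. Hence your argument places $[\lambda_2]$ in $\{tr(\lambda_2^K)\}$ rather than in $\{\lambda_2\}$, and the first assertion of the proposition as stated is not established. (This discrepancy is harmless for the $\gamma^\circ_1$ conclusion, since after multiplying by $t^{p^2}$ it is absorbed by the boundary $d^2(t^{p^2-1}v_3)$ at the $E^3$-term, but it should be flagged.) Your identification of the $E_\infty$ retraction through $MU_{(p)}$ as the licence for Corollary~\ref{cor:CartanV0V1} is exactly right and is precisely what the paper does.
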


\begin{proof}
Our first goal will be to show that $t^p$ times the indeterminacy in
$\{\lambda_2\}$ and $\lambda_2$ times the indeterminacy in $\{t^p\}$,
in combination, span the indeterminacy in $\{t^p \lambda_2\}$ in the
$p^2$-th order spectral sequence for $V(0) \wedge THH(BP)$.  To do this,
we compare the $m$-th order approximate $\bT$-homotopy fixed point
spectral sequences for the three $\bT$-spectra
$$
V(1) \wedge THH(BP)
\,,\quad
V(0) \wedge THH(BP)
\quad\text{and}\quad
THH(BP) \,,
$$
via the morphisms induced by $i_0 \: S \to V(0)$, $i_1 \:
V(0) \to V(1)$ and $j_1 \: V(1) \to \Sigma^{2p-1} V(0)$.

We begin with the $V(1)$-homotopy spectral sequence, which is easiest
to understand.  The $m$-th order spectral sequence for $V(1) \wedge
THH(BP)$ has $E^2$-term
$$
P_{m+1}(t)
	\otimes P(v_2, \dots) \otimes E(\lambda_1, \lambda_2, \dots) \,,
$$
where the omitted generators have vertical degree $* \ge 2p^3-2$.
Here $v_2$, $\lambda_1$ and $\lambda_2$ are infinite cycles, since
multiplication by $v_2$ is realized by a self-map of $V(1)$ and
since $\lambda_1$ and $\lambda_2$ detect the circle trace images of
$\lambda_1^K$ and $\lambda_2^K$, respectively.  For $m=p$ it follows that
this spectral sequence collapses at the $E^2$-term, in vertical degrees $*
< 2p^3-2$.

For $m > p$ there are nonzero $d^{2p}$-differentials
generated by
$$
d^{2p}(t) \doteq t^{p+1} \lambda_1 \,,
$$
where $x \doteq y$ means that $x$ is a unit (in $\bF_p$) times~$y$.  This
differential is present already in the $\bT$-homotopy fixed point spectral
sequence for $THH(BP)$, and lifts that of~\cite{BM94}*{Thm.~5.8(i)} for
$THH(\bZ_{(p)})$ over the morphism of spectral sequences induced by $BP
\to H\bZ_{(p)}$.  It follows that the $m$-th order $E^{2p+1}$-term equals
$$
\bF_p\{t^i \mid 0 \le i \le m, p \mid i\} \otimes P(v_2)
	\otimes E(\lambda_1, \lambda_2)
$$
in vertical degrees $* < 2p^3-2$, plus some extra classes in even
filtrations $-2m \le * < -2m + 2p$ and $-2p < * \le 0$ that survive
due to being close to the truncation limits.  Moreover, for $m < p^2+p$
the spectral sequence must collapse at this stage, for these vertical
degrees, since there is no room for a differential on $t^p$.

For later use, note that when $m = 3p-2$ no classes survive in total
degree $* = 2p^2 - 2p - 2i$ for $2 \le i < p$, since the classes
$t^{i+p-1} v_2$ support differentials and the classes $t^{i+2p-1}
\lambda_1 \lambda_2$ are hit by differentials.
Hence $V(1)_* F(S^{2m+1}_+, THH(BP))^{\bT}$ is zero in these degrees.
Moreover, for $i=1$ only the classes $t^{2p} \lambda_1 \lambda_2$
and $t^p v_2$ survive in total degree~$* = 2p^2 - 2p - 2$, and here
$i_1 j_1(t^p v_2)$ is detected by $t^{2p} \lambda_2 \ne 0$, so only
$t^{2p} \lambda_1 \lambda_2$ can be (and is) in the image of $i_1$,
since $j_1 i_1 = 0$.  Hence the image of $i_1$ is isomorphic to
$\bZ/p$ in this degree.

We now turn to the $V(0)$-homotopy spectral sequence.  The $p^2$-th order
approximate $\bT$-homotopy fixed point spectral sequence for $V(0)
\wedge THH(BP)$ has $E^2$-term
$$
P_{p^2+1}(t)
	\otimes P(v_1, v_2, \dots) \otimes E(\lambda_1, \lambda_2, \dots) \,,
$$
where the omitted generators have vertical degree $* \ge 2p^3-2$.
Here $t$, $v_1$, $\lambda_1$ and~$\lambda_2$ are $d^2$-cycles, while
$d^2(v_2) = - t v_1^p \lambda_1$ by Proposition~\ref{prop:piTHHBP}.
Hence the $E^3$-term equals
$$
P_{p^2+1}(t) \otimes \bigl( P(v_1) \{1\} \oplus
    P_p(v_1) \{\lambda_1, v_2 \lambda_1, \dots, v_2^{p-1} \lambda_1\} \bigr)
	\otimes E(\lambda_2)
$$
in vertical degrees~$* < 2p^3-2p$, except that there are some
additional classes in filtration degrees $0$ and $-2p^2$.  See
Figure~\ref{fig:approximate}, which is drawn for $p=3$, hence is not
quite to scale for the primes $p\ge7$ under consideration.
As above, we know that the classes $v_1$, $\lambda_1$ and $\lambda_2$
are infinite cycles.  The next nonzero differentials are
\begin{align*}
d^{2p}(t) &\doteq t^{p+1} \lambda_1 \\
d^{2p}(v_2 \lambda_1) &\doteq t^p v_1 \lambda_1 \lambda_2 \,.
\end{align*}
The $d^{2p}$-differential on $t$ for $V(0) \wedge THH(BP)$ follows,
as above, from the one for $THH(BP)$.
The earlier differential $d^{2p-2}(v_2 \lambda_1) \in \bF_p\{t^{p-1}
v_1^{p+3}\}$ must vanish by $t v_1^p$-linearity, since $t v_1^p
\cdot v_2 \lambda_1 = 0$ and $t v_1^p \cdot t^{p-1} v_1^{p+3} \ne 0$.
If $d^{2p}(v_2 \lambda_1)$ were zero, then $v_2 \lambda_1$ would detect
a class in $V(0)_* F(S^{2p+1}_+, THH(BP))^{\bT}$ that maps under $i_1
\: V(0) \to V(1)$ to the class in $V(1)_* F(S^{2p+1}_+, THH(BP))^{\bT}$
detected by $v_2 \lambda_1$.  However, the latter class maps under $i_1
j_1 \: V(1) \to \Sigma^{2p-1} V(1)$ to the nonzero class $i_1 j_1(v_2
\lambda_1) = i_1(\beta'_1) \lambda_1 = i_1(\beta^\circ_1) \lambda_1$
detected by $t^p \lambda_2 \cdot \lambda_1 = - t^p \cdot \lambda_1
\lambda_2$, as follows from Lemma~\ref{lem:j1v1beta-j2v2gamma} and
Proposition~\ref{prop:beta-tplambda2}.  This contradicts $j_1 i_1 = 0$,
and proves that $d^{2p}(v_2 \lambda_1)$ is nonzero in $\bF_p\{t^p v_1
\lambda_1 \lambda_2\}$.

\begin{figure}
\includegraphics[scale=0.7]{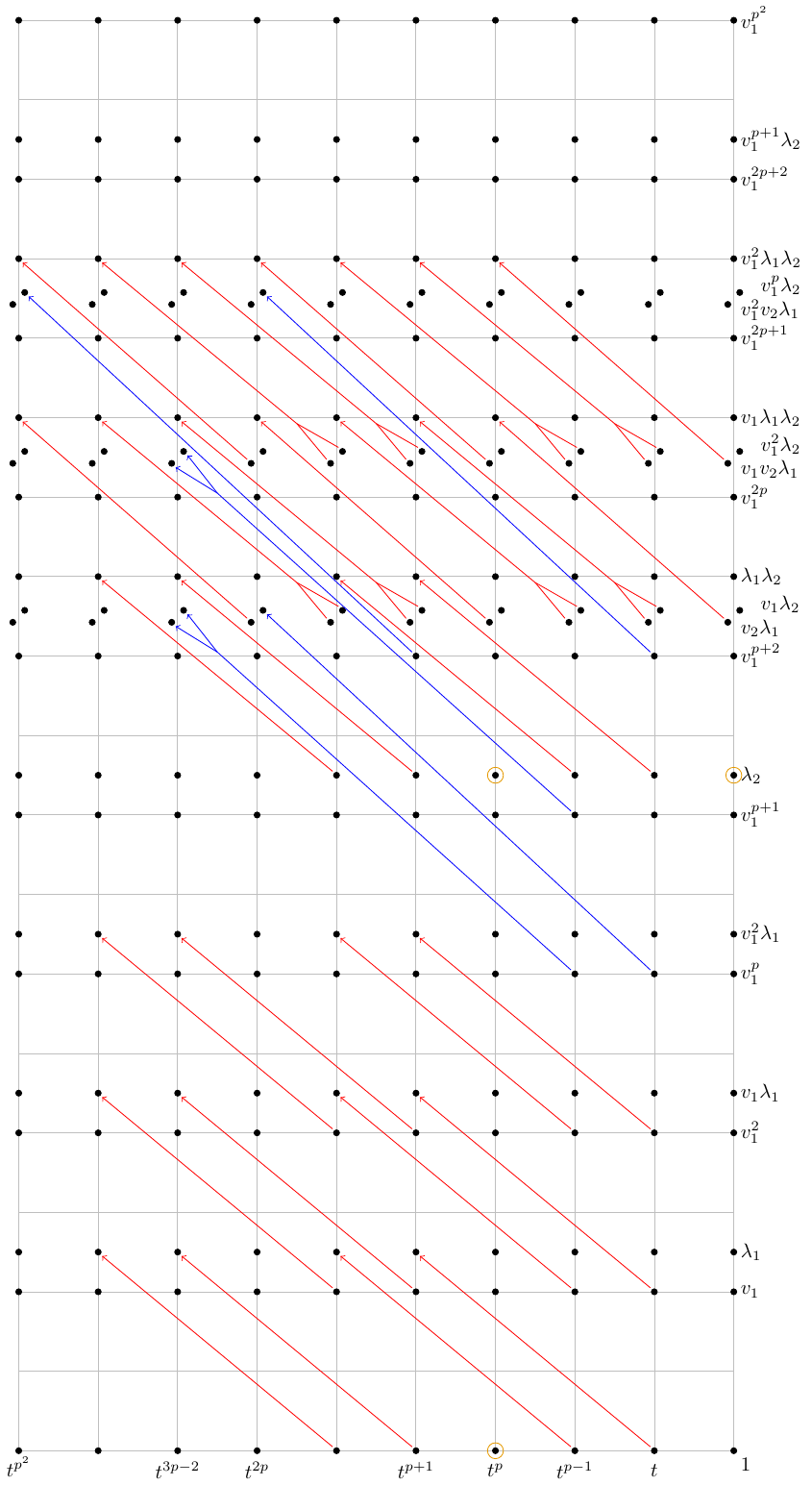}
\caption{$E^3 \Longrightarrow V(0)_* F(S^{2p^2+1}, THH(BP))^{\bT}$ in
vertical degrees $* < 4p^2+2p-5$, with all $d^{2p}$-differentials (red)
and selected $d^{4p-2}$-differentials (blue). \label{fig:approximate}}
\end{figure}

It follows that the $E^{2p+1}$-term equals
\begin{gather*}
P_{p+1}(t^p) \otimes \bigl(
    P(v_1)\{1, \lambda_2\} \oplus P_p(v_1)\{\lambda_1\}
	\oplus \bF_p\{\lambda_1 \lambda_2, v_1^{p-1} v_2 \lambda_1\} \bigr) \\
{} \oplus \bF_p\{t^i \mid 0 < i < p^2, p \nmid i\} \otimes \bigl(
    P(v_1)\{v_1^p, v_1 \lambda_2 + c v_2 \lambda_1\}
	\oplus \bF_p\{v_1^{p-1} v_2 \lambda_1\}
	\bigr)
\end{gather*}
in vertical degrees~$* < 4p^2+2p-5$, plus some extra classes in even
filtrations $-2p^2 \le * < -2p^2 + 2p$ and $-2p < * \le 0$.
In the expression $v_1 \lambda_2 + c v_2 \lambda_1$ the coefficient~$c$
(which will vary with the $t$-exponent $i$) is some unit in~$\bF_p$.

The next differentials include
\begin{align*}
d^{4p-2}(t v_1^p) &\doteq t^{2p} v_1 \lambda_2 \\
d^{4p-2}(t^i v_1^p) &\doteq t^{i+2p-1} (v_1 \lambda_2 + c v_2 \lambda_1)
\end{align*}
for $2 \le i < p$.  To see that these are nonzero, we compare the
$m$-th order spectral sequences for $V(0) \wedge THH(BP)$ and $V(1)
\wedge THH(BP)$, in the particular case $m = 3p-2$.  If $d^{4p-2}(t^i
v_1^p)$ were zero in the former, then $t^i v_1^p$ would survive to detect
a class in degree~$2p^2-2p-2i$ of $V(0)_* F(S^{2m+1}_+, THH(BP))^{\bT}$
that cannot be a $v_1$-multiple, for filtration reasons, and which must
therefore have nonzero image in $V(1)_* F(S^{2m+1}_+, THH(BP))^{\bT}$.
However, for $2 \le i < p$ we checked above that this graded abelian group
is zero in these degrees.  This contradiction shows that $d^{4p-2}(t^i
v_1^p)$ is nonzero in $\bF_p\{ t^{i+2p-1} (v_1 \lambda_2 + c v_2
\lambda_1) \}$, as claimed.

Furthermore, for $i=1$ it is not possible that both $t^{2p} \lambda_1
\lambda_2$ and $t v_1^p$ survive to $E^\infty$, since then the image
of $i_1$ in degree $2p^2-2p-2$ would have order~$p^2$, rather than the
order~$p$ that we established above.  Hence $d^{4p-2}(t v_1^p)$ must
be nonzero in $\bF_p\{t^{2p} v_1 \lambda_2, t^{2p} v_2 \lambda_1\}$.
Extending to the case $m = 3p$ shows that $d^{4p-2}(t v_1^p)$ must be
nonzero in $\bF_p\{t^{2p} v_1 \lambda_2\}$, as claimed.

We can now conclude that $t^p$ is an infinite cycle in the spectral
sequence converging to
$$
V(0)_* F(S^{2p^2+1}_+, THH(BP))^{\bT} \,,
$$
since there are no possible targets for later differentials, and
the indeterminacy in $\{t^p\}$ is generated by (classes detected by)
$$
t^{p^2-p+1} v_1^{p-1}
\quad\text{and}\quad
t^{p^2} v_1^p \,.
$$
The class $t^p \lambda_2$ is also an infinite cycle, detecting the
circle trace image of $\beta^\circ_1$ by Proposition~\ref{prop:beta-tplambda2},
and has indeterminacy generated by (a subset of)
$$
t^{2p-1} (v_1 \lambda_2 + c v_2 \lambda_1)
\,,\quad
t^{p^2-p+1} v_1^{p-1} \lambda_2
\quad\text{and}\quad
t^{p^2} v_1^{p-1} v_2 \lambda_1 \,.
$$
Likewise, $\lambda_2$ is an infinite cycle, detecting the circle
trace image of $\lambda^K_2$ plus some multiple of~$v_1^p \lambda_1^K$
according to Lemma~\ref{lem:trlambda2K}, with indeterminacy generated by
(a subset of)
\begin{gather*}
t^{p-1} (v_1 \lambda_2 + c v_2 \lambda_1)
\,,\quad
t^{2p-2} (v_1^2 \lambda_2 + c v_1 v_2 \lambda_1)
\,,\quad \\
t^{p^2-p} v_1^{p-1} v_2 \lambda_1
\quad\text{and}\quad
t^{p^2-1} v_1^{p+1} \lambda_2 \,.
\end{gather*}
Here $t^{p-1} (v_1 \lambda_2 + c v_2 \lambda_1)$ might support a
nonzero $d^r$-differential and not be an infinite cycle.  However,
there are no possible targets in filtrations $-2p^2 \le * < -2p^2+2p$
of such a $d^r$-differential, since $d^{4p-2}(t^{p^2-1} v_1^{2p+2}) =
t^{p^2+2p-2} v_1^{p+3} \lambda_2 \ne 0$ in the full $\bT$-homotopy fixed
point spectral sequence.  Hence, in this case $t^{2p-1} (v_1 \lambda_2 +
c v_2 \lambda_1)$ will also support a nonzero differential, of the same
length, and also not be an infinite cycle.  Similarly, if $t^{p^2-p}
v_1^{p-1} v_2 \lambda_1$ is hit by a $d^r$-differential, then $t^{p^2}
v_1^{p-1} v_2 \lambda_1$ will be hit by a differential of the same length.

It follows that $t^p$ times the indeterminacy in $\{\lambda_2\}$, together
with the class $t^{p^2-p+1} v_1^{p-1} \lambda_2$, span the indeterminacy
in $\{t^p \lambda_2\}$.  That extra class lies in the indeterminacy of
$\{t^p\}$ times $\lambda_2$.  Hence we have achieved our first goal,
as formulated at the outset of the proof.

Now choose classes $x$ and~$y$ in $V(0)_* F(S^{2p^2+1}_+, THH(BP))^{\bT}$,
detected by $t^p$ and~$\lambda_2$, respectively.  Then the difference
between the circle trace image of $\beta^\circ_1$ and the product $x
y$ lies in the indeterminacy of $\{t^p \lambda_2\}$.  By modifying
the choices of $x$ and~$y$, within the indeterminacies of $\{t^p\}$
and~$\{\lambda_2\}$, respectively, we can reduce the filtration of
this difference until it becomes zero.  Let $[\![t^p]\!] = x$ and
$[\![\lambda_2]\!]  = y$ be the final values of $x \in \{t^p\}$ and $y
\in \{\lambda_2\}$, so that the circle trace image of $\beta^\circ_1$
equals the product $[\![t^p]\!] \cdot [\![\lambda_2]\!]$.

Let $\lambda^\circ_3 = P^{p^2}(\lambda_2)$ in $V(1)_* THH(BP)$.  We apply
the Cartan formula from~Corollary~\ref{cor:CartanV0V1} in the case
of the $E_3$ ring spectrum retract $F(S^{2p^2+1}_+, THH(BP))^{\bT}$ of
$F(S^{2p^2+1}_+, THH(MU_{(p)}))^{\bT}$, where the latter is an $E_\infty$
ring spectrum.  It asserts that the circle trace image of $\gamma^\circ_1
= P^{p^2-p}(\beta^\circ_1)$ is
$$
P^{p^2-p}([\![t^p]\!] \cdot [\![\lambda_2]\!])
	= [\![t^p]\!]^p \cdot P^{p^2}([\![\lambda_2]\!]) \,.
$$
Here $P^{p^2}([\![\lambda_2]\!]) \in \{\lambda^\circ_3\}$ is a
class detected by $\lambda^\circ_3$, by naturality of $P^{p^2}$
with respect to the edge homomorphism induced by $F(S^{2p^2+1}_+,
THH(BP))^{\bT} \to THH(BP)$.  It follows that $[\![t^p]\!]^p \cdot
P^{p^2}([\![\lambda_2]\!])$ is detected by $t^{p^2} \lambda^\circ_3$, with
zero indeterminacy since this class lives in the lowest filtration degree.

To complete the proof, note that $t^{p^2} \lambda^\circ_3 = t^{p^2}
\lambda_3$ at the $V(1)$-homotopy $E^3$-term, since these classes
differ by a multiple of $d^2(t^{p^2-1} v_3) = - t^{p^2} v_2^p \lambda_1$
by Proposition~\ref{prop:piTHHBP} and Lemma~\ref{lem:trlambda3K}.
\end{proof}

\begin{remark} \label{rem:v1betacirc1}
In the course of the previous proof, we have seen that the circle trace
image of $\beta^\circ_1 \in V(0)_*$ is detected by $t^p \lambda_2$,
and that $t^p v_1 \lambda_2$ is not a boundary in the (approximate)
$\bT$-homotopy fixed point spectral sequence, which implies that $v_1
\cdot \beta^\circ_1 \ne 0$.  This confirms a claim made in the proof
of Lemma~\ref{lem:j1v1beta-j2v2gamma}.
\end{remark}

\begin{proposition} \label{prop:v3-tmu}
Consider the first order (approximate $\bT$-homotopy fixed point) spectral
sequence
$$
E^2_{*,*} = \bZ[t]/(t^2) \otimes V(2)_* THH(BP\<2\>)
	\Longrightarrow V(2)_* F(S^3_+, THH(BP\<2\>))^{\bT}
$$
for $V(2) \wedge THH(BP\<2\>)$.
The circle trace image of $v_3 \in \pi_{2p^3-2} V(2)$ in
$$
V(2)_* F(S^3_+, THH(BP\<2\>))^{\bT}
$$
is the unique class detected by $t \mu$.
\end{proposition}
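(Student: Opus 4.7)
The plan is to combine a short degree count with a Bockstein argument. In the first-order truncated SS only filtrations $0$ and $-2$ survive, so any putative differential from filtration $-2$ lands in the truncated-away filtration $\le -4$; in particular $t\mu$ is automatically a permanent cycle. A direct Diophantine check on the bigraded basis of $E(\lambda_1,\lambda_2,\lambda_3)\otimes P(\mu)$ shows $V(2)_{2p^3-2} THH(BP\<2\>)=0$ while $V(2)_{2p^3} THH(BP\<2\>)=\bF_p\{\mu\}$, so $E^\infty_{*,*}$ in total degree $2p^3-2$ equals $\bF_p\{t\mu\}$. Hence $V(2)_{2p^3-2} F(S^3_+,THH(BP\<2\>))^{\bT}$ has order at most~$p$, and any nonzero element is uniquely detected by~$t\mu$.

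To show that $v_3$'s image is nonzero I would apply the $v_2$-Bockstein~$j_2$. By Lemma~\ref{lem:j1v1beta-j2v2gamma}, $j_2(v_3)=\gamma''_1=\gamma^\circ_1+c\alpha_1 v_2^{p-1}$ for some $c\in\bF_p$. The scalar~$c$ is nonzero: since $j_2$ is $V(1)$-linear and $i_2(v_2)=0$, we have $v_2\gamma''_1=j_2(v_2\cdot v_3)=0$, so $v_2\gamma^\circ_1=-c\alpha_1 v_2^p$; the class $\alpha_1 v_2^p\in\pi_* V(1)$ is nonzero at $p\ge5$ (detected in the Adams--Novikov $E_2$-term by $h_{10}v_2^p$), precisely as in Remark~\ref{rmk:v1betacirc1} one chromatic level below.

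Next I would identify the images of the two summands in the first-order $V(1)$-SS. The classical B{\"o}kstedt--Hsiang--Madsen detection (or Proposition~\ref{prop:beta-tplambda2}) gives that $\alpha_1$ has circle trace image detected by $t\lambda_1$; since $v_2^{p-1}\in V(1)_* THH(BP\<2\>)$ sits in filtration~$0$, the image of $\alpha_1 v_2^{p-1}$ is the nonzero class $t\lambda_1 v_2^{p-1}$ in filtration~$-2$ (which is the unique class in its bidegree by a degree count analogous to the one above). Meanwhile, using the identification $F(S^{2m+1}_+,-)^{\bT}\simeq F(\bC P^m_+,-)$ together with the cofibration $\bC P^1_+\to\bC P^{p^2}_+\to\bC P^{p^2}/\bC P^1$, Proposition~\ref{prop:gamma-tp2lambda3} (and naturality along $BP\to BP\<2\>$) shows that the image of $\gamma^\circ_1$ in the $p^2$-th order approximation is detected in filtration $-2p^2\le-4$, hence lifts through $F(\bC P^{p^2}/\bC P^1,-)$ and maps to zero in the first-order quotient. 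Combining, $j_2(v_3)$ maps to the nonzero class $c\cdot t\lambda_1 v_2^{p-1}$, which by naturality of $j_2$ forces $v_3$ itself to be nonzero in $V(2)_{2p^3-2} F(S^3_+,THH(BP\<2\>))^{\bT}$, and hence equal to the unique class~$\{t\mu\}$.

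The main obstacle is confirming $c\ne 0$, which reduces to the nontriviality of $\alpha_1 v_2^p\in\pi_* V(1)$; a secondary subtlety is the precise filtration bookkeeping in the fiber sequence that forces the image of $\gamma^\circ_1$ to vanish in the first-order quotient.
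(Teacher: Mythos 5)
Your route is genuinely different from the paper's: the paper proves this by mapping into the mod~$p$ Adams spectral sequence for $V(2) \wedge Y$, where $Y = F(S^3_+, THH(BP\<2\>))^{\bT}$, and checking via the extension $0 \to \cok(\sigma) \to H_*(V(2)\wedge Y) \to \ker(\sigma) \to 0$ that no class has $\cA_*$-coaction containing $\tau_3 \otimes 1$, so the cobar cocycle detecting $v_3$ cannot become a coboundary. Your first paragraph (the degree count showing the target group has order at most $p$ with $t\mu$ the only possible detecting class) is correct and is also implicit in the paper's proof. Your argument that the image of $\gamma^\circ_1$ dies in the first-order quotient for filtration reasons is also sound, given Proposition~\ref{prop:gamma-tp2lambda3}.

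However, there are two genuine gaps. First, as you note, everything hinges on $c \ne 0$, i.e.\ on $v_2 \gamma^\circ_1 \ne 0$ in $\pi_{2p^3-3} V(1)$. This is not available at this point in the paper: the analogous fact $v_1 \beta^\circ_1 \ne 0$ is only obtained in Remark~\ref{rmk:v1betacirc1} as a \emph{consequence} of the circle-trace analysis, by observing that $t^p v_1 \lambda_2$ survives in the relevant approximate spectral sequence. To run the same argument one level up you would need to show that $t^{p^2} v_2 \lambda_3$ survives to $E^\infty$ of the $p^2$-th order spectral sequence for $V(1) \wedge THH(BP)$ in total degree $2p^3-3$, a degree the proof of Proposition~\ref{prop:gamma-tp2lambda3} does not analyze; without this (or an independent $\Ext$/Adams--Novikov input) the possibility $c=0$ is not excluded, and then $j_2(v_3)\cdot 1_Y = \gamma^\circ_1 \cdot 1_Y = 0$ yields no information. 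Second, your claim that $\alpha_1 v_2^{p-1}$ has nonzero image detected by $t\lambda_1 v_2^{p-1}$ takes place in the first-order spectral sequence for $V(1) \wedge THH(BP\<2\>)$, whose $E^2$-term $\bZ[t]/(t^2) \otimes V(1)_* THH(BP\<2\>)$ is never computed in the paper (only $V(2)_* THH(BP\<2\>)$ and $V(n)_* THH(BP\<n\>)$ are). To justify that $t\lambda_1 v_2^{p-1}$ is a nonzero permanent cycle you must rule out a $d^2$-differential $d^2(y) = t\sigma(y)$ hitting it from $V(1)_{2p^3-2p^2} THH(BP\<2\>)$, and to conclude the homotopy class is nonzero you must also control the filtration-zero group $V(1)_{2p^3-2p^2-1} THH(BP\<2\>)$; both require a $v_2$-Bockstein computation of $V(1)_* THH(BP\<2\>)$ in these degrees that is absent from your sketch. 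Until these two points are supplied, the proof is incomplete, and the paper's comodule-primitive argument is the more economical way to close the statement.
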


\begin{proof}
The line of argument is the same as for the case of $v_2 \in \pi_{2p^2-2}
V(1)$ in~\cite{AR02}*{Prop.~4.8}.  For brevity, let $Y = F(S^3_+,
THH(BP\<2\>))^{\bT}$.  We have a map of mod~$p$ Adams spectral sequences
$$
E_2(V(2)) = \Ext_{\cA_*}(\bF_p, H_* V(2))
	\longto \Ext_{\cA_*}(\bF_p, H_*(V(2) \wedge Y))
	= E_2(V(2) \wedge Y) \,,
$$
where $v_3$ is detected in the source in bidegree~$(s,t) = (1, 2p^3-1)$
by the class of the cobar cocycle
$$
x = [\tau_3] 1 + [\xi_3] \tau_0 + [\xi_2^p] \tau_1 + [\xi_1^{p^2}] \tau_2
$$
in $E_1^{1,*}(V(2)) = \bar \cA_* \otimes H_* V(2)$.
(As usual, $\bar \cA_*$ denotes the cokernel of the unit $\bF_p \to \cA_*$.)
We claim that this cocycle does not become a coboundary when mapped to
$E_1^{1,*}(V(2) \wedge Y) = \bar \cA_* \otimes H_*(V(2) \wedge Y)$.
This implies that the image of $v_3$ is nonzero in $V(2)_*(Y)$, and
in view of Proposition~\ref{prop:VnTHHBPn} the only possible detecting
class in its total degree is~$t \mu$.

To prove the claim we use the first order spectral sequence for $H
\wedge V(2) \wedge THH(BP\<2\>)$, which reduces to a long exact sequence,
leading to an extension
$$
0 \to \cok(\sigma) \longto H_*(V(2) \wedge Y) \longto \ker(\sigma) \to 0
$$
of $\cA_*$-comodules.  Here
$$
\sigma \: H_*(V(2) \wedge THH(BP\<2\>))
	\longto H_{*+1}(V(2) \wedge THH(BP\<2\>))
$$
acts on $H_*(V(2) \wedge THH(BP\<2\>)) \cong \cA_* \otimes
E(\sigma\bar\xi_1, \sigma\bar\xi_2, \sigma\bar\xi_3) \otimes
P(\sigma\bar\tau_3)$, as per Proposition~\ref{prop:HTHHBPBPn}.
The cocycle~$x$ is a cobar coboundary only if there is a class $y \in
E_1^{0,*}(V(2) \wedge Y) = H_*(V(2) \wedge Y)$ with $\cA_*$-comodule
coaction $\nu(y)$ containing the term $\tau_3 \otimes 1$.

There is no such class $y \in \cok(\sigma)$, since this $\cA_*$-subcomodule
does not contain the algebra unit~$1$.  Moreover, since $\sigma(\bar\tau_3) =
\sigma\bar\tau_3 \ne 0$, the class $\bar\tau_3$ is not in $\ker(\sigma)$.
Hence $\ker(\sigma)$ in total degree~$2p^3-1$ is generated by polynomials
in $\bar\tau_0$, $\bar\tau_1$, $\bar\tau_2$, $\bar\xi_1$, $\bar\xi_2$,
$\bar\xi_3$, $\sigma\bar\xi_1$, $\sigma\bar\xi_2$ and $\sigma\bar\xi_3$,
none of which have $\cA_*$-coaction that involves $\tau_3$.  This proves
that no such class~$y$ exists, and $x$ is not a coboundary.
\end{proof}

\section{The $C_p$-Tate spectral sequence}
\label{sec:CpTatespseq}

We now establish an effective version of the $C_p$-equivariant Segal
conjecture (or homotopy limit property) for $V(2) \wedge THH(BP\<2\>)$, by
direct computation.  The corresponding results for the groups $C_{p^n}$
and $\bT$ then follow from a theorem of Tsalidis.  The analogous
results for $BP\<0\> = H\bZ_{(p)}$ and~$BP\<1\> = \ell$ were proved in
\cite{BM94}*{Thm.~5.8(i)} and~\cite{AR02}*{Thm.~5.5}, respectively.

\begin{theorem} \label{thm:CpTate}
The $C_p$-Tate spectral sequence
\begin{align*}
\hat E^2(C_p) &= \hat H^{-*}(C_p; V(2)_* THH(BP\<2\>)) \\
	&\Longrightarrow V(2)_* THH(BP\<2\>)^{tC_p}
\end{align*}
has $E^2$-term
$$
\hat E^2(C_p) = E(u_1) \otimes P(t^{\pm1})
	\otimes P(t\mu) \otimes E(\lambda_1, \lambda_2, \lambda_3) \,.
$$
There are differentials
\begin{align*}
d^{2p}(t^{1-p}) &\doteq t \lambda_1 \\
d^{2p^2}(t^{p-p^2}) &\doteq t^p \lambda_2 \\
d^{2p^3}(t^{p^2-p^3}) &\doteq t^{p^2} \lambda_3 \\
d^{2p^3+1}(u_1 t^{-p^3}) &\doteq t\mu \,,
\end{align*}
and the classes $\lambda_1$, $\lambda_2$, $\lambda_3$ and~$t^{\pm
p^3}$ are permanent cycles.  The $E^\infty$-term
$$
\hat E^\infty = P(t^{\pm p^3}) \otimes E(\lambda_1, \lambda_2, \lambda_3)
$$
is the associated graded of
$$
V(2)_* THH(BP\<2\>)^{tC_p} \cong E(\lambda_1, \lambda_2, \lambda_3)
	\otimes P(\mu^{\pm1}) \,.
$$
The comparison map $\hat\Gamma_1 \: THH(BP\<2\>) \to THH(BP\<2\>)^{tC_p}$
induces the localization homomorphism
$$
V(2)_* \hat\Gamma_1 \: E(\lambda_1, \lambda_2, \lambda_3) \otimes P(\mu)
	\longto E(\lambda_1, \lambda_2, \lambda_3) \otimes P(\mu^{\pm1}) \,,
$$
which is $(2p^2 + 2p - 3)$-coconnected.
\end{theorem}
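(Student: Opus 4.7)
The plan is to proceed in four stages. First, the $E^2$-term: since $C_p$ acts trivially on the homotopy of any cyclotomic spectrum, standard group cohomology gives
$$
\hat H^*(C_p; V(2)_* THH(BP\<2\>)) = E(u_1) \otimes P(t^{\pm 1}) \otimes E(\lambda_1, \lambda_2, \lambda_3) \otimes P(\mu) \,,
$$
which matches the claimed form after rewriting $P(t^{\pm 1}) \otimes P(\mu) = P(t^{\pm 1}) \otimes P(t\mu)$. Second, the classes $\lambda_1, \lambda_2, \lambda_3$ in filtration~$0$ are permanent cycles: by Proposition~\ref{prop:trace} they are trace images of classes in $V(2)_* K(BP\<2\>)$, and the cyclotomic trace factors through $THH(BP\<2\>)^{tC_p}$.

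The heart of the proof is to derive the four differentials by one uniform strategy. By Proposition~\ref{prop:unit}, the images of $\alpha_1$, $i_1(\beta^\circ_1)$, $i_2 i_1(\gamma^\circ_1)$, and $v_3$ in $V(2)_* THH(BP\<2\>)^{h\bT}$ are detected by $t\lambda_1$, $t^p\lambda_2$, $t^{p^2}\lambda_3$, and $t\mu$, respectively; under the natural comparison $THH(B)^{h\bT} \to THH(B)^{hC_p} \to THH(B)^{tC_p}$ these detection statements pass to $\hat E^2(C_p)$. A direct degree check will then show that none of the four degrees $2p-3$, $2p^2-2p-1$, $2p^3-2p^2-1$, $2p^3-2$ is congruent modulo $2p^3 = |\mu|$ to a sum of a subset of $\{|\lambda_1|, |\lambda_2|, |\lambda_3|\}$, so the anticipated abutment $E(\lambda_1,\lambda_2,\lambda_3) \otimes P(\mu^{\pm 1})$ vanishes in these four degrees. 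Hence the four images must vanish, forcing $t\lambda_1$, $t^p\lambda_2$, $t^{p^2}\lambda_3$, and $t\mu$ to become boundaries in $\hat E^*(C_p)$. A Leibniz-rule analysis using $d^r(\lambda_i) = 0$ shows that each of these four targets is a $d^r$-cycle for all $r$ shorter than the claimed differential length, and filtration accounting uniquely identifies $t^{1-p}$, $t^{p-p^2}$, $t^{p^2-p^3}$, and $u_1 t^{-p^3}$ as the only possible sources, of respective lengths $2p$, $2p^2$, $2p^3$, and $2p^3+1$.

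The final step is to propagate these four primary differentials by the Leibniz rule to obtain $\hat E^\infty = P(t^{\pm p^3}) \otimes E(\lambda_1, \lambda_2, \lambda_3)$. Each surviving bidegree will be at most one-dimensional, so there are no additive extension problems; a hidden multiplicative extension identifies $t^{-p^3}$ with $\mu$, yielding $V(2)_* THH(BP\<2\>)^{tC_p} \cong E(\lambda_1,\lambda_2,\lambda_3) \otimes P(\mu^{\pm 1})$. The map $V(2)_* \hat\Gamma_1$ is then the localization $P(\mu) \to P(\mu^{\pm 1})$ tensored with $E(\lambda_1,\lambda_2,\lambda_3)$, whose cofiber has top class $\mu^{-1} \lambda_1 \lambda_2 \lambda_3$ in degree $-2p^3 + (2p-1)+(2p^2-1)+(2p^3-1) = 2p^2 + 2p - 3$, giving the claimed coconnectivity.

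The hard part will be the cumulative Leibniz propagation of the four differentials: one must check that together they annihilate exactly the classes not appearing in $\hat E^\infty$, with no accidental cancellations among $\lambda_1,\lambda_2,\lambda_3$ or the surviving $t^{kp^3}$-powers. In particular, propagating the final $d^{2p^3+1}$-differential on $u_1 t^{-p^3}$ through the $\mu$-tower, after the earlier $d^{2r}$-differentials have reshaped the spectral sequence, requires bookkeeping care with the exterior generator~$u_1$.
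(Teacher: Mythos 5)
The central step of your plan is circular. To show that $t\lambda_1$, $t^p\lambda_2$, $t^{p^2}\lambda_3$ and $t\mu$ are boundaries, you argue that the images of $\alpha_1$, $\beta^\circ_1$, $\gamma^\circ_1$ and $v_3$ must vanish in $V(2)_* THH(BP\<2\>)^{tC_p}$ because ``the anticipated abutment $E(\lambda_1,\lambda_2,\lambda_3)\otimes P(\mu^{\pm1})$ vanishes in these four degrees.'' But that abutment is precisely what the differential pattern you are trying to derive is supposed to establish, so you cannot assume it. The non-circular argument is much simpler: the unit of the ring spectrum $THH(BP\<2\>)^{tC_p}$ factors through $BP\<2\>$, and $V(2)_* BP\<2\> = \bF_p$ is concentrated in degree $0$, so every positive-degree class of $V(2)_*$ maps to zero there. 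Combined with Proposition~\ref{prop:unit} and the fact that $t\lambda_1$, $t^p\lambda_2$, $t^{p^2}\lambda_3$, $t\mu$ are infinite cycles (being products of infinite cycles), this forces them to be boundaries.

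Two further points are underestimated. First, the claim that ``filtration accounting uniquely identifies'' the sources is false as stated: for the target $t^{p^2}\lambda_3$ the a priori candidates include $t^{-p^3+2p^2+p-1}\lambda_1\lambda_2$, $u_1 t^{-p^3+2p^2-1}\lambda_2$ and $u_1 t^{-p^3+p^2+p-1}\lambda_1$ besides $t^{p^2-p^3}$, and similarly for $t\mu$; these must be eliminated case by case, which in turn requires tracking the a priori unknown shifts $m_1 \bmod p$ and $m_2 \bmod p^2$ in the surviving exterior generator $u_1 t^{m_i}$ (only at the very end does one conclude $m_1=m_2=0$, i.e.\ that $u_1 t^{-p^3}$ itself survives to support the final odd-length differential). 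Second, the multiplicative extension identifying $t^{-p^3}$ with the image of $\mu$ is not automatic from the $E^\infty$-term; it is proved by naturality along $BP\<2\>\to BP\<1\>$, using that $\hat\Gamma_1(\mu_2)$ is detected by $t^{-p^2}$ for $BP\<1\>$ and that $\mu$ maps to $\mu_2^p$. Without this input, the description of $V(2)_*\hat\Gamma_1$ as the localization $E(\lambda_1,\lambda_2,\lambda_3)\otimes P(\mu)\to E(\lambda_1,\lambda_2,\lambda_3)\otimes P(\mu^{\pm1})$, and hence the coconnectivity estimate, is not justified.
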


\begin{proof}
The circle trace map $K(B) \to THH(B)^{h\bT}$ lifts the trace map,
so by Proposition~\ref{prop:trace} the classes $\lambda^K_i$ for $i
\in \{1,2,3\}$ map to classes in $V(2)_* THH(B)^{h\bT}$ detected by the
$\lambda_i$.  Similarly, by Proposition~\ref{prop:unit} the class $v_3$
in $\pi_* V(2)$ maps to a class detected by~$t\mu$.  Hence these detecting
classes are infinite cycles in all of the $C$-homotopy fixed point and
$C$-Tate spectral sequences.  This means that in order to determine
the $d^r$-differentials in one of these spectral sequences, it suffices
to determine $d^r(x)$ for $x$ ranging through a $P(t\mu) \otimes
E(\lambda_1, \lambda_2, \lambda_3)$-module basis for the $E^r$-term.

The unit map $S \to THH(B)$ factors through~$B$, and $V(2)_* BP\<2\> =
\bF_p$, so the images of $\alpha_1$, $\beta^\circ_1$, $\gamma^\circ_1$
and~$v_3$ in $\pi_* V(2)$ map to zero in $V(2)_* THH(BP\<2\>)$ and~$V(2)_*
THH(BP\<2\>)^{tC_p}$.  Hence the four classes $t \lambda_1$, $t^p
\lambda_2$, $t^{p^2} \lambda_3$ and~$t\mu$ must all be boundaries in
the $C_p$-Tate spectral sequence.

The first possible (nonzero) $d^r$-differentials on $u_1$ and $t^{\pm1}$
in $\hat E^2(C_p)$ have $r = 2p$.  We know that $t \lambda_1$ is a
boundary, so
$$
d^{2p}(t^{1-p}) \doteq t \lambda_1 \,.
$$
Also $d^{2p}(u_1) \in \bF_p\{u_1 t^p \lambda_1\}$, so $d^{2p}(u_1 t^{m_1})
= 0$ for some integer~$m_1$ defined mod~$p$.  Hence
$$
\hat E^{2p+1}(C_p) = E(u_1 t^{m_1}) \otimes P(t^{\pm p}) \otimes P(t\mu)
\otimes E(\lambda_1, \lambda_2, \lambda_3) \,.
$$
The next possible $d^r$-differentials on $u_1 t^{m_1}$ and
$t^{\pm p}$ have $r = 2p^2$.
We know that $t^p \lambda_2$ is a boundary, so
$$
d^{2p^2}(t^{p-p^2}) \doteq t^p \lambda_2 \,.
$$
Also $d^{2p^2}(u_1 t^{m_1}) \in \bF_p\{u_1 t^{m_1+p^2} \lambda_2\}$,
so $d^{2p^2}(u_1 t^{m_2}) = 0$ for some integer~$m_2$ defined mod~$p^2$,
with $m_2 \equiv m_1 \mod p$.  Then
$$
\hat E^{2p^2+1}(C_p) = E(u_1 t^{m_2}) \otimes P(t^{\pm p^2})
	\otimes P(t\mu) \otimes E(\lambda_1, \lambda_2, \lambda_3) \,.
$$
If $m_2 \equiv -p \mod p^2$ then the first possible differential on
$u_1 t^{m_2}$ is $d^r(u_1 t^{m_2}) \in \bF_p\{t^{m_2+p^2+p} \lambda_1
\lambda_2\}$ with $r = 2p^2+2p-1$.  Otherwise, the first possible
differential on $u_1 t^{m_2}$ has $r = 2p^3$.

By naturality with respect to the group cohomology transfer
(Verschiebung), with $V(t^i) = 0$ and $V(u_1 t^i) = u_2 t^i$, the first
possible $d^r$-differential on $t^{\pm p^2}$ cannot take a value of the
form~$u_1 x$, hence has $r = 2p^3$, cf.~\cite{AR02}*{Lem.~5.2}.

We know that $t^{p^2} \lambda_3$ is a boundary, and the only possible
sources in $\hat E^2(C_p)$ of a $d^r$-differential with this target are
$t^{-p^3+2p^2+p-1} \lambda_1 \lambda_2$ with $r = 2p^3-2p^2-2p+2$,
$u_1 t^{-p^3+2p^2-1} \lambda_2$ with $r = 2p^3-2p^2+1$,
$u_1 t^{-p^3+p^2+p-1} \lambda_1$ with $r = 2p^3-2p+1$ and
$t^{-p^3+p^2}$ with $r = 2p^3$.
The first source is not present in $\hat E^{2p^2+1}(C_p)$, and the second
and third sources are present there only if $m_2 \equiv -1 \mod p^2$
or $m_2 \equiv p-1 \mod p^2$, respectively.  In both of these cases $m_2
\not\equiv -p \mod p^2$, so $u_1 t^{m_2}$ survives to the $E^{2p^3}$-term.
In the second case
$$
d^{2p^3-2p^2+1}(u_1 t^{-p^3+2p^2-1} \lambda_2)
    = d^{2p^3-2p^2+1}(u_1 t^{-p^3+2p^2-1}) \lambda_2 = 0 \,,
$$
while in the third case
$$
d^{2p^3-2p+1}(u_1 t^{-p^3+p^2+p-1} \lambda_1)
    = d^{2p^3-2p+1}(u_1 t^{-p^3+p^2+p-1}) \lambda_1 = 0 \,.
$$
Hence the fourth option,
$$
d^{2p^3}(t^{-p^3+p^2}) \doteq t^{p^2} \lambda_3 \,,
$$
is the only possibility.

We also know that $t\mu$ is a boundary, and the only possible sources
of a $d^r$-differential with this target are
$u_1 t^{-p^3+p^2+p-1} \lambda_1 \lambda_2$ with $r = 2p^3-2p^2-2p+3$,
$t^{-p^3+p^2} \lambda_2$ with $r = 2p^3-2p^2+2$,
$t^{-p^3+p} \lambda_1$ with $r = 2p^3-2p+2$ and
$u_1 t^{-p^3}$ with $r = 2p^3+1$.
The first source is only present in $\hat E^{2p^2+1}(C_p)$ if $m_2
\equiv p-1 \mod p^2$, in which case $u_1 t^{m_2}$ survives to the
$E^{2p^3}$-term, and
$$
d^{2p^3-2p^2-2p+3}(u_1 t^{-p^3+p^2+p-1} \lambda_1 \lambda_2)
    = d^{2p^3-2p^2-2p+3}(u_1 t^{-p^3+p^2+p-1}) \lambda_1 \lambda_2 = 0 \,.
$$
In the second case
$$
d^{2p^3-2p^2+2}(t^{-p^3+p^2} \lambda_2)
    = d^{2p^3-2p^2+2}((t^{-p^2})^{p-1}) \lambda_2 = 0 \,,
$$
since $t^{\pm p^2}$ survive to the $E^{2p^3}$-term.  The third source is
not present in $\hat E^{2p^2+1}(C_p)$.  This leaves the fourth option,
$$
d^{2p^3+1}(u_1 t^{-p^3}) \doteq t\mu \,,
$$
as the only possibility.  It follows that $d^{2p^3}(u_1 t^{-p^3}) = 0$.
In particular, $u_1 t^{-p^3}$ must be present in $\hat E^{2p^2+1}(C_p)$,
and we may take $m_1 = m_2 = 0$ in the formulas above.  Then
$$
\hat E^{2p^3+1}(C_p) = E(u_1 t^{-p^3}) \otimes P(t^{\pm p^3})
        \otimes P(t\mu) \otimes E(\lambda_1, \lambda_2, \lambda_3) \,.
$$
Here $d^{2p^3+1}(t^{-p^3})$ lies in a trivial group, so
$$
\hat E^{2p^3+2}(C_p) = P(t^{\pm p^3})
        \otimes E(\lambda_1, \lambda_2, \lambda_3) \,.
$$
This equals $\hat E^\infty(C_p)$, since there are no further targets
for differentials on $t^{-p^3}$.

We claim that $\hat\Gamma_1(\mu)$ in $V(2)_* THH(BP\<2\>)^{tC_p}$
is detected by a unit times~$t^{-p^3}$.  To see this, we can use
naturality with respect to the map $BP\<2\> \to BP\<1\>$, as in the
commutative diagram below.
$$
\xymatrix{
H_* THH(BP\<2\>) \ar[d]
	& V(2)_* THH(BP\<2\>) \ar[l]_-{h_2} \ar[r]^-{\hat\Gamma_1} \ar[d]
	& V(2)_* THH(BP\<2\>)^{tC_p} \ar[d] \\
H_* THH(BP\<1\>)
	& V(2)_* THH(BP\<1\>) \ar[l]_-{h_2} \ar[r]^-{\hat\Gamma_1}
	& V(2)_* THH(BP\<1\>)^{tC_p} \\
H_* THH(BP\<1\>) \ar@{=}[u]
	& V(1)_* THH(BP\<1\>) \ar[l]_-{h_1}
		\ar[r]^-{\hat\Gamma_1} \ar[u]^-{i_2}
	& V(1)_* THH(BP\<1\>)^{tC_p} \ar[u]^-{i_2}
}
$$
Recall from Proposition~\ref{prop:VnTHHBPn}
that $V(1)_* THH(BP\<1\>) = E(\lambda_1, \lambda_2) \otimes P(\mu_2)$,
where $h_1(\mu_2) = \sigma\bar\tau_2$ in $H_* THH(BP\<1\>)$, and that
$\hat\Gamma_1(\mu_2)$ in $V(1)_* THH(BP\<1\>)^{tC_p}$ is detected by a
unit times $t^{-p^2}$ by the proof of~\cite{AR02}*{Thm.~5.5}.  It follows
that $\mu$ maps to $i_2(\mu_2^p)$ in $V(2)_* THH(BP\<1\>)$, since
$h_2(\mu) = \sigma\bar\tau_3$ maps to $h_1(\mu_2^p) = (\sigma\bar\tau_2)^p
= \sigma\bar\tau_3$.  By naturality, $\hat\Gamma_1(\mu)$ maps to a
class detected by a unit times $(t^{-p^2})^p = t^{-p^3}$, which proves
the claim.

The highest-degree class in $E(\lambda_1, \lambda_2, \lambda_3)
\otimes P(\mu^{\pm1})$ that is not in the image from
$E(\lambda_1, \lambda_2, \lambda_3) \otimes P(\mu)$ is
$\lambda_1 \lambda_2 \lambda_3 \mu^{-1}$, in degree
$(2p-1) + (2p^2-1) + (2p^3-1) - (2p^3) = 2p^2 + 2p - 3$.
Hence $V(2)_* \hat\Gamma_1$ is injective in this degree, and
an isomorphism in all higher degrees.
\end{proof}


\begin{corollary}[\cite{Tsa98}*{Thm.~2.4}, \cite{BBLNR14}*{Thm.~2.8}]
\label{cor:coconn}
The comparison maps
\begin{align*}
\Gamma_n &\: V(2) \wedge THH(BP\<2\>)^{C_{p^n}}
	\longto V(2) \wedge THH(BP\<2\>)^{hC_{p^n}} \\
\hat\Gamma_n &\: V(2) \wedge THH(BP\<2\>)^{C_{p^{n-1}}}
	\longto V(2) \wedge THH(BP\<2\>)^{tC_{p^n}}
\end{align*}
for $n\ge1$, and their homotopy limits
\begin{align*}
\Gamma &\: V(2) \wedge TF(BP\<2\>)
	\longto V(2) \wedge THH(BP\<2\>)^{h\bT} \\
\hat\Gamma &\: V(2) \wedge TF(BP\<2\>)
	\longto V(2) \wedge THH(BP\<2\>)^{t\bT} \,,
\end{align*}
are all $(2p^2 + 2p - 3)$-coconnected.
\end{corollary}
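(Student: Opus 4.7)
The plan is to bootstrap the $n=1$ Tate input from Theorem~\ref{thm:CpTate} across all higher $n$ and then descend to the homotopy limits. Theorem~\ref{thm:CpTate} has just established that $V(2) \wedge \hat\Gamma_1$ is $(2p^2+2p-3)$-coconnected. Since $THH(BP\<2\>)$ is connective, and therefore bounded below, Tsalidis's theorem, in the refined form recorded as \cite{BBLNR14}*{Thm.~2.8}, takes this base case and produces the analogous $(2p^2+2p-3)$-coconnectedness of
\[
V(2) \wedge \hat\Gamma_n \: V(2) \wedge THH(BP\<2\>)^{C_{p^{n-1}}}
	\longto V(2) \wedge THH(BP\<2\>)^{tC_{p^n}}
\]
for every $n \ge 1$. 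Morally this is an induction on~$n$ in which the Tate orbit lemma is used to express~$\hat\Gamma_n$ in terms of iterated copies of~$\hat\Gamma_1$; here we are content to invoke it as a black box.

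To obtain the corresponding statement for~$\Gamma_n$, I would smash the norm--restriction diagram recalled in Section~\ref{sec:nomenclature} with~$V(2)$. This gives, for each $n \ge 1$, a commutative diagram of horizontal homotopy cofiber sequences whose left vertical arrow is the identity on $V(2) \wedge THH(BP\<2\>)_{hC_{p^n}}$, and whose middle and right vertical arrows are $V(2) \wedge \Gamma_n$ and $V(2) \wedge \hat\Gamma_n$ respectively. A standard cofiber-sequence comparison then produces a natural equivalence between the homotopy cofibers of these latter two maps, so that $V(2) \wedge \Gamma_n$ inherits the $(2p^2+2p-3)$-coconnectedness just established for $V(2) \wedge \hat\Gamma_n$.

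Finally, I would pass to the homotopy limit over the Frobenius maps~$F$. This sends the level-wise maps to $\Gamma$ and $\hat\Gamma$, with source $V(2) \wedge TF(BP\<2\>)$ and with targets $V(2) \wedge THH(BP\<2\>)^{h\bT}$ and $V(2) \wedge THH(BP\<2\>)^{t\bT}$, respectively, as in the second diagram of Section~\ref{sec:nomenclature}. The Milnor $\lim$--$\lim^1$ short exact sequences convert level-wise coconnectedness into the same bound for the maps of homotopy limits, since in every fixed degree both $\lim$ and $\lim^1$ of an inverse system of isomorphisms (respectively, of injections) are isomorphisms (respectively, injections). The only real obstacle is the Tsalidis bootstrap; once it is granted, the remainder of the argument is formal manipulation of the norm--restriction cofiber sequences and Milnor $\lim^1$ sequences applied to the concrete coconnectedness input from Theorem~\ref{thm:CpTate}.
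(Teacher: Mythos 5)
Your proof matches the paper's: the corollary is stated there as a direct application of Tsalidis's theorem, in the form of \cite{Tsa98}*{Thm.~2.4} and \cite{BBLNR14}*{Thm.~2.8}, to the coconnectivity of $\hat\Gamma_1$ established in Theorem~\ref{thm:CpTate}, with the passage from $\hat\Gamma_n$ to $\Gamma_n$ via the norm--restriction cofiber sequences and the passage to homotopy limits over~$F$ being exactly the formal steps you spell out. One small correction: ${\lim}^1$ of a levelwise injective map of towers need not be injective, but this is harmless, since injectivity of the map of homotopy limits in degree $2p^2+2p-3$ only requires control of ${\lim}^1$ in degree $2p^2+2p-2$, where the levelwise maps are already isomorphisms.
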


\section{The $C_{p^2}$-Tate spectral sequence}

Our next goal is to determine the differential structure of
the $C_{p^n}$-Tate spectral sequence converging to $V(2)_*
THH(BP\<2\>)^{tC_{p^n}}$, for each $n\ge2$.  There are some minor
differences between the cases $n=2$ and $n\ge3$, so we spell out the
$C_{p^2}$-case in this section, including some motivation, and leave
the notationally more elaborate cases $n\ge3$ for the next section.

We first determine the structure of the $C_p$-homotopy fixed point
spectral sequence from that of the $C_p$-Tate spectral sequence, using
the homotopy restriction (= canonical) morphism
$$
R^h \: E^r(C_p) \longto \hat E^r(C_p) \,.
$$
It is algebraically simpler to work with the localized spectral sequence
$\mu^{-1} E^r(C_p)$, keeping in mind that
$$
E^r(C_p) \longto \mu^{-1} E^r(C_p)
$$
is $(2p^2+2p-3)$-coconnected.  The $\mu$-localized $C_p$-homotopy fixed
point spectral sequence for $V(2) \wedge THH(BP\<2\>)$ is isomorphic
to the $C_p$-homotopy fixed point spectral sequence for $V(2) \wedge
THH(BP\<2\>)^{tC_p}$, in view of Theorem~\ref{thm:CpTate}.

\begin{proposition} \label{prop:locCpHFP}
The $\mu$-localized $C_p$-homotopy fixed point spectral sequence
\begin{align*}
\mu^{-1} E^2(C_p) &= H^{-*}(C_p; \mu^{-1} V(2)_* THH(BP\<2\>)) \\
	&\Longrightarrow \mu^{-1} V(2)_* THH(BP\<2\>)^{hC_p}
\end{align*}
has $E^2$-term
$$
\mu^{-1} E^2(C_p) = E(u_1) \otimes P(t\mu) \otimes
	E(\lambda_1, \lambda_2, \lambda_3) \otimes P(\mu^{\pm1}) \,.
$$
There are differentials
\begin{align*}
d^{2p}(\mu) &\doteq (t\mu)^p \lambda_1 \mu^{1-p} \\
d^{2p^2}(\mu^p) &\doteq (t\mu)^{p^2} \lambda_2 \mu^{p-p^2} \\
d^{2p^3}(\mu^{p^2}) &\doteq (t\mu)^{p^3} \lambda_3 \mu^{p^2-p^3} \\
d^{2p^3+1}(u_1 \mu^{p^3}) &\doteq (t\mu)^{p^3+1} \,,
\end{align*}
and the classes $t\mu$, $\lambda_1$, $\lambda_2$, $\lambda_3$
and~$\mu^{\pm p^3}$ are permanent cycles.
\end{proposition}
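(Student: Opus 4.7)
The plan is to deduce Proposition~\ref{prop:locCpHFP} from the $C_p$-Tate computation of Theorem~\ref{thm:CpTate} via the canonical morphism of spectral sequences $R^h\colon E^r(C_p) \to \hat E^r(C_p)$, exploiting that after inverting $\mu$ the change of variables $t = (t\mu)\mu^{-1}$ makes the two spectral sequences directly comparable.  Starting from the unlocalized HFP $E^2$-term $E(u_1) \otimes P(t) \otimes E(\lambda_1,\lambda_2,\lambda_3) \otimes P(\mu)$, inverting $\mu$ and substituting $t = (t\mu)\mu^{-1}$ identifies $\bF_p[t,\mu^{\pm 1}]$ with $\bF_p[t\mu, \mu^{\pm 1}]$, giving the stated form.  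Under the same substitution $\mu^{-1}\hat E^2(C_p)$ becomes $\bF_p[(t\mu)^{\pm 1}, \mu^{\pm 1}]$ tensored with the exterior part, so $\mu^{-1} R^h$ is realized as the inclusion obtained by adjoining $(t\mu)^{-1}$, which is injective on every $E^r$-page.

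Next I would identify permanent cycles and translate differentials.  By Proposition~\ref{prop:trace} the classes $\lambda_1, \lambda_2, \lambda_3$ are permanent cycles because they lift to circle trace images of the $K$-theory classes $\lambda_i^K$, and by Proposition~\ref{prop:unit} the class $t\mu$ is a permanent cycle because it detects the image of $v_3 \in \pi_{2p^3-2} V(2)$.  Because $t\mu$ is a permanent cycle, the Leibniz rule turns each Tate differential into an HFP differential by routine algebra.  For instance, rewriting $d^{2p}(t^{1-p}) \doteq t\lambda_1$ as $d^{2p}((t\mu)^{1-p}\mu^{p-1}) \doteq (t\mu)\mu^{-1}\lambda_1$ and using $d^{2p}(t\mu) = 0$ gives $(p-1)(t\mu)^{1-p}\mu^{p-2}\, d^{2p}(\mu) \doteq (t\mu)\mu^{-1}\lambda_1$, which solves to $d^{2p}(\mu) \doteq (t\mu)^p \mu^{1-p}\lambda_1$.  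Parallel manipulations starting from the Tate differentials on $t^{p-p^2}$ and $t^{p^2-p^3}$ yield the stated formulas for $d^{2p^2}(\mu^p)$ and $d^{2p^3}(\mu^{p^2})$, while multiplying $d^{2p^3+1}(u_1 t^{-p^3}) \doteq t\mu$ by the permanent cycle $(t\mu)^{p^3}$ yields $d^{2p^3+1}(u_1 \mu^{p^3}) \doteq (t\mu)^{p^3+1}$.

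The main obstacle is bookkeeping rather than novelty: on each page I must verify that $\mu^{p^{i-1}}$ is a well-defined $E^{2p^i}$-class (not yet killed and not itself a boundary) and that $\mu^{\pm p^3}$ survives to $E^\infty$.  Both follow from the Leibniz identity $d^r(\mu^k) = k\mu^{k-1} d^r(\mu)$: at page $2p^i$ the surviving $\mu$-exponents are precisely the $p^i$-multiples, so $\mu^{p^{i-1}}$ is available exactly when needed to support the differential, and after $d^{2p^3}$ the class $\mu^{\pm p^3}$ has no possible later differential compatible with the injective comparison to the already-determined Tate $E^\infty$.
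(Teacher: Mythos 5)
Your strategy --- rewriting the Tate differentials of Theorem~\ref{thm:CpTate} in the variables $t\mu$ and $\mu$ and transporting them to the $\mu$-localized homotopy fixed point spectral sequence along the canonical map --- is exactly the paper's, and your Leibniz manipulations are correct.  The step that fails as stated is the claim that $\mu^{-1}R^h$ is injective \emph{on every $E^r$-page}.  It is injective at $E^2$, but already at the $E^{2p+1}$-term the localized homotopy fixed point spectral sequence contains the $t\mu$-power-torsion summand
$$
E(u_1) \otimes P_p(t\mu) \otimes E(\lambda_2,\lambda_3)
	\otimes \bF_p\{\lambda_1\mu^j \mid v_p(j)=0\} \,,
$$
whereas the Tate $E^{2p+1}$-term is $t\mu$-torsion free.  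Indeed, for $p \nmid j$ the image $\lambda_1\mu^j = (t\mu)^j \cdot t^{-j}\lambda_1$ is a $d^{2p}$-boundary in the Tate spectral sequence (from $t^{-j-p}$, which has negative $t$-exponent and hence is invisible to the homotopy fixed point spectral sequence), so this entire summand lies in the kernel of the comparison.  The same happens for the torsion summands created by the later differentials, so a blanket appeal to injectivity is not available.

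The argument is repaired by checking injectivity only in the bidegrees you actually use.  The sources $\mu^{p^{k-1}}$ and $u_1\mu^{p^3}$ and the targets of all four differentials lie in the $P(t\mu)$-free summands; moreover, at the page where $d^{2p^k}$ acts the torsion summands are truncated modules $E(u_1) \otimes P_{p^{k-1}}(t\mu) \otimes (\cdots)$ on generators in filtrations $0$ and $-1$, hence vanish in the filtration $-2p^k$ (resp.\ $-2p^3-2$ for the last differential) where the target lives.  With that observation the deduction of each differential, and of the survival of $\mu^{\pm p^3}$, goes through.  The paper sidesteps the issue by first multiplying each relation by a suitable power of $\mu$ --- e.g.\ $d^{2p}(\mu)\cdot\mu^p \doteq (t\mu)^p\lambda_1\mu$ --- so that both sides have nonnegative $t$- and $\mu$-exponents and the lifting takes place in the unlocalized first-quadrant spectral sequence, where the comparison in the relevant bidegree is visibly injective; the stated formulas are then recovered by dividing by $\mu^p$ after localizing.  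Either fix is fine, but as written your injectivity claim is false and needs to be replaced by one of these local statements.
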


\begin{proof}
The composite relations
\begin{align*}
d^{2p}(\mu) \cdot \mu^p
	&= d^{2p}(t\mu \cdot t^{-1}) \cdot \mu^p
	\doteq t\mu \cdot t^{p-1} \lambda_1 \cdot \mu^p
	= (t\mu)^p \lambda_1 \mu \\
d^{2p^2}(\mu^p) \cdot \mu^{p^2}
	&= d^{2p^2}((t\mu)^p \cdot t^{-p}) \cdot \mu^{p^2}
	\doteq (t\mu)^p \cdot t^{p^2-p} \lambda_2 \cdot \mu^{p^2}
	= (t\mu)^{p^2} \lambda_2 \mu^p \\
d^{2p^3}(\mu^{p^2}) \cdot \mu^{p^3}
	&= d^{2p^3}((t\mu)^{p^2} \cdot t^{-p^2}) \cdot \mu^{p^3}
	\doteq (t\mu)^{p^2} \cdot t^{p^3-p^2} \lambda_3 \cdot \mu^{p^3}
	= (t\mu)^{p^3} \lambda_3 \mu^{p^2} \\
d^{2p^3+1}(u_1 \mu^{p^3})
	&= d^{2p^3+1}((t\mu)^{p^3} \cdot u_1 t^{-p^3})
	\doteq (t\mu)^{p^3} \cdot t\mu = (t\mu)^{p^3+1}
\end{align*}
lift to the $C_p$-homotopy fixed point spectral sequence, and can be
rewritten as claimed after inverting~$\mu$.
\end{proof}

The first differential leaves
\begin{align*}
\mu^{-1} E^{2p+1}(C_p) &= E(u_1) \otimes P(t\mu) \otimes
	E(\lambda_1, \lambda_2, \lambda_3) \otimes P(\mu^{\pm p}) \\
&\quad \oplus E(u_1) \otimes P_p(t\mu) \otimes E(\lambda_2, \lambda_3)
	\otimes \bF_p\{ \lambda_1 \mu^j \mid v_p(j) = 0 \} \,.
\end{align*}
The second leaves
\begin{align*}
\mu^{-1} E^{2p^2+1}(C_p) &= E(u_1) \otimes P(t\mu) \otimes
	E(\lambda_1, \lambda_2, \lambda_3) \otimes P(\mu^{\pm p^2}) \\
&\quad \oplus E(u_1) \otimes P_p(t\mu) \otimes E(\lambda_2, \lambda_3)
	\otimes \bF_p\{ \lambda_1 \mu^j \mid v_p(j) = 0 \} \\
&\quad \oplus E(u_1) \otimes P_{p^2}(t\mu) \otimes E(\lambda_1, \lambda_3)
	\otimes \bF_p\{ \lambda_2 \mu^j \mid v_p(j) = 1 \} \,.
\end{align*}
The third leaves
\begin{align*}
\mu^{-1} E^{2p^3+1}(C_p) &= E(u_1) \otimes P(t\mu) \otimes
	E(\lambda_1, \lambda_2, \lambda_3) \otimes P(\mu^{\pm p^3}) \\
&\quad \oplus E(u_1) \otimes P_p(t\mu) \otimes E(\lambda_2, \lambda_3)
	\otimes \bF_p\{ \lambda_1 \mu^j \mid v_p(j) = 0 \} \\
&\quad \oplus E(u_1) \otimes P_{p^2}(t\mu) \otimes E(\lambda_1, \lambda_3)
	\otimes \bF_p\{ \lambda_2 \mu^j \mid v_p(j) = 1 \} \\
&\quad \oplus E(u_1) \otimes P_{p^3}(t\mu) \otimes E(\lambda_1, \lambda_2)
	\otimes \bF_p\{ \lambda_3 \mu^j \mid v_p(j) = 2 \} \,.
\end{align*}
The final differential leaves
\begin{align*}
\mu^{-1} E^{2p^3+2}(C_p) &= P_{p^3+1}(t\mu) \otimes
	E(\lambda_1, \lambda_2, \lambda_3) \otimes P(\mu^{\pm p^3}) \\
&\quad \oplus E(u_1) \otimes P_p(t\mu) \otimes E(\lambda_2, \lambda_3)
	\otimes \bF_p\{ \lambda_1 \mu^j \mid v_p(j) = 0 \} \\
&\quad \oplus E(u_1) \otimes P_{p^2}(t\mu) \otimes E(\lambda_1, \lambda_3)
	\otimes \bF_p\{ \lambda_2 \mu^j \mid v_p(j) = 1 \} \\
&\quad \oplus E(u_1) \otimes P_{p^3}(t\mu) \otimes E(\lambda_1, \lambda_2)
	\otimes \bF_p\{ \lambda_3 \mu^j \mid v_p(j) = 2 \} \,,
\end{align*}
which equals $\mu^{-1} E^\infty(C_p)$.

Next we use the commutative diagram
$$
\xymatrix{
THH(BP\<2\>)^{hC_p} \ar[d]^-{F}
	& THH(BP\<2\>)^{C_p} \ar[d]^-{F}
		\ar[l]_-{\Gamma_1} \ar[r]^-{\hat\Gamma_2}
	& THH(BP\<2\>)^{tC_{p^2}} \ar[d]^-{F} \\
THH(BP\<2\>)
	& THH(BP\<2\>) \ar@{=}[l] \ar[r]^-{\hat\Gamma_1}
	& THH(BP\<2\>)^{tC_p}
}
$$
and what is known about $V(2)_* THH(BP\<2\>)^{hC_p}$ above
degree~$2p^2+2p-3$ to pin down the differential pattern
of the $C_{p^2}$-Tate spectral sequence leading to $V(2)_*
THH(BP\<2\>)^{tC_{p^2}}$.

\begin{theorem} \label{thm:Cp2Tate}
The $C_{p^2}$-Tate spectral sequence
\begin{align*}
\hat E^2(C_{p^2}) &= \hat H^{-*}(C_{p^2}; V(2)_* THH(BP\<2\>)) \\
        &\Longrightarrow V(2)_* THH(BP\<2\>)^{tC_{p^2}}
\end{align*}
has $E^2$-term
$$
\hat E^2(C_{p^2}) = E(u_2) \otimes P(t^{\pm1}) \otimes P(t\mu)
	\otimes E(\lambda_1, \lambda_2, \lambda_3) \,.
$$
There are differentials
\begin{align*}
d^{2p}(t^{1-p}) &\doteq t \lambda_1 \\
d^{2p^2}(t^{p-p^2}) &\doteq t^p \lambda_2 \\
d^{2p^3}(t^{p^2-p^3}) &\doteq t^{p^2} \lambda_3 \\
d^{2p^4+2p}(t^{p^3-p^4}) &\doteq t^{p^3} (t\mu)^p \lambda_1 \\
d^{2p^5+2p^2}(t^{p^4-p^5}) &\doteq t^{p^4} (t\mu)^{p^2} \lambda_2 \\
d^{2p^6+2p^3}(t^{p^5-p^6}) &\doteq t^{p^5} (t\mu)^{p^3} \lambda_3 \\
d^{2p^6+2p^3+1}(u_2 t^{-p^6}) &\doteq (t\mu)^{p^3+1} \,,
\end{align*}
and the classes $t\mu$, $\lambda_1$, $\lambda_2$, $\lambda_3$ and~$t^{\pm
p^6}$ are permanent cycles.
\end{theorem}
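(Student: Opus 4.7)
The plan is to extend the analysis of Theorem~\ref{thm:CpTate} to the $C_{p^2}$-Tate spectral sequence, with two new ingredients: naturality with respect to the Frobenius $F\colon THH(BP\<2\>)^{tC_{p^2}}\to THH(BP\<2\>)^{tC_p}$ to import the $d^{2p}, d^{2p^2}, d^{2p^3}$ differentials from $\hat E(C_p)$, and comparison with the $\mu^{-1}$-localized $C_p$-homotopy fixed point spectral sequence (Proposition~\ref{prop:locCpHFP}) via the coconnective maps $\Gamma_1, \hat\Gamma_2$ (Corollary~\ref{cor:coconn}) to pin down the three longer $t$-differentials. The $E^2$-term is the tensor product of $\hat H^*(C_{p^2};\bF_p) = E(u_2) \otimes P(t^{\pm 1})$ with $V(2)_* THH(BP\<2\>)$ from Proposition~\ref{prop:VnTHHBPn}; the rewriting with $P(t\mu)$ in place of $P(\mu)$ is the change of generator $\mu^i t^j = (t\mu)^i t^{j-i}$. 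The classes $\lambda_1, \lambda_2, \lambda_3$ are permanent cycles as circle-trace images of the $\lambda_i^K$ (Proposition~\ref{prop:trace}), and $t\mu$ is a permanent cycle as the detecting class for the image of $v_3$ (Proposition~\ref{prop:unit}), in each case via naturality of $R^h$ from an appropriate homotopy fixed point spectral sequence.

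For the first three differentials, I would apply Frobenius naturality. On $\hat E^2$-terms, $F$ is the identity on $V(2)_* THH(BP\<2\>)$, sends $t \mapsto t$, and sends $u_2 \mapsto 0$ (the restriction $H^1(C_{p^2};\bF_p) \to H^1(C_p;\bF_p)$ vanishes, while the tautological character of $C_{p^2}$ restricts to that of $C_p$). The differentials $d^{2p}(t^{1-p}) \doteq t\lambda_1$, $d^{2p^2}(t^{p-p^2}) \doteq t^p\lambda_2$ and $d^{2p^3}(t^{p^2-p^3}) \doteq t^{p^2}\lambda_3$ of Theorem~\ref{thm:CpTate} therefore lift to $\hat E(C_{p^2})$ with the stated targets modulo the $u_2$-kernel of $F$; since each target lies in an even horizontal bidegree while $u_2$-multiples lie in odd horizontal bidegrees, the lifts are exact up to units.

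For the three higher $t$-differentials, the Leibniz rule together with the permanence of $t\mu$ propagates the first three differentials to give $d^{2p^{i+1}}(\mu^{p^i}) \doteq t^{p^{i+1}} \mu^{p^i} \lambda_{i+1}$ for $i = 0, 1, 2$ (starting from $d^{2p}(t) \doteq t^{p+1}\lambda_1$, deduced by multiplying $d^{2p}(t^{1-p})$ by the $d^{2p}$-cycle $t^p$). After these rounds, the surviving $t$-powers at $\hat E^{2p^3+1}(C_{p^2})$ are the $t^{p^3 k}$, and the next three differentials mirror the first three with $t^{p^3}$ playing the role of $t$, but twisted by $(t\mu)^{p^i}$-factors that compensate for the absorbed $\mu^{p^i}$-classes; this gives the stated $d^{2p^{i+3}+2p^i}((t^{p^{i+2}})^{1-p}) \doteq t^{p^{i+2}}(t\mu)^{p^i}\lambda_i$ for $i = 1, 2, 3$. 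I would confirm their precise form and length by comparison to the $\mu^{-1}$-localized $C_p$-HFP SS: Corollary~\ref{cor:coconn} identifies $V(2)_* THH(BP\<2\>)^{tC_{p^2}}$ with $V(2)_* THH(BP\<2\>)^{hC_p}$ above degree $2p^2+2p-3$, and matching $E^\infty$-ranks against the computation flowing from Proposition~\ref{prop:locCpHFP} then pins the differentials down uniquely.

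The final differential $d^{2p^6+2p^3+1}(u_2 t^{-p^6}) \doteq (t\mu)^{p^3+1}$ is then forced as in the $C_p$-case: $(t\mu)^{p^3+1}$ must be a boundary (sufficiently large $(t\mu)$-powers cannot survive in the abutment), and $u_2 t^{-p^6}$ is the unique source in the relevant bidegree. The main obstacle is the rigorous derivation of the three higher $t$-differentials; Leibniz propagation from the first three identifies them only up to lower-filtration ambiguities and up to the choice of $(t\mu)$-power twists, so the cleanest resolution routes through the abutment matching provided by Corollary~\ref{cor:coconn} and Proposition~\ref{prop:locCpHFP}.
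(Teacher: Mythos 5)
Your overall strategy --- Frobenius/Verschiebung naturality for the first three differentials, then comparison with the known abutment via the coconnected maps of Corollary~\ref{cor:coconn} and the $\mu$-localized $C_p$-homotopy fixed point computation of Proposition~\ref{prop:locCpHFP} --- is the same as the paper's, and your treatment of the $E^2$-term, the first three differentials, and the permanent cycles is essentially correct.

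The gap is in how the abutment information is converted into the three long even differentials and the final odd one. ``Matching $E^\infty$-ranks'' is not sufficient as stated: a rank count in each total degree does not by itself exclude alternative differential patterns with the same Poincar\'e series, and your Leibniz heuristic cannot produce the long differentials, since the Leibniz rule only propagates differentials of a fixed length. What the paper actually does is finer. From $E^\infty(C_p)$ it reads off four specific classes $x_1,\dots,x_4$ in $V(2)_* THH(BP\<2\>)^{hC_p}$, detected by $\lambda_1\mu$, $\lambda_2\mu^p$, $\lambda_3\mu^{p^2}$ and $\mu^{p^3}$, whose exact $v_3$-torsion orders are $p$, $p^2$, $p^3$ and $p^3+1$ (after checking there are no hidden $v_3$-extensions). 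These lift uniquely through $\Gamma_1$ and push forward through $\hat\Gamma_2$ to classes $z_i$ in $V(2)_* THH(BP\<2\>)^{tC_{p^2}}$, which the Frobenius shows are detected by $t^{-p^3}\lambda_1$, $t^{-p^4}\lambda_2$, $t^{-p^5}\lambda_3$ and $t^{-p^6}$. The exact torsion orders then force the four products $(t\mu)^p\cdot t^{-p^3}\lambda_1$, \dots, $(t\mu)^{p^3+1}\cdot t^{-p^6}$ to be boundaries, and identifying the unique possible sources requires (i) the structural Lemma~\ref{lem:Cp2Tatedr}, asserting that from the $E^{2p^3+1}$-term onward the spectral sequence is a sum of cyclic $P(t\mu)$-modules generated by classes $u_2^\epsilon t^i\lambda_1^{\epsilon_1}\lambda_2^{\epsilon_2}\lambda_3^{\epsilon_3}$ with $p^3\mid i$, with differentials carrying free summands to free summands, and (ii) a careful five-step bidegree elimination, e.g.\ ruling out $u_2 t^{-p^3-p^5}$ as a detecting class for $z_3$ (it supports a $d^{2p^4+2p}$) and ruling out $t^{p^3-2p^6}\lambda_3$ as a source for $(t\mu)^{p^3+1}\cdot t^{-p^6}$. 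None of this appears in your proposal, and it is where the actual work of the theorem lies; you should supply the torsion-order bookkeeping and the $P(t\mu)$-module-structure lemma rather than defer to a rank comparison.
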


\begin{proof}
According to~\cite{AR02}*{Lem.~5.2}, naturality with respect to Frobenius
and Verschiebung maps forces the first three differentials, showing that
$$
\hat E^{2p^3+1}(C_{p^2}) = E(u_2) \otimes P(t^{\pm p^3})
        \otimes P(t\mu) \otimes E(\lambda_1, \lambda_2, \lambda_3) \,.
$$
To proceed, we shall make use of the summands
\begin{gather*}
P_p(t\mu) \otimes \bF_p\{\lambda_1 \mu\} \\
P_{p^2}(t\mu) \otimes \bF_p\{\lambda_2 \mu^p\} \\
P_{p^3}(t\mu) \otimes \bF_p\{\lambda_3 \mu^{p^2}\} \\
P_{p^3+1}(t\mu) \otimes \bF_p\{\mu^{p^3}\}
\end{gather*}
in $E^\infty(C_p)$, which is equal to $\mu^{-1} E^\infty(C_p)$ in these
degrees.  There are almost no classes in the same total degrees and of
lower filtration than the vanishing products
$$
(t\mu)^p \cdot \lambda_1 \mu \,,\quad
(t\mu)^{p^2} \cdot \lambda_2 \mu^p \,, \quad
(t\mu)^{p^3} \cdot \lambda_3 \mu^{p^2} \quad\text{and}\quad
(t\mu)^{p^3+1} \cdot \mu^{p^3} \,.
$$
The only exception is the class $(t\mu)^{p^2+p-1} \lambda_1
\lambda_2 \lambda_3$ in the same total degree as~$(t\mu)^{p^2}
\cdot \lambda_2 \mu^p$.  However, this class is itself a
$(t\mu)^{p^2}$-multiple, so there is no room for a hidden
$v_3^{p^2}$-extension on $\lambda_2 \mu^p$.  Hence $\lambda_1
\mu$ detects a $v_3^p$-torsion class~$x_1$, $\lambda_2 \mu^p$
detects a $v_3^{p^2}$-torsion class~$x_2$, $\lambda_3 \mu^{p^2}$
detects a $v_3^{p^3}$-torsion class~$x_3$, $\mu^{p^3}$ detects a
$v_3^{p^3+1}$-torsion class $x_4$ in $V(2)_* THH(BP\<2\>)^{hC_p}$,
and these $v_3$-power torsion orders are all exact.

By Corollary~\ref{cor:coconn} the maps $\Gamma_1$ and $\hat\Gamma_2$
are $(2p^2+2p-3)$-coconnected.  Hence the classes $x_i$ lift uniquely
to classes $y_i$ in $V(2)_* THH(BP\<2\>)^{C_p}$ with $\Gamma_1(y_i) =
x_i$, and we let $z_i = \hat\Gamma_2(y_i)$ denote their images in $V(2)_*
THH(BP\<2\>)^{tC_{p^2}}$.  Since $\hat\Gamma_1(\mu) = t^{-p^3}$,
up to a unit in $\bF_p$ that we hereafter often omit to mention, we see
that $F(z_1)$ is detected by $t^{-p^3} \lambda_1$, $F(z_2)$ is detected
by $t^{-p^4} \lambda_2$, $F(z_3)$ is detected by $t^{-p^5} \lambda_3$
and $F(z_4)$ is detected by $t^{-p^6}$ in~$\hat E^\infty(C_p)$.

We claim that there are no classes in $\hat E^\infty(C_{p^2})$
in the same total degrees and of higher filtrations than
$$
t^{-p^3} \lambda_1 \,,\quad
t^{-p^4} \lambda_2 \,,\quad
t^{-p^5} \lambda_3 \quad\text{and}\quad
t^{-p^6} \,.
$$
This will imply that the $z_i$ are detected by precisely these classes.
Already at the (known) $E^{2p^3+1}$-term the only exception to the
claim is $u_2 t^{-p^3-p^5}$ in the same total degree as~$t^{-p^5}
\lambda_3$, and we shall see below that this class supports a nonzero
$d^{2p^4+2p}$-differential, hence does not survive to the $E^\infty$-term.
It then follows that the products
$$
(t\mu)^p \cdot t^{-p^3} \lambda_1 \,,\quad
(t\mu)^{p^2} \cdot t^{-p^4} \lambda_2 \,,\quad
(t\mu)^{p^3} \cdot t^{-p^5} \lambda_3 \quad\text{and}\quad
(t\mu)^{p^3+1} \cdot t^{-p^6}
$$
must detect zero, and therefore be boundaries, in the $C_{p^2}$-Tate
spectral sequence $\hat E^r(C_{p^2})$.  We shall prove that these
boundaries must be
\begin{align*}
d^{2p^4+2p}(t^{-p^3-p^4}) &\doteq (t\mu)^p \cdot t^{-p^3} \lambda_1 \\
d^{2p^5+2p^2}(t^{-p^4-p^5}) &\doteq (t\mu)^{p^2} \cdot t^{-p^4} \lambda_2 \\
d^{2p^6+2p^3}(t^{-p^5-p^6}) &\doteq (t\mu)^{p^3} \cdot t^{-p^5} \lambda_3 \\
d^{2p^6+2p^3+1}(u_2 t^{-2p^6}) &\doteq (t\mu)^{p^3+1} \cdot t^{-p^6} \,,
\end{align*}
and the asserted formulas follow readily.

We shall make use of the following lemma.  Each cyclic $P(t\mu)$-module
is either free or torsion, being isomorphic to a suspension of~$P(t\mu)$
or of its truncation~$P_h(t\mu) = P(t\mu)/((t\mu)^h)$ at some height
$h\ge1$, according to the case.  Here, and below, exponents
$\epsilon$ and $\epsilon_i$ are always assumed to lie in $\{0,1\}$.

\begin{lemma} \label{lem:Cp2Tatedr}
For each $r\ge 2p^3+1$ the $C_{p^2}$-Tate $E^r$-term $\hat E^r(C_{p^2})$
is a direct sum of cyclic $P(t\mu)$-modules, generated by classes of the
form $u_2^\epsilon t^i \cdot \lambda_1^{\epsilon_1} \lambda_2^{\epsilon_2}
\lambda_3^{\epsilon_3}$ with $p^3 \mid i$.  The $d^r$-differential maps
free summands to free summands, and is zero on the torsion summands.
\end{lemma}

\begin{proof}
We proceed by induction on $r \ge 2p^3+1$, assuming that the
$P(t\mu)$-module structure of the $E^r$-term is as stated.

Suppose that there is a $d^r$-differential $d^r(a) = b$ hitting a
nonzero $t\mu$-torsion class.  Then $t\mu \cdot b = b_0 = d^{r_0}(a_0)$
must have been hit by an earlier $d^{r_0}$-differential, where $a_0$ is
a generator of the form $u_2^\epsilon t^i \cdot \lambda_1^{\epsilon_1}
\lambda_2^{\epsilon_2} \lambda_3^{\epsilon_3}$ with $p^3 \mid i$.  Hence
$a$ must lie in the same total degree as the formal product~$(t\mu)^{-1}
\cdot a_0$, but in a higher filtration.  At the $E^2$-term, this could
happen in three cases:
\begin{itemize}
\item
If $a_0 = u_2^\epsilon t^i \cdot \lambda_1 \lambda_2 \lambda_3$, with
$a$ in the bidegree of $u_2^\epsilon t^{i-p} \cdot \lambda_2$,
$u_2^\epsilon t^{i-p^2} \cdot \lambda_1$, $u_2 t^{i-p^2-p} \cdot 1$
or~$t^{i-p^2-p+1} \cdot 1$.
\item
If $a_0 = u_2^\epsilon t^i \cdot \lambda_2 \lambda_3$, with $a$ in
the bidegree of $u_2 t^{i-p^2+p-1} \cdot \lambda_1$, $t^{i-p^2+p}
\cdot \lambda_1$ or $u_2^\epsilon t^{i-p^2} \cdot 1$.
\item
If $a_0 = u_2^\epsilon t^i \cdot \lambda_1 \lambda_3$, with $a$ in
the bidegree of $u_2^\epsilon t^{i-p} \cdot 1$.
\end{itemize}
However, in none of these cases is the prescribed $t$-exponent ($i-p$,
$i-p^2$, etc.) a multiple of~$p^3$.  Hence there are no nonzero classes
in these bidegrees of~$\hat E^{2p^3+1}(C_{p^2})$, and therefore also
not in~$\hat E^r(C_{p^2})$ for $r \ge 2p^3+1$.

It follows that no differentials hit the torsion summands, so each
nonzero differentials maps a free summand to another free summand.  Its
kernel is then zero, while its cokernel creates a torsion summand in the
$E^{r+1}$-term, which is still generated by a class of the required form.
This proves the inductive statement for~$r+1$.
\end{proof}

The remainder of the proof of Theorem~\ref{thm:Cp2Tate}
can be separated into five steps.

(1) We start with~$z_1$, which we know is detected by~$t^{-p^3} \lambda_1$.
Checking bidegrees in~$\hat E^{2p^3+1}(C_{p^2})$, the next possible
differentials on $u_2$ and $t^{-p^3}$ are
\begin{align*}
d^{2p^4+2p-1}(t^{-p^3}) &\in
	\bF_p\{ u_2 t^{-p^3+p^4} (t\mu)^{p-1} \lambda_1 \lambda_3\} \\
d^{2p^4+2p}(t^{-p^3}) &\in
	\bF_p\{ t^{-p^3+p^4} (t\mu)^p \lambda_1 \} \\
d^{2p^4+2p}(u_2) &\in
	\bF_p\{ u_2 t^{p^4} (t\mu)^p \lambda_1 \} \,.
\end{align*}
Since $t\mu$ and the $\lambda_i$ are infinite cycles, we must have $d^r =
0$ for $2p^3+1 \le r < 2p^4+2p-1$.  Moreover, $(t\mu)^p \cdot t^{-p^3}
\lambda_1 \doteq d^{r_1}(a_1)$ in vertical degree~$2p^4+2p-1$ must be a
boundary, and the only possible source of such a $d^{r_1}$-differential
with $r_1 \ge 2p^4+2p-1$ is~$a_1 = t^{-p^3-p^4}$ with $r_1 = 2p^4+2p$.
It follows that $d^{2p^4+2p-1}(t^{-p^3}) = 0$ vanishes and, furthermore,
that $d^{2p^4+2p}(t^{-p^3}) \doteq t^{-p^3+p^4} (t\mu)^p \lambda_1$
is nonzero.

(2) We turn to $z_4$, which we know is detected by~$t^{-p^6}$.
Thus $t^{p^6}$ and its inverse are permanent cycles.
The nonzero product $v_3^{p^3} \cdot z_4$ is detected by $b_4 =
(t\mu)^{p^3} \cdot t^{-p^6}$ or, if this product is a boundary, by
another class in the same total degree as~$b_4$ but of lower filtration.
Let $b_4'$ denote the actual detecting class.  Then $t\mu \cdot b_4' \doteq
d^{r_4}(a_4)$ in total degree~$4p^6-2$ detects $v_3^{p^3+1} \cdot z_4
= 0$, hence is a boundary.  By Lemma~\ref{lem:Cp2Tatedr}, the
source of this differential is of the form $a_4 = u_2^\epsilon t^i \cdot
\lambda_1^{\epsilon_1} \lambda_2^{\epsilon_2} \lambda_3^{\epsilon_3}$,
with $p^3 \mid i$.  (If $a_4$ were a $t\mu$-multiple at the $E^2$-term,
then it would be a $t\mu$-multiple at the $E^{r_4}$-term, by the lemma.
Then $b_4'$ would be a $d^{r_4}$-boundary, which is impossible since it detects
$v_3^{p^3} \cdot z_4 \ne 0$.)  The total degree of~$a_4$ is $4p^6-1$,
so the only possibilities are $t^{p^3-2p^6} \lambda_3$ with $r_4 \ge
2p^6+2$, or $u_2 t^{-2p^6}$ with $r_4 \ge 2p^6+2p^3+1$.  However, we
showed in~(1) that $t^{p^3-2p^6} \lambda_3$ supports a nonzero (shorter)
$d^{2p^4+2p}$-differential.  Hence $a_4 = u_2 t^{-2p^6}$ survives at
least to the $E^{2p^6+2p^3+1}$-term, and $d^{r_4}(a_4) \ne 0$ for some
$r_4 \ge 2p^6+2p^3+1$.	Since $t^{p^6}$ is an infinite cycle it follows
that $u_2$ also survives to the $E^{2p^6+2p^3+1}$-term.  Hence
\begin{align*}
\hat E^{2p^4+2p+1}(C_{p^2}) &= E(u_2) \otimes P(t^{\pm p^4})
	\otimes P(t\mu) \otimes E(\lambda_1, \lambda_2, \lambda_3) \\
    &\qquad\oplus E(u_2) \otimes P_p(t\mu) \otimes E(\lambda_2, \lambda_3)
	\otimes \bF_p\{t^i \lambda_1 \mid v_p(i) = 3\} \,.
\end{align*}
In particular, $u_2 t^{-p^3-p^5}$ is not an infinite cycle,
and cannot detect $z_3$, confirming our earlier claim.

(3) We continue with~$z_2$, which we know is detected by~$t^{-p^4} \lambda_2$.
Checking bidegrees in~$\hat E^{2p^4+2p+1}(C_{p^2})$, the next possible
differentials on $t^{-p^4}$ are
\begin{align*}
d^{2p^5+2p^2-1}(t^{-p^4}) &\in
	\bF_p\{ u_2 t^{-p^4+p^5} (t\mu)^{p^2-1} \lambda_2 \lambda_3\} \\
d^{2p^5+2p^2}(t^{-p^4}) &\in
	\bF_p\{ t^{-p^4+p^5} (t\mu)^{p^2} \lambda_2 \} \,,
\end{align*}
while $u_2$ survives at least to $\hat E^{2p^6+2p^3+1}(C_{p^2})$ by~(2).
The differentials on $t\mu$, the $\lambda_i$, and the
torsion summand are zero.  Hence $d^r = 0$ for $2p^4+2p+1 \le r <
2p^5+2p^2-1$.  Moreover, $(t\mu)^{p^2} \cdot t^{-p^4} \lambda_2 =
d^{r_2}(a_2)$ in vertical degree~$2p^5+2p^2-1$ must be a boundary,
and the only possible source of such a $d^{r_2}$-differential with $r_2
\ge 2p^5+2p^2-1$ is~$a_2 \doteq t^{-p^4-p^5}$ with $r_2 = 2p^5+2p^2$.
It follows that $d^{2p^5+2p^2-1}(t^{-p^4}) = 0$ vanishes and that
$d^{2p^5+2p^2}(t^{-p^4}) \doteq t^{-p^4+p^5} (t\mu)^{p^2} \lambda_2$ is
nonzero.  Hence
\begin{align*}
\hat E^{2p^5+2p^2+1}(C_{p^2}) &= E(u_2) \otimes P(t^{\pm p^5})
	\otimes P(t\mu) \otimes E(\lambda_1, \lambda_2, \lambda_3) \\
	&\qquad\oplus E(u_2) \otimes P_p(t\mu)
		\otimes E(\lambda_2, \lambda_3)
		\otimes \bF_p\{t^i \lambda_1 \mid v_p(i) = 3\} \\
	&\qquad\oplus E(u_2) \otimes P_{p^2}(t\mu)
		\otimes E(\lambda_1, \lambda_3)
		\otimes \bF_p\{t^i \lambda_2 \mid v_p(i) = 4\} \,.
\end{align*}

(4) Next up is~$z_3$, which we know from~(2) is detected by~$t^{-p^5}
\lambda_3$.  Checking bidegrees in~$\hat E^{2p^5+2p^2+1}(C_{p^2})$,
the next possible differential on $t^{-p^5}$ is
$$
d^{2p^6+2p^3}(t^{-p^5}) \in
	\bF_p\{ t^{-p^5+p^6} (t\mu)^{p^3} \lambda_3 \} \,,
$$
while $u_2$ survives to the $E^{2p^6+2p^3+1}$-term by~(2).
The differentials on $t\mu$, the $\lambda_i$, and the torsion summands
are zero.  Hence $d^r = 0$ for $2p^5+2p^2+1 \le r < 2p^6+2p^3$.
Moreover, $(t\mu)^{p^3} \cdot t^{-p^5} \lambda_3 = d^{r_3}(a_3)$ in
vertical degree~$2p^6+2p^3-1$ must be a boundary, and the only possible
source of such a differential is~$a_3 \doteq t^{-p^5-p^6}$ with $r_3 =
2p^6+2p^3$.  It follows that $d^{2p^6+2p^3}(t^{-p^5}) \doteq t^{-p^5+p^6}
(t\mu)^{p^3} \lambda_3$ is nonzero.  Hence
\begin{align*}
\hat E^{2p^6+2p^3+1}(C_{p^2}) &= E(u_2) \otimes P(t^{\pm p^6})
	\otimes P(t\mu) \otimes E(\lambda_1, \lambda_2, \lambda_3) \\
	&\qquad\oplus E(u_2) \otimes P_p(t\mu)
		\otimes E(\lambda_2, \lambda_3)
		\otimes \bF_p\{t^i \lambda_1 \mid v_p(i) = 3\} \\
	&\qquad\oplus E(u_2) \otimes P_{p^2}(t\mu)
		\otimes E(\lambda_1, \lambda_3)
		\otimes \bF_p\{t^i \lambda_2 \mid v_p(i) = 4\} \\
	&\qquad\oplus E(u_2) \otimes P_{p^3}(t\mu)
		\otimes E(\lambda_1, \lambda_2)
		\otimes \bF_p\{t^i \lambda_3 \mid v_p(i) = 5\} \,.
\end{align*}

(5) Finally, we return to~$z_4$.  Since $b_4 = (t\mu)^{p^3} \cdot t^{-p^6}$
is nonzero, in vertical degree~$2p^6$ of the $E^{2p^6+2p^3+1}$-term,
it can no longer become a boundary.  We can therefore strengthen the
conclusions in~(2) to conclude that $v_3^{p^3} \cdot z_4$ is detected
by $b_4' = b_4$, and that $t\mu \cdot b_4 = (t\mu)^{p^3+1} \cdot t^{-p^6}$
is a unit times $d^{r_4}(a_4)$, with $r_4 = 2p^6+2p^3+1$ and $a_4 =
u_2 t^{-2p^6}$. It follows that $d^{2p^6+2p^3+1}(u_2 t^{-p^6}) \doteq
(t\mu)^{p^3+1}$, since $t^{p^6}$ is an infinite cycle.  Hence
\begin{align*}
\hat E^{2p^6+2p^3+2}(C_{p^2}) &= P(t^{\pm p^6})
	\otimes P_{p^3+1}(t\mu) \otimes E(\lambda_1, \lambda_2, \lambda_3) \\
	&\qquad\oplus E(u_2) \otimes P_p(t\mu)
		\otimes E(\lambda_2, \lambda_3)
		\otimes \bF_p\{t^i \lambda_1 \mid v_p(i) = 3\} \\
	&\qquad\oplus E(u_2) \otimes P_{p^2}(t\mu)
		\otimes E(\lambda_1, \lambda_3)
		\otimes \bF_p\{t^i \lambda_2 \mid v_p(i) = 4\} \\
	&\qquad\oplus E(u_2) \otimes P_{p^3}(t\mu)
		\otimes E(\lambda_1, \lambda_2)
		\otimes \bF_p\{t^i \lambda_3 \mid v_p(i) = 5\} \,.
\end{align*}
No free summands remain, so by Lemma~\ref{lem:Cp2Tatedr}
there are no further differentials, and this $E^r$-term equals $\hat
E^\infty(C_{p^2})$.
\end{proof}

\section{The $C_{p^n}$-Tate spectral sequences}

The following notations will be convenient when we now determine the
differential structure of the $C_{p^n}$-Tate spectral sequence.

\begin{definition}[\cite{AR02}*{Def.~2.5}, \cite{AKCH}*{(5.8)}]
Let $r(k) = 0$ for $k \in \{0,-1,-2\}$ and set $r(k) = p^k + r(k-3)$
for $k\ge1$.  Thus $r(3n-2) = p^{3n-2} + \dots + p$, $r(3n-1) = p^{3n-1}
+ \dots + p^2$ and $r(3n) = p^{3n} + \dots + p^3$, with $n$ terms in
each sum.

Let $[k] \in \{1,2,3\}$ be defined by $k \equiv [k] \mod 3$, so that
$\{\lambda_{[k]}, \lambda_{[k+1]}, \lambda_{[k+2]}\} = \{\lambda_1,
\lambda_2, \lambda_3\}$.
\end{definition}

\begin{theorem} \label{thm:CpnTate}
The $C_{p^n}$-Tate spectral sequence
\begin{align*}
\hat E^2(C_{p^n}) &= \hat H^{-*}(C_{p^n}; V(2)_* THH(BP\<2\>)) \\
	&\Longrightarrow V(2)_* THH(BP\<2\>)^{tC_{p^n}}
\end{align*}
has $E^2$-term
$$
\hat E^2(C_{p^n}) = E(u_n) \otimes P(t^{\pm1})
	\otimes P(t\mu) \otimes E(\lambda_1, \lambda_2, \lambda_3) \,.
$$
There are differentials
$$
d^{2r(k)}(t^{p^{k-1} - p^k}) \doteq t^{p^{k-1}} (t\mu)^{r(k-3)} \lambda_{[k]}
$$
for each $1 \le k \le 3n$, and
$$
d^{2r(3n)+1}(u_n t^{-p^{3n}}) \doteq (t\mu)^{r(3n-3)+1} \,.
$$
The classes $t\mu$, $\lambda_1$, $\lambda_2$, $\lambda_3$ and~$t^{\pm p^{3n}}$
are permanent cycles.
\end{theorem}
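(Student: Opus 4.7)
The plan is to argue by induction on $n \geq 1$, with base case $n = 1$ given by Theorem~\ref{thm:CpTate}. Throughout, the classes $t\mu$, $\lambda_1$, $\lambda_2$, $\lambda_3$ are permanent cycles from the start, because by Proposition~\ref{prop:unit} they detect the unit-map images of $v_3$, $\alpha_1$, $\beta_1^\circ$, $\gamma_1^\circ$, which map to zero in $V(2)_* THH(BP\<2\>)^{tC_{p^n}}$; the permanence of $t^{\pm p^{3n}}$ will follow at the end from the absence of admissible targets and sources.

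Assuming the theorem for $n-1$, the Witt-vector Frobenius $F \: THH(BP\<2\>)^{tC_{p^n}} \to THH(BP\<2\>)^{tC_{p^{n-1}}}$ and the group-cohomology transfer (Verschiebung), via the naturality argument of~\cite{AR02}*{Lem.~5.2}, rigidly propagate the first $3(n-1)$ differentials from $\hat E^r(C_{p^{n-1}})$ to $\hat E^r(C_{p^n})$, yielding $d^{2r(k)}(t^{p^{k-1}-p^k}) \doteq t^{p^{k-1}}(t\mu)^{r(k-3)}\lambda_{[k]}$ for $1 \leq k \leq 3(n-1)$. After these differentials $t^{\pm p^{3n-3}}$ survives, and the $E^{2r(3n-3)+1}$-term decomposes as a free $P(t\mu)$-summand $E(u_n) \otimes P(t^{\pm p^{3n-3}}) \otimes P(t\mu) \otimes E(\lambda_1, \lambda_2, \lambda_3)$ plus torsion summands created at the earlier stages.

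Next, construct four detecting classes $z_1, z_2, z_3, z_4 \in V(2)_* THH(BP\<2\>)^{tC_{p^n}}$ for the remaining differentials. The inductive structure of $\hat E^\infty(C_{p^{n-1}})$, together with the $\mu$-localized analog of Proposition~\ref{prop:locCpHFP} at level $n-1$, produces $v_3$-power torsion classes $x_1, x_2, x_3, x_4$ in $V(2)_* THH(BP\<2\>)^{hC_{p^{n-1}}}$ of exact orders $v_3^{r(3n-5)}$, $v_3^{r(3n-4)}$, $v_3^{r(3n-3)}$, $v_3^{r(3n-3)+1}$, detected by appropriate $\lambda_{[k]}\mu^{r(k-3)}$-classes. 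By Corollary~\ref{cor:coconn} these lift uniquely through the $(2p^2+2p-3)$-coconnected map $\Gamma_{n-1}$ and project via $\hat\Gamma_n$ to classes $z_i$. Using the iterated relation $\hat\Gamma_1(\mu) \doteq t^{-p^3}$ and Frobenius naturality to compute $F(z_i) \in \hat E^\infty(C_{p^{n-1}})$, a bidegree count in $\hat E^{2r(3n-3)+1}(C_{p^n})$ pins down the detecting classes of the $z_i$ as $t^{-p^{3n-2}}\lambda_1$, $t^{-p^{3n-1}}\lambda_2$, $t^{-p^{3n}}\lambda_3$, and $t^{-p^{3n}}$.

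Finally, prove the analog of Lemma~\ref{lem:Cp2Tatedr}: for $r \geq 2r(3n-3)+1$, $\hat E^r(C_{p^n})$ decomposes as a direct sum of cyclic $P(t\mu)$-modules generated by $u_n^\epsilon t^i \lambda_1^{\epsilon_1}\lambda_2^{\epsilon_2}\lambda_3^{\epsilon_3}$ with $p^{3n-3} \mid i$, every differential maps free summands to free summands, and no differential hits a torsion class; this follows from a bidegree check generalizing the one for $n = 2$. Cascading the same five-step argument as in Theorem~\ref{thm:Cp2Tate}, the fact that $t\mu$ times the detecting class of each $z_i$ must be a boundary (because the $v_3$-torsion order of $z_i$ is one less) together with the structural lemma's restriction on admissible sources forces, in order, the three $d^{2r(k)}$-differentials for $k = 3n-2, 3n-1, 3n$, and finally $d^{2r(3n)+1}(u_n t^{-p^{3n}}) \doteq (t\mu)^{r(3n-3)+1}$, since $t^{p^{3n}}$ remains an infinite cycle. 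The hardest step is the bidegree identification of the detecting classes of the $z_i$, which requires ruling out competing $u_n$-supported classes in the same total degree by bootstrapping one of the later differentials, exactly as in the $C_{p^2}$ argument where $u_2 t^{-p^3-p^5}$ had to be eliminated en route to identifying $z_3$; once this is done, the cascade of forced differentials is a mechanical iteration of the argument used in Theorem~\ref{thm:Cp2Tate}.
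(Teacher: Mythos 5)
Your proposal reproduces the paper's own proof: induction on $n$ with Frobenius--Verschiebung rigidity forcing the first $3(n-1)$ families of differentials, exact $v_3$-power torsion classes in $V(2)_* THH(BP\<2\>)^{hC_{p^{n-1}}}$ lifted through the $(2p^2+2p-3)$-coconnected map $\Gamma_{n-1}$ and pushed into the $C_{p^n}$-Tate abutment via $\hat\Gamma_n$, a structural lemma expressing the later $E^r$-terms as sums of cyclic $P(t\mu)$-modules on which differentials preserve the free part, and the five-step cascade with the $z_4$-type class treated first to control $u_n$. The strategy and all the key ideas match.

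Two bookkeeping errors should be corrected, the first of which would derail the computation if carried through literally. The detecting classes of $z_1$, $z_2$, $z_3$ in $\hat E^\infty(C_{p^n})$ are $t^{-p^{3n-3}}\lambda_1$, $t^{-p^{3n-2}}\lambda_2$ and $t^{-p^{3n-1}}\lambda_3$, not $t^{-p^{3n-2}}\lambda_1$, $t^{-p^{3n-1}}\lambda_2$ and $t^{-p^{3n}}\lambda_3$: the classes $x_i$ at homotopy fixed point level $n-1$ are detected by $\lambda_1\mu^{p^{3n-6}}$, $\lambda_2\mu^{p^{3n-5}}$, $\lambda_3\mu^{p^{3n-4}}$ and $\mu^{p^{3n-3}}$, and $\hat\Gamma_1$ sends $\mu$ to a unit times $t^{-p^3}$. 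With your exponents the products $(t\mu)^{r(3n-5)}\cdot t^{-p^{3n-2}}\lambda_1$, etc., that must become boundaries would sit in the wrong total degrees and would not force $d^{2r(k)}(t^{p^{k-1}-p^k}) \doteq t^{p^{k-1}}(t\mu)^{r(k-3)}\lambda_{[k]}$ for $k=3n-2,3n-1,3n$. Second, in the structural lemma the divisibility condition on the cyclic generators must be $p^3\mid i$ (as in Lemma~\ref{lem:Cp2Tatedr}), not $p^{3n-3}\mid i$: the torsion summand created at stage $k$ is generated by classes $t^i\lambda_{[k]}$ with $v_p(i)=k-1$, which for $k=4$ gives only $p^3$-divisibility. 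Neither correction changes the architecture of your argument.
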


For $n=1$, this is Theorem~\ref{thm:CpTate}.  We prove the statement for
general~$n$ by induction, assuming the statement to be true for one value
of $n\ge2$, and deducing that it also holds for $n+1$.  The inductive
beginning for $n=2$ is provided by Theorem~\ref{thm:Cp2Tate}.

The distinct terms of the $C_{p^n}$-Tate spectral sequence are
\begin{multline*}
\hat E^{2r(m)+1}(C_{p^n}) = E(u_n) \otimes P(t^{\pm p^m})
	\otimes P(t\mu) \otimes E(\lambda_1, \lambda_2, \lambda_3) \\
\oplus \bigoplus_{k=4}^m E(u_n) \otimes P_{r(k-3)}(t\mu)
	\otimes E(\lambda_{[k+1]}, \lambda_{[k+2]})
        \otimes \bF_p\{ t^i \lambda_{[k]} \mid v_p(i) = k-1 \}
\end{multline*}
for $1 \le m \le 3n$.  To see this, note that the differential $d^{2r(k)}$
only affects the summand $E(u_n) \otimes \bF_p\{ t^i \mid v_p(i) = k-1 \}
\otimes P(t\mu) \otimes E(\lambda_1, \lambda_2, \lambda_3)$, and here its
homology is $E(u_n) \otimes P_{r(k-3)}(t\mu) \otimes E(\lambda_{[k+1]},
\lambda_{[k+2]}) \otimes \bF_p\{ t^i \lambda_{[k]} \mid v_p(i) = k-1 \}$.
Thereafter,
\begin{multline*}
\hat E^{2r(3n)+2}(C_{p^n}) = P(t^{\pm p^{3n}})
        \otimes P_{r(3n-3)+1}(t\mu)
	\otimes E(\lambda_1, \lambda_2, \lambda_3) \\
\oplus \bigoplus_{k=4}^{3n} E(u_n) \otimes P_{r(k-3)}(t\mu)
	\otimes E(\lambda_{[k+1]}, \lambda_{[k+2]})
        \otimes \bF_p\{ t^i \lambda_{[k]} \mid v_p(i) = k-1 \} \,.
\end{multline*}
To see this, note that $d^{2r(3n)+1}$ only affects the summand
$E(u_n) \otimes P(t^{\pm p^{3n}})
        \otimes P(t\mu) \otimes E(\lambda_1, \lambda_2, \lambda_3)$,
and that its homology is
$P(t^{\pm p^{3n}}) \otimes P_{r(3n-3)+1}(t\mu)
        \otimes E(\lambda_1, \lambda_2, \lambda_3)$.
For bidegree reasons the remaining differentials are zero, so $\hat
E^{2r(3n)+2}(C_{p^n}) = \hat E^\infty(C_{p^n})$, and the classes $t^{\pm
p^{3n}}$ are permanent cycles.

The differential structure of the $C_{p^n}$-homotopy fixed point
spectral sequence $E^r(C_{p^n})$ for $V(2) \wedge THH(BP\<2\>)$
is obtained from that of the $C_{p^n}$-Tate spectral sequence $\hat
E^r(C_{p^n})$ by restricting to the second quadrant.  We write $\mu^{-1}
E^r(C_{p^n})$ for its localization given by inverting (a power of) $\mu$.
It follows from Theorem~\ref{thm:CpTate} that $\mu^{-1} E^r(C_{p^n})$
is isomorphic to the $C_{p^n}$-homotopy fixed point spectral sequence
for $V(2) \wedge THH(BP\<2\>)^{tC_p}$

\begin{proposition} \label{prop:locCpnHFP}
The $\mu$-localized $C_{p^n}$-homotopy fixed point spectral sequence
\begin{align*}
\mu^{-1} E^2(C_{p^n}) &= H^{-*}(C_{p^n}; \mu^{-1} V(2)_* THH(BP\<2\>)) \\
	&\Longrightarrow \mu^{-1} V(2)_* THH(BP\<2\>)^{hC_{p^n}}
\end{align*}
has $E^2$-term
$$
\mu^{-1} E^2(C_{p^n}) = E(u_n) \otimes P(t\mu)
	\otimes E(\lambda_1, \lambda_2, \lambda_3) \otimes P(\mu^{\pm1}) \,.
$$
There are differentials
$$
d^{2r(k)}(\mu^{p^{k-1}})
	\doteq (t\mu)^{r(k)} \lambda_{[k]} \mu^{p^{k-1}-p^k}
$$
for each $1 \le k \le 3n$, and
$$
d^{2r(3n)+1}(u_n \mu^{p^{3n}}) \doteq (t\mu)^{r(3n)+1} \,.
$$
The classes $t\mu$, $\lambda_1$, $\lambda_2$, $\lambda_3$ and $\mu^{\pm
p^{3n}}$ are permanent cycles.
\end{proposition}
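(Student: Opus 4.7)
The plan is to deduce Proposition~\ref{prop:locCpnHFP} from Theorem~\ref{thm:CpnTate} via the canonical map, following the pattern already used in the $n=1$ case (Proposition~\ref{prop:locCpHFP}).

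First I will set up the transport. The canonical map $R^h \colon THH(BP\<2\>)^{hC_{p^n}} \to THH(BP\<2\>)^{tC_{p^n}}$ induces a morphism of multiplicative spectral sequences $E^r(C_{p^n}) \to \hat E^r(C_{p^n})$, which on $E^2$-pages is the standard inclusion of $H^{-*}(C_{p^n};V(2)_* THH(BP\<2\>))$ into $\hat H^{-*}(C_{p^n};V(2)_* THH(BP\<2\>))$. After inverting $\mu$, the relation $t \cdot \mu = t\mu$ lets me rewrite the polynomial generator $t$ as $(t\mu)\mu^{-1}$, which both yields the stated form of $\mu^{-1}E^2(C_{p^n})$ and identifies $\mu^j$ with $(t\mu)^j t^{-j}$ (and $u_n\mu^j$ with $u_n(t\mu)^j t^{-j}$) inside $\mu^{-1}\hat E^r(C_{p^n})$. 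The induced map remains injective in the bidegrees relevant to the differentials I need to pin down, so every $d^r$ on $\mu^{\pm p^{k-1}}$ or on $u_n\mu^{\pm p^{3n}}$ in $\mu^{-1}E^r(C_{p^n})$ is detected by its image in the Tate spectral sequence.

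Second, I will convert each Tate differential of Theorem~\ref{thm:CpnTate} into the stated homotopy fixed point differential via the Leibniz rule, exactly as was done in the $n=1$ case. Applying the derivation property to $t^{p^{k-1} - p^k} = (t^{p^{k-1}})^{1-p}$, with $1-p \equiv 1 \pmod p$, converts the Tate formula $d^{2r(k)}(t^{p^{k-1}-p^k}) \doteq t^{p^{k-1}}(t\mu)^{r(k-3)}\lambda_{[k]}$ into a formula for $d^{2r(k)}(t^{-p^{k-1}})$; multiplying by the permanent cycle $(t\mu)^{p^{k-1}}$ and rearranging exponents via $r(k) = p^k + r(k-3)$ then yields the stated $d^{2r(k)}(\mu^{p^{k-1}}) \doteq (t\mu)^{r(k)} \lambda_{[k]} \mu^{p^{k-1} - p^k}$. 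The last differential comes from the same manipulation applied to $d^{2r(3n)+1}(u_n t^{-p^{3n}}) \doteq (t\mu)^{r(3n-3)+1}$, using the identity $u_n \mu^{p^{3n}} = u_n(t\mu)^{p^{3n}} t^{-p^{3n}}$ together with $p^{3n} + r(3n-3) = r(3n)$.

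Third, the permanent cycle statements are immediate: the classes $t\mu$, $\lambda_1$, $\lambda_2$, $\lambda_3$ are permanent cycles already in $\hat E^r(C_{p^n})$ by Theorem~\ref{thm:CpnTate}, hence also in $\mu^{-1}E^r(C_{p^n})$; while $\mu^{\pm p^{3n}} = (t\mu)^{\pm p^{3n}} t^{\mp p^{3n}}$ is a permanent cycle because both $(t\mu)^{\pm p^{3n}}$ and $t^{\mp p^{3n}}$ are permanent in $\hat E^r(C_{p^n})$. The main obstacle is purely notational: keeping track of the various exponents through the recursion $r(k) = p^k + r(k-3)$, and verifying the injectivity of $R^h$ on the bidegree of each source and target, so that the Tate computation indeed suffices to determine the homotopy fixed point differential. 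No further spectral-sequence input is required.
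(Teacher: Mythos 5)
Your proposal is correct and follows essentially the same route as the paper: the paper's proof likewise deduces the result from Theorem~\ref{thm:CpnTate} by comparison along $R^h$ and the localization morphism, summarizing the algebraic translation as "multiplication with appropriate powers of $t\mu$" (the very manipulation you carry out, and which the paper spells out explicitly only in the $n=1$ case, Proposition~\ref{prop:locCpHFP}). Your exponent bookkeeping via $r(k)=p^k+r(k-3)$ and the identity $\mu^j=(t\mu)^j t^{-j}$ checks out, so nothing further is needed.
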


\begin{proof}
This follows from Theorem~\ref{thm:CpnTate} by comparison along the
morphism
$$
R^h \: E^r(C_{p^n}) \longto \hat E^r(C_{p^n})
$$
of spectral sequences induced by the homotopy restriction (= canonical) map,
and the $(2p^2+2p-3)$-coconnected localization morphism
$$
E^r(C_{p^n}) \longto \mu^{-1} E^r(C_{p^n}) \,.
$$
Algebraically, the translation is achieved through multiplication with
appropriate powers of $t\mu$.
\end{proof}

The distinct terms of the $\mu$-localized $C_{p^n}$-homotopy fixed point
spectral sequence are
\begin{multline*}
\mu^{-1} E^{2r(m)+1}(C_{p^n}) = E(u_n) \otimes P(t\mu)
	\otimes E(\lambda_1, \lambda_2, \lambda_3)
	\otimes P(\mu^{\pm p^m}) \\
\oplus \bigoplus_{k=1}^m E(u_n) \otimes P_{r(k)}(t\mu)
	\otimes E(\lambda_{[k+1]}, \lambda_{[k+2]})
	\otimes \bF_p\{ \lambda_{[k]} \mu^j \mid v_p(j) = k-1\}
\end{multline*}
for $1 \le m \le 3n$.  To see this, note that the differential
$d^{2r(k)}$ only affects the summand
$E(u_n) \otimes P(t\mu) \otimes E(\lambda_1, \lambda_2, \lambda_3)
	\otimes \bF_p\{\mu^j \mid v_p(j) = k-1\}$,
and here its homology is
$E(u_n) \otimes P_{r(k)}(t\mu) \otimes E(\lambda_{[k+1]}, \lambda_{[k+2]})
	\otimes \bF_p\{\lambda_{[k]} \mu^j \mid v_p(j) = k-1\}$.
Thereafter
\begin{multline*}
\mu^{-1} E^{2r(3n)+2}(C_{p^n}) = P_{r(3n)+1}(t\mu)
        \otimes E(\lambda_1, \lambda_2, \lambda_3)
	\otimes P(\mu^{\pm p^{3n}}) \\
\oplus \bigoplus_{k=1}^{3n} E(u_n) \otimes P_{r(k)}(t\mu)
        \otimes E(\lambda_{[k+1]}, \lambda_{[k+2]})
        \otimes \bF_p\{ \lambda_{[k]} \mu^j \mid v_p(j) = k-1\} \,.
\end{multline*}
As before, $d^{2r(3n)+1}$ only affects the summand
$E(u_n) \otimes P(t\mu) \otimes E(\lambda_1, \lambda_2, \lambda_3)
	\otimes P(\mu^{\pm p^{3n}})$,
and its homology is
$P_{r(3n)+1}(t\mu) \otimes E(\lambda_1, \lambda_2, \lambda_3)
	\otimes P(\mu^{\pm p^{3n}})$.
For bidegree reasons the remaining differentials are zero, so $\mu^{-1}
E^{2r(3n)+2}(C_{p^n}) = \mu^{-1} E^\infty(C_{p^n})$, and the classes
$\mu^{\pm p^{3n}}$ are permanent cycles.

To achieve the inductive step we use the commutative diagram
\begin{equation} \label{eq:GammasFn}
\xymatrix{
THH(BP\<2\>)^{hC_{p^n}} \ar[d]^-{F^n}
	& THH(BP\<2\>)^{C_{p^n}} \ar[d]^-{F^n}
		\ar[l]_-{\Gamma_n} \ar[r]^-{\hat\Gamma_{n+1}}
	& THH(BP\<2\>)^{tC_{p^{n+1}}} \ar[d]^-{F^n} \\
THH(BP\<2\>)
	& THH(BP\<2\>) \ar@{=}[l] \ar[r]^-{\hat\Gamma_1}
	& THH(BP\<2\>)^{tC_p}
}
\end{equation}
and what is known about $V(2)_* THH(BP\<2\>)^{hC_{p^n}}$ above
degree~$2p^2+2p-3$ to determine the differential pattern
of the $C_{p^{n+1}}$-Tate spectral sequence converging to $V(2)_*
THH(BP\<2\>)^{tC_{p^{n+1}}}$.

\begin{proof}[Proof of Theorem~\ref{thm:CpnTate}]
We must show that the $C_{p^{n+1}}$-Tate spectral sequence
\begin{align*}
\hat E^2(C_{p^{n+1}}) &= \hat H^{-*}(C_{p^{n+1}}; V(2)_* THH(BP\<2\>)) \\
	&\Longrightarrow V(2)_* THH(BP\<2\>)^{tC_{p^{n+1}}}
\end{align*}
has the asserted differential pattern.  By naturality with respect to
(Tate spectrum) Frobenius and Verschiebung morphisms
$$
\xymatrix{
F \: \hat E^r(C_{p^{n+1}}) \ar@<1ex>[r]
	& \hat E^r(C_{p^n}) \ar@<1ex>[l] \: V
}
$$
it follows as in~\cite{AR02}*{Lem.~5.2} that the left hand spectral
sequence has differentials
$$
d^{2r(k)}(t^{p^{k-1} - p^k})
	\doteq t^{p^{k-1}} (t\mu)^{r(k-3)} \lambda_{[k]}
$$
for all $1 \le k \le 3n$, leading via the
$E^{2r(3)+1} = E^{2p^3+1}$-term
$$
\hat E^{2p^3+1}(C_{p^{n+1}}) = E(u_{n+1}) \otimes P(t^{\pm p^3})
	\otimes P(t\mu) \otimes E(\lambda_1, \lambda_2, \lambda_3)
$$
to the $E^{2r(3n)+1}$-term
\begin{multline*}
\hat E^{2r(3n)+1}(C_{p^{n+1}}) = E(u_{n+1}) \otimes P(t^{\pm p^{3n}})
        \otimes P(t\mu) \otimes E(\lambda_1, \lambda_2, \lambda_3) \\
\oplus \bigoplus_{k=4}^{3n} E(u_{n+1}) \otimes P_{r(k-3)}(t\mu)
        \otimes E(\lambda_{[k+1]}, \lambda_{[k+2]})
        \otimes \bF_p\{ t^i \lambda_{[k]} \mid v_p(i) = k-1 \} \,.
\end{multline*}
We shall prove that this spectral sequence contains three more families
of even length differentials, followed by one family of odd length
differentials, after which it collapses.

Note that the $E^{2p^3+1}$-term is free as a $P(t\mu)$-module.  Replacing
$C_{p^2}$ with $C_{p^{n+1}}$ and $u_2$ with $u_{n+1}$ in the proof of
Lemma~\ref{lem:Cp2Tatedr}, with no other changes, establishes the more
general statement below.

\begin{lemma} \label{lem:Cpn+1Tatedr}
For each $r\ge 2p^3+1$ the $C_{p^{n+1}}$-Tate $E^r$-term $\hat
E^r(C_{p^{n+1}})$ is a direct sum of cyclic $P(t\mu)$-modules, generated
by classes of the form $u_{n+1}^\epsilon t^i \cdot \lambda_1^{\epsilon_1}
\lambda_2^{\epsilon_2} \lambda_3^{\epsilon_3}$ with $p^3 \mid i$.
The $d^r$-differential maps free summands to free summands, and is zero
on the torsion summands.
\qed
\end{lemma}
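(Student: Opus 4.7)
The plan is to mirror the induction in the proof of Lemma \ref{lem:Cp2Tatedr} verbatim, substituting $C_{p^{n+1}}$ for $C_{p^2}$ and $u_{n+1}$ for $u_2$ throughout; the induction runs on $r \ge 2p^3+1$. The base $r = 2p^3+1$ is the explicit formula
\[
\hat E^{2p^3+1}(C_{p^{n+1}}) = E(u_{n+1}) \otimes P(t^{\pm p^3}) \otimes P(t\mu) \otimes E(\lambda_1,\lambda_2,\lambda_3),
\]
already isolated in the paragraph preceding the lemma from the three Frobenius/Verschiebung-forced differentials $d^{2r(k)}(t^{p^{k-1}-p^k}) \doteq t^{p^{k-1}}\lambda_{[k]}$ for $k \in \{1,2,3\}$. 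This $E^{2p^3+1}$-term is visibly a direct sum of free cyclic $P(t\mu)$-modules whose generators have the required form $u_{n+1}^{\epsilon} t^i \lambda_1^{\epsilon_1}\lambda_2^{\epsilon_2}\lambda_3^{\epsilon_3}$ with $\epsilon, \epsilon_j \in \{0,1\}$ and $p^3 \mid i$.

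For the inductive step, I would assume the $P(t\mu)$-module description at stage $r$ and suppose for contradiction that some $d^r(a) = b$ hits a nonzero $t\mu$-torsion class $b$ in a cyclic summand. Then $t\mu \cdot b = b_0 = d^{r_0}(a_0)$ must have been killed at an earlier stage $r_0 < r$, with $a_0 = u_{n+1}^{\epsilon} t^i \lambda_1^{\epsilon_1}\lambda_2^{\epsilon_2}\lambda_3^{\epsilon_3}$ a generator of the required form ($p^3 \mid i$); hence $a$ lies in the total bidegree of the formal expression $(t\mu)^{-1} a_0$ but in strictly higher filtration. Splitting on the exterior type of $a_0$ (three $\lambda$'s, two, or one) and listing the candidate higher-filtration bidegrees, exactly as in the three bullet points of the proof of Lemma \ref{lem:Cp2Tatedr}, one finds that the forced $t$-exponent in every case is $i - p$, $i - p^2$, $i - p^2 - p$, $i - p^2 - p + 1$, $i - p^2 + p - 1$, or $i - p^2 + p$, none of which is divisible by $p^3$. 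So the bidegree of $a$ is empty already in $\hat E^{2p^3+1}(C_{p^{n+1}})$, hence in every $\hat E^r(C_{p^{n+1}})$ for $r \ge 2p^3+1$, contradicting $d^r(a) \ne 0$.

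Consequently $d^r$ annihilates every torsion summand and only maps between free summands. On a free summand such a differential is injective (since $a$ and its $t\mu$-multiples all survive) and its image is a free subsummand; its cokernel therefore introduces a torsion summand generated by the same class $u_{n+1}^{\epsilon} t^i \lambda_1^{\epsilon_1}\lambda_2^{\epsilon_2}\lambda_3^{\epsilon_3}$, still of the required form. This propagates the inductive hypothesis to $r+1$ and completes the proof.

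The substantive point — the only place the argument might fail under the substitution $C_{p^2} \rightsquigarrow C_{p^{n+1}}$ — is the bidegree count in the case analysis, but the exterior class $u_{n+1}$ sits in cohomological degree $1$ exactly like $u_2$, and the $E^{2p^3+1}$-term has the same $t^{\pm p^3}$-periodicity, so the list of candidate $t$-exponents is unchanged and the vacuity conclusion $p^3 \nmid (i - p^a p^b)$ remains valid. Thus there is no real obstacle; the proof is an unchanged copy.
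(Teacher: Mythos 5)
Your proof is correct and is exactly the paper's argument: the text preceding the lemma states that replacing $C_{p^2}$ with $C_{p^{n+1}}$ and $u_2$ with $u_{n+1}$ in the proof of Lemma~\ref{lem:Cp2Tatedr}, with no other changes, establishes the result, which is why the lemma carries a \qed. Your verification that the bidegree case analysis and the non-divisibility of the candidate $t$-exponents by $p^3$ are unaffected by the substitution is precisely the point being relied upon.
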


By our inductive hypothesis, the abutment $E^\infty(C_{p^n})$, which
is isomorphic to $\mu^{-1} E^\infty(C_{p^n})$ above degree~$2p^2+2p-3$,
contains summands
\begin{gather*}
P_{r(3n-2)}(t\mu) \otimes \bF_p\{\lambda_1 \mu^{p^{3n-3}}\} \\
P_{r(3n-1)}(t\mu) \otimes \bF_p\{\lambda_2 \mu^{p^{3n-2}}\} \\
P_{r(3n)}(t\mu) \otimes \bF_p\{\lambda_3 \mu^{p^{3n-1}}\} \\
P_{r(3n)+1}(t\mu) \otimes \bF_p\{\mu^{p^{3n}}\} \,.
\end{gather*}
Moreover, $\mu^{-1} E^\infty(C_{p^n})$ is generated as a
$P(t\mu)$-module by classes in filtration $-1$ and~$0$.  Hence any
class in $E^\infty(C_{p^n})$ in the same total degree as, but of lower
filtration than, one of the vanishing products
\begin{gather*}
(t\mu)^{r(3n-2)} \cdot \lambda_1 \mu^{p^{3n-3}} \,,\quad
(t\mu)^{r(3n-1)} \cdot \lambda_2 \mu^{p^{3n-2}} \,,\quad \\
(t\mu)^{r(3n)} \cdot \lambda_3 \mu^{p^{3n-1}} \quad\text{and}\quad
(t\mu)^{r(3n)+1} \cdot \mu^{p^{3n}} \,,
\end{gather*}
must itself be divisible by (at least) the indicated power of $t\mu$.
It follows that there are no hidden $v_3$-power extensions present,
so that $\lambda_1 \mu^{p^{3n-3}}$ detects a $v_3^{r(3n-2)}$-torsion
class~$x_1 \in V(2)_* THH(BP\<2\>)^{hC_{p^n}}$, $\lambda_2 \mu^{p^{3n-2}}$
detects a $v_3^{r(3n-1)}$-torsion class~$x_2$, $\lambda_3 \mu^{p^{3n-1}}$
detects a $v_3^{r(3n)}$-torsion class~$x_3$, and $\mu^{p^{3n}}$ detects
a $v_3^{r(3n)+1}$-torsion class~$x_4$, and these $v_3$-power torsion
orders are exact.

By Corollary~\ref{cor:coconn} there are unique classes $y_i \in V(2)_*
THH(BP\<2\>)^{C_{p^n}}$ and $z_i \in V(2)_* THH(BP\<2\>)^{tC_{p^{n+1}}}$
with $\Gamma_n(y_i) = x_i$ and $\hat\Gamma_{n+1}(y_i) = z_i$ for
each~$i$.  Moreover, $z_1, \dots, z_4$ are $v_3$-power torsion classes of
order precisely $r(3n-2)$, $r(3n-1)$, $r(3n)$ and $r(3n)+1$, respectively.

Applying Frobenius maps $F^n$ as in diagram~\eqref{eq:GammasFn},
and the fact from Theorem~\ref{thm:CpTate} that $\hat\Gamma_1$ maps
$\mu$ to $t^{-p^3}$ (up to the usual implicit unit) and preserves the
$\lambda_i$, we deduce that $F^n(z_1), \dots, F^n(z_4)$ are detected
by the classes $t^{-p^{3n}} \lambda_1$, $t^{-p^{3n+1}} \lambda_2$,
$t^{-p^{3n+2}} \lambda_3$ and~$t^{-p^{3n+3}}$ in $\hat E^\infty(C_p)$.
Hence $z_1, \dots, z_4$ are detected in $\hat E^\infty(C_{p^{n+1}})$ in
the same total degree as these classes, in equal or higher filtration.
However, since $n\ge2$ there are no possible detecting classes of
strictly higher filtration present in $\hat E^{2r(3n)+1}(C_{p^{n+1}})$.
We can therefore conclude that $z_1, \dots, z_4$ are detected by
$$
t^{-p^{3n}} \lambda_1 \,,\quad
t^{-p^{3n+1}} \lambda_2 \,,\quad
t^{-p^{3n+2}} \lambda_3 \quad\text{and}\quad
t^{-p^{3n+3}} \,,
$$
respectively, in $\hat E^\infty(C_{p^{n+1}})$.  (The only problematic
class at the $E^{2r(3)+1}$-term, $u_{n+1} t^{-p^3-p^{3n+2}}$ in
the same total degree as~$t^{-p^{3n+2}} \lambda_3$, is now known
to support a $d^{2r(4)}$-differential, as in the $C_{p^2}$-case.)

It follows that the products
\begin{gather*}
(t\mu)^{r(3n-2)} \cdot t^{-p^{3n}} \lambda_1 \,,\quad
(t\mu)^{r(3n-1)} \cdot t^{-p^{3n+1}} \lambda_2 \,,\quad \\
(t\mu)^{r(3n)} \cdot t^{-p^{3n+2}} \lambda_3 \quad\text{and}\quad
(t\mu)^{r(3n)+1} \cdot t^{-p^{3n+3}}
\end{gather*}
must detect zero, and therefore be boundaries, in the $C_{p^{n+1}}$-Tate
spectral sequence.  We shall prove that these boundaries must be
\begin{align*}
d^{2r(3n+1)}(t^{-p^{3n}-p^{3n+1}})
	&\doteq (t\mu)^{r(3n-2)} \cdot t^{-p^{3n}} \lambda_1 \\
d^{2r(3n+2)}(t^{-p^{3n+1}-p^{3n+2}})
	&\doteq (t\mu)^{r(3n-1)} \cdot t^{-p^{3n+1}} \lambda_2 \\
d^{2r(3n+3)}(t^{-p^{3n+2}-p^{3n+3}})
	&\doteq (t\mu)^{r(3n)} \cdot t^{-p^{3n+2}} \lambda_3 \\
d^{2r(3n+3)+1}(u_{n+1} t^{-2p^{3n+3}})
	&\doteq (t\mu)^{r(3n)+1} \cdot t^{-p^{3n+3}} \,.
\end{align*}
In view of the Leibniz rule, the first three can be rewritten as
$$
d^{2r(k)}(t^{p^{k-1}-p^k})
	\doteq t^{p^{k-1}} (t\mu)^{r(k-3)} \lambda_{[k]}
$$
for $3n+1 \le k \le 3n+3$, while the fourth is equivalent to
$$
d^{2r(3n+3)+1}(u_{n+1} t^{-p^{3n+3}}) \doteq (t\mu)^{r(3n)+1} \,.
$$

As for Theorem~\ref{thm:Cp2Tate}, the remainder of the proof of
Theorem~\ref{thm:CpnTate} will consist of five steps, but for $n\ge2$
we can start with~$z_4$ in place of~$z_1$, and this simplifies the
discussion of the class $u_{n+1}$.

(1) We know that $z_4$ is detected by~$t^{-p^{3n+3}}$.  Thus
$t^{p^{3n+3}}$ and its inverse are permanent cycles.  The nonzero
product $v_3^{r(3n)} \cdot z_4$ is detected by $b_4 = (t\mu)^{r(3n)}
\cdot t^{-p^{3n+3}}$ or, if this product is a boundary, by another
class in the same total degree as~$b_4$ but of lower filtration.
Let $b_4'$ denote the actual detecting class.  Then $t\mu \cdot b_4'$
in total degree~$4p^{3n+3}-2$ and vertical degree $\ge 2 r(3n+3)$ detects
$v_3^{r(3n)+1} \cdot z_4 = 0$, hence is a boundary.  We write $t\mu \cdot
b_4' \doteq d^{r_4}(a_4)$.  By Lemma~\ref{lem:Cpn+1Tatedr}, the source
of this differential is of the form $a_4 = u_{n+1}^\epsilon t^i \cdot
\lambda_1^{\epsilon_1} \lambda_2^{\epsilon_2} \lambda_3^{\epsilon_3}$,
with $p^3 \mid i$.  The total degree of~$a_4$ is $4p^{3n+3}-1$, so
the only possible sources are $t^{p^3-2p^{3n+3}} \lambda_3$, with
$r_4 \ge 2r(3n+3)-2p^3+2$, or $u_{n+1} t^{-2p^{3n+3}}$.  However,
since $n\ge2$ the first of these possibilities is no longer present in
$\hat E^{2r(3n)+1}(C_{p^{n+1}})$.  Hence $a_4 = u_{n+1} t^{-2p^{3n+3}}$
survives at least to the $E^{2r(3n+3)+1}$-term, and $d^{r_4}(a_4) \ne
0$ for some $r_4 \ge 2r(3n+3)+1$.  Since $t^{p^{3n+3}}$ is an infinite
cycle it also follows that $d^r(u_{n+1}) = 0$ for all $r \le 2r(3n+3)$.

(2) We continue with $z_1$, which is detected by~$t^{-p^{3n}} \lambda_1$.
The nonzero product $v_3^{r(3n-2)-1} \cdot z_1$ is detected by $b_1 =
(t\mu)^{r(3n-2)-1} \cdot t^{-p^{3n}} \lambda_1$ or, if this product
is a boundary, by another class in the same total degree as~$b_1$
but of lower filtration.  Let $b_1'$ denote the detecting class.
Then $t\mu \cdot b_1'$ in total degree $2p^{3n+1} + 2p^{3n} -
1$ and vertical degree $\ge 2r(3n+1)-1$ detects $v_3^{r(3n-2)}
\cdot z_1 = 0$, hence is a boundary.  We write $t\mu \cdot b_1'
\doteq d^{r_1}(a_1)$.  By Lemma~\ref{lem:Cpn+1Tatedr} the source of
this differential is of the form $a_1 = u_{n+1}^\epsilon t^i \cdot
\lambda_1^{\epsilon_1} \lambda_2^{\epsilon_2} \lambda_3^{\epsilon_3}$
with $p^3 \mid i$.  The only such class in the correct total degree
is $a_1 = t^{-p^{3n}-p^{3n+1}}$.  Considering vertical degrees, it
follows that $r_1 \ge 2r(3n+1)$.  Since the torsion summands in $\hat
E^{2r(3n)+1}(C_{p^{n+1}})$ are not affected by later differentials, the
$\lambda_i$ and $t\mu$ are infinite cycles, and $u_{n+1}$ survives to the
$E^{2r(3n+3)+1}$-term by~(1), it follows that $d^r = 0$ for $2r(3n) <
r < 2r(3n+1)$.  After this, $b_1$ is in too low a vertical degree to be
a boundary.  Hence $b_1' = b_1$ and $r_1 = 2r(3n+1)$.  It follows that
$d^{2r(3n+1)}(t^{-p^{3n}}) \doteq t^{-p^{3n}+p^{3n+1}} (t\mu)^{r(3n-2)}
\lambda_1$.  This establishes the first new even length differential,
and leads to the $E^{2r(3n+1)+1}$-term
\begin{multline*}
\hat E^{2r(3n+1)+1}(C_{p^{n+1}}) = E(u_{n+1}) \otimes P(t^{\pm p^{3n+1}})
        \otimes P(t\mu) \otimes E(\lambda_1, \lambda_2, \lambda_3) \\
\oplus \bigoplus_{k=4}^{3n+1} E(u_{n+1}) \otimes P_{r(k-3)}(t\mu)
        \otimes E(\lambda_{[k+1]}, \lambda_{[k+2]})
        \otimes \bF_p\{ t^i \lambda_{[k]} \mid v_p(i) = k-1 \} \,.
\end{multline*}

(3) Next we turn to $z_2$, which is detected by~$t^{-p^{3n+1}} \lambda_2$.
\footnote{Steps~(3) and~(4) are very similar to step~(2), but we believe
the arguments are easier to follow when written out separately.}
The nonzero product $v_3^{r(3n-1)-1} \cdot z_2$ is detected by $b_2 =
(t\mu)^{r(3n-1)-1} \cdot t^{-p^{3n+1}} \lambda_2$ or, if this product is
a boundary, by another class in the same total degree as~$b_2$ but of
lower filtration.  Let $b_2'$ denote the detecting class.  Then $t\mu
\cdot b_2' \doteq d^{r_2}(a_2)$ detects $v_3^{r(3n-1)} \cdot z_2 = 0$,
hence is a boundary.  The source $a_2$ of this differential is of total
degree $2p^{3n+2} + 2p^{3n+1}$, and by Lemma~\ref{lem:Cpn+1Tatedr}
it has the usual form~$u_{n+1}^\epsilon t^i \cdot \lambda_1^{\epsilon_1}
\lambda_2^{\epsilon_2} \lambda_3^{\epsilon_3}$,
so $a_2 = t^{-p^{3n+1}-p^{3n+2}}$ is the only possibility, with $r_2
\ge 2r(3n+2)$.  It follows as in~(2) that $d^r = 0$ for $2r(3n+1) < r
< 2r(3n+2)$.  After this, $b_2$ lies too close to the horizontal axis
to be a boundary, so $b_2' = b_2$ and $r_2 = 2r(3n+2)$.  It then follows
that $d^{2r(3n+2)}(t^{-p^{3n+1}}) \doteq t^{-p^{3n+1}+p^{3n+2}}
(t\mu)^{r(3n-1)} \lambda_2$.  This establishes the second new even length
differential, and gives the $E^{2r(3n+2)+1}$-term
\begin{multline*}
\hat E^{2r(3n+2)+1}(C_{p^{n+1}}) = E(u_{n+1}) \otimes P(t^{\pm p^{3n+2}})
        \otimes P(t\mu) \otimes E(\lambda_1, \lambda_2, \lambda_3) \\
\oplus \bigoplus_{k=4}^{3n+2} E(u_{n+1}) \otimes P_{r(k-3)}(t\mu)
        \otimes E(\lambda_{[k+1]}, \lambda_{[k+2]})
        \otimes \bF_p\{ t^i \lambda_{[k]} \mid v_p(i) = k-1 \} \,.
\end{multline*}

(4) Carrying on we consider~$z_3$, which is detected by $t^{-p^{3n+2}}
\lambda_3$.  The nonzero product $v_3^{r(3n)-1} \cdot z_3$ is detected by
$b_3 = (t\mu)^{r(3n)-1} \cdot t^{-p^{3n+2}} \lambda_3$, unless this class
is a boundary, in which case the product is detected by another class
in the same total degree as~$b_3$, but of lower filtration.  Let $b_3'$
denote the detecting class.  Then $t\mu \cdot b_3' \doteq d^{r_3}(a_3)$
detects $v_3^{r(3n)}
\cdot z_3 = 0$, and must be a boundary.  The source~$a_3$ of this
differential is of total degree~$2p^{3n+3} + 2p^{3n+2}$, and by
Lemma~\ref{lem:Cpn+1Tatedr} it has the usual form~$u_{n+1}^\epsilon
t^i \cdot \lambda_1^{\epsilon_1} \lambda_2^{\epsilon_2}
\lambda_3^{\epsilon_3}$ not involving~$\mu$.
The only possibility is $a_3 = t^{-p^{3n+2}-p^{3n+3}}$, with $r_3
\ge 2r(3n+3)$.  It follows as above that $d^r = 0$ for $2r(3n+2)
< r < 2r(3n+3)$, after which $b_3$ is in too low a vertical degree
to become a boundary, so $b_3' = b_3$ and $r_3 = 2r(3n+3)$.  Hence
$d^{2r(3n+3)}(t^{-p^{3n+2}}) \doteq t^{-p^{3n+2}+p^{3n+3}} (t\mu)^{r(3n)}
\lambda_3$.  This establishes the third new even length differential,
and leaves the $E^{2r(3n+3)+1}$-term
\begin{multline*}
\hat E^{2r(3n+3)+1}(C_{p^{n+1}}) = E(u_{n+1}) \otimes P(t^{\pm p^{3n+3}})
        \otimes P(t\mu) \otimes E(\lambda_1, \lambda_2, \lambda_3) \\
\oplus \bigoplus_{k=4}^{3n+3} E(u_{n+1}) \otimes P_{r(k-3)}(t\mu)
        \otimes E(\lambda_{[k+1]}, \lambda_{[k+2]})
        \otimes \bF_p\{ t^i \lambda_{[k]} \mid v_p(i) = k-1 \} \,.
\end{multline*}

(5)
Finally, we return to~$z_4$.  Since $b_4 = (t\mu)^{r(3n)} \cdot
t^{-p^{3n+3}}$ is nonzero, in vertical degree~$2r(3n+3) - 2p^3$ of the
$E^{2r(3n+3)+1}$-term, it cannot be a boundary, hence is equal to the
class~$b_4'$ from step~(1).  Thus $t\mu \cdot b_4' = (t\mu)^{r(3n)+1} \cdot
t^{-p^{3n+3}} \doteq d^{r_4}(a_4)$ with $a_4 = u_{n+1} t^{-2p^{3n+3}}$
and $r_4 = 2r(3n+3)+1$.  It follows that $d^{2r(3n+3)+1}(u_{n+1}
t^{-p^{3n+3}}) \doteq (t\mu)^{r(3n)+1}$, since $t^{p^{3n+3}}$ is an
infinite cycle.  This establishes the claimed new odd length differential,
and leaves
\begin{multline*}
\hat E^{2r(3n+3)+2}(C_{p^{n+1}}) = P(t^{\pm p^{3n+3}})
        \otimes P_{r(3n)+1}(t\mu)
	\otimes E(\lambda_1, \lambda_2, \lambda_3) \\
\oplus \bigoplus_{k=4}^{3n+3} E(u_{n+1}) \otimes P_{r(k-3)}(t\mu)
        \otimes E(\lambda_{[k+1]}, \lambda_{[k+2]})
        \otimes \bF_p\{ t^i \lambda_{[k]} \mid v_p(i) = k-1 \} \,.
\end{multline*}
No free summands remain, so by Lemma~\ref{lem:Cpn+1Tatedr} the
remaining differentials are all zero, and this $E^r$-term equals $\hat
E^\infty(C_{p^{n+1}})$.  This completes the $n$-th inductive step.
\end{proof}

\section{The $\bT$-Tate spectral sequence}
\label{sec:TTatespseq}

We can now make the differential structure of the spectral sequences
\begin{align*}
E^2(\bT) = H^{-*}(\bT; V(2)_* THH(BP\<2\>))
	&\Longrightarrow V(2)_* THH(BP\<2\>)^{h\bT} \\
\mu^{-1} E^2(\bT) = H^{-*}(\bT; V(2)_* THH(BP\<2\>)^{tC_p})
	&\Longrightarrow V(2)_* (THH(BP\<2\>)^{tC_p})^{h\bT} \\
\hat E^2(\bT) = \hat H^{-*}(\bT; V(2)_* THH(BP\<2\>))
	&\Longrightarrow V(2)_* THH(BP\<2\>)^{t\bT}
\end{align*}
fully explicit.

\begin{theorem} \label{thm:TTate}
The $\bT$-Tate spectral sequence
\begin{align*}
\hat E^2(\bT) &= \hat H^{-*}(\bT; V(2)_* THH(BP\<2\>)) \\
	&\Longrightarrow V(2)_* THH(BP\<2\>)^{t\bT}
\end{align*}
has $E^2$-term
$$
\hat E^2(\bT) = P(t^{\pm1})
	\otimes P(t\mu) \otimes E(\lambda_1, \lambda_2, \lambda_3) \,.
$$
There are differentials
$$
d^{2r(k)}(t^{p^{k-1} - p^k}) \doteq t^{p^{k-1}} (t\mu)^{r(k-3)} \lambda_{[k]}
$$
for each $k\ge1$.  The classes $t\mu$, $\lambda_1$, $\lambda_2$
and~$\lambda_3$ are permanent cycles.  The $E^\infty$-term is
\begin{multline*}
\hat E^\infty(\bT) =
        P(t\mu) \otimes E(\lambda_1, \lambda_2, \lambda_3) \\
\oplus \bigoplus_{k\ge4} P_{r(k-3)}(t\mu)
        \otimes E(\lambda_{[k+1]}, \lambda_{[k+2]})
        \otimes \bF_p\{ t^i \lambda_{[k]} \mid v_p(i) = k-1 \} \,.
\end{multline*}
\end{theorem}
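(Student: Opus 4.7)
The plan is to deduce the $\bT$-Tate spectral sequence structure from the already-established $C_{p^n}$-Tate spectral sequences of Theorem~\ref{thm:CpnTate} by naturality along the comparison map $THH(BP\<2\>)^{t\bT} \to THH(BP\<2\>)^{tC_{p^n}}$, letting $n \to \infty$. First I would record the $E^2$-term: this is immediate from the standard computation $\hat H^{-*}(\bT; M) = P(t^{\pm 1}) \otimes M$ applied to $M = V(2)_* THH(BP\<2\>) = E(\lambda_1, \lambda_2, \lambda_3) \otimes P(\mu)$ from Definition~\ref{def:lam1lam2lam3mu}, after the reindexing change of variables $t \mapsto t$, $\mu \mapsto \mu$ so that $t\mu$ becomes the natural generator in total degree~$2$.

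Next I would observe that the forgetful map $C_{p^n} \hookrightarrow \bT$ induces a natural morphism of multiplicative spectral sequences
\[
\hat E^r(\bT) \longto \hat E^r(C_{p^n})
\]
which on $E^2$-terms is the evident inclusion $P(t^{\pm 1}) \otimes P(t\mu) \otimes E(\lambda_1, \lambda_2, \lambda_3) \hookrightarrow \hat E^2(C_{p^n})$ onto the direct summand complementary to the $u_n$-multiples. This morphism is injective on $E^2$-terms, and the source and target of each asserted differential $d^{2r(k)}(t^{p^{k-1}-p^k}) \doteq t^{p^{k-1}}(t\mu)^{r(k-3)}\lambda_{[k]}$ lie entirely within this summand. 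For any fixed $k\ge 1$, choosing $n$ with $3n\ge k$ and applying Theorem~\ref{thm:CpnTate} produces the required differential in $\hat E^{2r(k)}(C_{p^n})$, and injectivity lifts it to $\hat E^{2r(k)}(\bT)$. That $\lambda_1, \lambda_2, \lambda_3$ and $t\mu$ are permanent cycles follows from Proposition~\ref{prop:unit}: the $\lambda_i$ are circle-trace images of the $K$-theory classes $\lambda_i^K$ constructed in Section~\ref{sec:V0V1classes} (via Proposition~\ref{prop:trace}), while $t\mu$ detects the image of $v_3 \in V(2)_*$ under the unit $S \to THH(BP\<2\>)^{h\bT} \to THH(BP\<2\>)^{t\bT}$.

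To identify $\hat E^\infty(\bT)$ I would argue inductively in $k$, exactly as in the proof of Theorem~\ref{thm:CpnTate} but without the terminating odd-length differential on $u_n t^{-p^{3n}}$. At each stage the differential $d^{2r(k)}$ affects only the free $P(t\mu)$-summand generated by the classes $t^i$ with $v_p(i) = k-1$, and its homology is
\[
P_{r(k-3)}(t\mu) \otimes E(\lambda_{[k+1]}, \lambda_{[k+2]}) \otimes \bF_p\{t^i \lambda_{[k]} \mid v_p(i) = k-1\},
\]
leaving a free summand $P(t^{\pm p^k}) \otimes P(t\mu) \otimes E(\lambda_1, \lambda_2, \lambda_3)$ for later stages. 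An analogue of Lemma~\ref{lem:Cp2Tatedr} (with the same proof, omitting the $u_{n+1}$ factor) shows that later differentials map free summands to free summands and vanish on torsion summands, so there are no ``rogue'' differentials on the accumulated torsion. Passing to the limit over the exhausting family of differentials $d^{2r(k)}$ for all $k\ge 1$ leaves exactly the claimed description of $\hat E^\infty(\bT)$: the surviving horizontal line $P(t\mu) \otimes E(\lambda_1,\lambda_2,\lambda_3)$ coming from the complementary $t^0 = 1$ summand, together with the torsion contributions produced at each stage~$k\ge 4$.

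The main obstacle I anticipate is purely bookkeeping rather than conceptual: one must verify that the iterative application of differentials of ever-increasing length $2r(k)$ really converges to the stated $E^\infty$-pattern, and that no $d^r$-differential for $r$ between $2r(k)$ and $2r(k+1)$ can occur. This last check is bidegree-driven and proceeds exactly as in steps~(2)--(4) of the proof of Theorem~\ref{thm:CpnTate}: given the surviving generators, the unique candidate target for $d^r(t^{p^{k-1}-p^k})$ in total degree $2p^{k-1}(p-1)-1$ has vertical filtration $-2r(k)$, forcing all intermediate differentials to vanish.
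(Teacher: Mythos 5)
Your proposal is correct and follows the same route as the paper, whose proof of this theorem is literally ``passage to the limit over $n$ from Theorem~\ref{thm:CpnTate}'': you identify $\hat E^r(\bT)$ with the $u_n$-free summand of $\hat E^r(C_{p^n})$ for $n$ large relative to $r$, import the even-length differentials $d^{2r(k)}$ by naturality, observe that the terminating odd-length differential has no analogue (no class $u$), and read off the $E^\infty$-term. The details you supply (injectivity of the comparison on the relevant summand, the permanent-cycle inputs from Propositions~\ref{prop:trace} and~\ref{prop:unit}, and the $P(t\mu)$-module bookkeeping via the analogue of Lemma~\ref{lem:Cp2Tatedr}) are exactly the content the paper leaves implicit.
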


\begin{proof}
This follows by passage to the limit over~$n$ from Theorem~\ref{thm:CpnTate}.
\end{proof}

\begin{remark}
We saw in Propositions~\ref{prop:locCpHFP} and~\ref{prop:locCpnHFP} that
for each $n\ge1$ some positive power of $\mu \in V(2)_* THH(BP\<2\>)$
lifts to $V(2)_* THH(BP\<2\>)^{hC_{p^n}}$, so that the $\mu$-localized
$C_{p^n}$-homotopy fixed point spectral sequence converges to a
localization $\mu^{-1} V(2)_* THH(BP\<2\>)^{hC_{p^n}}$.  However, no such
power of~$\mu$ lifts to $V(2)_* THH(BP\<2\>)^{h\bT}$, and we therefore
instead express the abutment of the $\mu$-localized $\bT$-homotopy
fixed point spectral sequence in terms of $THH(BP\<2\>)^{tC_p}$, with
$\mu^{-1} V(2)_* THH(BP\<2\>) \cong V(2)_* THH(BP\<2\>)^{tC_p}$ as per
Theorem~\ref{thm:CpTate}.
\end{remark}

\begin{proposition} \label{prop:locTHFP}
The $\mu$-localized $\bT$-homotopy fixed point spectral sequence
\begin{align*}
\mu^{-1} E^2(\bT) &= H^{-*}(\bT; \mu^{-1} V(2)_* THH(BP\<2\>)) \\
	&\Longrightarrow V(2)_* (THH(BP\<2\>)^{tC_p})^{h\bT}
\end{align*}
has $E^2$-term
$$
\mu^{-1} E^2(\bT) = P(t\mu)
	\otimes E(\lambda_1, \lambda_2, \lambda_3) \otimes P(\mu^{\pm1}) \,.
$$
There are differentials
$$
d^{2r(k)}(\mu^{p^{k-1}})
	\doteq (t\mu)^{r(k)} \lambda_{[k]} \mu^{p^{k-1}-p^k}
$$
for each $k\ge1$.  The classes $t\mu$, $\lambda_1$, $\lambda_2$
and~$\lambda_3$ are permanent cycles.  The $E^\infty$-term is
\begin{multline*}
\mu^{-1} E^\infty(\bT) = P(t\mu)
        \otimes E(\lambda_1, \lambda_2, \lambda_3) \\
\oplus \bigoplus_{k\ge1} P_{r(k)}(t\mu)
        \otimes E(\lambda_{[k+1]}, \lambda_{[k+2]})
        \otimes \bF_p\{ \lambda_{[k]} \mu^j \mid v_p(j) = k-1\} \,.
\end{multline*}
\end{proposition}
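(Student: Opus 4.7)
The approach is to deduce the structure of this $\mu$-localized $\bT$-homotopy fixed point spectral sequence from the $\bT$-Tate spectral sequence (Theorem~\ref{thm:TTate}) by comparison along the canonical (homotopy restriction) map, entirely parallel to the derivation of Proposition~\ref{prop:locCpnHFP} from Theorem~\ref{thm:CpnTate}. The $E^2$-term and abutment are immediate: by Theorem~\ref{thm:CpTate}, $\mu^{-1} V(2)_* THH(BP\<2\>) \cong V(2)_* THH(BP\<2\>)^{tC_p}$, so the abutment is $V(2)_*(THH(BP\<2\>)^{tC_p})^{h\bT}$, and tensoring the coefficient ring $E(\lambda_1, \lambda_2, \lambda_3) \otimes P(\mu^{\pm 1})$ with $H^{-*}(\bT) = P(t)$ yields $P(t) \otimes E(\lambda_1, \lambda_2, \lambda_3) \otimes P(\mu^{\pm 1})$; substituting $t = (t\mu)\mu^{-1}$ (legal once $\mu$ is invertible) rewrites this as $P(t\mu) \otimes E(\lambda_1, \lambda_2, \lambda_3) \otimes P(\mu^{\pm 1})$.

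For the differentials, the canonical map $THH(BP\<2\>)^{h\bT} \to THH(BP\<2\>)^{t\bT}$ induces a morphism of spectral sequences
$$
\mu^{-1} R^h \: \mu^{-1} E^r(\bT) \longrightarrow \mu^{-1} \hat E^r(\bT)
$$
which is injective at $E^2$, being the inclusion of non-negative powers of $t$ into the Laurent series $P(t^{\pm 1})$ after tensoring with $E(\lambda_1,\lambda_2,\lambda_3) \otimes P(\mu^{\pm 1})$. Starting from the Tate differential $d^{2r(k)}(t^{p^{k-1}-p^k}) \doteq t^{p^{k-1}}(t\mu)^{r(k-3)}\lambda_{[k]}$ and using that $t\mu$ is a permanent cycle, one multiplies both sides by $(t\mu)^{p^{k-1}}$ and applies the identity $\mu^n = (t\mu)^n t^{-n}$ to rewrite the source as $\mu^{p^{k-1}}$ and, using $r(k) = p^k + r(k-3)$ to collect exponents, the target as $(t\mu)^{r(k)} \lambda_{[k]} \mu^{p^{k-1}-p^k}$. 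Both source and target have non-negative $t$-filtration, so injectivity of $\mu^{-1} R^h$ identifies this as a genuine differential in the localized HFPSS; the permanent cycle status of $t\mu$, $\lambda_1, \lambda_2, \lambda_3$ is inherited from Theorem~\ref{thm:TTate}.

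The $E^\infty$-term is assembled by splitting the $P(\mu^{\pm 1})$ factor of $E^2$ according to the $p$-adic valuation $v_p(j)$ of the exponent. The summand with $v_p(j) = k-1$ is affected only by $d^{2r(k)}$, which by the Leibniz rule sends $\mu^j \mapsto m \cdot (t\mu)^{r(k)}\lambda_{[k]}\mu^{j-p^k}$ (writing $j = p^{k-1}m$ with $p \nmid m$), so its contribution to $E^\infty$ is the truncated-polynomial factor $P_{r(k)}(t\mu) \otimes E(\lambda_{[k+1]}, \lambda_{[k+2]}) \otimes \bF_p\{\lambda_{[k]}\mu^j : v_p(j) = k-1\}$; the $j=0$ summand is untouched and contributes the free factor $P(t\mu) \otimes E(\lambda_1, \lambda_2, \lambda_3)$. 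The main technical point is to verify that no additional differentials occur beyond those transported from the Tate SS, which follows because the comparison $\mu^{-1} R^h$ remains injective at every page (any hidden differential in the localized HFPSS would be detected in $\mu^{-1} \hat E^r(\bT)$, where the differentials are completely described by Theorem~\ref{thm:TTate}).
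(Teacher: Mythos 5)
Your proposal reaches the right formulas, but the mechanism you use to justify them has a genuine gap, and it is not the paper's route. The paper proves this proposition in one line, by passing to the limit over~$n$ from Proposition~\ref{prop:locCpnHFP}: at each finite level the class $\mu^{\pm p^{3n}}$ \emph{is} a permanent cycle, so the localization $E^r(C_{p^n}) \to \mu^{-1}E^r(C_{p^n})$ is an honest ($(2p^2+2p-3)$-coconnected) morphism of spectral sequences, and the translation by powers of $t\mu$ is carried out there. The Remark following Theorem~\ref{thm:TTate} flags exactly why one cannot work directly at the $\bT$-level: no power of $\mu$ lifts to $V(2)_* THH(BP\<2\>)^{h\bT}$, and correspondingly no power of $\mu$ survives in $\hat E^\infty(\bT)$. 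Consequently your object ``$\mu^{-1}\hat E^r(\bT)$'' is not a localization of the spectral sequence of Theorem~\ref{thm:TTate} (one cannot invert a non-permanent cycle page by page); if it is instead read as the $\bT$-Tate spectral sequence of $THH(BP\<2\>)^{tC_p}$, then its differentials are \emph{not} ``completely described by Theorem~\ref{thm:TTate}'' --- that would require its own computation, essentially of the same difficulty as the proposition you are proving.

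The second problem is the assertion that $\mu^{-1}R^h$ ``remains injective at every page.'' This is false for canonical-map comparisons of this kind: Tate differentials whose sources lie in positive filtration (negative powers of $t$, absent from the homotopy fixed point side) kill classes that are in the image. Concretely, $t^d\lambda_1$ for $0<d<p$ is a boundary in the Tate spectral sequence via $d^{2p}(t^{d-p})$, with $t^{d-p}$ outside the image of $R^h$, yet $t^d\lambda_1 = (t\mu)^d\lambda_1\mu^{-d}$ survives to $\mu^{-1}E^\infty(\bT)$ (it detects $\zeta_{1,d}$); this is precisely the phenomenon exploited in Lemma~\ref{lem:xikd123} and Proposition~\ref{prop:EinftyRh}(3). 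So injectivity already fails at the $E^{2p+1}$-page, and your mechanism for excluding hidden differentials (``any hidden differential would be detected in the Tate spectral sequence'') breaks down on exactly the classes where it would be needed. To repair the argument you would either have to control $\ker(R^h)$ bidegree by bidegree on every page, or do what the paper does and assemble the answer from the finite levels, where the localization is a genuine morphism of spectral sequences.
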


\begin{proof}
This follows by passage to the limit over~$n$ from
Proposition~\ref{prop:locCpnHFP}.
\end{proof}

\begin{proposition} \label{prop:THFP}
The $\bT$-homotopy fixed point spectral sequence
\begin{align*}
E^2(\bT) &= H^{-*}(\bT; V(2)_* THH(BP\<2\>)) \\
        &\Longrightarrow V(2)_* THH(BP\<2\>)^{h\bT}
\end{align*}
has $E^2$-term
$$
E^2(\bT) = P(t)
	\otimes E(\lambda_1, \lambda_2, \lambda_3) \otimes P(\mu) \,.
$$
For each $k\ge1$ there are differentials
$$
\begin{cases}
d^{2r(k)}(\mu^{dp^{k-1}})
        \doteq (t\mu)^{r(k)} \lambda_{[k]} \mu^{(d-p)p^{k-1}}
	&\text{for $d>p$ with $p \nmid d$,} \\
\\
d^{2r(k)}(\mu^{(p-d)p^{k-1}})
        \doteq t^{dp^{k-1}} (t\mu)^{r(k)-dp^{k-1}} \lambda_{[k]}
	&\text{for $0<d<p$, and} \\
\\
d^{2r(k)}(t^{dp^{k-1}})
	\doteq t^{dp^{k-1}+p^k} (t\mu)^{r(k-3)} \lambda_{[k]}
	&\text{for $d>0$ with $p \nmid d$.}
\end{cases}
$$
The classes $t\mu$, $\lambda_1$, $\lambda_2$
and~$\lambda_3$ are permanent cycles.  The $E^\infty$-term is
\begin{align*}
E^\infty(\bT) &= P(t\mu)
        \otimes E(\lambda_1, \lambda_2, \lambda_3) \\
&\qquad\oplus \bigoplus_{k\ge1} P_{r(k)}(t\mu)
        \otimes E(\lambda_{[k+1]}, \lambda_{[k+2]})
        \otimes \bF_p\{\lambda_{[k]} \mu^{dp^{k-1}} \mid p \nmid d > 0\} \\
&\qquad\oplus \bigoplus_{k\ge1} P_{r(k)-dp^{k-1}}(t\mu)
        \otimes E(\lambda_{[k+1]}, \lambda_{[k+2]})
	\otimes \bF_p\{t^{dp^{k-1}} \lambda_{[k]} \mid 0<d<p\} \\
&\qquad\oplus \bigoplus_{k\ge4} P_{r(k-3)}(t\mu)
        \otimes E(\lambda_{[k+1]}, \lambda_{[k+2]})
	\otimes \bF_p\{t^{dp^{k-1}} \lambda_{[k]} \mid p \nmid d > p\} \,.
\end{align*}
\end{proposition}

\begin{proof}
The differentials on $t^{dp^{k-1}}$ for $p \nmid d > 0$ follow as in
Theorem~\ref{thm:TTate}, while those on $\mu^{dp^{k-1}}$ for $p \nmid d >
p$ are as in Proposition~\ref{prop:locTHFP}.  For $0<d<p$ we also
have
$$
d^{2r(k)}(\mu^{dp^{k-1}}) \doteq
(t\mu)^{r(k)} \lambda_{[k]} \mu^{dp^{k-1}-p^k}
= t^{p^k-dp^{k-1}} (t\mu)^{r(k)+dp^{k-1}-p^k} \lambda_{[k]} \,.
$$
Replacing $d$ by $p-d$ we obtain the claimed formula.

For each $k\ge1$ and $p \nmid d$ the $d^{2r(k)}$-differential maps
the summand
$$
E(\lambda_{[k+1]}, \lambda_{[k+2]})
	\otimes \bF_p\{t^i \mu^j \mid i-j=dp^{k-1}-p^k\}
$$
of $E^{2r(k)}(\bT)$ injectively to the summand
$$
E(\lambda_{[k+1]}, \lambda_{[k+2]})
	\otimes \bF_p\{t^i \lambda_{[k]} \mu^j \mid i-j=dp^{k-1}\} \,,
$$
with cokernel one of the displayed summands in $E^\infty(\bT)$.
Here $i\ge0$ and $j\ge0$ in each case.
\end{proof}

Following the referee's good advice, we decompose these $E^\infty$-terms
as in the next three definitions.

\begin{definition} \label{def:ABCD}
Let $A = P(t\mu) \otimes E(\lambda_1, \lambda_2, \lambda_3)$, viewed as
a subalgebra of $E^\infty(\bT)$.  For $k\ge1$ and $0<d<p$ let
$$
C(k,d) = P_{r(k)-dp^{k-1}}(t\mu)
        \otimes E(\lambda_{[k+1]}, \lambda_{[k+2]})
        \otimes \bF_p\{t^{dp^{k-1}} \lambda_{[k]}\}
$$
be the finite $A$-submodule of $E^\infty(\bT)$ generated by
$$
c_{k,d} = t^{dp^{k-1}} \lambda_{[k]} \,.
$$
The class
$$
x_{k,d} = (t\mu)^{\frac{d}{p} r(k-3)} \cdot c_{k,d}
	= t^{\frac{d}{p} r(k)} \lambda_{[k]} \mu^{\frac{d}{p} r(k-3)}
$$
lies in $C(k,d)$, is nonzero since $\frac{d}{p} r(k-3) < r(k) -
dp^{k-1}$, and has total degree $|x_{k,d}| = 2p^{[k]} - 2dp^{[k]-1} - 1$.
In particular,
$$
x_{1,d} = c_{1,d} = t^d \lambda_1 \ ,\ 
x_{2,d} = c_{2,d} = t^{dp} \lambda_2  \ ,\ 
x_{3,d} = c_{3,d} = t^{dp^2} \lambda_3
$$
for all $0<d<p$.  Let $C = \bigoplus_{k\ge1, 0<d<p} C(k,d)$, and
let
\begin{align*}
B &= \bigoplus_{k\ge1} P_{r(k)}(t\mu)
        \otimes E(\lambda_{[k+1]}, \lambda_{[k+2]})
        \otimes \bF_p\{\lambda_{[k]} \mu^{dp^{k-1}} \mid p \nmid d>0\} \\
D &= \bigoplus_{k\ge4} P_{r(k-3)}(t\mu)
        \otimes E(\lambda_{[k+1]}, \lambda_{[k+2]})
        \otimes \bF_p\{t^{dp^{k-1}} \lambda_{[k]} \mid p \nmid d>p\}
\end{align*}
be the indicated $A$-submodules of $E^\infty(\bT)$, concentrated in
positive and negative total degrees, respectively.  Then $E^\infty(\bT)
= A \oplus B \oplus C \oplus D$.
\end{definition}

It should be clear from the context whether~$B$ refers to this summand
in~$E^\infty(\bT)$ or a generic~$S$-algebra.  The classes $x_{k,d}$ are
the ones mentioned in the introduction.  Their role, together with the
classes $z_{k,d}$ defined just below, will only become apparent starting
with Corollaries~\ref{cor:xikd123} and~\ref{cor:xikdge4}.

\begin{definition} \label{def:A'B'C'D'}
Let $A' = P(t\mu) \otimes E(\lambda_1, \lambda_2, \lambda_3)$ as a
subalgebra of~$\mu^{-1} E^\infty(\bT)$.  For $k\ge1$ and
$0<d<p$ let
$$
C'(k,d) = P_{r(k)}(t\mu) \otimes E(\lambda_{[k+1]}, \lambda_{[k+2]})
        \otimes \bF_p\{\lambda_{[k]} \mu^{-dp^{k-1}}\}
$$
be the finite $A'$-submodule of $\mu^{-1} E^\infty(\bT)$ generated by
$$
c'_{k,d} = \lambda_{[k]} \mu^{-dp^{k-1}} \,.
$$
The class
$$
z_{k,d} = (t\mu)^{\frac{d}{p} r(k)} \cdot c'_{k,d}
	= t^{\frac{d}{p} r(k)} \lambda_{[k]} \mu^{\frac{d}{p} r(k-3)}
$$
lies in $C'(k,d)$, is nonzero since
$\frac{d}{p} r(k) < r(k)$, and has total degree
$|z_{k,d}| = 2p^{[k]} - 2dp^{[k]-1} - 1$.
Let $C' = \bigoplus_{k\ge1, 0<d<p} C'(k,d)$, and
let
\begin{align*}
B' &= \bigoplus_{k\ge1} P_{r(k)}(t\mu)
        \otimes E(\lambda_{[k+1]}, \lambda_{[k+2]})
        \otimes \bF_p\{\lambda_{[k]} \mu^{dp^{k-1}} \mid p \nmid d>0\} \\
D' &= \bigoplus_{k\ge1} P_{r(k)}(t\mu)
        \otimes E(\lambda_{[k+1]}, \lambda_{[k+2]})
        \otimes \bF_p\{\lambda_{[k]} \mu^{-dp^{k-1}} \mid p \nmid d>p\}
\end{align*}
be the indicated $A'$-submodules of $\mu^{-1} E^\infty(\bT)$, concentrated
in positive and negative total degrees, respectively.  Then $\mu^{-1}
E^\infty(\bT) = A' \oplus B' \oplus C' \oplus D'$.
\end{definition}

\begin{definition}
Let
$\hat A = P(t\mu) \otimes E(\lambda_1, \lambda_2, \lambda_3)$,
$$
\hat C(k,d) = P_{r(k-3)}(t\mu)
        \otimes E(\lambda_{[k+1]}, \lambda_{[k+2]})
        \otimes \bF_p\{t^{dp^{k-1}} \lambda_{[k]}\}
$$
for $k\ge1$ and $0<d<p$, and
\begin{align*}
\hat B &= \bigoplus_{k\ge4} P_{r(k-3)}(t\mu)
	\otimes E(\lambda_{[k+1]}, \lambda_{[k+2]})
	\otimes \bF_p\{t^{-dp^{k-1}} \lambda_{[k]} \mid p \nmid d>0\} \\
\hat C &= \bigoplus_{k\ge4, 0<d<p} \hat C(k,d) \\
\hat D &= \bigoplus_{k\ge4} P_{r(k-3)}(t\mu)
	\otimes E(\lambda_{[k+1]}, \lambda_{[k+2]})
	\otimes \bF_p\{t^{dp^{k-1}} \lambda_{[k]} \mid p \nmid d>p\} \,.
\end{align*}
Then $\hat E^\infty(\bT) = \hat A \oplus \hat B \oplus \hat C \oplus
\hat D$.  Note that $\hat C(k,d) = 0$ for $k \in \{1,2,3\}$.
\end{definition}




The $\bT$-equivariant comparison map
$$
\hat\Gamma_1 \: THH(BP\<2\>) \longto THH(BP\<2\>)^{tC_2}
$$
(renamed the $p$-cyclotomic structure map~$\varphi_p$ in~\cite{NS18})
induces a morphism of $\bT$-homotopy fixed point spectral sequences,
given at the $E^2$-term by the homomorphism
\begin{multline*}
E^2(\hat\Gamma_1^{h\bT}) \: E^2(\bT) = P(t) \otimes
	E(\lambda_1, \lambda_2, \lambda_3) \otimes P(\mu) \\
\longto P(t) \otimes E(\lambda_1, \lambda_2, \lambda_3) \otimes
        P(\mu^{\pm1}) = \mu^{-1} E^2(\bT)
\end{multline*}
that inverts~$\mu$.  At the $E^\infty$-terms we have the following
formulas.

\begin{lemma} \label{lem:EinftyhatGamma1hT}
The homomorphism
$$
E^\infty(\hat\Gamma_1^{h\bT}) \: E^\infty(\bT) \longto \mu^{-1} E^\infty(\bT)
$$
maps
\begin{enumerate}
\item
$A$ isomorphically to~$A'$,
\item
$B$ isomorphically to $B'$,
\item
$C$ injectively to $C'$, and
\item
$D$ to zero.
\end{enumerate}
Specifically, $E^\infty(\hat\Gamma_1^{h\bT})$ is injective in total
degrees $* \ge -2p^3+2p^2$, and bijective in total degrees $* \ge
2p^2+2p-2$.
\end{lemma}

\begin{proof}
Cases~(1) and~(2) are clear.  In~(3), the injection $C(k,d) \to C'(k,d)$
takes $c_{k,d} = t^{dp^{k-1}} \lambda_{[k]}$ to $(t\mu)^{dp^{k-1}} \cdot
c'_{k,d}$, which is annihilated by the same $t\mu$-power as~$c_{k,d}$,
namely $(t\mu)^{r(k)-dp^{k-1}}$.  In~(4), the image of $D$ in $D'$
is zero since $t^{dp^{k-1}} \lambda_{[k]}$ maps to $(t\mu)^{dp^{k-1}}
\cdot \lambda_{[k]} \mu^{-dp^{k-1}}$, which is zero because $dp^{k-1}
\ge r(k)$ for $d>p$.

The highest degree element in the kernel of
$E^\infty(\hat\Gamma_1^{h\bT})$ is $t^{(p+1)p^3} (t\mu)^{p-1} \lambda_1
\lambda_2 \lambda_3$ in~$D$ in total degree~$-2p^3+2p^2-1$, mapping to
$d^{2r(4)}(t^{p^3-1} \lambda_2 \lambda_3 \mu^{-1})$.  The highest degree
element not in the image of $E^\infty(\hat\Gamma_1^{h\bT})$ is $\lambda_1
\lambda_2 \lambda_3 \mu^{-1}$ in $C'(1,1)$, in total degree~$2p^2+2p-3$.
\end{proof}

Similarly, the homotopy restriction map
$$
R^h \: THH(BP\<2\>)^{h\bT} \longto THH(BP\<2\>)^{t\bT}
$$
(renamed the canonical map in~\cite{NS18}) induces a morphism of spectral
sequences, given at the $E^2$-term by the homomorphism
\begin{multline*}
E^2(R^h) \: E^2(\bT) = P(t) \otimes
	E(\lambda_1, \lambda_2, \lambda_3) \otimes P(\mu) \\
\longto P(t^{\pm1}) \otimes E(\lambda_1, \lambda_2, \lambda_3) \otimes
        P(\mu) = \hat E^2(\bT)
\end{multline*}
that inverts~$t$.  The following lemma is similar
to~\cite{AR02}*{Prop.~7.2}.

\begin{lemma} \label{lem:EinftyRh}
The homomorphism
$$
E^\infty(R^h) \: E^\infty(\bT) \longto \hat E^\infty(\bT)
$$
maps
\begin{enumerate}
\item
$A$ isomorphically to~$\hat A$,
\item
$B$ to zero,
\item
$C$ surjectively to $\hat C$, and
\item
$D$ isomorphically to $\hat D$.
\end{enumerate}
Specifically, $E^\infty(R^h)$ is surjective in total degrees
$* \le 2p^3+2p-2$, and bijective in total degrees $* \le 0$.
\end{lemma}

\begin{proof}
Cases~(1) and~(4) are clear.  In~(2), the image of $B$ in
$\hat B$ is zero, since $\lambda_{[k]} \mu^{dp^{k-1}}$ maps
to $(t\mu)^{dp^{k-1}} \cdot t^{-dp^{k-1}} \lambda_{[k]}$, which is
zero because $dp^{k-1} \ge r(k-3)$ for $d>0$.  In~(3), the surjection
$C(k,d) \to \hat C(k,d)$ takes $c_{k,d} = t^{dp^{k-1}} \lambda_{[k]}$ to
$t^{dp^{k-1}} \lambda_{[k]}$, which is annihilated by a lower $t\mu$-power
than $c_{k,d}$, since $r(k-3) < r(k) - dp^{k-1}$ for $0<d<p$.

The lowest degree element not in the image of $E^\infty(R^h)$ is $t^{-p^3}
\lambda_1$ in $\hat B$, in total degree $2p^3+2p-1$.  The lowest degree
element in the kernel of $E^\infty(R^h)$ is $t^{p-1} \lambda_1$ in
$C(1,p-1)$ in total degree~$1$, mapping to $d^{2p}(t^{-1})$.
\end{proof}

\section{Topological cyclic homology and algebraic $K$-theory}
\label{sec:TC}

We now pursue the calculational strategy employed in~\cite{BM94},
\cite{BM95}, \cite{HM97}, \cite{Rog99}, \cite{AR02}, \cite{Aus10}
and~\cite{AR12} to identify $TC(B)$ with the homotopy equalizer
of the two maps $GR^h$ and $\hat\Gamma_1^{h\bT}$ displayed below.
$$
\xymatrix{
TC(B) \ar[r]^-{\pi} & THH(B)^{h\bT} \ar[r]^-{R^h}
	\ar[dr]_-{\hat\Gamma_1^{h\bT}}
	& THH(B)^{t\bT} \ar[d]^-{G}_-{\simeq} \\
& & (THH(B)^{tC_p})^{h\bT}
}
$$
In these papers, this identification was only known to be valid
in $V$-homotopy in a range of sufficiently high degrees, for
suitable finite spectra~$V$.  However, with the work of Nikolaus
and Scholze~\cite{NS18}*{Rmk.~1.6}, we now know that $TC(B)$
is given by the homotopy equalizer above in all degrees, whenever
$THH(B)$ is bounded below.  (This certainly holds for all connective
$S$-algebras~$B$.)  Let $GR^h_* = V_*(GR^h)$ and $\hat\Gamma_{1*}^{h\bT}
= V_*(\hat\Gamma_1^{h\bT})$.  The associated long exact sequence
$$
\dots \overset{\partial}\longto V_* TC(B)
	\overset{\pi}\longto V_* THH(B)^{h\bT}
	\overset{GR^h_* - \hat\Gamma_{1*}^{h\bT}}\longto
	V_* (THH(B)^{tC_p})^{h\bT}
	\overset{\partial}\longto \dots
$$
leads to the short exact sequence
$$
0 \to \Sigma^{-1} \cok(GR^h_* - \hat\Gamma_{1*}^{h\bT})
	\overset{\partial}\longto
	V_* TC(B)
	\overset{\pi}\longto
	\ker(GR^h_* - \hat\Gamma_{1*}^{h\bT}) \to 0 \,.
$$

In our case, the task is to calculate the kernel and cokernel of
$GR^h_* - \hat\Gamma_{1*}^{h\bT}$ for $B = BP\<2\>$ and $V = V(2)$,
and thereby to determine $V(2)_* TC(BP\<2\>)$.  We studied the
effect of $\hat\Gamma_1^{h\bT}$ and $R^h$ at the level of spectral
sequence $E^\infty$-terms in Lemmas~\ref{lem:EinftyhatGamma1hT}
and~\ref{lem:EinftyRh}.  In Proposition~\ref{prop:V2G} we do something
similar for~$G$.  Thereafter we find lifts $\wA$, $\wB$, $\wC$ and~$\wD$
in $V(2)_* THH(BP\<2\>)^{h\bT}$ of the summands $A$, $B$, $C$ and~$D$
of $E^\infty(\bT)$ from Definition~\ref{def:ABCD}, and compute the effect
of $GR^h_* - \hat\Gamma_{1*}^{h\bT}$ acting upon these lifts.

\begin{proposition} \label{prop:V2G}
The isomorphism
$$
G_* = V(2)_*(G) \: V(2)_* THH(BP\<2\>)^{t\bT}
	\overset{\cong}\longto V(2)_* (THH(BP\<2\>)^{tC_p})^{h\bT}
$$
takes each class
$$
\eta \in \{t^{p^3 i} (t\mu)^m \lambda_1^{\epsilon_1}
	\lambda_2^{\epsilon_2} \lambda_3^{\epsilon_3}\}
$$
detected by $y = t^{p^3 i} (t\mu)^m \lambda_1^{\epsilon_1}
\lambda_2^{\epsilon_2} \lambda_3^{\epsilon_3} \in \hat E^\infty(\bT)$
to a class
$$
G_*(\eta) \in \{(t\mu)^m \lambda_1^{\epsilon_1} \lambda_2^{\epsilon_2}
	\lambda_3^{\epsilon_3} \mu^{-i}\}
$$
detected by $z = (t\mu)^m \lambda_1^{\epsilon_1} \lambda_2^{\epsilon_2}
\lambda_3^{\epsilon_3} \mu^{-i} \in \mu^{-1} E^\infty(\bT)$ (up to a unit
in~$\bF_p$, which we will suppress).  Conversely, its inverse $G_*^{-1}$
takes each class
$$
\zeta \in \{(t\mu)^m \lambda_1^{\epsilon_1} \lambda_2^{\epsilon_2}
	\lambda_3^{\epsilon_3} \mu^j\}
$$
to a class
$$
G_*^{-1}(\zeta) \in \{t^{-p^3 j} (t\mu)^m \lambda_1^{\epsilon_1}
	\lambda_2^{\epsilon_2} \lambda_3^{\epsilon_3}\}
$$
(again, up to a unit in $\bF_p$, which we suppress).
\end{proposition}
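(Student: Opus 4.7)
The plan is to combine the multiplicativity of $G_*$ with the commutative square relating $G$, $R^h$, and $(\hat\Gamma_1)^{h\bT}$, together with the key $C_p$-Tate identification $\hat\Gamma_1(\mu) \doteq t^{-p^3}$ from Theorem~\ref{thm:CpTate}. Write $X = THH(BP\<2\>)$ for brevity. First, $G \: X^{t\bT} = (X^{tC_p})^{\bT} \to (X^{tC_p})^{h\bT}$ is the canonical comparison map from categorical to homotopy fixed points of the $\bT$-ring spectrum $X^{tC_p}$, hence multiplicative. It follows that $G_*$ is a ring homomorphism, and it suffices to verify the assertion on multiplicative generators.

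From the commutative square in Section~\ref{sec:nomenclature}, one has $G \circ R^h = (\hat\Gamma_1)^{h\bT}$. At the $E^2$-level, $R^h_*$ is the inclusion $P(t) \otimes V(2)_* X \hookrightarrow P(t^{\pm1}) \otimes V(2)_* X$ that inverts $t$, while $(\hat\Gamma_1)^{h\bT}_*$ is the inclusion $P(t) \otimes V(2)_* X \hookrightarrow P(t) \otimes V(2)_* X^{tC_p}$ that inverts $\mu$. Consequently $G_*$ preserves the detection of any permanent cycle lying in the image of $R^h_*$. The classes $t\mu$, $\lambda_1$, $\lambda_2$, $\lambda_3$ generating the summand $\hat A = P(t\mu) \otimes E(\lambda_1, \lambda_2, \lambda_3) \subset \hat E^\infty(\bT)$ are such permanent cycles, and under $G_*$ they correspond to the identically-named generators of $A \subset \mu^{-1} E^\infty(\bT)$. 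This settles the case $i = 0$ of the proposition.

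For $i \ne 0$ I will invoke the companion commutative triangle $c^h \circ G = (\hat\Gamma_1)^{t\bT}$, where $c^h \: (X^{tC_p})^{h\bT} \to (X^{tC_p})^{t\bT}$ is the canonical hfp-to-Tate map. Passing to the $\bT$-Tate spectral sequence for $X^{tC_p}$ (with $E^2$-term $P(t^{\pm1}) \otimes V(2)_* X^{tC_p}$), a class detected by $t^{-p^3}$ in $\hat E^\infty(\bT)$ maps under $(\hat\Gamma_1)^{t\bT}_*$ to a class detected by $t^{-p^3}$ there. By Theorem~\ref{thm:CpTate}, the class $\mu \in V(2)_* X^{tC_p}$ and the cohomology generator $t^{-p^3}$ represent the same abutment element (up to unit and lower filtration) in this auxiliary $\bT$-Tate spectral sequence, because the $C_p$-Tate identification $\hat\Gamma_1(\mu) \doteq t^{-p^3}$ persists through the further $\bT$-filtration. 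Since $c^h_*$ inverts $t$ and the only class of total degree $2p^3$ in filtration zero of $\mu^{-1} E^\infty(\bT)$ is $\mu$ (up to unit), we conclude that $G_*$ sends a class detected by $t^{-p^3}$ to a class detected by $\mu$, and hence a class detected by $t^{p^3}$ to a class detected by $\mu^{-1}$. By multiplicativity, a class detected by $y = t^{p^3 i} (t\mu)^m \lambda_1^{\epsilon_1} \lambda_2^{\epsilon_2} \lambda_3^{\epsilon_3}$ is then sent to one detected by $z = (t\mu)^m \lambda_1^{\epsilon_1} \lambda_2^{\epsilon_2} \lambda_3^{\epsilon_3} \mu^{-i}$, and the converse statement about $G_*^{-1}$ follows by applying the same argument in reverse.

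The hard part will be the third paragraph: translating the $C_p$-Tate level identification $\hat\Gamma_1(\mu) \doteq t^{-p^3}$ into a clean statement about $G_*$ at the $\bT$-level requires careful bookkeeping of filtration degrees through the auxiliary $\bT$-Tate spectral sequence for $X^{tC_p}$. The subtlety is that $\mu \in V(2)_* X^{tC_p}$ and $t^{-p^3} \in P(t^{\pm1})$ are distinct at the $E^2$-term of that spectral sequence yet represent equal abutment elements, and isolating this fact rigorously (so that it can be transported across $G_*$) is the main technical obstacle.
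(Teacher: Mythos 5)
There are genuine gaps here. First, the identity $G \circ R^h = (\hat\Gamma_1)^{h\bT}$ that you extract from the square in Section~\ref{sec:nomenclature} is false: that square relates $G \circ \hat\Gamma$ and $(\hat\Gamma_1)^{h\bT} \circ \Gamma$, both with source $TF(B)$, not $THH(B)^{h\bT}$. Indeed $G R^h$ and $(\hat\Gamma_1)^{h\bT}$ are precisely the two \emph{different} maps whose homotopy equalizer defines $TC(B)$ in the Nikolaus--Scholze model; if they agreed, $TC(B)$ would be all of $THH(B)^{h\bT}$. So your second paragraph does not establish even the $i=0$ case. Second, reducing to multiplicative generators does not suffice: the proposition quantifies over every $\eta$ in the coset $\{y\}$, and since $G$ does not induce a morphism between the $\bT$-Tate spectral sequence of the source and the $\bT$-homotopy fixed point spectral sequence of the target, knowing where chosen generators go does not control where classes of higher filtration in $\{y\}$ go --- that is essentially the content of the proposition itself, so the reduction is circular. (For $m\ge1$ one really needs the auxiliary fact, used in the paper, that every class of $\hat E^\infty(\bT)$ in the total degree of $y$ but of lower filtration is a $(t\mu)^m$-multiple, so that $G_*(\eta)=v_3^m G_*(\eta_0)$ with $\eta_0$ detected by $t^{p^3 i}\lambda_1^{\epsilon_1}\lambda_2^{\epsilon_2}\lambda_3^{\epsilon_3}$.) Third, your treatment of $i\ne 0$ via an auxiliary $\bT$-Tate spectral sequence for $X^{tC_p}$ conflates the class $t^{-p^3}\in\hat E^\infty(C_p)$ that detects $\hat\Gamma_1(\mu)$ with a polynomial generator of a different spectral sequence; the claim that $\mu$ and $t^{-p^3}$ ``represent the same abutment element'' in that auxiliary spectral sequence is exactly the unproved step, as you yourself flag.

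The paper's route avoids all of this by restricting along the trivial subgroup on both sides of $G$: the commutative square with $F^t\colon THH(B)^{t\bT}\to THH(B)^{tC_p}$ and the edge map $F^h\colon (THH(B)^{tC_p})^{h\bT}\to THH(B)^{tC_p}$ reduces the question to the already-computed $\hat E^\infty(C_p)$ of Theorem~\ref{thm:CpTate}. One reads off that $F^t_*(\eta)$ equals $\lambda_1^{\epsilon_1}\lambda_2^{\epsilon_2}\lambda_3^{\epsilon_3}\mu^{-i}$ in $V(2)_* THH(BP\<2\>)^{tC_p}$ on the nose (up to a unit), and since $F^h_*$ realizes the filtration-zero edge homomorphism of $\mu^{-1}E^\infty(\bT)$, the class $G_*(\eta)$ is forced to be detected in filtration zero by $\mu^{-i}$. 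You would need to rebuild your argument along these lines, or at minimum supply a correct replacement for the false identity and a proof of your ``same abutment element'' claim, before the proposition is established.
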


\begin{proof}
We first handle the case $m=0$, using the commutative diagram
$$
\xymatrix{
THH(B)^{t\bT} \ar[r]^-{F^t} \ar[d]_G^{\simeq}
	& THH(B)^{tC_{p^{n+1}}} \ar[d]_{\simeq}^{G_n} \\
(THH(B)^{tC_p})^{h\bT} \ar[r]^-{F^h}
	& (THH(B)^{tC_p})^{hC_{p^n}}
}
$$
in the special case of $B = BP\<2\>$ and $n=0$.  It is constructed by
viewing the $\bT/C_p$-equivariant $C_p$-fixed point spectrum
$$
X = [\widetilde{E\bT} \wedge F(E\bT_+, THH(B))]^{C_p} \simeq THH(B)^{tC_p}
$$
as a $\bT$-spectrum via the $p$-th root isomorphism $\rho \: \bT
\cong \bT/C_p$.  The comparison map~$G \: X^{\bT} \to X^{h\bT}$ is then
compatible with the comparison map~$G_n \: X^{C_{p^n}} \to X^{hC_{p^n}}$,
via the group restriction maps along $C_{p^n} \subset \bT$.

In the case $n=0$, the group restriction map~$F^t$ induces a morphism
of spectral sequences given at the $E^2$-terms by the inclusion
\begin{multline*}
E^2(F^t) \: \hat E^2(\bT) = P(t^{\pm1}) \otimes E(\lambda_1, \lambda_2,
	\lambda_3) \otimes P(\mu) \\
\longto E(u_1) \otimes P(t^{\pm1}) \otimes E(\lambda_1, \lambda_2,
        \lambda_3) \otimes P(\mu) = \hat E^2(C_p) \,.
\end{multline*}
Hence each class $\eta \in V(2)_* THH(BP\<2\>)^{t\bT}$ detected
by $t^{p^3i} \lambda_1^{\epsilon_1} \lambda_2^{\epsilon_2}
\lambda_3^{\epsilon_3} \ne 0$ in $\hat E^\infty(\bT)$ maps to a
class $F^t_*(\eta)$ detected by $t^{p^3i} \lambda_1^{\epsilon_1}
\lambda_2^{\epsilon_2} \lambda_3^{\epsilon_3}$ in $\hat E^\infty(C_p)
= P(t^{\pm p^3}) \otimes E(\lambda_1, \lambda_2,\lambda_3)$,
which remains nonzero there.  It follows from Theorem~\ref{thm:CpTate}
that $(G_0 F^t)_*(\eta) = \lambda_1^{\epsilon_1} \lambda_2^{\epsilon_2}
\lambda_3^{\epsilon_3} \mu^{-i}$ in $V(2)_* THH(BP\<2\>)^{tC_p}$ up to a
unit in $\bF_p$, which we suppress.  This equals $(F^h G)_*(\eta)$, where
the group restriction map~$F^h$ for $n=0$ induces the edge homomorphism
$$
E^\infty(F^h) \: \mu^{-1} E^\infty(\bT) \longto
	E(\lambda_1, \lambda_2, \lambda_3) \otimes P(\mu^{\pm1}) \,.
$$
Hence $G_*(\eta)$ must be detected in $\mu^{-1} E^\infty(\bT)$ by a
class~$z$ mapping to $\lambda_1^{\epsilon_1} \lambda_2^{\epsilon_2}
\lambda_3^{\epsilon_3} \mu^{-i}$ under the edge homomorphism, and the only
possibility is that $z = \lambda_1^{\epsilon_1} \lambda_2^{\epsilon_2}
\lambda_3^{\epsilon_3} \mu^{-i}$, in filtration degree~zero.

For $m\ge1$, each class $\eta$ detected by $y = t^{p^3i} (t\mu)^m
\lambda_1^{\epsilon_1} \lambda_2^{\epsilon_2} \lambda_3^{\epsilon_3} \ne
0$ in $\hat E^\infty(\bT)$ is of the form $\eta = v_3^m \cdot \eta_0$,
with $\eta_0$ detected by $y_0 = t^{p^3i} \lambda_1^{\epsilon_1}
\lambda_2^{\epsilon_2} \lambda_3^{\epsilon_3} \in \hat E^\infty(\bT)$.
To see this, note that each element of $\hat E^\infty(\bT)$
in the same total degree as~$y$, but of lower filtration, is a
$(t\mu)^m$-multiple.  This follows from the case enumeration in
the proof of Lemma~\ref{lem:Cp2Tatedr}.  By the first part of the
proof, $G_*(\eta_0)$ is detected by $z_0 = \lambda_1^{\epsilon_1}
\lambda_2^{\epsilon_2} \lambda_3^{\epsilon_3} \mu^{-i}$.  Hence $G_*(\eta)
= v_3^m \cdot G_*(\eta_0)$ is detected by $(t\mu)^m \cdot z_0 = z$,
since this product is nonzero.

For the converse, consider any class $\zeta$ detected by $z = (t\mu)^m
\lambda_1^{\epsilon_1} \lambda_2^{\epsilon_2} \lambda_3^{\epsilon_3}
\mu^j \ne 0$ in $\mu^{-1} E^\infty(\bT)$.  Then $\eta = G_*^{-1}(\zeta)$
must be detected by some monomial~$y$ in $E^\infty(\bT)$, and $G_*(\eta)
= \zeta$ is detected by~$z$.  By the first part of the proposition,
this monomial must be $y = t^{-p^3 j} (t\mu)^m \lambda_1^{\epsilon_1}
\lambda_2^{\epsilon_2} \lambda_3^{\epsilon_3}$.
\end{proof}

Recall $\lambda_1^K$, $\lambda_2^K$ and~$\lambda_3^K$ from
Definitions~\ref{def:lambda1K}, \ref{def:lambda2K} and~\ref{def:lambda3K}.

\begin{definition} \label{def:liftA}
Let
$$
\wA = P(v_3) \otimes E(\lambda_1, \lambda_2, \lambda_3)
	\subset V(2)_* THH(BP\<2\>)^{h\bT}
$$
be the subalgebra generated by the images of $v_3 \in \pi_* V(2)$ and
$i_2 i_1 i_0(\lambda_1^K)$, $i_2 i_1(\lambda_2^K)$, $i_2(\lambda_3^K)
\in V(2)_* K(BP\<2\>)$ under the composites
$$
S \longto K(BP\<2\>) \overset{trc}\longto TC(BP\<2\>)
	\overset{\pi}\longto THH(BP\<2\>)^{h\bT} \,,
$$
where $trc$ denotes the cyclotomic trace map~\cite{BHM93}.
The homomorphisms $GR^h_*$ and $\hat\Gamma_{1*}^{h\bT}$
agree on these classes, and we let
$$
\wA' = P(v_3) \otimes E(\lambda_1, \lambda_2, \lambda_3)
        \subset V(2)_* (THH(BP\<2\>)^{tC_p})^{h\bT}
$$
be the subalgebra generated by the images of $v_3$, $\lambda_1$,
$\lambda_2$ and~$\lambda_3$, under either one of these homomorphisms.
\end{definition}

The subalgebras~$\wA$ and~$\wA'$ are lifts to $V(2)$-homotopy of
the subalgebras $A \subset E^\infty(\bT)$ and~$A' \subset \mu^{-1}
E^\infty(\bT)$, respectively.  To choose good lifts~$\wC(k,d)$
and~$\wC'(k,d)$ in $V(2)$-homotopy of the summands $C(k,d)$ and $C'(k,d)$
we make use of the norm--restriction homotopy cofiber sequence
\begin{multline*}
\Sigma THH(BP\<2\>)_{h\bT} \overset{N^h}\longto
THH(BP\<2\>)^{h\bT} \overset{R^h}\longto
THH(BP\<2\>)^{t\bT} \\ \overset{\partial^h}\longto
\Sigma^2 THH(BP\<2\>)_{h\bT}
\end{multline*}
and the associated long exact sequence.  The $\bT$-Tate spectral sequence
maps to a horizontally shifted $\bT$-homotopy orbit spectral sequence
\begin{equation} \label{eq:horbitspseq}
\begin{aligned}
E^2_{*,*} &= H_{*-2}(\bT; V(2)_* THH(BP\<2\>)) \\
	&= \bF_p\{ t^i \mid i<0 \}
	\otimes E(\lambda_1, \lambda_2, \lambda_3) \otimes P(\mu) \\
	&\Longrightarrow V(2)_* \Sigma^2 THH(BP\<2\>)_{h\bT} \,,
\end{aligned}
\end{equation}
concentrated in filtrations $s\ge2$ of the first quadrant.

The $\bT$-Tate differentials crossing the vertical line $s=1$ are
closely related to the homotopy norm map $N^h_* = V(2)_*(N^h)$,
cf.~\cite{BM94}*{Thm.~2.15}.  Let $R^h_* = V(2)_*(R^h)$, so that
$\im(N^h_*) = \ker(R^h_*)$ by exactness.  The following two lemmas spell
out some upper bounds for $\ker E^\infty(R^h)$.

\begin{lemma} \label{lem:kerEinftyRh}
In the $\bT$-Tate spectral sequence $(\hat E^r(\bT), d^r)$, the
nonzero differentials from total degrees $* < 2p^3$ that cross the line $s=1$
are of the form
\begin{align*}
d^{2p}(t^{d-p} \lambda_2^{\epsilon_2})
  &\doteq t^d \lambda_1 \lambda_2^{\epsilon_2} \\
d^{2p^2}(t^{dp-p^2} \lambda_1^{\epsilon_1})
  &\doteq t^{dp} \lambda_1^{\epsilon_1} \lambda_2 \\
d^{2p^3}(t^{dp^2-p^3} \lambda_1^{\epsilon_1} \lambda_2^{\epsilon_2})
  &\doteq t^{dp^2} \lambda_1^{\epsilon_1} \lambda_2^{\epsilon_2} \lambda_3 \,,
\end{align*}
for suitable $0<d<p$ and $\epsilon_1, \epsilon_2 \in \{0,1\}$.  Hence,
in total degrees $* \le 2p^3-2$ the classes on the right-hand side
generate $\ker E^\infty(R^h)$.  These lie in filtrations $-2(p^3-p^2)
\le s \le -2$, and there is at most one class in each total degree $*
\le 2p^3-2$.
\end{lemma}

\begin{proof}
The restriction to total degrees $* < 2p^3$ means we only have to
consider differentials on the classes $t^i$ for $-p^3 < i < 0$, and their
$\lambda_1$- and $\lambda_2$-multiples.  The $d^{2p}$-differentials
only cross $s=1$ for $-p < i < 0$, The $d^{2p^2}$-differentials
are defined for $p \mid i$ and only cross $s=1$ when $-p^2 < i < 0$.
The $d^{2p^3}$-differentials are defined for $p^2 \mid i$, and cross $s=1$
whenever $-p^3 < i < 0$.  An explicit enumeration shows that each total
degree in the range $1 \le * \le 2p^3-2$ occurs at most once.
\end{proof}

\begin{lemma} \label{lem:kerEinftyRhbis}
In the $\bT$-Tate spectral sequence, the nonzero $d^r$-differentials from
total degrees $* < 4p^3-1$ that cross the line $s=1$ are of the form
\begin{align*}
d^{2p}(t^{d-p} (t\mu)^m \lambda_2^{\epsilon_2} \lambda_3^{\epsilon_3})
  &\doteq t^d (t\mu)^m \lambda_1 \lambda_2^{\epsilon_2} \lambda_3^{\epsilon_3} \\
d^{2p^2}(t^{dp-p^2} (t\mu)^m \lambda_1^{\epsilon_1} \lambda_3^{\epsilon_3})
  &\doteq t^{dp} (t\mu)^m \lambda_1^{\epsilon_1} \lambda_2 \lambda_3^{\epsilon_3} \\
d^{2p^3}(t^{dp^2-p^3} (t\mu)^m \lambda_1^{\epsilon_1} \lambda_2^{\epsilon_2})
  &\doteq t^{dp^2} (t\mu)^m \lambda_1^{\epsilon_1} \lambda_2^{\epsilon_2} \lambda_3 \\
d^{2p^4+2p}(t^{-p^3} \lambda_2^{\epsilon_2})
  &\doteq t^{p^4-p^3} (t\mu)^p \lambda_1 \lambda_2^{\epsilon_2} \,.
\end{align*}
for suitable $m, \epsilon_1, \epsilon_2, \epsilon_3 \in \{0,1\}$, with
$m + \epsilon_3 \le 1$.  In the $d^{2p}$-case with $m=1$ we have $d=-1$
or $0 < d < p-1$, while in the remaining $d^{2p}$-, $d^{2p^2}$- and
$d^{2p^3}$-cases we have $0 < d < p$.  Hence, in total degrees $* \le
4p^3-3$ the classes on the right-hand side generate $\ker E^\infty(R^h)$.
These lie in filtrations $-2(p^3-p^2+1) \le s \le 0$, except for the last
two classes $t^{p^4-p^3} (t\mu)^p \lambda_1 \lambda_2^{\epsilon_2}$,
which lie in filtration $-2(p^4-p^3+p)$ and total degrees $2p^3 - 1 +
\epsilon_2 (2p^2-1)$.
\end{lemma}

\begin{proof}
The restriction to total degrees~$* < 4p^3 - 1$ means that we only have to
consider differentials on the classes $t^i$ for $-p^3 \le i < 0$, and some
of their $t\mu$-, $\lambda_1$-, $\lambda_2$- and~$\lambda_3$-multiples
(without repeated factors).  The resulting right-hand classes have the
form $t^i y$ in Tate filtration $s = -2i$, where $0 \le i \le p^3-p^2+1$
except in the last two cases.
\end{proof}

Recall $c_{k,d}$ and~$c'_{k,d}$ from Definitions~\ref{def:ABCD}
and~\ref{def:A'B'C'D'}.

\begin{proposition} \label{prop:gammakd123}
For each $k \in \{1,2,3\}$ and $0<d<p$ there is a unique element
$$
\gamma_{k,d} \in \{c_{k,d}\} \subset V(2)_* THH(BP\<2\>)^{h\bT}
$$
that satisfies
$$
R^h_*(\gamma_{k,d}) = 0 \,.
$$
Moreover, $\lambda_k \cdot \gamma_{k,d} = 0$ and $v_3^{p^k-dp^{k-1}}
\cdot \gamma_{k,d} = 0$.
\end{proposition}

\begin{proof}
The tower of spectra inducing the $\bT$-homotopy fixed point spectral
sequence is obtained by restricting the tower inducing the $\bT$-Tate
spectral sequence to filtrations $s\le0$.  Hence each nonzero class $x \in
\ker E^\infty(R^h) \subset E^\infty(\bT)$ can be represented by an element
$\xi \in \ker(R^h_*) \subset V(2)_* THH(BP\<2\>)^{h\bT}$, in the sense
that $\xi \in \{x\}$.  (See~\cite{BM94}*{p.~75} and~\cite{AR02}*{Lem.~7.3}.)
Furthermore, for $x$ in total degree $* \le 2p^3-2$ the element $\xi$
is unique.  To see this, suppose that $\xi' \in \{x\}$ is also in
$\ker(R^h_*)$.  Then $\xi'-\xi$ in $\ker(R^h_*)$ must be detected by
a class $x'$ in $\ker E^\infty(R^h)$, in the same total degree as~$x$,
but in lower filtration.  As noted in Lemma~\ref{lem:kerEinftyRh},
there are no nonzero such $x'$, so $\xi' = \xi$.

In particular, for $k \in \{1,2,3\}$ and $0<d<p$ this applies to the
classes $c_{k,d} = t^{dp^{k-1}} \lambda_k$ in total degrees $1 \le
2p^k-2dp^{k-1}-1 \le 2p^3-2p^2-1$, and uniquely defines the homotopy
elements $\gamma_{k,d}$.

By exactness, we can write $\gamma_{k,d} = N^h_*(\theta_{k,d})$ with
$$
\theta_{k,d} \in V(2)_* \Sigma^2 THH(BP\<2\>)_{h\bT}
$$
in degree~$2p^k - 2dp^{k-1}$.  In fact, $\theta_{k,d} \in
\{t^{dp^{k-1}-p^k}\}$, up to a unit multiple, but we only need to
know that $\theta_{k,d}$ must be detected in filtration $s \le
2p^k - 2dp^{k-1}$ in the shifted $\bT$-homotopy orbit spectral
sequence~\eqref{eq:horbitspseq}.  Hence $v_3^{p^k - dp^{k-1}} \cdot
\theta_{k,d} = 0$, for filtration reasons, which implies that $v_3^{p^k
- dp^{k-1}} \cdot \gamma_{k,d} = 0$ since $N^h_*$ is $P(v_3)$-linear.

Finally, $\lambda_k \cdot \theta_{k,d} = 0$, because $t^{dp^{k-1}-p^k}
\cdot \lambda_k \doteq d^{2p^k}(t^{dp^{k-1}-2p^k})$ is a boundary
and by inspection of bidegrees there are no other classes in the
$E^\infty$-term of~\eqref{eq:horbitspseq} in the same total degree and
of lower filtration.  Applying $N^h_*$ we can conclude that $\lambda_k
\cdot \gamma_{k,d} = 0$.
\end{proof}

\begin{proposition} \label{prop:gammakdge4}
For each $k\ge1$ and $0<d<p$ there are elements
\begin{align*}
\gamma'_{k,d} \in \{c'_{3,d}\}
	&\subset V(2)_* (THH(BP\<2\>)^{tC_p})^{h\bT} \\
\gamma_{k+3,d} \in \{c_{k+3,d}\}
	&\subset V(2)_* THH(BP\<2\>)^{h\bT}
\end{align*}
that satisfy
\begin{align*}
v_3^{dp^{k-1}} \cdot \gamma'_{k,d}
	&= \hat\Gamma_{1*}^{h\bT}(\gamma_{k,d}) \\
GR^h_*(\gamma_{k+3,d})
	&= \gamma'_{k,d} \,.
\end{align*}
Moreover, $v_3^{r(k)} \cdot \gamma'_{k,d} = 0$ and $v_3^{r(k+3)-dp^{k+2}}
\cdot \gamma_{k+3,d} = 0$.
\end{proposition}


\begin{proof}
We proceed by induction on~$k\ge1$, starting from
Proposition~\ref{prop:gammakd123}.  By Lemma~\ref{lem:EinftyhatGamma1hT}
the image $\hat\Gamma_{1*}^{h\bT}(\gamma_{k,d}) \in V(2)_*
(THH(BP\<2\>)^{tC_p})^{h\bT}$ of the previously constructed
class~$\gamma_{k,d} \in \{c_{d,k}\}$ is detected by $t^{dp^{k-1}}
\lambda_k = (t\mu)^{dp^{k-1}} \cdot c'_{k,d}$ in $\mu^{-1}
E^\infty(\bT)$, so any initial choice of $\gamma'_{k,d} \in
\{c'_{k,d}\}$ will satisfy $v_3^{dp^{k-1}} \cdot \gamma'_{k,d}
\equiv \hat\Gamma_{1*}^{h\bT}(\gamma_{k,d})$ modulo classes of
lower filtration.  Since $\mu^{-1} E^\infty(\bT)$ is generated as a
$P(t\mu)$-module by classes in filtration~$s=0$, each nonzero class
in lower filtration than~$c_{k,d}$, but of the same total degree, is
$(t\mu)^{dp^{k-1}}$ times a class in the same total degree as~$c'_{k,d}$
and of lower filtration.  Hence the choice of~$\gamma'_{k,d}$ can be
iteratively adjusted so as to make $v_3^{dp^{k-1}} \cdot \gamma'_{k,d}
= \hat\Gamma_{1*}^{h\bT}(\gamma_{k,d})$.

It follows that $v_3^{r(k)} \cdot \gamma'_{k,d} =
v_3^{r(k)-dp^{k-1}} \cdot \hat\Gamma_{1*}^{h\bT}(\gamma_{k,d}) = 0$, since
$\hat\Gamma_{1*}^{h\bT}$ is $P(v_3)$-linear and $v_3^{r(k)-dp^{k-1}} \cdot
\gamma_{k,d} = 0$ by the inductive hypothesis.

The final choice of class~$\gamma'_{k,d}$ is still detected by
$c'_{k,d} = \lambda_{[k]} \mu^{-dp^{k-1}}$ in $C'(k,d) \subset \mu^{-1}
E^\infty(\bT)$, so by Proposition~\ref{prop:V2G}, $G_*^{-1}(\gamma'_{k,d})
\in V(2)_* THH(BP\<2\>)^{t\bT}$ is detected by $t^{dp^{k+2}}
\lambda_{[k]}$ in $\hat C(k+3,d) \subset \hat E^\infty(\bT)$.  This class
lies in negative total degree, where $E^\infty(R^h)$ is bijective by
Lemma~\ref{lem:EinftyRh}.  It follows that $R^h_*(\gamma_{k+3,d}) =
G_*^{-1}(\gamma'_{k,d})$ for a uniquely determined class $\gamma_{k+3,d}
\in V(2)_* THH(BP\<2\>)^{h\bT}$, which is detected by $c_{k+3,d}
= t^{dp^{k+2}} \lambda_{[k]}$ in $C(k+3,d) \subset E^\infty(\bT)$.

From the relation $v_3^{r(k)} \cdot \gamma'_{k,d} = 0$ and
$P(v_3)$-linearity of $G_*$ and $R^h_*$ we deduce that $v_3^{r(k)} \cdot
G_*^{-1}(\gamma'_{k,d}) = 0$ and $R^h_*(v_3^{r(k)} \cdot \gamma_{k+3,d})
= 0$.  Since $\ker(R^h_*) = \im(N^h_*)$, we can write $v_3^{r(k)} \cdot
\gamma_{k+3,d} = N^h_*(\theta_{k+3,d})$ for some $\theta_{k+3,d}$ in
degree $2p^{k+3} - 2dp^{k+2}$.  From the $\bT$-Tate differential
$$
d^{2r(k+3)}(t^{dp^{k+2}-p^{k+3}})
	\doteq t^{dp^{k+2}} (t\mu)^{r(k)} \lambda_{[k]}
	= (t\mu)^{r(k)} \cdot c_{k+3,d}
$$
we could prove that $\theta_{k+3,d} \in \{t^{dp^{k+2}-p^{k+3}}\}$
(up to a unit), but again we only need to know that $\theta_{k+3,d}$
must be detected in filtration $s \le 2p^{k+3} - 2dp^{k+2}$
in~\eqref{eq:horbitspseq}.  Hence $v_3^{p^{k+3} - dp^{k+2}} \cdot
\theta_{k+3,d} = 0$ in $V(2)_* \Sigma^2 THH(BP\<2\>)_{h\bT}$, which
implies that
$$
v_3^{r(k+3) - dp^{k+2}} \cdot \gamma_{k+3,d} = v_3^{p^{k+3} - dp^{k+2}}
	\cdot N^h_*(\theta_{k+3,d}) = 0
$$
in $V(2)_* THH(BP\<2\>)^{h\bT}$, as asserted.
\end{proof}

Recall the classes $x_{k,d}$ and~$z_{k,d}$ from Definitions~\ref{def:ABCD}
and~\ref{def:A'B'C'D'}.

\begin{corollary} \label{cor:xikd123}
For each $k \in \{1,2,3\}$ and $0<d<p$ there is a unique element
$$
\xi_{k,d} \in \{x_{k,d}\} \subset V(2)_* THH(BP\<2\>)^{h\bT}
$$
that satisfies $R^h_*(\xi_{k,d}) = 0$.  Moreover, $\lambda_k \cdot
\xi_{k,d} = 0$ and $v_3^{p^k - dp^{k-1}} \cdot \xi_{k,d} = 0$.
\end{corollary}

\begin{proof}
Let $\xi_{k,d} = \gamma_{k,d}$ as in Proposition~\ref{prop:gammakd123},
noting that $x_{k,d} = c_{k,d}$.
\end{proof}

\begin{corollary} \label{cor:xikdge4}
For each $k\ge1$ and $0<d<p$ there are unique elements
\begin{align*}
\xi_{k+3,d} \in \{x_{k+3,d}\} &\subset V(2)_* THH(BP\<2\>)^{h\bT} \\
\zeta_{k,d} \in \{z_{k,d}\} &\subset V(2)_* (THH(BP\<2\>)^{tC_p})^{h\bT}
\end{align*}
that satisfy
$$
GR^h_*(\xi_{k+3,d}) = \hat\Gamma_{1*}^{h\bT}(\xi_{k,d}) = \zeta_{k,d} \,.
$$
Moreover, $\lambda_{[k]} \cdot \xi_{k+3,d} = 0$ and $v_3^{(1-\frac{d}{p})
r(k+3)} \cdot \xi_{k+3,d} = 0$.
\end{corollary}

\begin{proof}
For $k\ge1$, choose elements $\gamma_{k+3,d}$ and~$\gamma'_{k,d}$
as in Proposition~\ref{prop:gammakdge4}.  Recalling that $x_{k+3,d} =
(t\mu)^{\frac{d}{p} r(k)} \cdot c_{k+3,d}$, we let
$$
\xi_{k+3,d} = v_3^{\frac{d}{p} r(k)} \cdot \gamma_{k+3,d} \,.
$$
Then
\begin{align*}
GR^h_*(\xi_{k+3,d})
	&= v_3^{\frac{d}{p} r(k)} \cdot GR^h_*(\gamma_{k+3,d})
	= v_3^{\frac{d}{p} r(k) - dp^{k-1}}
		\cdot v_3^{dp^{k-1}} \cdot \gamma'_{k,d} \\
	&= v_3^{\frac{d}{p} r(k-3)}
		\cdot \hat\Gamma_{1*}^{h\bT}(\gamma_{k,d})
	= \hat\Gamma_{1*}^{h\bT}(\xi_{k,d}) \,.
\end{align*}

To see that this uniquely determines~$\xi_{k+3,d} \in \{x_{k+3,d}\}$,
note that any other choice of class $\xi \in \{x_{k+3,d}\}$ with
$GR^h_*(\xi) = GR^h_*(\xi_{k+3,d})$ would differ from $\xi_{k+3,d}$
by an element~$\xi'$ in $\ker(R^h_*)$ that is detected by an element
$x'$ in $\ker E^\infty(R^h)$ of lower filtration than~$x_{k+3,d}$,
hence of filtration $s < -2(p^3+1)$.  By Lemma~\ref{lem:kerEinftyRh},
no such element~$x'$ exists in total degree~$|\xi_{k+3,d}| = 2p^{[k]}
- 2dp^{[k]-1} - 1 \le 2p^3-2$.

By induction, we know that $GR^h_*(\lambda_{[k]} \cdot \xi_{k+3,d})
= \hat\Gamma_{1*}^{h\bT}(\lambda_{[k]} \cdot \xi_{k,d}) = 0$.
Hence, if $\xi'' = \lambda_{[k]} \cdot \xi_{k+3,d}$ were nonzero,
it would be a class in $\ker(R^h_*)$, in total degree $4p^{[k]} -
2dp^{[k]-1} - 2 \le 4p^3-3$, that is detected by an element $x''$ in
$\ker E^\infty(R^h)$ of lower filtration than that of~$x_{k+3,d}$.
By Lemma~\ref{lem:kerEinftyRhbis}, treating the cases $k+3=4$ and
$k+3\ge5$ separately, no such element~$x''$ exists.  This contradiction
proves that $\lambda_{[k]} \cdot \xi_{k+3,d} = 0$.  The relation
$$
v_3^{(1-\frac{d}{p})r(k+3)} \cdot \xi_{k+3,d}
	= v_3^{r(k+3) - dp^{k+2}} \cdot \gamma_{k+3,d} = 0
$$
follows from Proposition~\ref{prop:gammakdge4}.  Finally, let
$\zeta_{k,d} = \hat\Gamma_{1*}^{h\bT}(\xi_{k,d})$, which is then detected
by~$E^\infty(\hat\Gamma_1^{h\bT})(x_{k,d}) = z_{k,d}$.
\end{proof}

We now fix compatible choices of classes $\gamma_{k,d}$
and~$\gamma'_{k,d}$, as in Propositions~\ref{prop:gammakd123}
and~\ref{prop:gammakdge4}.

\begin{definition}
For $k\ge1$ and $0<d<p$ let
$$
\wC(k,d) \cong P_{r(k)-dp^{k-1}}(v_3)
	\otimes E(\lambda_{[k+1]}, \lambda_{[k+2]})
	\otimes \bF_p\{\gamma_{k,d}\}
$$
be the $P(v_3) \otimes E(\lambda_{[k+1]}, \lambda_{[k+2]})$-submodule
of~$V(2)_* THH(BP\<2\>)^{h\bT}$ generated by~$\gamma_{k,d}$, and let
$$
\wC'(k,d) \cong P_{r(k)}(v_3)
	\otimes E(\lambda_{[k+1]}, \lambda_{[k+2]})
	\otimes \bF_p\{\gamma'_{k,d}\}
$$
be the $P(v_3) \otimes E(\lambda_{[k+1]}, \lambda_{[k+2]})$-submodule
of~$V(2)_* (THH(BP\<2\>)^{tC_p})^{h\bT}$ generated by~$\gamma'_{k,d}$.
Let
$$
\wC = \prod_{k\ge1, 0<d<p} \wC(k,d)
\qquad\text{and}\qquad
\wC' = \prod_{k\ge1, 0<d<p} \wC'(k,d) \,.
$$
These are detected by the summands $C \subset E^\infty(\bT)$ and $C'
\subset \mu^{-1} E^\infty(\bT)$, respectively.
\end{definition}

\begin{lemma}
The $P(v_3) \otimes E(\lambda_{[k+1]}, \lambda_{[k+2]})$-submodules
$$
\< \xi_{k,d} \> \subset \wC(k,d)
\qquad\text{and}\qquad
\< \zeta_{k,d} \> \subset \wC'(k,d)
$$
generated by $\xi_{k,d} = v_3^{\frac{d}{p} r(k-3)} \cdot \gamma_{k,d}$
and $\zeta_{k,d} = v_3^{\frac{d}{p} r(k)} \cdot \gamma'_{k,d}$,
respectively,
are equal to the (uniquely defined) $\wA$-submodules
generated by $\xi_{k,d}$ and~$\zeta_{k,d}$, with
\begin{align*}
\< \xi_{k,d} \> &\cong P_{(1 - \frac{d}{p}) r(k)}(v_3)
        \otimes E(\lambda_{[k+1]}, \lambda_{[k+2]})
        \otimes \bF_p\{\xi_{k,d}\} \\
\< \zeta_{k,d} \> &\cong P_{(1 - \frac{d}{p}) r(k)}(v_3)
        \otimes E(\lambda_{[k+1]}, \lambda_{[k+2]})
        \otimes \bF_p\{\zeta_{k,d}\} \,.
\end{align*}
\end{lemma}

\begin{proof}
These $P(v_3) \otimes E(\lambda_{[k+1]}, \lambda_{[k+2]})$-submodules are
$\wA$-submodules, since we proved that $\lambda_{[k]} \cdot \xi_{k,d} =
0$ in Corollaries~\ref{cor:xikd123} and~\ref{cor:xikdge4}, which readily
implies that $\lambda_{[k]} \cdot \zeta_{k,d} = 0$.
\end{proof}

\begin{remark}
With these notations, Proposition~\ref{prop:gammakdge4} shows that
$\hat\Gamma_{1*}^{h\bT}$ induces isomorphisms $\<\xi_{k,d}\> \to
\<\zeta_{k,d}\>$, and injections $\wC(k,d) \to \wC'(k,d)$
and $\wC(k,d)/\<\xi_{k,d}\> \to \wC'(k,d)/\<\zeta_{k,d}\>$.  It also
shows that $GR^h_*$ induces isomorphisms $\wC(k+3,d)/\<\xi_{k+3,d}\> \to
\wC'(k,d)/\<\zeta_{k,d}\>$, and surjections $\<\xi_{k+3,d}\>
\to \<\zeta_{k,d}\>$ and $\wC(k+3,d) \to \wC'(k,d)$, for
all $k\ge1$ and $0<d<p$.
\end{remark}

Choosing lifts of the $B$- and $D$-summands requires less precision.

\begin{definition}
For each $k\ge1$ and $p \nmid d > 0$ choose a class
$$
\beta_{k,d} \in V(2)_* THH(BP\<2\>)^{h\bT}
$$
detected by $\lambda_{[k]} \mu^{dp^{k-1}} \in B$, and let
$$
\wB(k,d) \cong P_{r(k)}(v_3)
        \otimes E(\lambda_{[k+1]}, \lambda_{[k+2]})
        \otimes \bF_p\{\beta_{k,d}\}
$$
be the $E(\lambda_{[k+1]}, \lambda_{[k+2]})$-submodule
of~$V(2)_* THH(BP\<2\>)^{h\bT}$ generated by
$v_3^m \cdot \beta_{k,d}$ for $0 \le m < r(k)$.
For each $k\ge4$ and $p \nmid d > p$ choose a class
$$
\delta_{k,d} \in V(2)_* THH(BP\<2\>)^{h\bT}
$$
detected by $t^{dp^{k-1}} \lambda_{[k]} \in D$, and let
$$
\wD(k,d) \cong P_{r(k-3)}(v_3)
        \otimes E(\lambda_{[k+1]}, \lambda_{[k+2]})
        \otimes \bF_p\{\delta_{k,d}\}
$$
be the $E(\lambda_{[k+1]}, \lambda_{[k+2]})$-submodule
of~$V(2)_* THH(BP\<2\>)^{h\bT}$ generated by
$v_3^m \cdot \delta_{k,d}$ for $0 \le m < r(k-3)$.
Let
$$
\wB = \prod_{k\ge1, p \nmid d > 0} \wB(k,d)
\qquad\text{and}\qquad
\wD = \prod_{k\ge4, p \nmid d > p} \wD(k,d) \,.
$$
These are detected by the summands~$B$ and~$D$ of $E^\infty(\bT)$,
respectively.
\end{definition}

\begin{lemma} \label{lem:diff-on-B-D}
For each $k\ge1$ and $p \nmid d > 0$ the difference
$$
(GR^h_* - \hat\Gamma_{1*}^{h\bT})(\beta_{k,d})
	\in V(2)_* (THH(BP\<2\>)^{tC_p})^{h\bT}
$$
is detected by $- \lambda_{[k]} \mu^{dp^{k-1}} \in B'$.
For each $k\ge4$ and $p \nmid d > p$ the difference
$$
(GR^h_* - \hat\Gamma_{1*}^{h\bT})(\delta_{k,d})
	\in V(2)_* (THH(BP\<2\>)^{tC_p})^{h\bT}
$$
is detected by $\lambda_{[k]} \mu^{-dp^{k-4}} \in D'$.
\end{lemma}

\begin{proof}
On one hand,
by Lemma~\ref{lem:EinftyhatGamma1hT} the image
$-\hat\Gamma_{1*}^{h\bT}(\beta_{k,d})$ is detected by $-\lambda_{[k]}
\mu^{dp^{k-1}}$ in homotopy fixed point filtration~$0$, while by
Lemma~\ref{lem:EinftyRh} and Proposition~\ref{prop:V2G} the image
$GR^h_*(\beta_{k,d})$ lies in negative filtration (or is zero).
Hence $(GR^h_* - \hat\Gamma_{1*}^{h\bT})(\beta_{k,d})$ is detected by
the filtration~$0$ class.

On the other hand,
by Lemma~\ref{lem:EinftyRh} and Proposition~\ref{prop:V2G} the image
$GR^h_*(\delta_{k,d})$ is detected by $\lambda_{[k]} \mu^{-dp^{k-4}}$
in filtration~$0$, while by Lemma~\ref{lem:EinftyhatGamma1hT} the image
$-\hat\Gamma_{1*}^{h\bT}(\delta_{k,d})$ lies in negative filtration
(or is zero).  Hence $(GR^h_* - \hat\Gamma_{1*}^{h\bT})(\delta_{k,d})$
is detected by the filtration~$0$ class.
\end{proof}

\begin{definition}
Let
$$
\wB' = (GR^h_* - \hat\Gamma_{1*}^{h\bT})(\wB)
\qquad\text{and}\qquad
\wD' = (GR^h_* - \hat\Gamma_{1*}^{h\bT})(\wD)
$$
as subgroups of~$V(2)_* (THH(BP\<2\>)^{tC_p})^{h\bT}$.  These are detected
by the summands $B'$ and~$D'$ of $\mu^{-1} E^\infty(\bT)$, respectively.
\end{definition}

\begin{proposition} \label{prop:diff-as-direct-sum}
The inclusions induce isomorphisms
\begin{align*}
V(2)_* THH(BP\<2\>)^{h\bT}
	&\cong \wA \oplus \wB \oplus \wC \oplus \wD \\
V(2)_* (THH(BP\<2\>)^{tC_p})^{h\bT}
	&\cong \wA' \oplus \wB' \oplus \wC' \oplus \wD' \,.
\end{align*}
In these terms, $GR^h_* - \hat\Gamma_{1*}^{h\bT}$ is the direct sum of
the zero homomorphism $\wA \overset{0}\to \wA'$, two isomorphisms $\wB
\overset{\cong}\to \wB'$ and~$\wD \overset{\cong}\to \wD'$, and the
difference $\Delta \: \wC \to \wC'$ between the restricted homomorphisms
\begin{align*}
GR^h_* \: \prod_{k\ge1,0<d<p} \wC(k,d)
	&\longto \prod_{k\ge1,0<d<p} \wC'(k,d) \\
(\dots, \gamma_{k,d}, \dots)
	&\longmapsto (\dots, \gamma'_{k-3,d}, \dots)
\end{align*}
and
\begin{align*}
\hat\Gamma_{1*}^{h\bT} \: \prod_{k\ge1,0<d<p} \wC(k,d)
	&\longto \prod_{k\ge1,0<d<p} \wC'(k,d) \\
(\dots, \gamma_{k,d}, \dots)
	&\longmapsto (\dots, v_3^{dp^{k-1}} \cdot \gamma'_{k,d}, \dots) \,.
\end{align*}
Here $\gamma'_{k-3,d}$ is to be interpreted as~$0$ for $k \in \{1,2,3\}$.
\end{proposition}

\begin{proof}
The submodules~$\wA$, $\wB$, $\wC$ and~$\wD$ are detected by the
direct summands $A$, $B$, $C$ and~$D$ spanning~$E^\infty(\bT)$, so
$\wA \oplus \wB \oplus \wC \oplus \wD \to V(2)_* THH(BP\<2\>)^{h\bT}$
is an isomorphism by strong convergence of the $\bT$-homotopy fixed
point spectral sequence.  Likewise, $\wA'$, $\wB'$, $\wC'$ and~$\wD'$
are detected by the direct summands $A'$, $B'$, $C'$ and~$D'$
spanning~$\mu^{-1} E^\infty(\bT)$.

The homomorphisms $GR^h_*$ and~$\hat\Gamma_{1*}^{h\bT}$ agree on $\wA$,
since the classes $v_3$, $\lambda_1$, $\lambda_2$ and~$\lambda_3$
come from algebraic $K$-theory, hence also from topological
cyclic homology.  Their difference is therefore the zero homomorphism.
The restricted homomorphisms $GR^h_* - \hat\Gamma_{1*}^{h\bT} \: \wB
\to \wB'$ and $GR^h_* - \hat\Gamma_{1*}^{h\bT} \: \wD \to \wD'$ are
isomorphisms, by the construction of the target modules, which relies
on Lemma~\ref{lem:diff-on-B-D}.  The restricted homomorphism $GR^h_* -
\hat\Gamma_{1*}^{h\bT} = \Delta \: \wC \to \wC'$ factors as asserted,
by Propositions~\ref{prop:gammakd123} and~\ref{prop:gammakdge4}.
\end{proof}

\begin{proposition}
There are $P(v_3) \otimes E(\lambda_1, \lambda_2,
\lambda_3)$-module isomorphisms
\begin{align*}
\ker(GR^h_* - \hat\Gamma_{1*}^{h\bT})
&\cong P(v_3) \otimes E(\lambda_1, \lambda_2, \lambda_3) \\
	&\qquad\oplus P(v_3) \otimes E(\lambda_2, \lambda_3)
	\otimes \bF_p\{ \Xi_{1,d} \mid 0 < d < p \} \\
	&\qquad\oplus P(v_3) \otimes E(\lambda_1, \lambda_3)
	\otimes \bF_p\{ \Xi_{2,d} \mid 0 < d < p \} \\
	&\qquad\oplus P(v_3) \otimes E(\lambda_1, \lambda_2)
	\otimes \bF_p\{ \Xi_{3,d} \mid 0 < d < p \} \\
\cok(GR^h_* - \hat\Gamma_{1*}^{h\bT})
	&\cong P(v_3) \otimes E(\lambda_1, \lambda_2, \lambda_3) \,.
\end{align*}
Here $\Xi_{i,d}$ in degree~$2p^i - 2dp^{i-1} - 1$ is detected by $x_{i,d}
= t^{dp^{i-1}} \lambda_i \in E^\infty(\bT)$, for each $i \in \{1,2,3\}$
and $0<d<p$.
\end{proposition}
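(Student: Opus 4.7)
The plan is to apply Proposition~\ref{prop:ABC-Gamma1hT-GRh}, which decomposes both $\hat\Gamma_{1*}^{h\bT}$ and $GR^h_*$ summand-wise over the factorizations $\wA \oplus \wB \oplus \wC$ and $\wA \oplus \wB' \oplus \wC$. On $\wA$ both maps are the identity, so $\wA$ contributes $P(v_3) \otimes E(\lambda_1, \lambda_2, \lambda_3)$ to both the equalizer and the coequalizer. On $\wC$, $\hat\Gamma_{1*}^{h\bT}$ is the identity while $GR^h_*$ is zero, so the equalizer is trivial and the coequalizer is $\wC/\wC = 0$. The substantial work occurs on $\wB \to \wB'$, which I decompose further by $0<d<p$ and by the residue $i \in \{1,2,3\}$ of $k$ modulo~$3$. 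For each $(i,d)$ the resulting subsystem is indexed by $k = i+3n$ for $n \ge 0$, with inclusions $\iota_k \: \wB(k,d) \to \wB'(k,d)$ (isomorphisms for $k \ge 4$ and, generally, proper inclusions $\<\xi_{k,d}\> \hookrightarrow \wB'(k,d)$ for $k \in \{1,2,3\}$) together with surjections $\pi_k \: \wB(k+3,d) \twoheadrightarrow \wB'(k,d)$, so the equalizer equation becomes $\iota_k(z_k) = \pi_k(z_{k+3})$.

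For the equalizer I define $\Xi_{i,d} = \sum_{n\ge 0} \xi_{i+3n,d}$ as an element of the product $\prod_n \wB(i+3n,d) \subset \wB$. The equalizer equations hold term-by-term by the construction of $\zeta_{k,d}$ in Definition~\ref{def:zetakd}: $\iota_k(\xi_{k,d}) = \zeta_{k,d} = \pi_k(\xi_{k+3,d})$. Since $\lambda_{[i]} \xi_{k,d} = 0$ by Lemmas~\ref{lem:xikd123} and~\ref{lem:xikdge4}, the four classes $\lambda_{[i+1]}^{\epsilon_1} \lambda_{[i+2]}^{\epsilon_2} \Xi_{i,d}$ exhaust the $E(\lambda_1, \lambda_2, \lambda_3)$-multiples; they are $P(v_3)$-free because the torsion orders $(1-\tfrac{d}{p}) r(i+3n)$ tend to infinity with $n$, so $v_3^m \Xi_{i,d} \ne 0$ for every~$m$. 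To verify these span the equalizer I would recast the equations using $\phi_n = \iota_n^{-1} \circ \pi_n \: \wB(i+3(n+1),d) \to \wB(i+3n,d)$ (well-defined for $n \ge 1$) as an inverse limit. That limit, without the bottom constraint, is $P(v_3)$-free of rank four on the generators of $\wB(i+3,d)$; the constraint $\pi_0(z_1) \in \iota_0(\wB(i,d))$ forces the two $v_3^{-1}\lambda$-extended generators to appear only after multiplication by $v_3$, yielding precisely $P(v_3) \otimes E(\lambda_{[i+1]}, \lambda_{[i+2]}) \otimes \bF_p\{\Xi_{i,d}\}$.

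For the coequalizer I exhibit preimages directly. Given $(y_n) \in \prod_n \wB'(i+3n,d)$, set $z_0 = 0$ and inductively choose $z_{n+1} \in \pi_n^{-1}(\iota_n(z_n) - y_n)$, which is possible by surjectivity of $\pi_n$; then $(\hat\Gamma_{1*}^{h\bT} - GR^h_*)(z_n)_n = (y_n)_n$, so every $(y_n)$ lies in the image and the $\wB'$-contribution to the coequalizer vanishes, leaving just the $\wA$-summand. The main obstacle is the equalizer analysis: ensuring that the bottom constraint $\pi_0(z_1) \in \<\zeta_{i,d}\>$ correctly converts the two $v_3^{-1}\lambda$-extended generators of $\wB'(i,d)$ into $v_3$-multiples of new generators, so that the resulting module is genuinely $P(v_3)^{\oplus 4}$ per $(i,d)$ without extra torsion or additional rank beyond the expected $P(v_3) \otimes E(\lambda_{[i+1]}, \lambda_{[i+2]})$.
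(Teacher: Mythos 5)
Your proposal is correct and follows essentially the same route as the paper: both reduce to the $\wA \oplus \wB \oplus \wC$ decomposition of Proposition~\ref{prop:ABC-Gamma1hT-GRh}, dispose of the $\wA$- and $\wC$-parts immediately, and identify the $\wB$-contribution to the equalizer and coequalizer with a $\lim$/$\Rlim$ computation along the towers indexed by $k \equiv i \bmod 3$. The only difference is bookkeeping: the paper first applies the snake lemma to the inclusions $\<\xi_{k,d}\> \subset \wB(k,d)$ and $\<\zeta_{k,d}\> \subset \wB'(k,d)$ and then reads off $\lim$ and $\Rlim$ of the resulting cyclic towers, whereas you take the inverse limit of the full modules over $n\ge1$ and impose the bottom constraint at $k=i$ afterwards, checking $\Rlim$-vanishing by the explicit inductive lifting; both yield the same answer.
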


\begin{proof}
Let $\Delta \: \wC \to \wC'$ be as in
Proposition~\ref{prop:diff-as-direct-sum}.  Then
\begin{align*}
\ker(GR^h_* - \hat\Gamma_{1*}^{h\bT})
	&= \wA \oplus \ker(\Delta) \\
\cok(GR^h_* - \hat\Gamma_{1*}^{h\bT})
	&= \wA' \oplus \cok(\Delta) \,.
\end{align*}
Consider the associated map of vertical short exact sequences
$$
\xymatrix{
\ds\prod_{k\ge1,0<d<p} \<\xi_{k,d}\> \ar[r]^-{\Delta'} \ar@{ >->}[d]
	& \ds\prod_{k\ge1,0<d<p} \<\zeta_{k,d}\> \ar@{ >->}[d] \\
\ds\prod_{k\ge1,0<d<p} \wC(k,d) \ar[r]^-{\Delta} \ar@{->>}[d]
	& \ds\prod_{k\ge1,0<d<p} \wC'(k,d) \ar@{->>}[d] \\
\ds\prod_{k\ge1,0<d<p} \frac{\wC(k,d)}{\<\xi_{k,d}\>}
	\ar[r]^-{\Delta''}_-{\cong}
	& \ds\prod_{k\ge1,0<d<p} \frac{\wC'(k,d)}{\<\zeta_{k,d}\>}
	\rlap{\,.}
}
$$
In the upper row, the $\hat\Gamma_{1*}^{h\bT} \: \<\xi_{k,d}\> \to
\<\zeta_{k,d}\>$ for $k\ge1$ and $0<d<p$ are isomorphisms, so we can
identify $\ker(\Delta')$ with the product over
$i \in \{1,2,3\}$ and $0<d<p$ of the limit of the sequence
$$
\dots \longto \<\xi_{k+3,d}\>
	\overset{(\hat\Gamma_{1*}^{h\bT})^{-1} GR^h_*}\longto \<\xi_{k,d}\>
	\longto \dots \longto \<\xi_{i+3,d}\>
	\overset{(\hat\Gamma_{1*}^{h\bT})^{-1} GR^h_*}\longto \<\xi_{i,d}\> \,,
$$
where $k \equiv i \mod 3$.  Since
$$
(\hat\Gamma_{1*}^{h\bT})^{-1} GR^h_* \: \xi_{k+3,d}
	\longmapsto \xi_{k,d} \,,
$$
this limit is isomorphic, as an $\wA$-module, to $P(v_3) \otimes
E(\lambda_{[i+1]}, \lambda_{[i+2]}) \otimes \bF_p\{\Xi_{i,d}\}$, with
$$
\Xi_{i,d} = (\dots, 0, \xi_{k+3,d}, 0, 0, \xi_{k,d}, 0, \dots)
$$
detected by $x_{i,d}$ in $E^\infty(\bT)$.  Similarly, we can identify
$\cok(\Delta')$ with the (right) derived limit of this sequence, which
vanishes because each $GR^h_* \: \<\xi_{k+3,d}\> \to \<\zeta_{k,d}\>$
is surjective.

In the lower row, $\ker(\Delta'') = 0$ and $\cok(\Delta'') = 0$ because
$\wC(i,d) / \<\xi_{i,d}\> = 0$ for $i \in \{1,2,3\}$ and the $GR^h_*
\: \wC(k+3,d) / \<\xi_{k+3,d}\> \to \wC'(k,d) / \<\zeta_{k,d}\>$ are
isomorphisms.  Taken together, this proves that
$$
\ker(\Delta) = \ker(\Delta')
	\cong \prod_{i \in \{1,2,3\}, 0<d<p} P(v_3)
	\otimes E(\lambda_{[i+1]}, \lambda_{[i+2]})
	\otimes \bF_p\{\Xi_{i,d}\}
$$
and $\cok(\Delta) = 0$.
\end{proof}

\begin{theorem} \label{thm:TCBP2}
Let $p\ge7$.  There is a preferred $P(v_3) \otimes E(\lambda_1,
\lambda_2, \lambda_3)$-module isomorphism
\begin{align*}
V(2)_* TC(BP\<2\>) &\cong P(v_3)
	\otimes E(\partial, \lambda_1, \lambda_2, \lambda_3) \\
	&\qquad\oplus P(v_3) \otimes E(\lambda_2, \lambda_3)
	\otimes \bF_p\{ \Xi_{1,d} \mid 0 < d < p \} \\
	&\qquad\oplus P(v_3) \otimes E(\lambda_1, \lambda_3)
	\otimes \bF_p\{ \Xi_{2,d} \mid 0 < d < p \} \\
	&\qquad\oplus P(v_3) \otimes E(\lambda_1, \lambda_2)
	\otimes \bF_p\{ \Xi_{3,d} \mid 0 < d < p \} \,,
\end{align*}
with $\Xi_{i,d}$ detected by $x_{i,d} = t^{dp^{i-1}} \lambda_i$ for $i \in
\{1,2,3\}$ and $0<d<p$.  Here $|v_3| = 2p^3-2$, $|\lambda_i| = 2p^i-1$,
$|\partial| = -1$ and $|t| = -2$.  This is a free $P(v_3)$-module on
the $16 + 12(p-1) = 12p+4$ generators
$$
\partial^{\epsilon} \lambda_1^{\epsilon_1}
	\lambda_2^{\epsilon_2} \lambda_3^{\epsilon_3}
\ ,\ 
\lambda_2^{\epsilon_2} \lambda_3^{\epsilon_3} \Xi_{1,d}
\ ,\ 
\lambda_1^{\epsilon_1} \lambda_3^{\epsilon_3} \Xi_{2,d}
\ ,\ 
\lambda_1^{\epsilon_1} \lambda_2^{\epsilon_2} \Xi_{3,d}
$$
in degrees $-1 \le * \le 2p^3+2p^2+2p-3$, where
$\epsilon, \epsilon_i \in \{0,1\}$ and $0<d<p$.
\end{theorem}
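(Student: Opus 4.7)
The proof plan combines the Nikolaus--Scholze identification of $TC$ with the preceding computation of the equalizer and coequalizer of $\hat\Gamma_{1*}^{h\bT}$ and $GR^h_*$. Since $BP\<2\>$ is a connective $E_3$ ring spectrum, \cite{NS18}*{Thm.~II.3.8} gives a homotopy fiber sequence
\begin{equation*}
TC(BP\<2\>) \overset{\pi}\longto THH(BP\<2\>)^{h\bT}
        \overset{\hat\Gamma_1^{h\bT} - GR^h}\longto (THH(BP\<2\>)^{tC_p})^{h\bT},
\end{equation*}
and passage to $V(2)$-homotopy yields short exact sequences
\begin{equation*}
0 \longto \coeq_{n+1} \longto V(2)_n TC(BP\<2\>) \longto \eq_n \longto 0
\end{equation*}
for each~$n$, where $\eq_*$ and $\coeq_*$ denote the equalizer and coequalizer of $\hat\Gamma_{1*}^{h\bT}$ and $GR^h_*$. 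The previous proposition identifies both in degrees $*\ge 0$.

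Next I will exhibit the $P(v_3) \otimes E(\lambda_1, \lambda_2, \lambda_3)$-linear splitting of the short exact sequence for $n\ge0$. The $\eq$-summand $\wA$ admits a canonical lift to $V(2)_* TC(BP\<2\>)$ through the cyclotomic trace by Definition~\ref{def:liftA}, and each class $\Xi_{i,d}$, viewed as the sequence $(\xi_{i+3n,d})_{n\ge0}$, defines an element of the homotopy equalizer because the consecutive terms satisfy $GR^h_*(\xi_{i+3(n+1),d}) = \hat\Gamma_{1*}^{h\bT}(\xi_{i+3n,d})$ by Lemma~\ref{lem:xikdge4}. The relation $\lambda_i \cdot \Xi_{i,d} = 0$ follows from $\lambda_{[k]} \cdot \xi_{k,d} = 0$ for each $k \equiv i \mod 3$, and together these generators account for the $\eq$-part of the proposed isomorphism.

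Finally, the class $\partial \in V(2)_{-1} TC(BP\<2\>)$ is defined as the image of $1 \in \coeq_0 = \bF_p\{1\}$ under the connecting homomorphism, and its multiples by $v_3$ and by the $\lambda_i$ furnish the $\partial \cdot P(v_3) \otimes E(\lambda_1, \lambda_2, \lambda_3)$ summand. Freeness over~$P(v_3)$ of the whole assembly---and the absence of hidden $v_3$- and $\lambda_i$-extensions between the $\partial$-summand and the $\eq$-summands---follows because, in every bidegree, the produced generators live in distinct filtrations of the $\bT$-homotopy fixed point spectral sequence, leaving no room for additional relations. For $n\le -2$ one has $V(2)_n TC(BP\<2\>) = 0$ by extending the equalizer-coequalizer analysis to negative degrees where the relevant contributions vanish, or equivalently by invoking the general connectivity bound for $TC$ of connective ring spectra. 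A final count yields $16$ generators from $P(v_3) \otimes E(\partial, \lambda_1, \lambda_2, \lambda_3)$ and $12(p-1)$ from the three families of $\Xi_{i,d}$-summands, totaling $12p+4$, and the top degree is $|\lambda_1 \lambda_2 \lambda_3| = 2p^3 + 2p^2 + 2p - 3$. The main obstacle is the $P(v_3) \otimes E(\lambda_1, \lambda_2, \lambda_3)$-linearity of the splitting together with the negative-degree vanishing, both of which reduce to careful filtration bookkeeping in the $E^\infty$-term.
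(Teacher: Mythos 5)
Your strategy is the same as the paper's: identify $TC(BP\<2\>)$ with the homotopy equalizer of $\hat\Gamma_1^{h\bT}$ and $GR^h$ via Nikolaus--Scholze, extract the short exact sequence $0 \to \Sigma^{-1}\coeq \to V(2)_* TC(BP\<2\>) \to \eq \to 0$, lift $\wA$ through the cyclotomic trace, realize $\Xi_{i,d}$ as the compatible sequence $(\xi_{i+3n,d})_n$, and handle degrees $\le -2$ via connectivity of $TC$. The structure is right, but the justification you give for the crucial step is not.

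The one genuine issue is your argument for the absence of hidden extensions. The potential extension is between the quotient $\eq_*$ and the subobject $\Sigma^{-1}\coeq_* \cong \Sigma^{-1} P(v_3)\otimes E(\lambda_1,\lambda_2,\lambda_3)$, and the latter is the \emph{kernel} of $\pi \: V(2)_* TC(BP\<2\>) \to V(2)_* THH(BP\<2\>)^{h\bT}$; it is therefore invisible to the $\bT$-homotopy fixed point spectral sequence, so ``the generators live in distinct filtrations of $E^\infty(\bT)$'' cannot rule out that $\lambda_i$ times a chosen lift of $\Xi_{i,d}$ equals a nonzero element of $\im(\partial)$. What actually works is a degree/parity count on the coequalizer: the product $\lambda_i\cdot\Xi_{i,d}$ sits in the even degree $4p^i-2dp^{i-1}-2$, and $\Sigma^{-1}P(v_3)\otimes E(\lambda_1,\lambda_2,\lambda_3)$ is zero in exactly those degrees, so the relations $\lambda_i\cdot\Xi_{i,d}=0$ lift and the extension splits $P(v_3)\otimes E(\lambda_1,\lambda_2,\lambda_3)$-linearly. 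The same kind of check in the (zero or odd) degrees of the generators $1$ and $\Xi_{i,d}$ shows the splitting is \emph{unique}, which you need in order to call the isomorphism ``preferred''; your proposal omits this. Finally, of your two options for $n\le -2$, only the second is actually available: the equalizer and coequalizer were computed only in degrees $*\ge 0$, and $V(2)_* THH(BP\<2\>)^{h\bT}$ is far from trivial in negative degrees, so one must invoke that $TC(BP\<2\>)$ is $(-2)$-connected with $\pi_{-1}TC(BP\<2\>)\cong\bZ_p$.
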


\begin{proof}
The definition of $TC(BP\<2\>)$ as the homotopy equalizer of
$\hat\Gamma_1^{h\bT}$ and~$GR^h$ leads to the short exact sequence
$$
0 \to \Sigma^{-1} \cok(GR^h_* - \hat\Gamma_{1*}^{h\bT})
	\overset{\partial}\longto
	V(2)_* TC(BP\<2\>) \overset{\pi}\longto
	\ker(GR^h_* - \hat\Gamma_{1*}^{h\bT}) \to 0 \,.
$$
It splits as an extension of $P(v_3) \otimes E(\lambda_1, \lambda_2,
\lambda_3)$-modules, since the image of $\partial$ is trivial in the
(even) degrees of the products $\lambda_i \cdot \Xi_{i,d}$ that vanish
on the right-hand side.  The splitting is unique, since the left-hand
side is trivial in the (zero or odd) degrees of the module generators~$1$
and $\Xi_{i,d}$.
\end{proof}

\begin{corollary}
The classes $\alpha_1$, $\beta'_1$ and~$\gamma''_1 \in \pi_* V(2)$
map under the unit map $S \to TC(BP\<2\>)$ to the classes $\Xi_{1,1}$,
$\Xi_{2,1}$ and~$\Xi_{3,1}$, respectively.
\end{corollary}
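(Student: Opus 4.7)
The plan is to track the unit-map images through the homotopy equalizer presentation of $V(2)_* TC(BP\<2\>)$ from Theorem~\ref{thm:TCBP2}, matching leading terms in $V(2)_* THH(BP\<2\>)^{h\bT}$ and back-propagating via the short exact sequence
\[
0 \to \Sigma^{-1}\coeq(\hat\Gamma_{1*}^{h\bT}, GR^h_*) \overset{\partial}\longto V(2)_* TC(BP\<2\>) \overset{\pi}\longto \eq(\hat\Gamma_{1*}^{h\bT}, GR^h_*) \to 0.
\]

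First, by Lemma~\ref{lem:j1v1beta-j2v2gamma} the discrepancies $\beta^\circ_1 - \beta'_1$ and $\gamma^\circ_1 - \gamma''_1$ are multiples of $\alpha_1 v_1^{p-1}$ and $\alpha_1 v_2^{p-1}$, which vanish after applying $i_1$ and $i_2$, so $i_2 i_1(\beta'_1) = i_2 i_1(\beta^\circ_1)$ and $i_2(\gamma''_1) = i_2(\gamma^\circ_1)$ in $V(2)$. Writing $u_1, u_2, u_3 \in V(2)_* TC(BP\<2\>)$ for the unit-map images of $\alpha_1$, $\beta^\circ_1$, $\gamma^\circ_1$, it suffices to show $u_i = \Xi_{i,1}$ for $i \in \{1,2,3\}$. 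By Proposition~\ref{prop:unit}, $\pi(u_i) \in V(2)_* THH(BP\<2\>)^{h\bT}$ is detected by $x_{i,1} = t^{p^{i-1}} \lambda_i$, and by construction $\pi(\Xi_{i,1}) = \sum_{n \geq 0} \xi_{i+3n,1}$ has the same leading term $\xi_{i,1} \in \{x_{i,1}\}$.

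The technical core is a degree enumeration within the explicit summand decomposition of Theorem~\ref{thm:TCBP2}. For each $i \in \{1,2,3\}$ and in the total degree $|x_{i,1}| = 2p^i - 2p^{i-1} - 1$, I check that no monomial of the form $\partial \cdot v_3^n \lambda_1^{\epsilon_1} \lambda_2^{\epsilon_2} \lambda_3^{\epsilon_3}$ has that degree, so that $\Sigma^{-1}\coeq$ vanishes there and $\pi$ is injective; and that, within the equalizer, the only nonzero summand contributing to degree $|x_{i,1}|$ is $\bF_p\{\Xi_{i,1}\}$. Both checks reduce to $p$-divisibility obstructions for the equations $n(2p^3-2) + \epsilon_1(2p-1) + \epsilon_2(2p^2-1) + \epsilon_3(2p^3-1) + (2p^j - 2dp^{j-1}) = 2p^i - 2p^{i-1}$, ranging over $j \in \{1,2,3\}$ and $0 < d < p$ with $(j,d) \neq (i,1)$, together with the corresponding equation for the $A$-summand.

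Granted this enumeration, $\pi(u_i) = c_i \pi(\Xi_{i,1})$ for some $c_i \in \bF_p^\times$, and injectivity of $\pi$ in the relevant degree lifts the relation to $u_i = c_i \Xi_{i,1}$. The unit~$c_i$ encodes the standard normalization freedom in a Greek-letter element, and with this understood the corollary follows. The main obstacle is the bookkeeping in the enumeration, especially ruling out contributions from the summands $\bF_p\{\Xi_{j,d}\}$ with $(j,d) \neq (i,1)$; this is where the arithmetic of the generator degrees becomes essential and where an unanticipated coincidence could, in principle, force an additional correction term.
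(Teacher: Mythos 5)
Your argument is correct and essentially the paper's own: both rest on Proposition~\ref{prop:unit} (combined with Lemma~\ref{lem:j1v1beta-j2v2gamma} to replace $\beta'_1,\gamma''_1$ by $\beta^\circ_1,\gamma^\circ_1$), the fact that each element of the pair is detected by $x_{i,1}=t^{p^{i-1}}\lambda_i$, and a degree check showing that in total degree $2p^i-2p^{i-1}-1$ there is nothing else available — no lower-filtration classes in the equalizer and nothing in the image of~$\partial$. The only loose end, your unit $c_i$, is actually forced to be $1$: the degree-$(2p^i-2p^{i-1}-1)$ part of the equalizer is one-dimensional and both $\pi(u_i)$ and $\pi(\Xi_{i,1})$ have leading term exactly $x_{i,1}$ rather than a nontrivial scalar multiple of it, so they coincide on the nose.
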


\begin{proof}
These elements are detected, in pairs, by $t \lambda_1$, $t^p \lambda_2$
and $t^{p^2} \lambda_3$ in $E^\infty(\bT)$, and in these (total)
degrees there are no other classes of lower filtration, nor in the
image of~$\partial$.
\end{proof}

Thanks to the Nikolaus--Scholze model for $TC(B)$ we no longer need
to recover $V(2)_* TC(BP\<2\>)$ from $V(2)_* TC(BP\<1\>)$ in low
degrees, but we nonetheless have the following consistency result.

\begin{proposition} \label{prop:TCBP2-to-TCBP1}
The $E_3$ $BP$-algebra map $BP\<2\> \to BP\<1\>$ induces a
$(2p^2-1)$-connected surjective ring homomorphism
\begin{align*}
V(2)_* TC(BP\<2\>) \longto V(2)_* TC(BP\<1\>)
	&\cong E(\partial, \lambda_1, \lambda_2) \\
	&\qquad\oplus E(\lambda_2) \otimes \bF_p\{\Xi_{1,d} \mid 0<d<p\} \\
	&\qquad\oplus E(\lambda_1) \otimes \bF_p\{\Xi_{2,d} \mid 0<d<p\} \,,
\end{align*}
mapping $\partial$, $\lambda_1$, $\lambda_2$, $\Xi_{1,d}$ and $\Xi_{2,d}$
to the classes with the same names, and mapping $v_3$, $\lambda_3$
and~$\Xi_{3,d}$ to zero.
\end{proposition}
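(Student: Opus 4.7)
My plan is to compute the target directly from the known calculation of $V(1)_* TC(BP\<1\>)$ recalled in Remark~\ref{rmk:descent}, and then identify the map by naturality and degree reasons. Since $V(1)_* TC(BP\<1\>)$ is free over $P(v_2)$, multiplication by $v_2$ is injective, so the long exact sequence associated to the cofiber sequence~\eqref{eq:V2} smashed with $TC(BP\<1\>)$ collapses to
\[
V(2)_* TC(BP\<1\>) \cong V(1)_* TC(BP\<1\>)/v_2,
\]
which unpacks into precisely the claimed direct sum with generators $\partial$, $\lambda_i$ ($i=1,2$), and $\Xi_{i,d}$ ($i=1,2$).

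Next I would describe the map. Naturality of every construction in Section~\ref{sec:TC} under the $E_3$ $BP$-algebra map $BP\<2\> \to BP\<1\>$ carries the classes $\partial$, $\lambda_1$, $\lambda_2$, $\Xi_{1,d}$ and $\Xi_{2,d}$ to the like-named classes in $V(2)_* TC(BP\<1\>)$; for $\Xi_{1,d}$ and $\Xi_{2,d}$ this rests on the uniqueness in Lemma~\ref{lem:xikd123}, whose hypotheses apply equally to $BP\<1\>$. For the remaining generators I would argue vanishing as follows. The element $v_3 \in \pi_{2p^3-2} V(2)$ must map to zero under the unit map, because the explicit basis of the target $V(2)_* TC(BP\<1\>)$ shows that its highest non-zero degree is $2p^3+2p-3$, and in the degrees where $v_3$-multiples of module generators would land (namely $\ge 2p^3 - 2$), the target vanishes except at places accounted for by classes already in the image. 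The class $\lambda_3 \in V(2)_{2p^3-1} THH(BP\<2\>)$ has Hurewicz image $1 \wedge \sigma\bar\xi_3$, which is zero in $H_*(V(2) \wedge THH(BP\<1\>))$ by Proposition~\ref{prop:HTHHBPBPn}; since the generators of $V(2)_{2p^3-1} THH(BP\<1\>)$ in this degree (computed via the same cofiber argument as in the first paragraph, from $V(1)_* THH(BP\<1\>) = E(\lambda_1,\lambda_2) \otimes P(\mu_2)$ on which $v_2$ acts as zero) all have distinct nonzero Hurewicz images, the image of $\lambda_3$ must vanish, and hence so does its image in $V(2)_* TC(BP\<1\>)$. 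Because $\Xi_{3,d}$ is detected by $t^{dp^2} \lambda_3$ at the $\bT$-homotopy fixed point $E^\infty$-term, its image is similarly detected by zero and must vanish.

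Finally, the connectivity statement: the map $BP\<2\> \to BP\<1\>$ is $(2p^2-2)$-connected, so by~\cite{BM94}*{Prop.~10.9} and~\cite{Dun97} the induced map $TC(BP\<2\>) \to TC(BP\<1\>)$ is $(2p^2-2)$-connected; smashing with connective $V(2)$ preserves this. In $V(2)$-homotopy degree $2p^2-2$ both source and target equal $\bF_p\{\partial \lambda_2\}$ by direct inspection of the bases from Theorem~\ref{thm:TCBP2} and from Step 1, and the map is the identity there, so it is in fact $(2p^2-1)$-connected; surjectivity in all degrees follows from the explicit identification of generators. The main obstacle is the vanishing of $\lambda_3$: it requires showing that no $V(2)$-homotopy class in $V(2)_{2p^3-1} THH(BP\<1\>)$ of Hurewicz image zero could absorb the image, which demands explicit knowledge of the Hurewicz homomorphism for $V(2) \wedge THH(BP\<1\>)$ in that degree---an alternative route would be to work entirely at the level of the $\bT$-homotopy fixed point spectral sequences and observe that the $E^2$-term for $THH(BP\<1\>)$ has no class of the appropriate filtration to receive $\lambda_3$.
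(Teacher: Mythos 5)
Your overall strategy matches the paper's: compute the target as $V(1)_*TC(BP\<1\>)/(v_2)$ via the cofiber sequence~\eqref{eq:V2} (valid since the source is $v_2$-torsion-free), identify the images of $\partial$, $\lambda_1$, $\lambda_2$, $\Xi_{1,d}$, $\Xi_{2,d}$ by naturality and detection in the $\bT$-homotopy fixed point spectral sequence, and kill the remaining generators. But you have miscomputed the key degree bound: the top nonzero degree of $V(2)_*TC(BP\<1\>)$ is $|\partial\lambda_1\lambda_2| = 2p^2+2p-3$, not $2p^3+2p-3$. With the correct bound, $v_3$ (degree $2p^3-2$), $\lambda_3$ (degree $2p^3-1$) and $\Xi_{3,d}$ for $1\le d\le p-2$ (degrees $\ge 4p^2-1$) all land in \emph{zero groups}, which is the paper's entire argument for them. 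Your elaborate Hurewicz argument for $\lambda_3$ is not only unnecessary but contains a non sequitur: showing that the image of $\lambda_3$ in $V(2)_*THH(BP\<1\>)$ vanishes does not imply that its image in $V(2)_*TC(BP\<1\>)$ vanishes, since $TC\to THH$ is far from injective. (The implication you need runs the other way: vanishing in $TC$ cannot be deduced from vanishing in $THH$ alone.)

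The one case that genuinely requires an argument is $\Xi_{3,p-1}$, which sits in degree $2p^3-2(p-1)p^2-1 = 2p^2-1$, where the target is $\bF_p\{\lambda_2\}\neq 0$. Your blanket remark that the $\Xi_{3,d}$ are ``detected by zero'' does not by itself rule out that the image of $\Xi_{3,p-1}$ is a nonzero multiple of $\lambda_2$; you must observe that $\Xi_{3,p-1}$ is detected in strictly negative filtration $-2(p-1)p^2$, hence maps to zero under the edge map to $V(2)_*THH(BP\<1\>)$, whereas $\lambda_2$ has nonzero image there — this is exactly the paper's closing step, and it deserves to be isolated rather than absorbed into a uniform statement. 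Finally, a small error in your connectivity upgrade: $V(2)_{2p^2-2}$ of both source and target is $\bF_p\{\partial\lambda_2,\ \lambda_1\Xi_{2,1}\}$, of rank two, not $\bF_p\{\partial\lambda_2\}$; the isomorphism in that degree still holds because both generators are visibly hit, and in any case the Dundas--B{\"o}kstedt--Madsen statement already gives $(2p^2-1)$-connectivity directly from the $(2p^2-2)$-connectivity of $BP\<2\>\to BP\<1\>$, so no upgrade is needed.
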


\begin{proof}
This is clear for~$\partial$, $\lambda_1$ and~$\lambda_2$.  Moreover,
$\Xi_{1,d}$ and $\Xi_{2,d}$ in $V(2)_* TC(BP\<2\>)$ map to classes
in $V(2)_* THH(BP\<1\>)^{h\bT}$ that are detected by $t^d \lambda_1$
and $t^{dp} \lambda_2$, respectively, which characterizes their
images in $V(2)_* TC(BP\<1\>)$.  The classes $v_3$, $\lambda_3$
and $\Xi_{3,d}$ for $1 \le d \le p-2$ are mapped to trivial groups.
Finally, $\Xi_{3,p-1}$ in degree~$2p^2-1$ maps to zero in $V(2)_*
THH(BP\<2\>)$, hence cannot be detected by~$\lambda_2$.
\end{proof}

We write $BP\<2\>_p$ for the $p$-completion of the $p$-local $E_3$
ring spectrum~$BP\<2\>$.

\begin{theorem} \label{thm:KBP2p}
Let $p\ge7$.  There is an exact sequence
\begin{multline*}
0 \to \Sigma^{-2} \bF_p\{\bar\tau_1, \bar\tau_2, \bar\tau_1\bar\tau_2\}
	\longto V(2)_* K(BP\<2\>_p) \\
	\overset{trc_*}\longto V(2)_* TC(BP\<2\>)
	\longto \Sigma^{-1} \bF_p\{1\} \to 0 \,.
\end{multline*}
Hence $V(2)_* K(BP\<2\>_p)$ is the direct sum of a free $P(v_3)$-module
on $12p+4$ generators in degrees $0 \le * \le 2p^3+2p^2+2p-3$, plus
an $\bF_p$-module with trivial $v_3$-action spanned by three classes in
degrees~$2p-3$, $2p^2-3$ and $2p^2+2p-4$.  In particular, the localization
homomorphism
$$
V(2)_* K(BP\<2\>_p) \longto v_3^{-1} V(2)_* K(BP\<2\>_p)
$$
is an isomorphism in degrees $* \ge 2p^2+2p$.
\end{theorem}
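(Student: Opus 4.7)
The plan is to combine Theorem~\ref{thm:TCBP2} with the Dundas--Goodwillie--McCarthy (DGM) theorem in order to transfer the structural information about $TC(BP\<2\>)$ to $K(BP\<2\>_p)$. I would apply DGM to the Postnikov truncation $BP\<2\>_p \to H\bZ_p$, which is an isomorphism on $\pi_0$. Since $BP\<2\>_p$ is a connective ring spectrum, this yields a $p$-adically homotopy cartesian square
\[
\xymatrix{
K(BP\<2\>_p) \ar[r]^-{trc} \ar[d] & TC(BP\<2\>_p) \ar[d] \\
K(\bZ_p) \ar[r]^-{trc} & TC(\bZ_p),
}
\]
so the fibers and cofibers of the horizontal trace maps for $BP\<2\>_p$ and $\bZ_p$ coincide after $p$-completion.

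The next step is to determine the $V(2)$-homotopy of this common trace fiber and cofiber. Combining B\"okstedt--Madsen's calculation of $V(0)_* TC(\bZ_{(p)})$ and $V(0)_* K(\bZ_p)$ with $v_1$- and $v_2$-Bockstein analyses to lift to $V(2)$-coefficients, one finds the cokernel of $trc_*$ to be $\bF_p\{1\}$ in degree $-1$ (accounting for the $\partial$-class in $TC$, which has no preimage in the connective $K$-theory) and the kernel to be a three-dimensional $\bF_p$-module spanned by the $V(2)$-Hurewicz lifts of $\bar\tau_1,\bar\tau_2,\bar\tau_1\bar\tau_2$ in degrees $2p-3,\ 2p^2-3,\ 2p^2+2p-4$. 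Transferring this identification back to $BP\<2\>_p$ via the DGM pullback square, and combining with Theorem~\ref{thm:TCBP2}, produces the stated four-term exact sequence.

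For the $P(v_3)$-module structure: the three kernel classes of $trc_*$ lie in degrees $\le 2p^2+2p-4 < |v_3|$. By $P(v_3)$-linearity of $trc_*$, any $v_3$-multiple of such a class is itself in the kernel; but such a multiple lies in degree exceeding the kernel's bounded range and so must vanish. Hence the kernel is $v_3$-annihilated, the extension splits uniquely, and $V(2)_* K(BP\<2\>_p)$ decomposes as the stated sum of the bounded $\bF_p$-summand plus a free $P(v_3)$-module on $12p+4$ generators: the $TC$-generators other than $\partial$ lie in degrees $\ge 0$ and lift directly to $K$-theory, while $\partial$ is replaced by a generator in degree $2p^3-3$ lifting $v_3\partial$.

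Finally, $v_3$-inversion annihilates the three torsion classes (all in degrees $\le 2p^2+2p-4$) and acts as the identity on the free summand. If a free generator $e$ had $\deg(e) > n$ with $n \equiv \deg(e) \pmod{2p^3-2}$, then $\deg(e) \ge n + (2p^3-2) \ge 2p^3+2p^2+2p-2$, contradicting the generator-degree bound $\le 2p^3+2p^2+2p-3$; hence the localization is an isomorphism in all degrees $\ge 2p^2+2p$. The main obstacle would be the second step: the explicit $V(2)$-identification of the trace fiber for $\bZ_p$ requires careful Bockstein bookkeeping across several spectral sequences. An alternative route would be to iterate DGM along $BP\<2\>_p \to BP\<1\>_p \to H\bZ_p$ and use the Ausoni--Rognes calculation of $V(1)_* K(BP\<1\>_p)$ as a bootstrap.
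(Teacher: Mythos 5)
Your overall route is the one the paper takes: Dundas' theorem together with Hesselholt--Madsen's identification of the trace cofiber for $\bZ_p$ yield a homotopy cofiber sequence $K(BP\<2\>_p)_p \to TC(BP\<2\>)_p \to \Sigma^{-1}H\bZ_p$, and the four-term sequence is then extracted from the long exact sequence in $V(2)$-homotopy, using $V(2)_*(\Sigma^{-1}H\bZ_p) \cong \Sigma^{-1}E(\bar\tau_1,\bar\tau_2)$. Your final bookkeeping (the unique splitting, replacing $\partial$ by the degree-$(2p^3-3)$ generator lifting $v_3\partial$, and the localization estimate in degrees $\ge 2p^2+2p$) is correct.

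The gap is in the middle step. A homotopy cartesian square identifies the \emph{fiber spectrum} of the two trace maps, but it does not let you transfer the kernel and cokernel of the induced maps on $V(2)$-homotopy from $\bZ_p$ to $BP\<2\>_p$: these are governed by the boundary homomorphism $\varpi_* \: V(2)_* TC(BP\<2\>) \to \Sigma^{-1}E(\bar\tau_1,\bar\tau_2)$, which must be evaluated on $V(2)_* TC(BP\<2\>)$ itself. Besides $\partial$ (which must map onto $\Sigma^{-1}1$, since $V(2)\wedge K(BP\<2\>_p)$ is connective while $V(2)_{-1}TC(BP\<2\>) = \bF_p\{\partial\}$), the generators $\partial\lambda_1$, $\partial\lambda_2$, $\lambda_1\Xi_{2,1}$ and $\partial\lambda_1\lambda_2$ sit in exactly the degrees $2p-2$, $2p^2-2$, $2p^2-2$ and $2p^2+2p-3$ of $\Sigma^{-1}\bar\tau_1$, $\Sigma^{-1}\bar\tau_2$ and $\Sigma^{-1}\bar\tau_1\bar\tau_2$; unless $\varpi_*$ kills all of them, the three kernel classes you list would not all survive. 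The paper checks this directly: $\partial\lambda_1$, $\partial\lambda_2$, $\partial\lambda_1\lambda_2$ come from $V(0)$-homotopy and $V(0)_*(\Sigma^{-1}H\bZ_p)$ is concentrated in degree $-1$, while $\lambda_1\Xi_{2,1}$ is a product of classes in the image of $trc_*$. Your alternative --- first computing $\ker$ and $\cok$ of $trc_*$ for $\bZ_p$ in $V(2)$-homotopy and then using that the boundary map for $BP\<2\>_p$ factors through $V(2)_* TC(\bZ_p)$ --- could be made to work, but it requires $V(2)_* TC(\bZ_p)$, which is not available off the shelf and whose ``Bockstein analysis'' you do not carry out; the direct check on the four generators above is both necessary and much shorter.
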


\begin{proof}
By~\cite{Dun97} and~\cite{HM97}*{Thm.~D} there is a homotopy cofiber
sequence
$$
K(BP\<2\>_p)_p \overset{trc}\longto TC(BP\<2\>)_p
	\overset{\varpi}\longto \Sigma^{-1} H\bZ_p \,,
$$
hence also a long exact sequence
$$
\dots \to V(2)_* K(BP\<2\>_p) \overset{trc_*}\longto V(2)_* TC(BP\<2\>)
	\overset{\varpi_*}\longto V(2)_*(\Sigma^{-1} H\bZ_p) \to \dots \,.
$$
Here $V(2)_*(H\bZ_p) \cong E(\bar\tau_1, \bar\tau_2)$ with
$|\bar\tau_1|=2p-1$ and $|\bar\tau_2|=2p^2-1$.  The only
$P(v_3)$-module generator of~$V(2)_* TC(BP\<2\>)$ that is mapped
nontrivially by~$\varpi_*$ is~$\partial$, with $\varpi_*(\partial)
\doteq \Sigma^{-1} 1$.  The generators $\partial \lambda_1$, $\partial
\lambda_2$ and $\partial \lambda_1 \lambda_2$ come from $V(0)$-homotopy,
hence factor through $V(0)_*(\Sigma^{-1} H\bZ_p)$, and therefore map to
zero.  The generator $\lambda_1 \Xi_{2,1}$ is the product of two classes
in the image of $trc_*$, hence also maps to zero under~$\varpi$.
It follows that $\ker(\varpi_*)$ is freely generated as a $P(v_3)$-module
by the same generators as for $V(2)_* TC(BP\<2\>)$, except that $\partial$
in degree~$-1$ is replaced by $v_3 \partial$ in degree~${2p^3-3}$.
\end{proof}

\begin{theorem} \label{thm:KBP2}
The $p$-completion map $\kappa \: BP\<2\> \to BP\<2\>_p$ induces a
$(2p^2+2p-2)$-coconnected homomorphism
$$
V(2)_* K(BP\<2\>) \overset{\kappa_*}\longto V(2)_* K(BP\<2\>_p) \,.
$$
Hence $V(2)_* K(BP\<2\>)$ is the direct sum of a free $P(v_3)$-module
on $12p+4$ generators in degrees $0 \le * \le 2p^3+2p^2+2p-3$, plus an
$\bF_p$-module with trivial $v_3$-action concentrated in degrees~$1 \le *
\le 2p^2+2p-3$.  In particular,
$$
V(2)_* K(BP\<2\>) \longto v_3^{-1} V(2)_* K(BP\<2\>)
$$
is an isomorphism in degrees $* \ge 2p^2+2p$.
\end{theorem}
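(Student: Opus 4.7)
The plan is to propagate the Lichtenbaum--Quillen-type result for the $p$-completion map at height $n=1$ (for $BP\<1\>$) up to height $n=2$ via the $(2p^2-2)$-connected truncation $BP\<2\> \to BP\<1\>$, and then read off the structural and localization assertions from Theorem~\ref{thm:KBP2p}. The first step is to show that $\kappa_*$ is iso on the $P(v_3)$-free summand in degrees $\ge 2p^2+2p-1$; the second step is to identify the extra low-degree $\bF_p$-torsion arising from non-$p$-complete classes in $\pi_* K(BP\<2\>)$ (ultimately descending from $\bZ_{(p)}^\times$ versus $\bZ_p^\times$ phenomena).

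I would set up the commutative square
\[
\xymatrix{
V(2)_* K(BP\<2\>) \ar[r]^-{\kappa_*} \ar[d]
	& V(2)_* K(BP\<2\>_p) \ar[d] \\
V(2)_* K(BP\<1\>) \ar[r]^-{\kappa_{1*}}
	& V(2)_* K(BP\<1\>_p)
}
\]
induced by the $E_3$ $BP$-algebra map $BP\<2\> \to BP\<1\>$. By relative $K$-theory (B{\"o}kstedt--Madsen~\cite{BM94}*{Prop.~10.9}, refined by Blumberg--Mandell for $E_1$-rings) applied to the $(2p^2-2)$-connected fiber of this truncation, the vertical maps are $(2p^2-1)$-connected, mirroring the corresponding statement for $TC$ in Proposition~\ref{prop:TCBP2-to-TCBP1}. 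The coconnectivity of $\kappa_{1*}$ on $V(2)$-homotopy is in turn inferred from its $V(1)$-homotopy analogue established in \cite{AR02} and \cite{Aus05}, propagated through the $v_2$-Bockstein cofiber sequence $\Sigma^{2p^2-2} V(1) \to V(1) \to V(2)$ which shifts coconnectivity degrees upward by $|v_2|=2p^2-2$. Assembling these inputs via the long exact sequences attached to the $K$-theory fibration of $BP\<2\>\to BP\<1\>$, a diagram chase yields that $\kappa_*$ is $(2p^2+2p-2)$-coconnected, where the contribution of roughly $2p$ from the $V(1)$-to-$V(2)$ passage combines with the $2p^2-2$ from the vertical connectivity to give the asserted total.

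The hard part will be matching the precise degree bounds, requiring careful identification of which $V(2)$-homotopy classes in $K(BP\<1\>)$ and $K(BP\<1\>_p)$ are carried by the $v_2$-Bockstein from $V(1)$-homotopy classes that already differ in source and target. One must verify that no additional classes sneak in from the shifted copy of $V(1)$-homotopy, which requires explicit tracking of generators rather than a purely formal five-lemma chase. Granting this, the structural decomposition of $V(2)_* K(BP\<2\>)$ follows by comparing with Theorem~\ref{thm:KBP2p}: the free $P(v_3)$-module on $12p+4$ generators transfers identically in the range $* \ge 2p^2+2p-1$ where $\kappa_*$ is an isomorphism, while an extra $\bF_p$-module with trivial $v_3$-action in degrees $1 \le * \le 2p^2+2p-3$ accounts for the low-degree difference (subsuming the three classes $\bar\tau_1, \bar\tau_2, \bar\tau_1\bar\tau_2$ already present in the $p$-completed target). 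The localization statement is then immediate, since the extra $\bF_p$-torsion lies entirely below degree $2p^2+2p$ and is annihilated by $v_3$, so inverting $v_3$ removes this torsion precisely and identifies $v_3^{-1} V(2)_* K(BP\<2\>)$ with the $P(v_3^{\pm 1})$-free module of rank $12p+4$ from Theorem~\ref{thm:KBP2p}.
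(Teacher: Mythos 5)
There is a genuine gap at the central step. Your comparison square over $K(BP\<1\>)$ and $K(BP\<1\>_p)$, together with the $(2p^2-1)$-connectivity of the vertical maps, only controls $\kappa_*$ in \emph{low} degrees, whereas the coconnectivity you must prove is a statement about \emph{high} degrees. Concretely, writing $F = \mathrm{fib}(V(2)\wedge\kappa)$ and $F_1 = \mathrm{fib}(V(2)\wedge\kappa_1)$, your two inputs only show that $F \to F_1$ is roughly $(2p^2-2)$-connected, which says nothing about $\pi_* F$ in degrees $* \ge 2p^2-2$ --- exactly the range at issue; no diagram chase from connectivity of the vertical maps plus coconnectivity of $\kappa_{1*}$ can close this. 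The missing ingredient is that the square is homotopy \emph{cartesian}: by Dundas~\cite{Dun97} the relative $K$-theory of $BP\<2\> \to BP\<1\>$ (resp.\ of the $p$-completed map) agrees with relative $TC$ after $p$-completion, and $TC(A)_p \simeq TC(A_p)_p$, so the two vertical fibers are equivalent and hence $F \simeq F_1$ on the nose. This cartesianness is the crux of the paper's proof (stated there as a cube over $\bZ_{(p)} \to \bZ_p$ rather than over $BP\<1\> \to BP\<1\>_p$, but the mechanism is identical), and your write-up never invokes it. Your degree bookkeeping confirms the confusion: the $2p^2-2$ in the bound $2p^2+2p-2$ comes from the suspension in the cofiber sequence~\eqref{eq:V2}, not from the connectivity of $K(BP\<2\>) \to K(BP\<1\>)$, which plays no role in the degree count of the correct argument.

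Separately, the base case is misattributed. The coconnectivity of $V(1)\wedge K(\ell) \to V(1)\wedge K(\ell_p)$ --- equivalently, by the same Dundas argument, of $V(1)\wedge K(\bZ_{(p)}) \to V(1)\wedge K(\bZ_p)$ --- is not established in \cite{AR02} or \cite{Aus05}; it rests on the Lichtenbaum--Quillen/Bloch--Kato conjecture for $K(\bQ)$, i.e.\ Voevodsky's theorem~\cite{Voe11}, together with \cite{BM94}, \cite{BM95} for $K(\bQ_p)$ and the localization sequence. Once the coconnectivity of $\kappa_*$ is actually in hand, your final paragraph deducing the structural decomposition and the localization statement from Theorem~\ref{thm:KBP2p} is essentially correct.
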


\begin{proof}
We know that $V(1)_* K(\bQ)$ and $V(1)_* K(\bQ_p)$ are concentrated in
degrees $0 \le * \le 2p-2$, by the proven Lichtenbaum--Quillen/Bloch--Kato
conjectures~\cite{Voe11}, and the earlier calculation of $V(0)_* TC(\bZ)$
from~\cite{BM94},~\cite{BM95}.  Hence $V(1) \wedge K(\bQ) \to V(1) \wedge
K(\bQ_p)$ is $(2p-1)$-coconnected.  It follows from the localization
sequence in algebraic $K$-theory that $V(1) \wedge K(\bZ_{(p)}) \to
V(1) \wedge K(\bZ_p)$ is also $(2p-1)$-coconnected, so that $V(2) \wedge
K(\bZ_{(p)}) \to V(2) \wedge K(\bZ_p)$ is $(2p^2+2p-2)$-coconnected.
By the commutative cube
$$
\xymatrix@-0.5pc{
K(BP\<2\>)_p \ar[rr]^-{\kappa} \ar[dr] \ar[dd]_-{trc}
	&& K(BP\<2\>_p)_p \ar[dr] \ar[dd]_(0.3){trc}|\hole \\
& K(\bZ_{(p)})_p \ar[rr]^(0.3){\kappa} \ar[dd]_(0.3){trc}
	&& K(\bZ_p)_p \ar[dd]^-{trc} \\
TC(BP\<2\>)_p \ar[rr]^(0.3){\simeq}|\hole \ar[dr]
	&& TC(BP\<2\>_p)_p \ar[dr] \\
& TC(\bZ_{(p)})_p \ar[rr]^-{\simeq} && TC(\bZ_p)_p
}
$$
and~\cite{Dun97} applied to the left hand and right hand faces, it
follows that $V(2) \wedge K(BP\<2\>) \to V(2) \wedge K(BP\<2\>_p)$
is also $(2p^2+2p-2)$-coconnected.  This implies the assertions above.
\end{proof}

\begin{bibdiv}
\begin{biblist}

\bib{AKCH}{article}{
   author={Angelini-Knoll, Gabriel},
   author={Culver, Dominic Leon},
   author={H{\"o}ning, Eva},
   title={Topological Hochschild homology of truncated Brown-Peterson
   	spectra I},
   journal={Algebr. Geom. Topol.},
   note={arXiv:2106.06785},
   date={2021},
}

\bib{AR05}{article}{
   author={Angeltveit, Vigleik},
   author={Rognes, John},
   title={Hopf algebra structure on topological Hochschild homology},
   journal={Algebr. Geom. Topol.},
   volume={5},
   date={2005},
   pages={1223--1290},
   issn={1472-2747},
   review={\MR{2171809}},
   doi={10.2140/agt.2005.5.1223},
}


\bib{AT65}{article}{
   author={Araki, Sh\^{o}r\^{o}},
   author={Toda, Hirosi},
   title={Multiplicative structures in ${\rm mod}\,q$ cohomology theories.
   I},
   journal={Osaka Math. J.},
   volume={2},
   date={1965},
   pages={71--115},
   issn={0388-0699},
   review={\MR{182967}},
}

\bib{Aus05}{article}{
   author={Ausoni, Christian},
   title={Topological Hochschild homology of connective complex $K$-theory},
   journal={Amer. J. Math.},
   volume={127},
   date={2005},
   number={6},
   pages={1261--1313},
   issn={0002-9327},
   review={\MR{2183525}},
}

\bib{Aus10}{article}{
   author={Ausoni, Christian},
   title={On the algebraic $K$-theory of the complex $K$-theory spectrum},
   journal={Invent. Math.},
   volume={180},
   date={2010},
   number={3},
   pages={611--668},
   issn={0020-9910},
   review={\MR{2609252}},
   doi={10.1007/s00222-010-0239-x},
}

\bib{AR02}{article}{
   author={Ausoni, Christian},
   author={Rognes, John},
   title={Algebraic $K$-theory of topological $K$-theory},
   journal={Acta Math.},
   volume={188},
   date={2002},
   number={1},
   pages={1--39},
   issn={0001-5962},
   review={\MR{1947457}},
   doi={10.1007/BF02392794},
}

\bib{AR08}{article}{
   author={Ausoni, Christian},
   author={Rognes, John},
   title={The chromatic red-shift in algebraic K-theory},
   journal={Monographie de L'Enseignement Math{\'e}matique},
   volume={40},
   date={2008},
   pages={13--15},
}

\bib{AR12}{article}{
   author={Ausoni, Christian},
   author={Rognes, John},
   title={Algebraic $K$-theory of the first Morava $K$-theory},
   journal={J. Eur. Math. Soc. (JEMS)},
   volume={14},
   date={2012},
   number={4},
   pages={1041--1079},
   issn={1435-9855},
   review={\MR{2928844}},
   doi={10.4171/JEMS/326},
}

\bib{BJ02}{article}{
   author={Baker, Andrew},
   author={Jeanneret, Alain},
   title={Brave new Hopf algebroids and extensions of $M$U-algebras},
   journal={Homology Homotopy Appl.},
   volume={4},
   date={2002},
   number={1},
   pages={163--173},
   issn={1532-0081},
   review={\MR{1937961}},
   doi={10.4310/hha.2002.v4.n1.a9},
}

\bib{BMT70}{article}{
   author={Barratt, M. G.},
   author={Mahowald, M. E.},
   author={Tangora, M. C.},
   title={Some differentials in the Adams spectral sequence. II},
   journal={Topology},
   volume={9},
   date={1970},
   pages={309--316},
   issn={0040-9383},
   review={\MR{266215}},
   doi={10.1016/0040-9383(70)90055-8},
}

\bib{BM13}{article}{
   author={Basterra, Maria},
   author={Mandell, Michael A.},
   title={The multiplication on BP},
   journal={J. Topol.},
   volume={6},
   date={2013},
   number={2},
   pages={285--310},
   issn={1753-8416},
   review={\MR{3065177}},
   doi={10.1112/jtopol/jts032},
}

\bib{BMS19}{article}{
   author={Bhatt, Bhargav},
   author={Morrow, Matthew},
   author={Scholze, Peter},
   title={Topological Hochschild homology and integral $p$-adic Hodge
   theory},
   journal={Publ. Math. Inst. Hautes \'{E}tudes Sci.},
   volume={129},
   date={2019},
   pages={199--310},
   issn={0073-8301},
   review={\MR{3949030}},
   doi={10.1007/s10240-019-00106-9},
}

\bib{BBLNR14}{article}{
   author={B\"{o}kstedt, Marcel},
   author={Bruner, Robert R.},
   author={Lun\o e-Nielsen, Sverre},
   author={Rognes, John},
   title={On cyclic fixed points of spectra},
   journal={Math. Z.},
   volume={276},
   date={2014},
   number={1-2},
   pages={81--91},
   issn={0025-5874},
   review={\MR{3150193}},
   doi={10.1007/s00209-013-1187-0},
}

\bib{BHM93}{article}{
   author={B\"{o}kstedt, M.},
   author={Hsiang, W. C.},
   author={Madsen, I.},
   title={The cyclotomic trace and algebraic $K$-theory of spaces},
   journal={Invent. Math.},
   volume={111},
   date={1993},
   number={3},
   pages={465--539},
   issn={0020-9910},
   review={\MR{1202133}},
   doi={10.1007/BF01231296},
}

\bib{BM94}{article}{
   author={B\"{o}kstedt, M.},
   author={Madsen, I.},
   title={Topological cyclic homology of the integers},
   note={$K$-theory (Strasbourg, 1992)},
   journal={Ast\'{e}risque},
   number={226},
   date={1994},
   pages={7--8, 57--143},
   issn={0303-1179},
   review={\MR{1317117}},
}

\bib{BM95}{article}{
   author={B\"{o}kstedt, M.},
   author={Madsen, I.},
   title={Algebraic $K$-theory of local number fields: the unramified case},
   conference={
      title={Prospects in topology},
      address={Princeton, NJ},
      date={1994},
   },
   book={
      series={Ann. of Math. Stud.},
      volume={138},
      publisher={Princeton Univ. Press, Princeton, NJ},
   },
   date={1995},
   pages={28--57},
   review={\MR{1368652}},
}


\bib{BMMS86}{book}{
   author={Bruner, R. R.},
   author={May, J. P.},
   author={McClure, J. E.},
   author={Steinberger, M.},
   title={$H_\infty $ ring spectra and their applications},
   series={Lecture Notes in Mathematics},
   volume={1176},
   publisher={Springer-Verlag, Berlin},
   date={1986},
   pages={viii+388},
   isbn={3-540-16434-0},
   review={\MR{836132}},
   doi={10.1007/BFb0075405},
}

\bib{BR21}{book}{
   author={Bruner, Robert R.},
   author={Rognes, John},
   title={The Adams spectral sequence for topological modular forms},
   series={Mathematical Surveys and Monographs},
   volume={253},
   publisher={American Mathematical Society, Providence, RI},
   date={2021},
   pages={xix+690},
   isbn={978-1-4704-5674-0},
   review={\MR{4284897}},
   doi={10.1090/surv/253},
}

\bib{BSY}{article}{
   author={Burklund, Robert},
   author={Schlank, Tomer M.},
   author={Yuan, Allen},
   title={The chromatic Nullstellensatz},
   note={arXiv:2207.09929},
   date={2022},
}

\bib{CM15}{article}{
   author={Chadwick, Steven Greg},
   author={Mandell, Michael A.},
   title={$E_n$ genera},
   journal={Geom. Topol.},
   volume={19},
   date={2015},
   number={6},
   pages={3193--3232},
   issn={1465-3060},
   review={\MR{3447102}},
   doi={10.2140/gt.2015.19.3193},
}

\bib{CMNN}{article}{
   author={Clausen, Dustin},
   author={Mathew, Akhil},
   author={Naumann, Niko},
   author={Noel, Justin},
   title={Descent and vanishing in chromatic algebraic $K$-theory via
	group actions},
   note={arXiv:2011.08233},
   date={2020},
}

\bib{CLM76}{book}{
   author={Cohen, Frederick R.},
   author={Lada, Thomas J.},
   author={May, J. Peter},
   title={The homology of iterated loop spaces},
   series={Lecture Notes in Mathematics, Vol. 533},
   publisher={Springer-Verlag, Berlin-New York},
   date={1976},
   pages={vii+490},
   review={\MR{0436146}},
}

\bib{CMM78}{article}{
   author={Cohen, F. R.},
   author={Mahowald, M. E.},
   author={Milgram, R. J.},
   title={The stable decomposition for the double loop space of a sphere},
   conference={
      title={Algebraic and geometric topology},
      address={Proc. Sympos. Pure Math., Stanford Univ., Stanford, Calif.},
      date={1976},
   },
   book={
      series={Proc. Sympos. Pure Math., XXXII},
      publisher={Amer. Math. Soc., Providence, R.I.},
   },
   date={1978},
   pages={225--228},
   review={\MR{520543}},
}

\bib{Coh81}{article}{
   author={Cohen, Ralph L.},
   title={Odd primary infinite families in stable homotopy theory},
   journal={Mem. Amer. Math. Soc.},
   volume={30},
   date={1981},
   number={242},
   pages={viii+92},
   issn={0065-9266},
   review={\MR{603393}},
   doi={10.1090/memo/0242},
}

\bib{Dun97}{article}{
   author={Dundas, Bj\o rn Ian},
   title={Relative $K$-theory and topological cyclic homology},
   journal={Acta Math.},
   volume={179},
   date={1997},
   number={2},
   pages={223--242},
   issn={0001-5962},
   review={\MR{1607556}},
   doi={10.1007/BF02392744},
}

\bib{Dun88}{article}{
   author={Dunn, Gerald},
   title={Tensor product of operads and iterated loop spaces},
   journal={J. Pure Appl. Algebra},
   volume={50},
   date={1988},
   number={3},
   pages={237--258},
   issn={0022-4049},
   review={\MR{938617}},
   doi={10.1016/0022-4049(88)90103-X},
}

\bib{HW22}{article}{
   author={Hahn, Jeremy},
   author={Wilson, Dylan},
   title={Redshift and multiplication for truncated Brown-Peterson spectra},
   journal={Ann. of Math. (2)},
   volume={196},
   date={2022},
   number={3},
   pages={1277--1351},
   issn={0003-486X},
   review={\MR{4503327}},
   doi={10.4007/annals.2022.196.3.6},
}

\bib{HRW}{article}{
   author={Hahn, Jeremy},
   author={Raksit, Arpon},
   author={Wilson, Dylan},
   title={A motivic filtration on the topological cyclic homology of
	commutative ring spectra},
   note={arXiv:2206.11208},
   date={2022},
}

\bib{HR}{article}{
   author={Hedenlund, Alice},
   author={Rognes, John},
   title={A multiplicative Tate spectral sequence for compact Lie group
   	actions},
   journal={Mem. Amer. Math. Soc.},
   note={arXiv:2008.09095},
   date={2020},
}

\bib{Hes18}{article}{
   author={Hesselholt, Lars},
   title={Topological Hochschild homology and the Hasse-Weil zeta function},
   conference={
      title={An alpine bouquet of algebraic topology},
   },
   book={
      series={Contemp. Math.},
      volume={708},
      publisher={Amer. Math. Soc., [Providence], RI},
   },
   date={2018},
   pages={157--180},
   review={\MR{3807755}},
   doi={10.1090/conm/708/14264},
}

\bib{HM97}{article}{
   author={Hesselholt, Lars},
   author={Madsen, Ib},
   title={On the $K$-theory of finite algebras over Witt vectors of perfect
   fields},
   journal={Topology},
   volume={36},
   date={1997},
   number={1},
   pages={29--101},
   issn={0040-9383},
   review={\MR{1410465}},
   doi={10.1016/0040-9383(96)00003-1},
}

\bib{HM03}{article}{
   author={Hesselholt, Lars},
   author={Madsen, Ib},
   title={On the $K$-theory of local fields},
   journal={Ann. of Math. (2)},
   volume={158},
   date={2003},
   number={1},
   pages={1--113},
   issn={0003-486X},
   review={\MR{1998478}},
   doi={10.4007/annals.2003.158.1},
}


\bib{LMMT}{article}{
   author={Land, Markus},
   author={Mathew, Akhil},
   author={Meier, Lennart},
   author={Tamme, Georg},
   title={Purity in chromatically localized algebraic $K$-theory},
   note={arXiv:2001.10425},
   date={2020},
}

\bib{LN14}{article}{
   author={Lawson, Tyler},
   author={Naumann, Niko},
   title={Strictly commutative realizations of diagrams over the Steenrod
   algebra and topological modular forms at the prime 2},
   journal={Int. Math. Res. Not. IMRN},
   date={2014},
   number={10},
   pages={2773--2813},
   issn={1073-7928},
   review={\MR{3214285}},
   doi={10.1093/imrn/rnt010},
}

\bib{LMS86}{book}{
   author={Lewis, L. G., Jr.},
   author={May, J. P.},
   author={Steinberger, M.},
   author={McClure, J. E.},
   title={Equivariant stable homotopy theory},
   series={Lecture Notes in Mathematics},
   volume={1213},
   note={With contributions by J. E. McClure},
   publisher={Springer-Verlag, Berlin},
   date={1986},
   pages={x+538},
   isbn={3-540-16820-6},
   review={\MR{866482}},
   doi={10.1007/BFb0075778},
}

\bib{Liu62}{article}{
   author={Liulevicius, Arunas},
   title={The factorization of cyclic reduced powers by secondary cohomology
   operations},
   journal={Mem. Amer. Math. Soc.},
   volume={42},
   date={1962},
   pages={112},
   issn={0065-9266},
   review={\MR{182001}},
}

\bib{LNR12}{article}{
   author={Lun\o e-Nielsen, Sverre},
   author={Rognes, John},
   title={The topological Singer construction},
   journal={Doc. Math.},
   volume={17},
   date={2012},
   pages={861--909},
   issn={1431-0635},
   review={\MR{3007679}},
}

\bib{MR99}{article}{
   author={Mahowald, Mark},
   author={Rezk, Charles},
   title={Brown-Comenetz duality and the Adams spectral sequence},
   journal={Amer. J. Math.},
   volume={121},
   date={1999},
   number={6},
   pages={1153--1177},
   issn={0002-9327},
   review={\MR{1719751}},
}

\bib{MM02}{article}{
   author={Mandell, M. A.},
   author={May, J. P.},
   title={Equivariant orthogonal spectra and $S$-modules},
   journal={Mem. Amer. Math. Soc.},
   volume={159},
   date={2002},
   number={755},
   pages={x+108},
   issn={0065-9266},
   review={\MR{1922205}},
   doi={10.1090/memo/0755},
}

\bib{May70}{article}{
   author={May, J. Peter},
   title={A general algebraic approach to Steenrod operations},
   conference={
      title={The Steenrod Algebra and its Applications (Proc. Conf. to
      Celebrate N. E. Steenrod's Sixtieth Birthday, Battelle Memorial Inst.,
      Columbus, Ohio, 1970)},
   },
   book={
      series={Lecture Notes in Mathematics, Vol. 168},
      publisher={Springer, Berlin},
   },
   date={1970},
   pages={153--231},
   review={\MR{0281196}},
}

\bib{MS93}{article}{
   author={McClure, J. E.},
   author={Staffeldt, R. E.},
   title={On the topological Hochschild homology of $b{\rm u}$. I},
   journal={Amer. J. Math.},
   volume={115},
   date={1993},
   number={1},
   pages={1--45},
   issn={0002-9327},
   review={\MR{1209233}},
   doi={10.2307/2374721},
}

\bib{Mil58}{article}{
   author={Milnor, John},
   title={The Steenrod algebra and its dual},
   journal={Ann. of Math. (2)},
   volume={67},
   date={1958},
   pages={150--171},
   issn={0003-486X},
   review={\MR{99653}},
   doi={10.2307/1969932},
}

\bib{NS18}{article}{
   author={Nikolaus, Thomas},
   author={Scholze, Peter},
   title={On topological cyclic homology},
   journal={Acta Math.},
   volume={221},
   date={2018},
   number={2},
   pages={203--409},
   issn={0001-5962},
   review={\MR{3904731}},
   doi={10.4310/ACTA.2018.v221.n2.a1},
}

\bib{Rav04}{book}{
   author={Ravenel, Douglas C.},
   title={Complex Cobordism and Stable Homotopy Groups of Spheres:
	Second Edition},
   volume={347},
   publisher={AMS Chelsea Publishing, Providence},
   date={2004},
   pages={xix+395},
   isbn={978-0-8218-2967-7},
   review={\MR{860042}},
}

\bib{Rog98}{article}{
   author={Rognes, John},
   title={Trace maps from the algebraic $K$-theory of the integers (after
   Marcel B\"{o}kstedt)},
   journal={J. Pure Appl. Algebra},
   volume={125},
   date={1998},
   number={1-3},
   pages={277--286},
   issn={0022-4049},
   review={\MR{1600028}},
   doi={10.1016/S0022-4049(96)00119-3},
}

\bib{Rog99}{article}{
   author={Rognes, John},
   title={Topological cyclic homology of the integers at two},
   journal={J. Pure Appl. Algebra},
   volume={134},
   date={1999},
   number={3},
   pages={219--286},
   issn={0022-4049},
   review={\MR{1663390}},
   doi={10.1016/S0022-4049(97)00155-2},
}

\bib{Rog00}{webpage}{
   author={Rognes,John},
   title={Algebraic $K$-theory of finitely presented ring spectra},
   date={2000},
   note={Talk at Oberwolfach},
   url={https://www.mn.uio.no/math/personer/vit/rognes/papers/red-shift.pdf},
}

\bib{Rog14}{article}{
   author={Rognes, John},
   title={Algebraic $K$-theory of strict ring spectra},
   conference={
      title={Proceedings of the International Congress of
      Mathematicians---Seoul 2014. Vol. II},
   },
   book={
      publisher={Kyung Moon Sa, Seoul},
   },
   date={2014},
   pages={1259--1283},
   review={\MR{3728661}},
}

\bib{Rog20}{article}{
   author={Rognes, John},
   title={The circle action on topological Hochschild homology of complex
   cobordism and the Brown-Peterson spectrum},
   journal={J. Topol.},
   volume={13},
   date={2020},
   number={3},
   pages={939--968},
   issn={1753-8416},
   review={\MR{4100122}},
   doi={10.1112/topo.12141},
}

\bib{Smi70}{article}{
   author={Smith, Larry},
   title={On realizing complex bordism modules. Applications to the stable
   homotopy of spheres},
   journal={Amer. J. Math.},
   volume={92},
   date={1970},
   pages={793--856},
   issn={0002-9327},
   review={\MR{275429}},
   doi={10.2307/2373397},
}

\bib{Sus84}{article}{
   author={Suslin, Andrei A.},
   title={On the $K$-theory of local fields},
   booktitle={Proceedings of the Luminy conference on algebraic $K$-theory
   (Luminy, 1983)},
   journal={J. Pure Appl. Algebra},
   volume={34},
   date={1984},
   number={2-3},
   pages={301--318},
   issn={0022-4049},
   review={\MR{772065}},
   doi={10.1016/0022-4049(84)90043-4},
}

\bib{Tod68}{article}{
   author={Toda, Hirosi},
   title={Extended $p$-th powers of complexes and applications to homotopy
   theory},
   journal={Proc. Japan Acad.},
   volume={44},
   date={1968},
   pages={198--203},
   issn={0021-4280},
   review={\MR{230311}},
}

\bib{Tod71}{article}{
   author={Toda, Hirosi},
   title={On spectra realizing exterior parts of the Steenrod algebra},
   journal={Topology},
   volume={10},
   date={1971},
   pages={53--65},
   issn={0040-9383},
   review={\MR{271933}},
   doi={10.1016/0040-9383(71)90017-6},
}

\bib{Tsa98}{article}{
   author={Tsalidis, Stavros},
   title={Topological Hochschild homology and the homotopy descent problem},
   journal={Topology},
   volume={37},
   date={1998},
   number={4},
   pages={913--934},
   issn={0040-9383},
   review={\MR{1607764}},
   doi={10.1016/S0040-9383(97)00045-1},
}

\bib{Voe11}{article}{
   author={Voevodsky, Vladimir},
   title={On motivic cohomology with $\bold Z/l$-coefficients},
   journal={Ann. of Math. (2)},
   volume={174},
   date={2011},
   number={1},
   pages={401--438},
   issn={0003-486X},
   review={\MR{2811603}},
   doi={10.4007/annals.2011.174.1.11},
}


\bib{YY77}{article}{
   author={Yanagida, Syun-ichi},
   author={Yosimura, Zen-ichi},
   title={Ring spectra with coefficients in $V(1)$ and $V(2)$. I},
   journal={Japan. J. Math. (N.S.)},
   volume={3},
   date={1977},
   number={1},
   pages={191--222},
   issn={0289-2316},
   review={\MR{529443}},
   doi={10.4099/math1924.3.191},
}
		
\bib{Yos77}{article}{
   author={Yosimura, Zen-ichi},
   title={Ring spectra with coefficients in $V(1)$ and $V(2)$. II},
   journal={Japan. J. Math. (N.S.)},
   volume={3},
   date={1977},
   number={2},
   pages={297--312},
   issn={0289-2316},
   review={\MR{529281}},
   doi={10.4099/math1924.3.297},
}

\bib{Yua21}{article}{
   author={Yuan, Allen},
   title={Examples of chromatic redshift in algebraic $K$-theory},
   note={arXiv:2111.10837},
   date={2021},
}

\bib{Zah72}{article}{
   author={Zahler, Raphael},
   title={The Adams-Novikov spectral sequence for the spheres},
   journal={Ann. of Math. (2)},
   volume={96},
   date={1972},
   pages={480--504},
   issn={0003-486X},
   review={\MR{319197}},
   doi={10.2307/1970821},
}

\end{biblist}
\end{bibdiv}

\end{document}